\documentclass[10pt,a4paper]{article}
\usepackage{typearea}
\usepackage{amsmath}
\usepackage{amsfonts}
\usepackage{amssymb}
\usepackage{amsthm}
\usepackage{amscd}
\typearea{11}
\author{Andreas Klein}
\title{The Eta invariant on the Milnor fibration of a quasihomogeneous polynomial}
\setlength{\parindent}{0pt}
\setlength{\parskip}{5pt plus 2pt minus 1pt}

\newcommand{\Gr}{{\rm Gr}}

\newcommand{\etab}{\widetilde{\eta}}

%---------------- GENERAL MATHS --------------------

\newcommand{\R}{\mathbb{R}}
\newcommand{\del}{\partial}

\newcommand{\C}{\mathbb{C}}

           %isomorphism sign
          %homotopy sign

%\newcommand{\Rgeq}{\R^{\scriptscriptstyle \geq 0}}
%\newcommand{\Rleq}{\R^{\scriptscriptstyle \leq 0}}

\newcommand{\tr}{\mathrm{trace}}

\newcommand{\im}{\mathrm{im}}
\newcommand{\spec}{\mathrm{spec}}
\renewcommand{\ker}{\mathrm{ker}}

\newtheorem{theorem}{Theorem}[section]
\newtheorem{Def}[theorem]{Definition}
\newtheorem{prop}[theorem]{Proposition}
\newtheorem{asser}[theorem]{Assertion}
\newtheorem{lemma}[theorem]{Lemma}
\newtheorem{folg}[theorem]{Corollary}

\setlength{\parindent}{0pt}
\setlength{\parskip}{5pt plus 2pt minus 1pt}
\sloppy
\begin{document}
\maketitle
\begin{abstract}We calculate the eta-invariant for the odd signature operator relative to a specific submersion metric on the Milnor fibration of a quasihomogeneous hypersurface singularity using certain global boundary conditions in terms of the data of its fibre intersection form, monodromy and variation mapping resp. the monomial data of its Milnor algebra. This is done by representing this eta-invariant as the eta-invariant of the odd signature operator on a certain closed fibrewise double of the original bundle and expressing the latter as the mapping cylinder of a specific fibrewise isometry. In this situation, well-known cutting and pasting-laws for the Eta-invariant apply and give equality (modulo the integers) to a certain real-valued Maslov-type number, first introduced by Lesch and Wojciechowski, whose value in this case is a topological invariant of the isolated singularity. We finally give an explicit formula for the eta-invariant in the case of Brieskorn polynomials in terms of combinatorial data.
\end{abstract}

\section{Introduction}
Let $f \in \mathbb{C}[z_0,\dots,z_n]$, $n\geq 1$ be a quasihomogeneous polynomial with isolated singularity in $0 \in \mathbb{C}^{n+1}$, that is there are integers $\beta_0, \dots\beta_n,\beta > 0$ such that $f(t^{\beta_0} z_0,\dots,t^{\beta_n} z_n)=t^\beta f(z_0,\dots,z_n)$ for any $t\in \mathbb{C}^*$. In \cite{klein3} resp. \cite{klein1}, we observed that a certain set of differences of eta-invariants $\eta_\Delta(i)$ resp. a certain set of spectral flows ${\rm SF}(\alpha(i)),\ i=1,\dots, \mu$ for the odd signature operstor on the Milnor fibration of $f$ are, being determined by the spectrum of $f$ (cf. Definition \ref{spectrumdef}), topological invariants, provided $n \leq 2$. Here $\mu$ is the Milnor number of $f$ and the $\alpha(i) \in \Lambda \subset \mathbb{N}^{n+1}, |\Lambda|=\mu$ determine a monomial basis $z^{\alpha(1)},\dots, z^{\alpha(\mu)}$ of its Milnor algebra
\begin{equation}\label{mf}
M(f)=\mathcal{O}_{\mathbb{C}^{n+1},0}/(\frac{\partial f}{\partial z_0},\dots,\frac{\partial f}{\partial z_n})\mathcal{O}_{\mathbb{C}^{n+1},0}.
\end{equation}
Now, while currently it is unknown if these invariants, i.e. the spectrum of an isolated quasihomogeneous singularity, are topological invariants for $n \geq 3$, it is well-known by results of Le-Ramanujam and Varchenko (see again Saeki \cite{saeki} and references therein) that if two quasihomogeneous polynomials $f$ and $g$ with an isolated singularity can be connected by a $\mu$-constant deformation (cf. Varchenko \cite{varchenko2}), then they can be connected by a deformation of constant topological type and $f$ and $g$ have the same spectrum and the same weights and these four conditions are in fact equivalent (see Theorem \ref{foureq}, Section \ref{relcohom}). In especially the sets $\eta_\Delta(i)$ resp. ${\rm SF}(\alpha(i))$ for $i =1,\dots,\mu$, the latter being equivalent to the pectrum of $f$, are invariant under $\mu$-constant deformation for all $n\geq 1$. One could now pose the question if such an invariance is manifest not only on the level of 'differences of eta-invariants' and spectral flows, but \\

{\it Given a $\mu$-constant deformation connecting quasihomogeneous polynomials $f_0$ and $f_1$ with an isolated singularity at the origin, are there 'well-posed' boundary projections $P_{0,1} \in {\rm Gr}(A)$ so that for the associated eta-invariants on the Milnor bundles $Y_0, Y_1$, equipped with appropriate submersion metrics, we have $\eta (D_0,P_0)=\eta (D_1,P_1) \in \mathbb{R}$? Furthermore, is $\eta (D,P)$ determined by the topological type of $f$ for appropriate $P \in {\rm Gr}(A)$?}\\

Recall that for a closed manifold $Y$ of dimension $2n+1$ and an elliptic self-adjoint differential operator of first order with compact resolvent $D$ on $Y$ its {\it eta-function} as introduced by Atiyah et al. (\cite{Atiyah}) with the objective to generalize Hirzebruch's result on the signature defect of Hilbert modular cusps (\cite{atiyahdonelly}) is given for ${\rm Re}(s)>>0$ by the holomorphic function on the half-plane
\[
\eta(D)(s):= Tr(D|D|^{s-1})=\sum_{i \in \mathbb{Z}}{\rm sign}(\lambda_i)|\lambda_i|^{-s},
\]
where $\{\lambda_i\}_{i\in \mathbb{Z}}$ are the (non-zero) eigenvalues of $D$ (counting multiplicity), and is seen to have a meromorphic continuation to $\mathbb{C}$ with at most a simple pole at the origin (see Section \ref{begriffe1}). Now in the compact case its residue at zero is shown to be a (locally computable) homotopy invariant of $D$, hence a function of the stable class defined by the symbol of $D$ in $K^1(TY)$. This combined with the signature Theorem in \cite{Atiyah}) already implies the vanishing of the residue of $\eta(s)$ at zero for quite a general class of (pseudo)differential-operators (see \cite{atiyah3} and the remarks in Section \ref{begriffe1}), i.e. all Dirac operators in odd dimensions. Note that from its representation as a 'boundary correction' term in the Index Theorem it also follows using simple examples that $\eta(0)$ is not multiplicative under finite coverings, hence a {\it global} invariant of $Y$ (\cite{Atiyah}). Note further that if $\partial X=Y$, the odd signature operator $D$ on $Y$ appears as the tangential operator, restricted to {\it even} forms, of the signature operator $\tilde D^+=d +d^*:\Omega^+(X)\rightarrow \Omega^-(X)$, where $\pm$ denote the $\pm 1$-eigenspaces of the involution $\tau=(-1)^r*$, where $r$ depends on $n$ and the degree of the form. In our case, returning to the Milnor bundle $Y$ of a quasihomogeneous polynomial $f:\mathbb{C}^{n+1}\rightarrow \mathbb{C}$, for $P\in {\rm Gr}(A)$ it follows that $\eta(D_P)(s)$ has in general a pole of order $2$, which is why we will restrict ourselves to the class ${\rm Gr}_\infty(A)\subset {\rm Gr}(A) $ of self-adjoint boundary conditions which differ from $P_{>0}$, the projection onto the positive eigenvalues of $A$, by a smoothing operator (cf. Section \ref{begriffe1}). Now assume for the moment $Y$ had closed fibres and consider the family of signature operators $D_y, \ y \in S^1$ along the fibres.  Then following a scheme of Quillen (\cite{quillen}) one can make sense of a complex line bundle $\lambda_Y$ over $S^1$ whose fibres are canonically isomorphic
to
\[
\lambda_{Y_y}={\rm det}({\rm ker}\ D_y)^*\otimes {\rm det}({\rm coker}\ D_y), \ y \in S^1,
\]
and which comes equipped with a distinguished norm involving a zeta-regularized determinant, the Quillen norm. Now let $c:[0,1]\rightarrow S^1$ and $\tilde Y=c^*Y$. Then $\partial \tilde Y= Y_{c(0)}\sqcup Y_{c(1)}$, the index of the tangential operator $D_{\partial \tilde Y}$ vanishes and one has an involution $\tau=(-1)^r*$ on ${\rm ker}\ D_{\partial \tilde Y}$ whose $\pm 1$-eigenspaces we denote by $K^\pm$. By choosing an isometry $T:K^+\rightarrow K^-$ w.r.t. to the hermitian structure on $K^\pm$ given by $i\tau$ we have an associated element $P(T)\in Gr_\infty(D_{\partial \tilde Y})$ given by the orthogonal projection onto ${\rm im}(P_{>0})\oplus {\rm graph}(T)$ and an associated eta-invariant $\eta(D, P(T))$, where we assume $Y$ is equipped with a submersion metric $g$. Now the intruiging property of Quillen's determinant line is that setting $\tilde \eta(D,P(T))=(\eta(D, P(T))+ {\rm dim \ ker} D_{P(T)})/2$ we have a representative (cf. Dai and Freed \cite{daifreed}, note this uses essentially Scott-Wojchiechowski's formula \cite{scottwoj} alluded to above)
\[
\tau_{\tilde Y} :=e^{2\pi i\tilde \eta(D,P(T))}\left(\prod_{\lambda_i >0}\lambda_i\right)^{1/2}({\rm Det}\ T)^{-1} \in \lambda_{\partial \tilde Y}^{-1}
\]
where $\lambda_{\partial \tilde Y}=(c|_{\{0,1\}})^*\lambda_Y$ and the product is defined via the zeta-determinant ${\rm exp}(-\zeta'(0))$ (\cite{quillen}). Then since $\lambda_{\partial \tilde Y}=\lambda_{Y_{c(0)}}\otimes \lambda_{-Y_{c(1)}}$ and $\lambda_{-Y_{c(1)}}=\lambda_{Y_{c(1)}}^{-1}$ this amounts to a map
\[
\tau_{\tilde Y}: \lambda_{Y_{c(0)}} \rightarrow \lambda_{Y_{c(1)}}.
\]
Now 'blowing up' the base part of the submersion metric by a factor $1/\epsilon^2$ for small $\epsilon >0$, calling it $g(\epsilon)$ and taking the (existing) limit ${\rm lim}_{\epsilon \rightarrow 0} (\tau_{\tilde Y}(\epsilon))$ gives exactly the parallel transport induced by the Quillen connection on $\lambda$, as is shown by Dai and Freed (\cite{daifreed}). Consequently we have for $c(0)=c(1)$ that (cf. \cite{daifreed}, eq. (5.6), using the 'bounding' spin-structure)
\[
{\rm lim}_{\epsilon \rightarrow 0} (\tau_{Y}(\epsilon))={\rm Tr}_{Y_{c(0)}}({\rm lim}_{\epsilon \rightarrow 0} \tau_{\tilde Y}(\epsilon))= {\rm holonomy \ around}\ c,
\]
where $\tau_{Y}(\epsilon)$ denotes the exponentiated (reduced) eta-invariant for $g(\epsilon)$ and ${\rm Tr}_{Y_{c(0)}}$ is a certain supertrace (see \cite{daifreed}, eq. (2.6)). Note that here the essential ingredient is a glueing law for the exponentiated eta-invariant, that is $\tau_{Y}={\rm Tr}_{Y_{c(0)}}(\tau_{\tilde Y})$, where the isometry $T$ which is implicit on the right hand side can be fixed to be induced by an identifying isometry $\Phi:Y_{c(0)}\simeq  Y_{c(1)}$, noting that $K^+\oplus K^-\simeq H^*(\partial \tilde Y, \mathbb{C})\simeq H^*(Y_{c(0)}\sqcup -Y_{c(0)}, \mathbb{C})$. Now observe that the weighted circle action $\sigma$ on $Y$ identifies the fibres isometrically, hence choosing the submersion metric on $Y$ which is induced by the horizontal distribution spanned by the Killing vector field of $\sigma$ on $Y$ (the Euler vector field) and by restriction of the euclidean metric on $\mathbb{C}^{n+1}$ to the fibres, the corresponding isometry $T$ can be chosen to be induced by $id \sqcup \sigma(1/\beta)$, acting on 'the diagonal' in $K^+\oplus K^-\simeq H^*(Y_{c(0)}\sqcup -Y_{c(0)}, \mathbb{C})$. But then it simply follows that (for details see Section \ref{etaquas})
\begin{equation}\label{taueta}
{\rm lim}_{\epsilon \rightarrow 0} (\tau_{Y}(\epsilon))={\rm det}(T).
\end{equation}
Indeed, even more is true: since the fibres with respect to the submersion metric induced by the Euler vector fields are totally geodesic, the connection induced on $\lambda_Y$ by the Levi-Civita-connection of $g(\epsilon)$ is essentially the Gauss-Manin connection, furthermore since the second fundamental form of the fibres vanishes we have by a formula of Bismut and Freed (\cite{bisfreed}, (3.49))
\begin{equation}\label{etaconst}
\frac{\partial}{\partial \epsilon}\tilde \eta(D)(\epsilon)=\int_Y\hat A\left(\frac{R^{T^vY}}{2\pi}\right),
\end{equation}
where $\hat A$ is a certain $O(n)$-invariant polynomial and $R^Z$ is the curvature of the vertical tangent bundle $T^vY$ of $Y$, which is independent of $\epsilon$. Since the limit of $\tilde \eta(D)(\epsilon)$ exists at least in $\mathbb{R}/\mathbb{Z}$ due to the Index theorem and the convergence of the curvature (in fact, it exists in $\mathbb{R}$, since the set ${\rm ker}(D_y), \ y \in S^1$ forms a vector bundle on $S^1$, the $D_y$ being the signature operators on the fibres and the kernel of $D(\epsilon)$ is of constant dimension (\cite{daieta})), (\ref{etaconst}) simply says that $\eta(D)(\epsilon)$ is independent of $\epsilon$, hence the adiabatic limit in (\ref{taueta}) can be replaced by the exponentiated eta-invariant $\tau_Y$ itself. Now in the above discussion we ignored the non-empty boundaries of the fibres of $Y$. To overcome this problem, one can modify the metric in a boundary neighborhood $U$ of $Y$ slightly (Lemma \ref{product123}), so that this neighborhood becomes isometric to the metric product $U_y \times S^1$ for some $y$, where $U_y =Y_y\cap U$. Then by glueing the metric product $Y_0:=Y_y\times S^1$ to $Y$ along their common boundaries with opposite orientation, we get a bundle $Y^e$ with closed fibres $F^e$ and submersion metric whose fibres are still totally geodesic and whose algebraic monodromy, that is the action of the time-one flow of the horizontal lift of $\partial/\partial t$ to $Y^e$ on $U^e=H^*(F^e, \mathbb{C})\simeq U\oplus U^*$, where $U=H^*(Y_y, \mathbb{C})$, is given with respect to the latter splitting by (cf. Lemma \ref{betaisom})
\begin{equation}\label{var}
\rho^e=\begin{pmatrix} I & 0 \\ V & \rho \end{pmatrix},
\end{equation}
where $\rho$ and $V$ are algebraic monodromy resp. variation mapping of $Y$ (see Appendix A). Note that replacing $Y$ by $Y^e$ corresponds in some sense to a Theorem by Nemethi (\cite{nem1}), which states that any $\epsilon$-hermitian variation structure $(U, b, h^*, V)$ can be represented by an $\epsilon$-hermitian isometric structure $(U^e, b^e, \rho^e)$ ($\epsilon=\pm 1$), that is, $b^e$ is an $\epsilon$-hermitian {\it nondegenerate} form. Now the eta-invariant on the trivial part of $Y^e$, $\eta(D_0; P_0)$ vanishes when using the Calderon projection $P_0$ of $Y_0$ and applying a well-known glueing law for the eta-invariant (see Bruening/Lesch \cite{brules2}) yields the exact equality
\begin{equation}\label{etabruening}
\tilde \eta(D_{Y^e})=\tilde \eta(D,Id-P_0)-\tilde \eta(D_0, P_0),
\end{equation}
so from the above discussion we already have the following result which we here state as an assertion, since the proof of our formulas on the eta invariant follows a slightly different strategy:
\begin{asser}
The reduced eta-invariant $\tilde \eta(D,I-P_0)$ of the odd signature operator $D$ on the Milnor bundle $Y$ of a quasihomogeneous polynomial $f:\mathbb{C}^{n+1}\rightarrow \mathbb{C}$, with respect to the submersion metric given by the Euler vector field, $P_0$ being the Calderon projector of the odd signature operator on the metric product (identifying boundaries with opposite orientation) depends modulo the integers only on the variation structure of $Y$ and is given by
\[
e^{2\pi i\tilde \eta(D,I-P_0)}= {\rm det}(\tilde T)\ {\rm mod}\ \mathbb{Z},
\]
where there graph of $\tilde T:K^+\rightarrow K^-$ in $K^+\oplus K^- \simeq H^*(F^e\bigcup -F^e,\mathbb{C})=:U^e$ is given by the image of the diagonal in $U^e\oplus U^e$ under $Id\oplus\rho^e$, $\rho^e$ as in (\ref{var}). Consequently, for $n \geq 2$, $[\tilde \eta(D, I-P_0)] \in \mathbb{R}/\mathbb{Z}$ is determined by the topological type of $f$.
\end{asser}
In fact, instead of using the holonomy theorem as outlined in the argumentation above, we will use in Section \ref{etaquas} refined glueing laws for the eta-invariant, as they were systematically approached by Lesch and Kirk in \cite{lesch}, based on the result (\ref{etabruening}), these glueing theorems (see Theorem \ref{invertiblecase}) allow for an explicit control of the $\mathbb{Z}$-part of the eta-invariant. As a consequence, it turns out that the above arguments remain valid in $\mathbb{R}$. Instead of listing the various formulas obtained in Sections \ref{etaquas} and \ref{brieskorn} giving $\tilde \eta(D,I- P_0)$ in terms of the combinatorial data of a monomial base of $M(f)$ (Theorem \ref{eta}) resp. in terms of the exponents of $f$ if $f$ is a Brieskorn polnomial (Theorem \ref{etab}), we will come back to our discussion on the spectrum of $f$
${\rm sp}(\alpha(i))=(l(\alpha(i))-1), \ i=1, \dots, \mu$ (cf. Def. \ref{spectrumdef}) and state that:
\begin{theorem}\label{etathm}
Let $\eta(D, I-P_0)$ be the eta-invariant of the odd signature operator $D$ on $Y$ (with the metric described in Lemma \ref{product123}) as calculated in Theorem \ref{eta}. We then have
\begin{equation}
\eta(D,I-P_0)=\sum_{\alpha \in \Lambda, {\rm sp}(\alpha) \notin \mathbb{Z}}(-1)^{[{\rm sp}(\alpha)]+n+1}\left(1-2\{{\rm sp}(\alpha)\}\right) +\frac{{\rm arg}(-1+\frac{4}{3}i)}{\pi}\sum_{\alpha \in \Lambda, {\rm sp}(\alpha)\in \mathbb{Z}}(-1)^{{\rm sp}(\alpha)+n+1},
\end{equation}
where $[\cdot]$ denotes the integer part, $\{\cdot\}$ the image in $\mathbb{R}/\mathbb{Z}$. Furthermore the above eta-invariant equals the eta-invariants given by the $APS$-boundary condition $P^+(\Lambda_Y)$ induced by the space of 'limiting values of extended $L^2$-solutions' of $D$ on $Y$, $\Lambda_Y\subset {\rm ker}(A)$ and relative to the adiabatic limit of the Calderon-projector $P_{Y}^{\infty}$ on $Y$:
\[
\eta(D, I-P_{0}) = \eta(D,P^+(\Lambda_Y))  = \eta(D,P_{Y}^{\infty})
\]
where both equalities are in fact valid in $\mathbb{R}$.
\end{theorem}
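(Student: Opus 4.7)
The plan is to split the theorem into three tasks: derivation of the combinatorial formula from Theorem~\ref{eta}, the identification $\eta(D,I-P_0)=\eta(D,P^+(\Lambda_Y))$ in $\R$, and the adiabatic identification $\eta(D,I-P_0)=\eta(D,P^\infty_Y)$ in $\R$.

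\textbf{Combinatorial formula.} My starting point is the determinant identity $e^{2\pi i \tilde\eta(D,I-P_0)}=\det(\tilde T)$ mod $\Z$ from the Assertion, which Theorem~\ref{eta} already refines into a sum over the monomial basis $\{z^{\alpha}\}_{\alpha\in \Lambda}$. I would re-index the contributions by the spectral numbers $\mathrm{sp}(\alpha)=l(\alpha)-1$ with $l(\alpha)=\sum_j \beta_j(\alpha_j+1)/\beta$, and partition $\Lambda$ according to whether $\mathrm{sp}(\alpha)\in \Z$ or not. On a non-integer block the eigenvalue $e^{2\pi i l(\alpha)}$ of $\rho$ is nontrivial, so $V$ vanishes there and $\rho^e$ collapses to $\rho$; the corresponding one-dimensional piece of $\tilde T$ has eigenvalue $e^{2\pi i \mathrm{sp}(\alpha)}$, whose argument divided by $\pi$ yields $1-2\{\mathrm{sp}(\alpha)\}$, and the sign $(-1)^{[\mathrm{sp}(\alpha)]+n+1}$ is dictated by the $\pm 1$-eigenspace of $i\tau=(-1)^r i\,*$ on forms of the relevant degree. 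On an integer block $\rho=I$ and $\rho^e=\bigl(\begin{smallmatrix}I & 0 \\ V & I\end{smallmatrix}\bigr)$; after Cayley-transforming this unipotent into its $i\tau$-unitary representative one gets a vector whose argument is precisely $\mathrm{arg}(-1+\tfrac{4}{3}i)$. Summing the two cases reproduces the displayed expression.

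\textbf{Identification of boundary projections.} All three projections $I-P_0$, $P^+(\Lambda_Y)$, $P^\infty_Y$ lie in $\Gr_\infty(A)$ and agree with $P_{>0}$ on its range, so the problem reduces to comparing their Lagrangian parts inside $\ker(A)$. By the refined glueing/variation law of Lesch--Kirk (Theorem~\ref{invertiblecase}), the eta-difference between two such projections equals a Maslov triple index plus a kernel-dimension correction, both integers, so the strategy is to show both vanish exactly.

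For $P^+(\Lambda_Y)=I-P_0$: using Nicolaescu's scattering description, $\Lambda_Y$ is the orthogonal complement of $\mathrm{Im}(P_0|_{\ker A})$ in $\ker(A)$, and the explicit splitting induced by (\ref{var}) lets me identify the two Lagrangians pointwise, annihilating the Maslov term. For $P^\infty_Y=P^+(\Lambda_Y)$: the Calderon projector of $Y$ converges in norm to $P^+(\Lambda_Y)$ under adiabatic stretching, and the totally-geodesic fibre condition combined with the Bismut--Freed formula (\ref{etaconst}) forces $\tilde\eta(\epsilon)$ to be $\epsilon$-constant, so no spectral flow is generated and the equality lifts from $\R/\Z$ to $\R$.

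\textbf{Expected main obstacle.} The hardest step is controlling the integer part of the first identification: showing that the Maslov triple index in $\ker(A)$ between the Calderon Lagrangian of $Y_0=Y_y\times S^1$ and the extended-$L^2$ Lagrangian of $Y$ vanishes exactly, not merely modulo $\Z$. The plan is to exploit the lower-triangular shape of $\rho^e$ in (\ref{var}) and induct block by block on the integer-spectrum Jordan pieces, showing that on each unipotent block the structure determined by $V$ forces the two Lagrangians to coincide. Once this combinatorial verification is in hand, both equalities hold in $\R$ and the combinatorial formula inherited from Theorem~\ref{eta} transfers verbatim to $\eta(D,P^+(\Lambda_Y))$ and $\eta(D,P^\infty_Y)$.
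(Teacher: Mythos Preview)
Your proposal contains genuine structural gaps in the identification of the three boundary projections.

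First, you propose to show $I-P_0$ and $P^+(\Lambda_Y)$ have the same Lagrangian part in $\ker A$. This is false: $I-P_0$ has image $\gamma(L_{Y_0})$, where $L_{Y_0}$ is the full Cauchy data space of the trivial bundle $Y_0$, and this Lagrangian does \emph{not} sit as $F_0^+\oplus\Lambda_Y$. The paper never identifies these two projections; instead it routes both through the closed double $Y^e$. From the Br\"uning--Lesch gluing formula one has $\eta(D_{\gamma(L_{Y_0})},Y)=\eta(D,Y^e)+\eta(D_{L_{Y_0}},Y_0)$, and the second term vanishes because there is an isometry on the product $Y_0=-F\times S^1$ anticommuting with $D$ (Lemma~\ref{symm}). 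Separately, Theorem~\ref{invariantform} applied to $Y^e=Y\cup Y_0$ together with $\Lambda_Y=\Lambda_{Y_0}$ (Lemma~\ref{limiting}, a Wang--sequence argument) gives $\eta(D,Y^e)=\eta(D,Y,F_0^+\oplus\Lambda_Y)+\eta(D,Y_0,F_0^-\oplus\Lambda_{Y_0})$, and the last term is cancelled by a $\mathrm{tr}\log$ computation against the adiabatic limit of $L_{Y_0}$ (Lemma~\ref{maslov23}). Both chains thus meet at $\eta(D,Y^e)$, and that is what yields the real equality.

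Second, you claim that the Calderon projector of $Y$ converges in norm to $P^+(\Lambda_Y)$. It does not: by Theorem~\ref{adiabaticlimit} the adiabatic limit is $F_\nu^+\oplus(W_Y\oplus\gamma(W_Y^\perp))\oplus d(E_\nu^-)\oplus\Lambda_Y$, and the extra block involving $W_Y\cong\mathrm{im}\,(H^{\mathrm{even}}(Y,\partial Y)\to H^{\mathrm{even}}(Y))$ is in general nonzero for the Milnor bundle. The correct argument computes $\mathrm{tr}\log(\Phi(F_0^+\oplus\Lambda_Y)\Phi(L_Y^r)^*)$, shows it is $r$-independent, passes to $r=\infty$, and observes that the surviving term equals $\dim W_Y=\dim\ker D_{F_0^+\oplus\Lambda_Y}$, so it cancels against the kernel correction in the reduced eta-invariant. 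The Bismut--Freed formula is motivational in the introduction but is not the mechanism used in the proof.

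Finally, a smaller error: on non-integer blocks you say ``$V$ vanishes there and $\rho^e$ collapses to $\rho$''. In fact $V=\pm(\lambda-1)i^{n^2}\neq 0$ precisely when $\lambda\neq 1$; the variation structure of an isolated singularity is always simple. The paper obtains the spectrum on each block via the explicit $2\times 2$ matrices $P^\pm(j),P^0(j)$ of Lemma~\ref{bladecomp}, not from one-dimensional pieces. The passage from the $l(\alpha)$-formula of Theorem~\ref{eta} to the $\mathrm{sp}(\alpha)$-formula here is then just the substitution $\mathrm{sp}(\alpha)=l(\alpha)-1$.
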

Note here, we set ${\rm arg}(re^{i\theta})=\theta \in[0,2\pi), r>0$. In Theorem \ref{eta2}, we give a formula for the eta-invariant relative to {\it arbitrary} $APS$-boundary conditions. Note that for $n=2k, k\in \mathbb{N}$, so ${\rm dim}\ Y=4k+1$, there is a skew-complex involution $C$ which anti-commutes with $D$, so $\eta(D_{Y^e})$ and hence $\eta(D, I-P_{0})$ vanishes. In fact the vanishing of the eta-invariant in this case corresponds to the fact that the spectral numbers ${\rm sp}(\alpha(i))$ are symmetric relative to the point $(n+1)/2-1$ by \cite{varchenko4}. Thus if $n$ is even, the signs in each of the two sums in the expression for $\eta(D,I-P_0)$ given in Theorem \ref{etathm} cancel. Further the above formula resp. Lemma \ref{glue} shows that $\eta(D,I-P_0)\in \mathbb{R}$ is determined completely by the variation structure of $f$, hence its complex Seifert form (\ref{seifertform}) and monodromy and is thus for $n\geq 1, \ n\in \mathbb{N}$ determined by the topological type of $f$. Further since by the above discussion the spectrum is constant under $\mu$-constant deformations, we can deduce from Theorem \ref{etathm} directly the answer to our above question:
\begin{folg}\label{deformation}
Let $n\geq 2 \in \mathbb{N}$ and let $f_0$ and $f_1$ be quasihomogeneous polynomials being connected by a $\mu$-constant deformation, then for the corresponding eta-invariants on $Y_0,\ Y_1 $ we have $\eta(D_0,P^+(\Lambda_{Y_0}))=\eta(D_1,P^+(\Lambda_{Y_1}))$ with respect to the APS-boundary-projectors $P^+(\Lambda_{Y_{0,1}})$.
\end{folg}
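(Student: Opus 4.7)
The plan is to reduce the corollary to a direct application of Theorem \ref{etathm} together with the Le-Ramanujam-Varchenko invariance of the spectrum under $\mu$-constant deformations recorded in Theorem \ref{foureq}. No new analysis on eta-invariants is needed; everything is already packaged in the formula of Theorem \ref{etathm}.

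For each $i \in \{0,1\}$, I would apply Theorem \ref{etathm} to the Milnor bundle $Y_i$ of $f_i$, obtaining in $\mathbb{R}$ the chain of equalities
\[
\eta(D_i, P^+(\Lambda_{Y_i})) = \eta(D_i, I - P_{0,i}) = \Phi\bigl(\{\mathrm{sp}(\alpha) : \alpha \in \Lambda_i\}\bigr),
\]
where $\Phi$ is the universal function of a finite multiset of real numbers given by the two explicit sums in the displayed formula of Theorem \ref{etathm}. The essential observation is that each summand depends on $\alpha$ only through the spectral number $\mathrm{sp}(\alpha)$ (and the fixed ambient dimension $n$, which is common to both $f_0$ and $f_1$), so $\Phi$ depends on the monomial data $\Lambda_i$ only through the multiset of spectral numbers, i.e.\ through the spectrum of $f_i$. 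I would then invoke Theorem \ref{foureq} to conclude that $f_0$ and $f_1$, being $\mu$-constant equivalent, share the same spectrum as a multiset; feeding this into the display above gives $\Phi(\mathrm{sp}(f_0)) = \Phi(\mathrm{sp}(f_1))$, and hence the desired identity $\eta(D_0, P^+(\Lambda_{Y_0})) = \eta(D_1, P^+(\Lambda_{Y_1}))$.

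The one delicate point worth flagging is the passage from $\mathbb{R}/\mathbb{Z}$ to $\mathbb{R}$. The Dai-Freed holonomy argument sketched in the introduction, together with the Assertion above, would give the formula only modulo the integers, and would correspondingly only yield an equality of eta-invariants in $\mathbb{R}/\mathbb{Z}$. What allows the lift to $\mathbb{R}$ is the refined glueing machinery of Lesch-Kirk (Theorem \ref{invertiblecase}), which tracks the integer jump in the eta-invariant exactly and is already built into the statement of Theorem \ref{etathm} that is being quoted. With this control in hand the corollary is a direct combinatorial consequence of the two results cited above.
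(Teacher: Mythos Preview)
Your proposal is correct and follows essentially the same approach as the paper: the paragraph immediately following the corollary explains that Theorem~\ref{etathm} expresses $\eta(D,P^+(\Lambda_Y))$ purely in terms of the spectrum of $f$, so the invariance under $\mu$-constant deformation follows directly from Varchenko's result (equivalently, the $(1)\Leftrightarrow(4)$ part of Theorem~\ref{foureq}). The paper also notes an alternative route via topological invariance of the variation structure together with Le--Ramanujam, but your argument via the spectrum is exactly the ``direct'' proof the paper highlights.
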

Certainly this is already implied by the topological invariance of $\eta(D,P^+(\Lambda_Y))$ and the fact that the topological type of $f$ is invariant under $\mu$-constant deformation (\cite{ramanujam}, also Theorem \ref{foureq}). On the other hand, the formula in Theorem \ref{etathm} expresses the eta-invariant in terms of the spectrum of $f$ and thus gives a direct proof of its invariance under $\mu$-constant deformation using Varchenko's Theorem \ref{definvspec}. We will state finally Theorem \ref{etab} which gives a beautiful formula for the eta-invariant of a Brieskorn singularity $f:(\mathbb{C}^{n+1},0)\rightarrow (\mathbb{C},0)$ in terms of lattice point counting. Thus let $f= \sum_{i=1}^{n+1}z_i^{a_i}$, where $a_i\in \mathbb{N}_+$ and assume as before $n=2k$ or $n=2k+1, \ k\in \mathbb{N}$. Set
\[
\Lambda:=\{{\bf k} \in \mathbb{Z}^{n+1}|1\leq k_j \leq a_j-1\},\quad \Lambda_0:= \{{\bf k}\ \in \Lambda| \sum_{j=1}^{n+1}k_j/a_j \in \mathbb{Z}\}.
\]
Writing for any subset $\Lambda' \subset \Lambda$ the symbol $\sum_{\Lambda'}$ as the sum over all $n+1$-tuples ${\bf k}\subset \mathbb{Z}^{n+1}$ so that ${\bf k} \in \Lambda'$. Then we have the following:
\begin{theorem}\label{etabthm}
Let $f$ be a Brieskorn polynomial and let $D$ be the signature operator on its Milnor bundle $Y$ w.r.t the APS-boundary condition $P^+(\Lambda_Y) \in {\rm Gr}_\infty(A)$  in anology to Theorem \ref{etathm} above. Then the eta-invariant $\eta(D, P^+(\Lambda_Y))$ for $n=2k$ or $n=2k+1, \ k\in \mathbb{N}$ equals:
\[
\begin{split}
\eta(D, P^+(\Lambda_Y))  &= (-1)^n\sum_{\Lambda\setminus\Lambda_0} {\rm  sign}\ ({\rm sin}(\pi\sum_{j=1}^{n+1}k_j/a_j))\cdot (1-2\{\sum_{j=1}^{n+1}k_j/a_j\}) \\
&-\frac{{\rm arg}(-1+\frac{4}{3}i)}{\pi}\sum _{\Lambda_0}(-1)^{{\sum_{j=1}^{n+1}k_j/a_j}+n}.
\end{split}
\]
\end{theorem}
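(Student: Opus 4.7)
The plan is to derive Theorem \ref{etabthm} as a direct specialization of Theorem \ref{etathm}, by substituting the explicit monomial data of a Brieskorn polynomial into the general formula and doing a careful bookkeeping of signs.

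First, I would recall the standard description of the Milnor algebra of $f = \sum_{j=1}^{n+1} z_j^{a_j}$: since $\partial f/\partial z_j = a_j z_j^{a_j-1}$, the Jacobian ideal is monomial and a canonical basis of $M(f)$ is $\{z^\alpha : 0 \leq \alpha_j \leq a_j - 2\}$, with $\mu = \prod_{j=1}^{n+1}(a_j-1)$. The weights are $\beta_j = \beta/a_j$, so the weighted degree of $z^\alpha$ is $l(\alpha) = \sum_{j=1}^{n+1}(\alpha_j+1)/a_j$, giving $\mathrm{sp}(\alpha) = l(\alpha) - 1$ as in Definition \ref{spectrumdef}. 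The substitution $k_j := \alpha_j + 1$ turns the index set of the monomial basis into $\Lambda = \{\mathbf{k} \in \mathbb{Z}^{n+1} : 1 \leq k_j \leq a_j - 1\}$, and identifies $\mathrm{sp}(\alpha) \in \mathbb{Z}$ with $\mathbf{k} \in \Lambda_0$.

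Next, I would plug these identifications into the formula of Theorem \ref{etathm}. The integer-part and fractional-part data transform as $\lfloor \mathrm{sp}(\alpha)\rfloor = \lfloor \sum_j k_j/a_j\rfloor - 1$ and $\{\mathrm{sp}(\alpha)\} = \{\sum_j k_j/a_j\}$ provided $\sum_j k_j/a_j \notin \mathbb{Z}$. The key simplification is the elementary identity
\[
\mathrm{sign}\bigl(\sin(\pi x)\bigr) = (-1)^{\lfloor x\rfloor} \qquad (x \in \mathbb{R}\setminus\mathbb{Z}),
\]
which converts $(-1)^{[\mathrm{sp}(\alpha)]+n+1} = (-1)^n \cdot (-1)^{\lfloor \sum_j k_j/a_j\rfloor}$ into $(-1)^n \,\mathrm{sign}(\sin(\pi \sum_j k_j/a_j))$. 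This produces precisely the first sum in Theorem \ref{etabthm}, ranging over $\Lambda \setminus \Lambda_0$.

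For the integer-spectrum contribution I would similarly reduce $(-1)^{\mathrm{sp}(\alpha)+n+1}$ to $(-1)^{\sum_j k_j/a_j + n}$ using $\mathrm{sp}(\alpha) = \sum_j k_j/a_j - 1$, recovering the combinatorial factor appearing in the second sum. The overall sign of this term must be tracked carefully through Lemma \ref{glue} and the conventions used in defining $\mathrm{arg}(-1+\tfrac{4}{3}i)\in[0,2\pi)$, and a consistency check should be made with Theorem \ref{etathm} in the cases where $\Lambda_0$ is empty (generic $a_i$) versus nonempty (e.g.\ all $a_i$ equal), where the known symmetry of the spectrum around $(n+1)/2 - 1$ already constrains the answer.

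The main obstacle is not a genuinely difficult step but rather a careful sign-tracking exercise: one has to verify that the shift by $-1$ between $\mathrm{sp}(\alpha)$ and $\sum_j k_j/a_j$ interacts consistently with the parity factors $(-1)^{[\cdot]+n+1}$ in both sums, and that the chosen branch of $\mathrm{arg}$ on the integer part is compatible with the normalization used in the proof of Theorem \ref{eta}. Once these conventions are aligned, the theorem follows by a routine rewriting of the general formula in terms of lattice points in $\Lambda$ and $\Lambda_0$.
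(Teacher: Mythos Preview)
Your approach is correct and considerably more economical than the paper's. The paper does \emph{not} substitute into Theorem~\ref{etathm}; instead, in Section~\ref{brieskorn} it redoes the entire Maslov-index computation $m_{H^*(N,\mathbb{C})}(\Lambda_0,\rho^e(\Lambda_0))$ from scratch, starting from the explicit Sebastiani--Thom/Deligne--Sakamoto description of the Brieskorn variation structure (Theorem~\ref{bries3}), recomputing the block matrices $P^+(j)$, $P^-(j)$, $P^0(j)$ in the Brieskorn basis $\{\theta_i,\omega_i\}$ via the data $(h_{\mathbf k},V_{\mathbf k},b_{\mathbf k})$, and reading off the eigenvalues. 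Your route treats Theorem~\ref{etathm} as a black box and merely rewrites its output using $k_j=\alpha_j+1$, $l(\alpha)=\sum_j k_j/a_j$, and the identity $(-1)^{\lfloor x\rfloor}=\operatorname{sign}(\sin\pi x)$ for $x\notin\mathbb{Z}$. The paper's approach has the virtue of being an independent verification --- the Brieskorn variation structure is obtained directly from Theorem~\ref{bries3} rather than via the general decomposition (\ref{weightvar}), and the formula $b_{\mathbf k}=\sin(\pi\sum_j k_j/a_j)/\bigl(2^n\prod_j\sin(\pi k_j/a_j)\bigr)$ explains intrinsically why $\operatorname{sign}(\sin(\pi\sum_j k_j/a_j))$ appears. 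Your approach is shorter and perfectly legitimate once Theorem~\ref{etathm} is established.

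Your caution about the sign on the $\Lambda_0$-sum is well placed: direct substitution of $\mathrm{sp}(\alpha)=\sum_j k_j/a_j-1$ into the second sum of Theorem~\ref{etathm} produces a $+$ sign, not the $-$ sign stated in Theorem~\ref{etabthm}. The paper's own proof introduces a convention change $\hat\gamma\mapsto\tilde\gamma=-\hat\gamma$ midway (with the remark that ``the total sign change will be taken account for at the end of the calculation''), and this is where the apparent discrepancy must be reconciled; you should flag it explicitly rather than leave it as a loose end.
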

\tableofcontents

\section{Preparations}\label{chapter1}
\subsection{Eta-invariants on manifolds with boundary}\label{begriffe1}
Let $(X,g)$ be a closed odd-dimensional Riemannian manifold, $E \rightarrow X$ a complex hermitian or real vector bundle. Let $D$ be a generalized Dirac operator with a compatible connection $\nabla^E$ on $X$, that is $D$ is a first order formally self-adjoint elliptic differential operator satisfying a generalized Weitzenboeck-formula of the type
\[
D^2=(\nabla^E)^*(\nabla^E)+\mathcal{R},
\]
where $\mathcal{R}$ is a bundle endomorphism of $E$. As is well-known (cf. Berline-Getzler-Vergne (\cite{getzler}), $D$ is essentially self-adjoint and has discrete spectrum $\dots \lambda_{-1}< 0\leq \lambda_0\leq \lambda _1\dots$ (counting as usual with multiplicity). Now its {\it Eta-function} is defined as the for $Re(s) > n$ absolutely converging series, $n ={\rm dim} \ X$
$$\label{etafunction}
\eta (D)(s):= \sum_{i \in \mathbb{Z}}{\rm sign}(\lambda_i)|\lambda_i|^{-s},
$$
where $\rm{sign}(x)$ equals ${\rm sign}(x)$ for $x \neq 0$ and equal to zero for $x=0$. It is a well-known result using the Mellin transform and heat-equation methods (see e.g. Gilkey \cite{gilkey}) that $\eta (D)(s)$ can be meromorphically extended to $\mathbb{C}$ having a priori a simple pole at zero. That $0$ is actually a regular value, was proven by Atiyah et al. (\cite{atiyah3}) by associating to the self-adjoint symbol of $D$ its stable class $K^1(TX)$. Furthermore the residue $R(A)$ turns out to be a (locally computable) homotopy invariant of $D$, hence depends only on its stable class, on the other hand, one can show the stable class of $D=d+d^*$ being a generator of $K^1(TX)$. Then the finiteness of $\eta(0)$ follows as a direct application of the Index Theorem by choosing a manifold $Y$ so that an appropriate multiple of $X$ bounds $Y$. Noting this one sets
\begin{Def}
The eta-invariant of $D$ is defined as
\[
\eta(D):=\eta(D)(0).
\]
\end{Def}
Note that formally, $\eta(D)=\sharp\{\lambda_i >0\}-\sharp\{\lambda_i>0\}$.\\
Now consider a Riemannian manifold $(X,g)$ with boundary $\partial X \neq \emptyset$ that is of odd dimension, that is ${\rm dim }X=2n+1$, a hermitian vecor bundle $E \rightarrow X$ and a symmetric Dirac operator $D:C^\infty(E)\to C^\infty (E)$ as above.
Note that the symmetry of $D$ is measured with respect to the $L^2$ inner product on $X$, so that if
$\phi_1,\phi_2\in C^\infty_0(E)$ are supported in the interior of $X$ then
\[
(\phi_1,\phi_2)_X:=\int_X(D\phi_1,\phi_2)_{E_x}dx=\int_X(\phi_1,D\phi_2)_{E_x}dx.
\]
Assume a 'metric collar' neighbourhood of $\partial X$, that is a neighbourhood $U\subset X$ of $\partial X$, so that on $U$
\begin{equation}\label{metricol234}
[0, \epsilon)\times \partial X \hookrightarrow  X, \quad g=dr^2+g_{\partial X},
\end{equation}
for some appropriate isometry, where $g_{\partial X}$ is a metric on $\partial X$ independent of $r$.\\
Then it is well-known (see for instance \cite{brules2}) that the restriction of $D$ to the collar takes the form
$D=\gamma(\frac{d}{dx}+A)$, where $\gamma:E_{|\partial X}\to E_{|\partial X}$ is a bundle endomorphism and $A: C^\infty(E_{|\partial X})\to C^\infty(E_{|\partial X})$ is a first-order self-adjoint
elliptic differential operator on the closed manifold $\partial X$ (called the {\it tangential operator}) satisfying
\begin{equation}\label{properties546}
\gamma^2=-I, \quad \gamma^*=-\gamma,\quad \text{and}\quad \gamma A=-A\gamma.
\end{equation}
Note that $A$ is independent of $x$ for $x\in [0,\epsilon)$ due to (\ref{metricol234}). Now let $<\cdot,\cdot>_{\partial X}$ be the $L^2$ inner product on $\partial X$ induced by $g_{\partial X}$ an define
\[
\Omega(\phi,\psi)=<\phi, \gamma\psi>_{\partial X}
\]
for $\phi,\psi\in L^2(E_{|\partial X})$, then $\Omega$ is a hermitian symplectic form on $L^2(E_{|\partial X})$. Now assume that $D$ is symmetric on a subspace $\mathcal{D}_0\subset C^\infty(E)$, then by Greens formula one has for $f,g \in \mathcal{D}_0$
\begin{equation}\label{symmetry12}
(Df,g)_X\ -\ (f,D^*g)_X =\Omega(f|_{\partial X},g|_{\partial X}),
\end{equation}
so defining a projection $P:L^2(E_{|\partial X})\rightarrow L^2(E_{|\partial X})$ so that $I-P$ is the orthogonal projection onto (the closure of) $\mathcal{D}_0|_{\partial X}\subset C^\infty(E_{|\partial X})$ one has
\[
\mathcal{D}_0\subset \mathcal{D}_P :=\{f \in C^\infty(E)|P(f_{|\partial X})=0\}
\]
hence $D_P:= D| \mathcal{D}_P$ is as symmetric extension of $D|\mathcal{D}_0$. From (\ref{symmetry12}) one sees that $\gamma({\rm ran} \ (I-P)) \subset {{\rm ran} \ (I-P)}^\perp$ which implies by direct calculation (see for instance \cite{loyapark}) that
\[
I-P\leq \gamma^*P\gamma.
\]
Now calling a closed subspace $L\subset L^2(E_{|\partial X})$ {\it Lagrangian} iff $\gamma(L)=L^\perp$,  so if $L={\rm ran}\ P$ for some projection $P$ one has that $\gamma:{\rm ran} \ P \rightarrow {{\rm ran} \ P}^\perp$ is an isomorphism, one calculates (see again \cite{loyapark}) that $L$ being Lagrangian in $L^2(E_{|\partial X})$ is equivalent to
\[
I-P=\gamma^*P\gamma
\]
and in fact one sees (as the next theorem) that the latter condition is necessary for the operator $D_P=D|\mathcal{D}_P$ to be essentially self-adjoint in $L^2(E)$, but before fomulating this, we need a definition. For this let
\[
P_{>0}:L^2(E_{|\partial X})\longrightarrow L^2(E_{|\partial X})
\]
denote  the positive spectral projection for the self--adjoint tangential operator $A:C^\infty(E_{|\partial X})\to  C^\infty(E_{|\partial X})$; thus if $\{  \psi_\lambda \}$ is a  basis of $L^2(E_{|\partial X})$ with $A\psi_\lambda=\lambda \psi_\lambda$, then $P_{>0}(\sum  a_\lambda \psi_\lambda)=\sum_{\lambda >0}a_\lambda \psi_\lambda$. Then one defines the following class of projections in $L^2(E_{|\partial X})$ (cf. \cite{lesch}).
\begin{Def} \label{fredholmgr} Define the {\it self-adjoint Fredholm Grassmannian} ${\rm Gr}(A)$ to be the set of maps
$P:L^2(E_{|\partial X})\to L^2(E_{|\partial X})$  so that
\begin{enumerate}
\item $P$ is pseudo--differential of order $0$,
\item $P=P^*, P^2=P$, i.e. $P$ is an orthogonal projection,
\item $\gamma P\gamma^*=I-P$,
\item $(P_{>0}, P)$ form a Fredholm pair, that is,
\[
P_{>0|{\rm im}\  P}:{\rm im}\ P\to {\rm im}\  P_{>0}
\]
is Fredholm.
\end{enumerate}
\end{Def}
Note that the Grassmannian ${\rm Gr}(A)$ is topologized using the norm topology on bounded
operators. We then have the following result proved for instance in \cite{brules} or \cite{boos}.
\begin{theorem}\label{bvp}
Let $P \in {\rm Gr}(A)$, then $D$ with domain
\[
\mathcal{D}_P :=\{f \in C^\infty(E)|P(f_{|\partial X})=0\}
\]
is a 'well-posed' boundary value problem in the sense of Seeley (\cite{seeley}), i.e.$D|\mathcal{D}_P$ is essentially self-adjoint in $L^2(E)$. If $D_P$ denotes the closure of $D| \mathcal{D}_P$, then its domain is given by
\[
\mathcal{D}(D_P)=\{f \in H_1(E)|P(f_{|\partial X})=0\}\subset L^2(E),
\]
furthermore $D_P$ is Fredholm and has compact resolvent, in particular its spectrum is discrete and each eigenvalue has finite multiplicity.
\end{theorem}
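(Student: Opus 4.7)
The plan is to follow Seeley's scheme for well-posed elliptic boundary value problems, as developed for Dirac-type operators in the references already cited. The input is the three structural properties of $P\in{\rm Gr}(A)$: it is a pseudodifferential projection of order $0$, it is Lagrangian for $\Omega$ (the content of $\gamma P\gamma^*=I-P$), and it is a Fredholm complement of $P_{>0}$.

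First I would verify symmetry of $D|\mathcal{D}_P$. By Green's formula (\ref{symmetry12}), for $f,g\in\mathcal{D}_P$ the defect equals $\Omega(f|_{\partial X},g|_{\partial X})=\langle f|_{\partial X},\gamma g|_{\partial X}\rangle_{\partial X}$. Combining $\gamma P\gamma^*=I-P$ with $\gamma^*=-\gamma$ and $\gamma^2=-I$ gives $\gamma P=(I-P)\gamma$, so $\gamma$ interchanges $\mathrm{im}\,P$ and $\ker P$. Since $f|_{\partial X},g|_{\partial X}\in\ker P$, we have $\gamma g|_{\partial X}\in\mathrm{im}\,P$, which is orthogonal to $f|_{\partial X}$, and the boundary term vanishes.

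Next I would bring in the Calder\'on projector $C=C(D)$, the orthogonal projection onto the closure in $L^2(E|_{\partial X})$ of $\{u|_{\partial X}:u\in C^\infty(E),\,Du=0\}$. By a theorem of Seeley, $C$ is a pseudodifferential projection of order $0$ whose principal symbol agrees with that of $P_{>0}$; in particular $C-P_{>0}$ has order $-1$ and is compact on $L^2(E|_{\partial X})$. Since the Fredholm-pair property is stable under compact perturbation of either projection, $(P,C)$ is again a Fredholm pair, which is precisely Seeley's criterion for $P$ to define a well-posed boundary condition. This furnishes a pseudodifferential parametrix of $D|_{\mathcal{D}_P}$ as well as the standard elliptic boundary regularity statement: any $f\in L^2(E)$ with $Df\in L^2$ distributionally and $P(f|_{\partial X})=0$ (in the generalized sense supplied by the Poisson operator of $D$) already lies in $H_1(E)$. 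From this regularity one identifies the domain of the closure as $\mathcal{D}(D_P)=\{f\in H_1(E):P(f|_{\partial X})=0\}$ and concludes $D_P^*=D_P$, so that $D|\mathcal{D}_P$ is essentially self-adjoint.

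Fredholmness of $D_P:\mathcal{D}(D_P)\to L^2(E)$ is then immediate from the parametrix, and compactness of the resolvent follows because $\mathcal{D}(D_P)\hookrightarrow H_1(E)$ continuously while the Rellich embedding $H_1(E)\hookrightarrow L^2(E)$ on the compact manifold $X$ is compact; discreteness of the spectrum with finite multiplicities is then the standard consequence for a self-adjoint operator with compact resolvent. The main obstacle is the boundary-regularity step that identifies $\mathcal{D}(D_P)$ with the $H_1$-space cut out by $P$: this requires the full Seeley--Calder\'on pseudodifferential calculus of boundary value problems, notably the construction of the Poisson operator inverting $D$ modulo $\ker D$ on Cauchy data. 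Once this analytic input is in place, symmetry, Fredholmness, and compactness of the resolvent follow as soft consequences.
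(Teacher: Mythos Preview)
The paper does not give its own proof of this theorem; it cites \cite{brules} and \cite{boos} and moves on. Your outline is the standard route taken in those references: verify symmetry from Green's formula and the Lagrangian condition, invoke Seeley's well-posedness criterion via the Calder\'on projector having the same principal symbol as $P_{>0}$, then extract regularity, self-adjointness, Fredholmness, and compact resolvent from the resulting parametrix and Rellich. So your approach is correct and matches the literature the paper defers to.

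One small comment: you state that $C-P_{>0}$ has order $-1$, which is what you need for the Fredholm-pair stability argument. The paper actually records the stronger fact (citing \cite{scott}) that $P_X\in{\rm Gr}_\infty(A)$, i.e.\ the difference is smoothing; this is not needed here but is used later. Also, you are right to flag the boundary-regularity step identifying $\mathcal{D}(D_P)$ with the $H_1$-space as the genuine analytic content---this is precisely where \cite{brules} and \cite{boos} do the work, and your sketch correctly isolates it rather than pretending it is formal.
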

Now using the cobordism theorem (see Palais \cite{pal}) one concludes on  the validity of the hypothesis in the implication
\[
{\rm sign}(i\gamma)|_{{\rm ker}\ A}=0 \Rightarrow {\rm dim \  ker}(\gamma-iI)= {\rm dim \  ker}(\gamma+iI).
\]
It follows that $(ker A, \gamma)$ is a symplectic vectorspace and there exist {\it Lagrangian} subspaces
\[
L \subset {\rm ker}\ A \quad {\rm s.t.}\ \gamma(L)=L^\bot\cap {\rm ker} \ A.
\]
That the above hypothesis is true follows using the splitting $L^2(E_{|\partial X})=H^+ \oplus H^-$ into the $\pm i$-eigenspaces of $\gamma$. One has as a consequence of (\ref{properties546})
\[
A=\begin{pmatrix}0& A_{-} \\ A_{+} &0\end{pmatrix},
\]
and $A_+$ is a Fredholm operator with index
\[
{\rm ind } \ A_+={\rm ker}\ A\cap {\rm ker }(\gamma -i) - {\rm ker}\ A \cap {\rm ker }(\gamma +i),
\]
but the vanishing of ${\rm ind} \ A_+$ follows from the cobordism theorem. Given such a Lagrangian subspace $L\subset {\rm ker}\ A$ define an orthogonal projection $P_+(L):L^2(E_{|\partial X})\rightarrow L^2(E_{|\partial X})$ by
\[
P_+(L)={\rm proj}_L+P_{>0}.
\]
Then because of $L$ being Lagrangian, it follows that $P_+(L)$ satisfies the third condition in Definition \ref{fredholmgr}, since it differs from $P_{>0}$ by a finite dimensional spectral projection which is smoothing, $P_+(L)$ is as $P_{>0}$ pseudodifferential of order zero and the pair $(P_{>0}, P_+(L))$ is Fredholm, so $P_+(A)\in {\rm Gr}(A)$ and it is even more, it is an element of the set ${\rm Gr}_\infty(A)\subset {\rm Gr}(A)$
\[
{\rm Gr}_\infty(A)=\{P \in {\rm Gr}(A)| P-P_{>0} \ {\rm is \ a\ smoothing\ operator}\}.
\]
Note that here as in Definition \ref{fredholmgr}, one could replace $P_{>0}$ by any pseudo-differential projection $Q$ such that $P_{>0}-Q$ is smoothing since for $P,Q,R$ orthogonal projections in a Hilbert space $H$ and $Q-R$ compact, $(P,Q)$ is a Fredholm pair if and only if $(P,R)$ is a Fredholm pair. $P_+(L)$, which depends only on the choice of $L$ and on $A$, thus on boundary data, is called the {\it Atiyah-Patodi-Singer} (APS)-boundary projection associated to $L$, in contrary, the following projection $P_X \in Gr(A)$, called the Calderon projector associated to $(D,X)$ depends on all of $X$ and $D$. Before stating the definition we need a version of the 'unique continuation property' for Dirac operators (see \cite{boos}, Theorem 8.2).
\begin{theorem}\label{UCP}
let $X=X_+\cup X_-$ be a connected partitioned manifold with a hermitian bundle $E$ and Dirac-type operator $D$ as above so that $X_+\cap X_-=\partial X_{\pm}=Y$. Let $s \in C^\infty(E)$ satisfy $Ds=0$ and $s|_Y=0$. Then $s=0$ on $X$.
\end{theorem}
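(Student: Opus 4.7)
The plan is to reduce the claim to Aronszajn's classical unique continuation theorem for second-order elliptic operators of Laplace type, using the first-order collar structure of $D$ to supply the missing piece of Cauchy data from the vanishing of $s$ on $Y$ alone.

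First, I would observe that by the Weitzenb\"ock formula $D^2 = (\nabla^E)^*(\nabla^E) + \mathcal{R}$ recorded in Section \ref{begriffe1}, the operator $D^2$ is a generalized Laplacian whose principal symbol is $|\xi|^2 \cdot \mathrm{Id}_E$; in particular $D^2$ is a second-order elliptic operator with smooth scalar principal symbol, which places it exactly in the class for which Aronszajn's theorem is available.

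Next I would work in a tubular neighbourhood of $Y$ endowed with the product structure (\ref{metricol234}), in which $D = \gamma(\partial_r + A)$ with $\gamma$ invertible. The equation $Ds = 0$ there forces $\partial_r s = -As$ on the collar, and the hypothesis $s|_Y = 0$ yields $(As)|_Y = A(s|_Y) = 0$, hence also $\partial_r s|_{Y} = 0$. So both $s$ and its normal derivative vanish along $Y$: we have vanishing Cauchy data on the non-characteristic hypersurface $Y$ for the second-order elliptic equation $D^2 s = 0$. Aronszajn's theorem applied to $D^2$ then gives $s \equiv 0$ on some open two-sided tubular neighbourhood $U$ of $Y$ in $X$.

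Finally, the global conclusion follows from a second application of Aronszajn's theorem: on the connected manifold $X$, a smooth solution of $D^2 s = 0$ that vanishes on the non-empty open set $U$ must vanish on all of $X$, which is exactly the standard propagation-of-zeros argument using that the set of interior points of the vanishing locus of $s$ is both open and (by local unique continuation across small geodesic balls) closed. This yields $s \equiv 0$ on $X$ and completes the proof. The only non-trivial ingredient is the correct vector-valued, smooth-coefficient version of Aronszajn's theorem for generalized Laplacians acting on sections of a vector bundle; this is a well-known Carleman-estimate result which applies in our setting without modification, and is the main reason one wishes to record the theorem here rather than reprove it in detail.
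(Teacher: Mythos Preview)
Your argument is correct and is essentially the standard one. Note, however, that the paper does not actually supply a proof of this theorem: it is simply quoted from \cite{boos}, Theorem 8.2, and used as a black box in what follows. The proof you sketch---passing to the generalized Laplacian $D^2$, extracting from $D=\gamma(\partial_r+A)$ and $s|_Y=0$ the vanishing of the normal derivative (indeed, by iterating $\partial_r s=-As$, of all normal derivatives) along $Y$, and then invoking Aronszajn's Carleman-estimate theorem together with a connectedness argument---is precisely the route taken in the cited reference. So there is nothing to compare: you have reproduced the standard proof that the paper chose to outsource.
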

Now the {\it Calderon  projector} $P_X$ is defined as the orthogonal projection onto the {\it Cauchy data space}
\begin{equation}\label{calderon}
L_X:=r\left(\ker D : H_{1/2}(E)\longrightarrow
H_{-1/2}(E)\right)\subset L^2(E_{|\partial X}).
\end{equation} Here $r$ denotes the
restriction to the boundary. That the trace operator $r$ defines a bounded
map from the $H_{1/2}$--kernel of $D$ into $L^2(E_{|\partial X})$ is proved for instance in Boos' monograph ( \cite{boos}), furthermore, it is proved in \cite{scott} that the Calderon projector $P_{X}= {\rm proj}_{L_X}$ lies in ${\rm Gr}_\infty(A)$.
The above cited unique continuation property for $D$ implies that
\[
r:\left(\ker D :H_{1/2}(E)\longrightarrow H_{-1/2}(E)\right)\longrightarrow L^2(E_{|\partial X})
\]
is injective, so that to any  element $f$ in the image of $P_X$ we can assign a unique solution to $D\phi=0$ on $X$ with $\phi\in H_{1/2}$ and $r(\phi)=f$, this obervation is generalized by the following Lemma, taken from Lesch/Kirk (\cite{lesch}):
\begin{lemma}\label{kerdp} Let $P\in{\rm Gr}(A)$. Then
\[
{\rm ker} P_{|{\rm im} P_X}={\rm im} P_X\cap \ker P=\gamma({\rm ker} P_X)\cap  {\rm ker}\  P
\]
and this space is isomorphic to  the kernel of
$D_P$.  Thus $D_P$ is invertible if and only if ${\rm im} P_X\cap {\rm ker} P=0$. In
particular $D_{P_X}$ is invertible.
\end{lemma}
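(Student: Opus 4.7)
My plan is to proceed in three steps: first establish the algebraic chain of equalities of the three subspaces, then construct the isomorphism with $\ker D_P$ via the boundary trace, and finally deduce invertibility from the self-adjoint Fredholm property.

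The first equality is immediate from the definition of the kernel of a restricted operator: $\ker P_{|{\rm im}\, P_X}=\{f\in{\rm im}\,P_X\mid Pf=0\}={\rm im}\,P_X\cap\ker P$. For the second equality, I will exploit the Lagrangian structure encoded in condition (3) of Definition \ref{fredholmgr}, which $P_X$ satisfies since $P_X\in{\rm Gr}_\infty(A)\subset{\rm Gr}(A)$ by Scott's result. From $\gamma P_X\gamma^*=I-P_X$ together with $\gamma^*=-\gamma$ one obtains $\gamma P_X=(I-P_X)\gamma$, hence $\gamma$ carries ${\rm im}\,P_X$ into $\ker P_X$. Because $\gamma^*\gamma=-\gamma^2=I$, $\gamma$ is unitary, so this inclusion is an equality; applying $\gamma$ once more and using $\gamma^2=-I$ yields $\gamma(\ker P_X)={\rm im}\,P_X$ as closed subspaces, and intersecting with $\ker P$ gives the claim.

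For the isomorphism with $\ker D_P$, I would define $\Phi\colon\ker D_P\to L^2(E_{|\partial X})$ by $\phi\mapsto\phi|_{\partial X}$; this is well-defined by the trace theorem since $\mathcal{D}(D_P)\subset H_1(E)$ by Theorem \ref{bvp}. The image of $\Phi$ lies in ${\rm im}\,P_X\cap\ker P$: membership in ${\rm im}\,P_X$ follows from the definition (\ref{calderon}) of the Cauchy data space applied to the $H_1$-solution $\phi$, while membership in $\ker P$ is the boundary condition defining $\mathcal{D}_P$. Injectivity of $\Phi$ follows from unique continuation: if $\phi\in\ker D_P$ has $\phi|_{\partial X}=0$, I would extend $X$ across $\partial X$ to a closed manifold $\tilde X$ with an extension of $D$, extend $\phi$ by zero, and apply Theorem \ref{UCP} to conclude $\phi\equiv0$. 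For surjectivity, given $f\in{\rm im}\,P_X\cap\ker P$, the injectivity of the restriction map $r$ on $\ker D\cap H_{1/2}$ (which itself uses Theorem \ref{UCP}) produces a unique $\phi\in H_{1/2}$ with $D\phi=0$ and $\phi|_{\partial X}=f$; since $P\in{\rm Gr}(A)$ defines a well-posed boundary value problem, the elliptic regularity contained in Theorem \ref{bvp} upgrades $\phi$ to an element of $\mathcal{D}(D_P)$.

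The invertibility assertion is then immediate: by Theorem \ref{bvp}, $D_P$ is a self-adjoint Fredholm operator, hence of Fredholm index $0$, so injectivity is equivalent to invertibility; combined with the isomorphism $\ker D_P\cong{\rm im}\,P_X\cap\ker P$ just established, this proves the characterization. The final sentence is trivial, since for $P=P_X$ one has $\ker P_X=({\rm im}\,P_X)^\perp$ and therefore ${\rm im}\,P_X\cap\ker P_X=0$. The \emph{main technical obstacle} I anticipate is the surjectivity step, specifically the regularity upgrade of a solution $\phi\in H_{1/2}$ to $H_1$ once its boundary value is known to lie in $\ker P$; this rests on the Seeley-type elliptic boundary regularity for projections in ${\rm Gr}(A)$, which in turn depends on pseudo-locality of $P$ and on the Fredholm property of the pair $(P_{>0},P)$, both built into Definition \ref{fredholmgr}. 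A secondary subtlety is invoking unique continuation near the boundary, which requires the standard Boos--Wojciechowski passage to a closed extension of $(X,D)$.
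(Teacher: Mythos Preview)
Your proposal is correct and follows the same approach as the paper's own proof, which is extremely terse (three sentences): the paper simply notes that the restriction of $\phi\in\ker D_P$ lands in $\ker P\cap{\rm im}\,P_X$, invokes unique continuation for injectivity, and observes that $P_X$ being a self-adjoint projection gives ${\rm im}\,P_X\cap\ker P_X=0$. You have filled in precisely the steps the paper omits---the Lagrangian identity $\gamma(\ker P_X)={\rm im}\,P_X$, the surjectivity via regularity of the well-posed problem, and the passage from $\ker D_P=0$ to invertibility through the self-adjoint Fredholm property---and your identification of the $H_{1/2}\to H_1$ upgrade as the only genuine technical point is apt.
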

\begin{proof} The first two assertions follows directly from the fact that if $\phi\in{\rm ker} D_P$, the restriction of $\phi$ to the boundary  lies in the intersection of ${\rm ker} P$ and the image of $P_X$ and the unique continuation property for $D$. The third follows since $P_X$ is a self-adjoint projection.
 \end{proof}
Note that ${\rm Gr}(A)$ as well as ${\rm Gr}_\infty(A)$ are path-connected, more precisely one has (c.f. \cite{lesch},\cite{DouWoj}):
\begin{prop}\label{connected} The Grassmannians ${\rm Gr}(A), {\rm Gr}_\infty(A)$ are path
connected. For a fixed $P\in{\rm Gr}_\infty(A)$ (resp. ${\rm Gr}(A)$) the space
$\left\{Q\in{\rm Gr}_{(\infty)}(A)|\ {\rm ker} Q\cap{\rm im} P=0 \ \right \}$  is path connected.
\end{prop}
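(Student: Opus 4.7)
The plan is to realize ${\rm Gr}(A)$ and ${\rm Gr}_\infty(A)$ as homogeneous spaces of path-connected groups, and to handle the second statement by a codimension/perturbation argument. First I would observe that condition (3) in Definition~\ref{fredholmgr} is equivalent to ${\rm im}(P)$ being a closed Lagrangian subspace of the hermitian symplectic Hilbert space $(L^2(E|_{\partial X}),\Omega)$. Thus the assignment $P\mapsto {\rm im}(P)$ identifies ${\rm Gr}(A)$ with the space of closed Lagrangians $L$ for which $(L,{\rm im}\,P_{>0})$ is a Fredholm pair. Working with respect to the splitting $L^2(E|_{\partial X})=H^+\oplus H^-$ into the $\pm i$-eigenspaces of $\gamma$, a Lagrangian $L$ in generic position ($L\cap H^-=0$) is the graph of a unitary operator $T\colon H^+\to H^-$; since generic-position Lagrangians form an open dense subset, it suffices to treat these. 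The Fredholm pair condition then translates into $T$ being a compact perturbation of the reference unitary $T_{P_{>0}}$, and the additional condition in ${\rm Gr}_\infty(A)$ into $T-T_{P_{>0}}$ being a smoothing operator. Path-connectedness now follows because the relevant group of such unitary perturbations is path-connected: for ${\rm Gr}(A)$ this is the restricted unitary group, connected by a Kuiper-type result; for ${\rm Gr}_\infty(A)$ the smoothing analogue is connected by a straight-line homotopy at the level of smoothing operators together with a spectral cut-off to preserve unitarity.

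For the second assertion, fix $P\in {\rm Gr}_{(\infty)}(A)$ and set $\mathcal{S}_P:=\{Q\in{\rm Gr}_{(\infty)}(A)\mid \ker Q\cap{\rm im}\,P=0\}$. Since $(P,Q)$ is a Fredholm pair of projections for every $Q\in{\rm Gr}(A)$ (both being compact perturbations of $P_{>0}$), the restriction $Q|_{{\rm im}\,P}\colon{\rm im}\,P\to L^2(E|_{\partial X})$ is Fredholm, so $\ker Q\cap{\rm im}\,P$ is finite-dimensional and its dimension is upper semicontinuous in $Q$. Hence $\mathcal{S}_P$ is open in ${\rm Gr}_{(\infty)}(A)$. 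Given $Q_0,Q_1\in\mathcal{S}_P$, I would use the first part to join them by some path $t\mapsto Q_t$ in ${\rm Gr}_{(\infty)}(A)$ and then perturb it off the complementary 'bad locus' $\mathcal{B}_P:={\rm Gr}_{(\infty)}(A)\setminus\mathcal{S}_P$ while keeping the endpoints fixed.

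The heart of the proof, and the step I expect to be the main obstacle, is this perturbation. In a local graph-type chart around any reference $Q_*\in{\rm Gr}(A)$, nearby elements $Q$ are parametrized by a bounded operator $Z\colon{\rm im}\,Q_*\to\ker Q_*$ satisfying the symmetry constraint coming from condition (3), and the intersection $\ker Q\cap{\rm im}\,P$ is identified with the kernel of a finite matrix $M(Z)$ obtained by restricting $Q$ to a fixed finite-dimensional complement transversal to the generic part of the Fredholm pair $(P,Q_*)$. The bad set $\mathcal{B}_P$ is then cut out locally by the real-analytic equation $\det M(Z)=0$, so has positive codimension. A transversality/partition-of-unity argument therefore yields a $C^0$-small perturbation of $t\mapsto Q_t$ lying entirely in $\mathcal{S}_P$. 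The subtle point is to carry this construction out within ${\rm Gr}_\infty(A)$: the perturbing directions must consist of smoothing operators, which one obtains by first perturbing in the ambient ${\rm Gr}(A)$ and then regularizing via a smoothing approximation, using the openness of $\mathcal{S}_P$ to guarantee that the regularized path still misses $\mathcal{B}_P$.
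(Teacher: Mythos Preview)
The paper does not give its own proof of this proposition; it simply cites \cite{lesch} and \cite{DouWoj}. The relevant machinery, however, is recorded immediately afterwards in Lemma~\ref{grassmannian2}, and that lemma is what drives the argument in those references. Your first part is headed in the same direction---identify $\Gr(A)$ with a space of unitaries---but the ``generic position'' step is unnecessary: by Lemma~\ref{grassmannian2} \emph{every} $P\in\Gr(A)$ is the graph of a unitary $\Phi(P):\mathcal{E}_i\to\mathcal{E}_{-i}$, not just a dense open subset. Fixing any base point $P_0\in\Gr_\infty(A)$, the map $U\mapsto \Phi(P_0)U$ gives homeomorphisms $\mathcal{U}_{\rm Fred}\simeq\Gr(A)$ and $\mathcal{U}_\infty\simeq\Gr_\infty(A)$, and path-connectedness of these unitary spaces is elementary (for $\mathcal{U}_\infty$: if $U-I$ is smoothing then $\log U$ is smoothing by writing $\log z=(z-1)g(z)$ with $g$ holomorphic, and $t\mapsto\exp(t\log U)$ is the desired path).

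For the second assertion your transversality scheme is substantially more complicated than what is actually needed, and the step you flag as ``the main obstacle'' is a genuine soft spot: pushing a path off a positive-codimension analytic set by Sard--Smale type arguments in an infinite-dimensional pseudodifferential Grassmannian requires real work that you have not supplied. The route taken in \cite{lesch} bypasses this entirely. Using Lemma~\ref{grassmannian2}(3), the condition $\ker Q\cap\im P=0$ translates to $-1\notin\spec(\Phi(Q)^*\Phi(P))$; under the homeomorphism $Q\mapsto U:=\Phi(P)^*\Phi(Q)$ this becomes simply $-1\notin\spec U$. So one must show $\{U\in\mathcal{U}_{(\infty)}:-1\notin\spec U\}$ is path connected. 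But for such $U$ the branch of $\log$ with imaginary part in $(-\pi,\pi)$ is defined on $\spec U$, and $t\mapsto\exp(t\log U)$ joins $I$ to $U$ while keeping the spectrum inside $\{e^{i\theta}:|\theta|<\pi\}$, hence never hitting $-1$; the same factorization $z^t-1=(z-1)k_t(z)$ shows the path stays in $\mathcal{U}_\infty$ when $U\in\mathcal{U}_\infty$. This one-line functional-calculus argument replaces your entire perturbation/regularization program.
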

The next lemma (see again \cite{lesch}) that represents the Lagrangian Grassmannians in a certain space of unitary mappings will be crucial for calculations. For this consider the bundle endomorphism $\gamma:E_{|\partial X}\to E_{|\partial X}$ which induces a decomposition of
$E_{|\partial X}=E_i\oplus E_{-i}$ into the $\pm i$ eigenbundles and consequently we get a decomposition of
$L^2(E_{|\partial X})$ into the $\pm i$ eigenspaces,
\begin{equation}\label{decompL2}
L^2(E_{|\partial X})= L^2(E_i)\oplus L^2(E_{-i})=:\mathcal{E}_{i}\oplus \mathcal{E}_{-i}.
\end{equation}
For the following (cf. \cite{lesch}) a pair of projections $(P,Q)$ will be called invertible, if $P$ restricted to the image of $Q$ is an isomorphism onto the image of $P$.
\begin{lemma}\label{grassmannian2}
Let $\mathcal{U}(\mathcal{E}_i,\mathcal{E}_{-i})$ denote the set of 0th order pseudo-differential isometries from  $\mathcal{E}_i$ to $\mathcal{E}_{-i}$. Then the above decomposition of $L^2(E_{|\partial X})$ gives rise to a mapping
\begin{equation}\label{phimap}
\Phi: {\rm Gr(A)}\longrightarrow \mathcal{U}(\mathcal{E}_i,\mathcal{E}_{-i}),
\end{equation}
which is given by representing $P \in {\rm Gr}(A)$ as
\[
P=\frac{1}{2}\begin{pmatrix} I&\Phi(P)^*\\ \Phi(P)&I\end{pmatrix}.
\]
Conversely, given such an isometry $T\in \mathcal{U}(\mathcal{E}_i,\mathcal{E}_{-i})$,  then
\[
\frac{1}{2}\begin{pmatrix} I&T^*\\ T&I  \end{pmatrix}
\]
is a pseudo-differential projection satisfying the properties 1. - 3. of Definition \ref{fredholmgr}. Furthermore given projections $P,Q$ satisfying these properties one has
\begin{enumerate}
\item $(P,Q)$ form a Fredholm pair if and only if $-1\not\in
{\rm spec} _{\rm ess} \Phi(P)^*\Phi(Q)$,
\item $(P,Q)$ is invertible if and only if $-1\not\in
{\rm spec} \Phi(P)^*\Phi(Q)$,
\item ${\rm ker} P\cap {\rm im} Q$ is canonically isomorphic to ${\rm ker} (I+\Phi(P)^*\Phi(Q))$,
\item $P-Q$ is smoothing if and only if $\Phi(P)^*\Phi(Q)-I$ is smoothing.
\end{enumerate}
In particular, if $Q=P^+(L)$ for some Lagrangian $L\subset {\rm ker} A$, then
$P\in{\rm Gr}(A)$ if and only if $-1\not \in{\rm spec}_{\rm ess} \Phi(P)^*\Phi(Q)$.
Define now
\begin{equation}
\mathcal{U}_{\rm Fred}=\left\{U\in \mathcal{U}|-1\not\in{\rm spec}_{\rm ess} U \right\},
\end{equation}
and
\begin{equation}
\mathcal{U}_{\infty}=\left\{U\in \mathcal{U}_{{\rm Fred}}|U-I {\rm \  is\ a\ smoothing\
operator}\right\}.
\end{equation}
Then given any $P\in {\rm Gr}_\infty(A)$,the map
\begin{equation}
U\mapsto \frac{1}{2}\begin{pmatrix} I&(\Phi(P)U) ^*\\ \Phi(P)U &I \end{pmatrix}
\end{equation}
defines homeomorphisms
\[
\mathcal{U}_{\rm Fred}\longrightarrow {\rm Gr}(A),\quad \mathcal{U}_{\infty}\longrightarrow {\rm Gr}_\infty(A).
\]
\end{lemma}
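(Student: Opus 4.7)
The plan is to work in the decomposition (\ref{decompL2}) $L^2(E_{|\partial X}) = \mathcal{E}_i \oplus \mathcal{E}_{-i}$, in which $\gamma$ is block-diagonal with blocks $\pm i I$ and $\gamma^* = -\gamma$. Writing a self-adjoint candidate $P$ as a block matrix $\begin{pmatrix} A & B \\ B^* & D \end{pmatrix}$, a short computation shows that the relation $\gamma P \gamma^* = I - P$ forces $A = D = I/2$, while $P^2 = P$ becomes equivalent to $(2B)(2B)^* = (2B)^*(2B) = I$. Setting $\Phi(P) := 2B^*$ therefore produces a unitary $\mathcal{E}_i \to \mathcal{E}_{-i}$ and the asserted matrix form of $P$; reading the same calculation backwards shows that every such unitary yields a self-adjoint idempotent with $\gamma P \gamma^* = I - P$, so $\Phi$ is a bijection onto $\mathcal{U}(\mathcal{E}_i, \mathcal{E}_{-i})$.

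The four equivalences all rest on a single identification. From the formula for $P$ one checks that a vector $(\psi_+, \psi_-)$ lies in $\mathrm{im}\, Q$ iff $\psi_- = \Phi(Q) \psi_+$, so $\mathrm{im}\, Q = \mathrm{graph}(\Phi(Q))$; applying the same reasoning to $I - P$, which has $\Phi(I - P) = -\Phi(P)$, yields $\ker P = \mathrm{graph}(-\Phi(P))$. Parameterising both graphs by their $\mathcal{E}_i$-coordinate, the restriction
\[
P|_{\mathrm{im}\, Q} : \mathrm{im}\, Q \longrightarrow \mathrm{im}\, P
\]
is identified with $\tfrac{1}{2}(I + \Phi(P)^* \Phi(Q)) : \mathcal{E}_i \to \mathcal{E}_i$. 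Since $\Phi(P)^* \Phi(Q)$ is unitary, this operator is Fredholm iff $-1 \notin \mathrm{spec}_\mathrm{ess}(\Phi(P)^* \Phi(Q))$ and invertible iff $-1 \notin \mathrm{spec}(\Phi(P)^* \Phi(Q))$, yielding (1) and (2); its kernel is canonically $\ker P \cap \mathrm{im}\, Q$, which proves (3). For (4) one computes
\[
P - Q = \tfrac{1}{2}\begin{pmatrix} 0 & \Phi(P)^* - \Phi(Q)^* \\ \Phi(P) - \Phi(Q) & 0 \end{pmatrix}
\]
and uses $\Phi(P) - \Phi(Q) = \Phi(P)(I - \Phi(P)^* \Phi(Q))$ together with the fact that $\Phi(P)$ is a zeroth-order unitary.

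For the concluding homeomorphism statement, fix $P \in \mathrm{Gr}_\infty(A)$ and put $\Psi(U) := \Phi^{-1}(\Phi(P) U)$. By the first part $\Psi(U)$ is a self-adjoint zeroth-order pseudo-differential projection satisfying conditions 1--3 of Definition \ref{fredholmgr}; condition 4 follows from item (1) applied to the pair $(\Psi(U), P)$, since $\Phi(P)^* \Phi(\Psi(U)) = U$ and the fact that $P - P_{>0}$ is smoothing renders Fredholmness of $(\Psi(U), P_{>0})$ equivalent to that of $(\Psi(U), P)$. Continuity of $\Phi$ in operator norm together with item (4) then yields the homeomorphisms $\mathcal{U}_\mathrm{Fred} \to \mathrm{Gr}(A)$ and $\mathcal{U}_\infty \to \mathrm{Gr}_\infty(A)$. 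The main technical obstacle is the pseudo-differential bookkeeping: one must verify that $\Phi$ preserves the order-zero pseudo-differential class in both directions and that the smoothing ideal transforms functorially under the block-matrix calculus, which rests on the fact that the spectral projections onto $\mathcal{E}_{\pm i}$ are themselves zeroth-order pseudo-differential operators; once this framework is granted, the remainder of the proof reduces to the algebraic identifications above.
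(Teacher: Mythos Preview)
Your proof is correct and follows the standard block-matrix approach. The paper itself does not prove this lemma: it is quoted from Kirk--Lesch \cite{lesch} (see the sentence preceding the statement), and your argument is essentially the one found there, so there is nothing to compare.
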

Following the original picture of Lagrangian subspaces as images of orthogonal projections we define $\mathcal{L}$ to be the set of Lagrangian subspaces of $L^2(E_{|\partial X})$ whose associated projections are pseudo-differential of order $0$. Accordingly, the Cauchy data space, $L_X$, (the image  of the Calderon-projector) is a Lagrangian subspace of $L^2(E_{|\partial X})$. Then from the above considerations it follows immediately that if one defines
\begin{equation}\label{fredholmlag}
\mathcal{L}_{{\rm Fred}}=\left\{L\in \mathcal{L}|(L,\gamma(L_X)){\rm  is\ a\ Fredholm\ pair\ of\ subspaces}\right\},
\end{equation}
where here, $(L_1, L_2) \in \mathcal{L}^2$ is a Fredholm pair if and only if $L_1\cap L_2$ is of finite dimension and  $L_1+L_2$ is closed with finite codimension, and
\begin{equation}
\mathcal{L}_{\infty}=\{L\in \mathcal{L}_{\rm Fred}|{\rm proj}_L-{\rm proj}_{L_X}
{\rm {\ is\ a\ smoothing\ operator}}\}.
\end{equation}
that one has homeomorphisms
\[
\mathcal{L}_{{\rm Fred}}\longrightarrow {\rm Gr}(A)
\] and
\[
\mathcal{L}_{\infty}\longrightarrow {\rm Gr}_\infty(A)
\]
which are given by translating $L\in \mathcal{L}_{{\rm Fred}}$ into $P\in {\rm Gr}(A)$ so that $L={\rm im}\ P$, note on the other hand that ${\rm ker}\ P$ for $P\in {\rm Gr}(A)$ is a Lagrangian subspace since it is orthogonal to $\gamma({\rm im}P)$ and is given as the graph of the unitary mapping $-\Phi(P)$:
\[
{\rm ker}\ P=\{\begin{pmatrix}x\\-\Phi(P)x\end{pmatrix}|x\in\mathcal{E}_i\}\subset
L^2(E_{|\partial X}).
\]
As we saw above, for $P \in {\rm Gr}(A)$, $D_P$ is self-adjoint in $L^2(E)$ with compact resolvent, hence we can define $\eta(D_P;s)$ for ${\rm Re}(s)>>0$ as in \ref{etafunction}. Then (see \cite{DouWoj}, \cite{brules2} and \cite{Grubb}, \cite{Woj}):
\begin{theorem}\label{ML-S3.1} For $P\in{\rm Gr}(A)$ the function
$\eta(D_P;s)$ extends meromorphically to the whole complex plane with poles of order
at most $2$. If $P\in{\rm Gr}_\infty(A)$ then $\eta(D_P,s)$ is regular at $s=0$.
\end{theorem}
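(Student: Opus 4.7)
The plan is to start from the Mellin representation
\[
\eta(D_P;s) \;=\; \frac{1}{\Gamma((s+1)/2)}\int_0^\infty t^{(s-1)/2}\,\operatorname{Tr}\bigl(D_P e^{-tD_P^2}\bigr)\,dt,
\]
valid for $\operatorname{Re}(s)\gg 0$, and to split the integral at $t=1$. By Theorem \ref{bvp}, $D_P$ is self-adjoint with compact resolvent and discrete spectrum accumulating only at $\pm\infty$, so $\operatorname{Tr}(D_P e^{-tD_P^2})$ decays exponentially as $t\to\infty$; hence $\int_1^\infty t^{(s-1)/2}\operatorname{Tr}(D_P e^{-tD_P^2})\,dt$ is entire in $s$, and all the analytic structure of $\eta(D_P;s)$ is encoded in the behaviour near $t=0$.

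The short-time analysis would be carried out within Grubb's pseudo-differential boundary value calculus. Since $P\in\Gr(A)$ is a zeroth order pseudo-differential projection satisfying the transmission/ellipticity conditions of Definition \ref{fredholmgr}, the problem $D_P$ admits a parametrix in that calculus and the heat trace has an asymptotic expansion of the form
\[
\operatorname{Tr}\bigl(D_P e^{-tD_P^2}\bigr) \;\sim\; \sum_{k\geq 0} a_k\, t^{(k-n-1)/2} \;+\; \sum_{k\geq 0}\bigl(b_k \log t + c_k\bigr)\, t^{k/2},\qquad t\to 0^+,
\]
where the logarithmic coefficients $b_k$ come precisely from the fact that $P$ need not commute, modulo smoothing, with the symbolic calculus of the interior parametrix at the boundary. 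Substituting this into the Mellin integral and integrating term by term produces the meromorphic continuation of $\eta(D_P;s)$ to all of $\C$: pure power terms $t^\alpha$ give simple poles, while terms $t^\alpha\log t$ contribute poles of order two. This yields the first assertion with poles of order at most $2$.

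For the refinement in the case $P\in\Gr_\infty(A)$, I would compare $D_P$ with an APS operator $D_{P_+(L)}$ for some Lagrangian $L\subset\ker A$, noting that $P_+(L)\in\Gr_\infty(A)$. Because $P-P_+(L)$ is smoothing, a Duhamel expansion together with Lemma \ref{grassmannian2}(4) (smoothing difference of the associated boundary isometries) shows that
\[
\operatorname{Tr}\bigl(D_P e^{-tD_P^2}\bigr) - \operatorname{Tr}\bigl(D_{P_+(L)} e^{-tD_{P_+(L)}^2}\bigr) \;=\; O(t^\infty)\quad\text{as } t\to 0^+,
\]
so the difference $\eta(D_P;s)-\eta(D_{P_+(L)};s)$ extends to an entire function of $s$. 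The regularity of $\eta(D_{P_+(L)};s)$ at $s=0$ is the content of the original Atiyah-Patodi-Singer theorem, so the regularity of $\eta(D_P;s)$ at the origin follows.

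The main obstacle is the rigorous proof of the Duhamel/residue-comparison statement above, i.e.\ showing that the logarithmic coefficients $b_k$ in the short-time expansion depend only on the class of $P$ modulo $\Gr_\infty(A)$ (so in particular vanish when $P-P_{>0}$ is smoothing). This is the technical heart of Grubb's calculus and of the Douglas-Wojciechowski analysis cited in the paper, and requires careful bookkeeping of the non-local boundary contributions to the symbol of $e^{-tD_P^2}$ up to smoothing operators.
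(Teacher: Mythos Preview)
The paper does not give its own proof of this theorem: it is stated with a citation to \cite{DouWoj}, \cite{brules2}, \cite{Grubb}, \cite{Woj} and immediately followed by the definition of $\eta(D_P)$. So there is nothing to compare against except the literature the paper points to, and your outline is broadly in line with those sources: the first assertion is indeed Grubb's heat-trace expansion for pseudodifferential boundary conditions (producing $t^\alpha$ and $t^\alpha\log t$ terms, hence at most double poles via Mellin), and the second assertion is the Wojciechowski / Br\"uning--Lesch refinement for the smooth self-adjoint Grassmannian.

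One point deserves care. Your comparison step claims
\[
\operatorname{Tr}\bigl(D_P e^{-tD_P^2}\bigr) - \operatorname{Tr}\bigl(D_{P_+(L)} e^{-tD_{P_+(L)}^2}\bigr) = O(t^\infty),
\]
but this is stronger than what is needed and not obviously true: $D_P$ and $D_{P_+(L)}$ are the \emph{same} differential operator with different domains, so a naive Duhamel argument does not apply directly (the formal difference $D_P^2-D_{P_+(L)}^2$ vanishes as a differential expression), and in fact the non-logarithmic coefficients in the two expansions can differ (consistent with $\eta(D_P)\neq\eta(D_{P_+(L)})$). What the cited references actually show is that for $P\in\Gr_\infty(A)$ the boundary symbol of the heat parametrix has the same structure, modulo smoothing, as in the APS case, so the logarithmic coefficients $b_k$ themselves vanish --- not that the full difference of traces is $O(t^\infty)$. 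You correctly flag this as the technical heart; just be aware that the mechanism is ``$b_k=0$ directly'' rather than ``difference is rapidly decreasing''.
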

Hence one sets:
\begin{Def}\label{ML-S3.2} For $P\in {\rm Gr}(A)$, the $\eta$-invariant of $D_P$,
$\eta(D_P)$, is defined to be the constant term in the Laurent
expansion of $\eta(D_P;s)$ at $s=0$, i.e.
\[
\eta(D_P,s)= a s^{-2} + b s^{-1} + \eta(D_P) + O(s),
\]
while for further use, we denote
\[
\tilde \eta(D_P)=\left(\eta(D_P) + {\rm dim} \ker D_P\right)/2
\]
as the {\it reduced} eta- invariant of $D_P$.
\end{Def}
Note that the 'standard source' of such Laurent expansions are short time asymptotic expansions of ${\rm tr}(D_Pe^{-tD_P^2})$ (see for instance \cite{brules2}).\\
We will now see how the Scott-Wojciechowski theorem (see \cite{scottwoj}) relates the Fredholm determinant over the boundary Grassmannian to the dependence of $\eta$-invariants on the boundary condition. This together with a result of Bruening /Lesch (\cite{brules2}) gives the first version of 'glueing law' for eta-invariants used in later sections, the exposition is taken from \cite{lesch} and \cite{brules2}.\\
Let $H$ be a separable Hilbert space, we will say $T:H\rightarrow H$ is of {\it determinant class} if it is of the form $T=1+A$, where $A$ is of trace class. For then $A$ has as sequence of eigenvalues $\{\lambda_k\}$ (with multiplicities) so that $\sum_k\lambda_k <\infty$ and we can define
\[
{\rm det}_{\rm F}(T)={\rm det_F}(1+A)=\prod_k(1+\lambda_k).
\]
We cite the following Theorem which is taken from \cite{lesch} and is a direct consequence of the celebrated Scott-Wojciechowski-Theorem (see \cite{Woj}) which relates the ratio of $\zeta$-determinants (which we did not introduce here) to a Fredholm determinant on ${\rm Gr}_\infty(A)$. Note that for $P,Q\in {\rm Gr}(A)$, $\Phi(P)\Phi(Q)^* - 1$ is a smoothing operator (see Lemma \ref{grassmannian2}), hence it is of trace class, so $\Phi(P)\Phi(Q)^*$ is of determinant class. Consequently the determinant ${\rm det}_{\rm F}(\Phi(P)\Phi(Q)^*)$ is defined and lies in $U(1)$ since $\Phi(P)\Phi(Q)^*$ is unitary. Then one has the following result.
\begin{theorem}\label{scottwoj}  Let $P,Q\in {\rm Gr}_\infty(A)$. Then
\begin{equation} e^{2\pi i(\tilde \eta(D_P)-\tilde \eta(D_{Q}))}={\rm det}_{\rm F}(\Phi(P)\Phi(Q)^*).
\end{equation}
\end{theorem}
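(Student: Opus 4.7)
The plan is to deduce the statement from the original Scott--Wojciechowski comparison formula for $\zeta$-regularized determinants (cf.\ \cite{scottwoj},\cite{Woj}) by passing to the phase. For $P\in{\rm Gr}_\infty(A)$ the operator $D_P$ has discrete spectrum (Theorem \ref{bvp}) and a regular eta-function at $s=0$ (Theorem \ref{ML-S3.1}), so $\detz D_P$ is well-defined on the complement of $\ker D_P$ up to a spectral cut convention. The Scott--Wojciechowski theorem itself asserts
\[
\frac{\detz D_P}{\detz D_Q}=\detf\bigl(\Phi(P)\Phi(Q)^*\bigr),
\]
where the right-hand side is of unit modulus since, by Lemma \ref{grassmannian2} item 4, $\Phi(P)\Phi(Q)^*-I$ is smoothing and hence trace class, while $\Phi(P)\Phi(Q)^*$ is unitary. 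The classical heat-kernel identity
\[
\arg\detz D_P\equiv -\pi\,\tilde\eta(D_P)\pmod{2\pi\mathbb{Z}}
\]
then converts this determinant formula, upon taking arguments and exponentiating by $2i$, into the stated equality in $U(1)$.

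To fix the sign convention and to avoid the $\mathbb{Z}$-ambiguity inherent in passing from $\log$ to $\arg$, I will verify the formula via a deformation argument. Using path-connectedness of ${\rm Gr}_\infty(A)$ (Proposition \ref{connected}), pick a smooth path $P_t$ from $Q$ to $P$ and check that both sides of the claimed identity, viewed as functions of $t$, are continuous in $t$, agree at $t=0$, and satisfy the same first-order variation equation. The crucial feature of the \emph{reduced} eta-invariant is that the left-hand side is continuous across spectral crossings: at eigenvalues of $D_{P_t}$ crossing zero along the path, $\eta(D_{P_t})$ jumps by $\pm 2$ while $\dim\ker D_{P_t}$ spikes by $\pm 1$, so $\tilde\eta$ jumps by an integer and $e^{2\pi i\tilde\eta(D_{P_t})}$ stays continuous. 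Continuity of the right-hand side follows directly from Lemma \ref{grassmannian2}.

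The main obstacle is verifying the differential identity
\[
2\pi i\,\frac{d}{dt}\tilde\eta(D_{P_t})=\tr\bigl((\Phi(P_t)\Phi(Q)^*)^{-1}\tfrac{d}{dt}(\Phi(P_t)\Phi(Q)^*)\bigr)
\]
pointwise away from crossings. This is the content of the variation formula for eta-invariants on ${\rm Gr}_\infty(A)$ due to Douglas--Wojciechowski (cf.\ \cite{DouWoj}, also \cite{brules2}), which expresses $\frac{d}{dt}\tilde\eta(D_{P_t})$ as a boundary trace involving $\Phi(P_t)$ and $\dot\Phi(P_t)$; the identification with the Fredholm logarithmic derivative on the right reduces, after using that $P_t-P_{>0}$ is smoothing, to a finite-rank computation on the range of this smoothing perturbation. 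Integrating from $t=0$ to $t=1$ and exponentiating then yields the claimed equality.
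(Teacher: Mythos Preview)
The paper does not give its own proof of this statement: it is explicitly cited from Kirk--Lesch \cite{lesch} as ``a direct consequence of the celebrated Scott--Wojciechowski Theorem,'' and the remark immediately following the statement already interprets it exactly as you do, namely as the assertion that $s\mapsto \tfrac{1}{2}\int_0^s\frac{d}{dt}\eta(D_{P_t})\,dt$ lifts $s\mapsto\det_{\rm F}(\Phi(P_s)\Phi(P_0)^*)$ along a path. Your deformation argument is therefore precisely the intended route, and the ingredients you invoke (path-connectedness of ${\rm Gr}_\infty(A)$, continuity of $e^{2\pi i\tilde\eta}$ across spectral crossings, and the boundary variation formula for $\frac{d}{dt}\eta(D_{P_t})$ from \cite{DouWoj},\cite{brules2}) are the right ones.

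One correction to your first paragraph: the displayed identity $\detz D_P/\detz D_Q=\det_{\rm F}(\Phi(P)\Phi(Q)^*)$ is not the Scott--Wojciechowski formula as such. The actual result in \cite{scottwoj},\cite{Woj} compares $\detz D_P$ to $\detz D_{P_X}$ via a Fredholm determinant of the form $\det_{\rm F}\bigl(\tfrac{1}{2}(I+\Phi(P_X)^*\Phi(P))\bigr)$, always anchored at the Calder\'on projector; the ratio you wrote for general $P,Q$ does not drop out of it algebraically, and in fact $\det_{\rm F}(\Phi(P)\Phi(Q)^*)$ has unit modulus while the left-hand side generally does not. So your first route, as written, does not work without further argument; but your second route (the deformation argument) is self-contained and does not need the misstated formula, so the overall proposal is sound.
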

Note that considering the reals $\mathbb{R}$ as the universal cover of $U(1)$ via
the map $r\mapsto e^{2\pi i r}$, the Theorem can be interpreted as stating that for a smooth path $P_t \in {\rm Gr}(A), \  t\in [0,1]$, the map
\[
s\mapsto \frac{1}{2}\int_0^s \frac{d}{dt}(\eta(D_{P_t})) dt
\]
is the unique lift to $\mathbb{R}$ of the map $[0,1]\to U(1)$
\[
s\mapsto {\rm det}_{\rm F}\left(\Phi(P_s)\Phi(P_0)^*\right).
\]
One can improve Theorem \ref{scottwoj} by choosing a branch of the logarithm. For this, note that  if $P\in {\rm Gr}_\infty(A)$, then from Lemma \ref{kerdp} one knows that $D_P$ is invertible if and only if ${\rm ker} P_X\cap \gamma ({\rm ker} P)=0$ where $P_X$ denotes the Calderon projector, this is by Lemma \ref{grassmannian2} equivalent to $-1\not\in{\rm spec}(\Phi(P)\Phi(P_X)^*)$. On the other hand, since the pair $(P,P_X)$ is a Fredholm pair, by the same Lemma \ref{grassmannian2}, we know that $-1\not\in {\rm spec}_{\rm ess}(\Phi(P)\Phi(P_X)^*)$, so $-1$ is an isolated point in the spectrum of $\Phi(P)\Phi(P_X)^*$. So we can choose a holomorphic branch of the logarithm ${\rm log}:\mathbb{C}\setminus \{0\}\rightarrow \mathbb{C}$ as
\begin{equation}\label{wind12}
 {\rm log}(re^{it})={\rm ln} \ r+i t, \quad r>0, -\pi<t\leq \pi.
\end{equation}
and define the operator ${\rm log}(\Phi(P)\Phi(P_X)^*)$ by holomorphic functional calculus. The so defined ${\rm log}(\Phi(P)\Phi(P_X)^*)$ is then of trace class (see \cite{lesch}) and by construction
\begin{equation}\label{fredholmdet}
{\rm tr \ log}(\Phi(P)\Phi(P_X)^*)\equiv {\rm log\ det}_{\rm F}\left(\Phi(P)\Phi(P_X)^*\right){\rm mod}\ 2\pi i \mathbb{Z}.
\end{equation}
Using this one has the following, for a proof we refer to \cite{lesch}.
\begin{theorem}\label{invertible} Let $X$ and $D$ be as above. Then for
$P\in {\rm Gr}_\infty(A)$ we have
\[
\tilde \eta(D_P)-\tilde \eta(D_{P_X})=\frac{1}{2\pi i} {\rm tr\ log}
(\Phi(P)\Phi(P_X)^*).
\]
\end{theorem}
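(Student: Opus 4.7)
The plan is to reduce the identity modulo $\mathbb{Z}$ using results already stated, and then to identify the integer ambiguity by a homotopy/continuity argument. Exponentiating $2\pi i$ times the right-hand side of the claimed formula yields, by (\ref{fredholmdet}), the Fredholm determinant ${\rm det}_{\rm F}(\Phi(P)\Phi(P_X)^*)$, while Theorem \ref{scottwoj} asserts that this equals the exponential of $2\pi i$ times the left-hand side. Therefore the function
\[
h(P) := \tilde\eta(D_P) - \tilde\eta(D_{P_X}) - \frac{1}{2\pi i}\,{\rm tr\ log}(\Phi(P)\Phi(P_X)^*)
\]
takes integer values on ${\rm Gr}_\infty(A)$, and vanishes at $P = P_X$ since $\Phi(P_X)\Phi(P_X)^* = I$ forces both ${\rm tr\ log}(I) = 0$ and $\tilde\eta(D_{P_X}) - \tilde\eta(D_{P_X}) = 0$.

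By Proposition \ref{connected}, there exists a continuous path $P_t \in {\rm Gr}_\infty(A)$, $t \in [0,1]$, with $P_0 = P_X$ and $P_1 = P$. If $t \mapsto h(P_t)$ is continuous on $[0,1]$, then being integer-valued and vanishing at $t = 0$ it must vanish identically, so that $h(P) = h(P_1) = 0$ and the theorem follows. The proof therefore reduces to showing that the two potentially discontinuous pieces, $\tilde\eta(D_{P_t})$ and $(2\pi i)^{-1}\,{\rm tr\ log}(\Phi(P_t)\Phi(P_X)^*)$, have matching jumps along the path.

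The main obstacle is precisely the verification of this matching. Discontinuities of $\tilde\eta(D_{P_t})$ occur exactly at parameters $t$ where $\ker D_{P_t} \neq 0$; by Lemma \ref{kerdp} together with part (3) of Lemma \ref{grassmannian2}, this kernel is canonically isomorphic to the $-1$-eigenspace of $\Phi(P_t)\Phi(P_X)^*$, so the logarithm term has jumps at exactly the same parameters, the branch (\ref{wind12}) being discontinuous across $-1$ with jump $2\pi i$ per eigenvalue. For each simple eigenvalue of $\Phi(P_t)\Phi(P_X)^*$ crossing $-1$ as $t$ increases, the spectral-flow jump of $\eta(D_{P_t})$ is $\pm 2$ while the ${\rm dim\ ker}$ contribution jumps by $\pm 1$, so the jump in $\tilde\eta$ is $\pm 1$; the jump of $(2\pi i)^{-1}\,{\rm tr\ log}$ at the same instant is also $\pm 1$, the sign being dictated by the direction in which the eigenvalue leaves the branch cut. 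A careful sign bookkeeping at each crossing, using the canonical identification furnished by Lemma \ref{grassmannian2}(3), shows that these jumps cancel in $h(P_t)$, giving continuity and hence the desired equality in $\mathbb{R}$.
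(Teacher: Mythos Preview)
The paper does not give its own proof; it simply refers to Kirk--Lesch \cite{lesch}. Your strategy---reduce modulo $\mathbb{Z}$ via Theorem~\ref{scottwoj} and then pin down the integer part by a homotopy argument using Proposition~\ref{connected}---is the standard one and is correct in outline.

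The gap is in the last step. You assert that the jumps of $\tilde\eta(D_{P_t})$ and of $(2\pi i)^{-1}\,{\rm tr}\,{\rm log}(\Phi(P_t)\Phi(P_X)^*)$ cancel, but you do not actually verify the signs. Lemma~\ref{grassmannian2}(3) identifies $\ker D_{P_t}$ with the $(-1)$-eigenspace of $\Phi(P_t)\Phi(P_X)^*$, so the discontinuities occur at the same parameters; but nothing you have cited tells you whether an eigenvalue of $D_{P_t}$ crossing zero from below corresponds to the associated eigenvalue of $\Phi(P_t)\Phi(P_X)^*$ crossing $-1$ counterclockwise or clockwise. If the correspondence went the wrong way, $h(P_t)$ would jump by $\pm 2$ at each crossing rather than by $0$. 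Pinning this correspondence down is the actual analytic content of the result; it requires a local model computation, not just the dimension identification of Lemma~\ref{grassmannian2}(3).

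There is a cleaner route that sidesteps this entirely when $D_P$ is invertible: use the \emph{second} assertion of Proposition~\ref{connected} rather than the first. Since $D_{P_X}$ is invertible (Lemma~\ref{kerdp}), both $P_X$ and $P$ lie in the path-connected set $\{Q\in{\rm Gr}_\infty(A):\ker Q\cap{\rm im}\,P_X=0\}$, and along a path in this set no eigenvalue of $D_{P_t}$ ever reaches zero and $-1\notin{\rm spec}(\Phi(P_t)\Phi(P_X)^*)$ throughout. Both terms of $h$ are then genuinely continuous and no sign bookkeeping is needed at all. The non-invertible endpoint case then reduces to a single one-sided limit at $t=1$, which is far more tractable than tracking arbitrarily many crossings along a generic path.
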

Now suppose we are given a closed manifold $M$ containing a separating hypersurface $N\subset M$. We consider only Dirac operators $D$ on $M$ so that in a collar neighborhood $[-\epsilon,\epsilon]\times N$ of $N$, $D$ has the form $D=\gamma(\frac{d}{dx}+A)$ as in (\ref{metricol234}). Then Kirk and Lesch \cite{lesch} prove the following, in the next section we will give refined versions for the case of signature operator.
\begin{theorem}\label{invertiblecase}Let $D$ be a Dirac operator on
$M$ and let $N\subset M$ split $M$ into $M^+$ and $M^-$.
Then for $P\in \Gr_\infty(A)$ and $Q\in \Gr_\infty(-A)$ and if $P_{M^+}, P_{M^-}$ denote the Calderon projectors for $M^+$ and $M^-$ respectively, then we have with $\Phi=\Phi_\gamma$,
\[
\begin{split}
\etab(&D,M)-\etab(D_P,M^+) - \etab(D_{Q},M^-)\\
&=-\frac{1}{2\pi i}{\rm tr}\ {\rm log}(\Phi(P)\Phi(P_{M^+})^*)-\frac{1}{2\pi i}{\rm tr}\ {\rm log}(\Phi(P_{M^-})\Phi(Q)^*)\\
&\quad +\frac{1}{2\pi i}{\rm tr}\ {\rm log}(\Phi(I-P_{M^-})\Phi(P_{M^+})^*).
\end{split}
\]
In particular,
 \begin{equation}\label{iceq1}\etab(D,M)=\etab(D_{P_{M^+}},M^+) +
\etab(D_{P_{M^-}},M^-)+\frac{1}{2\pi i}{\rm tr}\ {\rm log}(\Phi(I-P_{M^-})\Phi(P_{M^+})^*).
\end{equation}
Finally, from the last equation we have:
$$\etab(D,M)=\etab(D_{P_{M^+}},M^+) +
\etab(D_{I-P_{M^+}},M^-).$$
\end{theorem}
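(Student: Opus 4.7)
The plan is to apply the general identity in the first half of Theorem \ref{invertiblecase} to the specific pair of boundary conditions $P = P_{M^+}$ on $M^+$ and $Q = I - P_{M^+}$ on $M^-$, and to observe that all three correction traces on the right-hand side collapse.

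First I would verify that $I - P_{M^+}$ is a legitimate boundary condition for $D$ on $M^-$, i.e.\ that $I - P_{M^+} \in \Gr_\infty(-A)$. Orientation reversal across $N$ sends $\gamma \mapsto -\gamma$ and $A \mapsto -A$, so the symmetry condition in Definition \ref{fredholmgr} is preserved under complementation: a direct computation gives $(-\gamma)(I-P_{M^+})(-\gamma)^* = P_{M^+} = I - (I - P_{M^+})$. The smoothing condition is inherited as well, since $P_{>0}(-A)$ and $I - P_{>0}(A)$ differ only by the finite-rank projection onto $\ker A$, so $I - P_{M^+}$ still differs from the positive spectral projection of $-A$ by a smoothing operator.

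Next, I would evaluate the three traces in the general formula. The first one vanishes trivially since $\Phi(P_{M^+})\Phi(P_{M^+})^* = I$ and $\operatorname{tr}\log(I) = 0$. For the two remaining terms the key observation, read off directly from the matrix representation in Lemma \ref{grassmannian2}, is the identity $\Phi(I - R) = -\Phi(R)$ for any $R$ satisfying properties 1.--3. of Definition \ref{fredholmgr}. Applying it on both sides yields
$$
\Phi(P_{M^-})\,\Phi(I - P_{M^+})^* \;=\; -\Phi(P_{M^-})\,\Phi(P_{M^+})^* \;=\; \Phi(I - P_{M^-})\,\Phi(P_{M^+})^*,
$$
so the two surviving terms are traces of the \emph{same} operator, and their signed contributions cancel exactly in $\mathbb{R}$ (not merely modulo $2\pi i\mathbb{Z}$). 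Substituting $P = P_{M^+}$, $Q = I - P_{M^+}$ into the general identity then gives $\etab(D,M) = \etab(D_{P_{M^+}}, M^+) + \etab(D_{I-P_{M^+}}, M^-)$, which is the claimed equality.

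The only point that requires care, and thus the main potential obstacle, is the orientation-dependent bookkeeping: one must use a single consistent $\Phi = \Phi_\gamma$ throughout (as already stipulated in Theorem \ref{invertiblecase}) and keep track of the sign flips $\gamma_{M^-} = -\gamma$, $A_{M^-} = -A$ when checking admissibility of $I - P_{M^+}$ on $M^-$. Once these sign conventions are fixed, everything else is an algebraic manipulation based on the representation of $\Gr(A)$ by unitary isometries in Lemma \ref{grassmannian2}.
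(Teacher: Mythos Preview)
Your proposal is correct. The paper does not give its own proof of Theorem~\ref{invertiblecase}; it is quoted from Kirk and Lesch \cite{lesch}, and the final line is presented as an immediate consequence (``from the last equation''). Your derivation---specializing the general formula to $P=P_{M^+}$, $Q=I-P_{M^+}$ and using $\Phi(I-R)=-\Phi(R)$ so that the second and third trace terms coincide and cancel---is exactly the kind of verification the phrase ``from the last equation'' is pointing at; the only cosmetic difference is that you plug into the first display rather than into \eqref{iceq1} combined with Theorem~\ref{invertible}, which amounts to the same computation.
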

For later application, we will need a generalization of (\ref{iceq1}) to the case when we glue two manifolds with boundary $M^+, M^-$ with an orientation-preserving isometry $\rho:M^+ \rightarrow M^-$ along their common boundary $N$. The resulting manifold will be denoted $M_\rho$. Assume that $\rho$ is covered by a map $\rho^*:L^2(E)\rightarrow L^2(E)$ that preserves the $L^2$-inner product on sections in $E$ and commutes with $\gamma$. Let $M_{cut}=M^+\sqcup M^-$ so that $\partial M_{cut}=N\sqcup N$ where here, $N$ is oriented by the metric collar on $M^-$ and $M^+$ as left-boundary (reparametrizing the collar of $M^-$ by $N\times [0,\epsilon]$, cf. \cite{lesch}). Then $L^2(E_{|\partial M_{cut}})= L^2(E|_N)\oplus L^2(E|_N)$ and w.r.t. to that splitting the bundle endomorphism $\tilde \gamma: L^2(E_{|\partial M_{cut}})\rightarrow L^2(E_{|\partial M_{cut}})$ decomposes as $\tilde \gamma|\partial M_{cut}=-\gamma|-N\oplus \gamma|N$ (see again \cite{lesch}). Furthermore if $\tilde A$ denotes the tangential operator of $M_{cut}$, we have an isomomphism
\[
{\rm Gr}(\tilde A)\simeq {\rm Gr}(-A)\times {\rm Gr}(-A), \ (P,Q)\mapsto \begin{pmatrix}P&0\\0&Q\end{pmatrix}
\]
Recall the 'continuous transmission' projection acting on $L^2(E_{|\partial M_{cut}})$ in \cite{lesch},
\[
P_\Delta=1/2\begin{pmatrix}I&-I\\-I&I\end{pmatrix}
\]
and denote ${\rm ker}(\Delta)=L_{\Delta}$. Let
\[
\rho_{cut}: N\sqcup N\rightarrow N\sqcup N, \quad \rho_{cut}(x,y)=(x, \rho(y)),
\]
denote the corresponding covering homomorphism by $\rho_{cut}^*:L^2(E_{|\partial M_{cut}})\rightarrow L^2(E_{|\partial M_{cut}})$. We will assume in the following that $\rho_{cut}^*$ commutes wih $\tilde \gamma$. Denote $L_{\Delta_\rho}= \rho_{cut}^*(L_{\Delta})\subset L^2(E_{|\partial M_{cut}})$ and let $P_{\Delta,\rho}\in {\rm Gr}(\tilde A)$ denote the projection so that $L_{\Delta_\rho}={\rm ker}(\Delta_\rho)$ (this is a Fourier-integral operator, see the remark in \cite{lesch} under (5.3) loc. cit.). For any $P \in {\rm Gr}(A)$, let $L_P={\rm ker}(P)$ and denote $L_{P_\rho}=\rho^*(L_P)$. Let $P_\rho$ the associated projection, i.e. $L_{P_\rho}= {\rm ker}(P_\rho)$. We can now formulate
\begin{theorem}\label{generalglue}
Let $D$ be a Dirac operator on $M_\rho=M^+\cup_\rho M^-$ as above. Let $P_{M^+}, P_{M^-}$ denote the Calderon projectors for $M^+$ and $M^-$ respectively, then we have
\begin{equation}\label{iceq2}\etab(D,M_\rho)=\etab(D_{P_{M^+}},M^+) +
\etab(D_{P_{M^-}},M^-)+\frac{1}{2\pi i}{\rm tr}\ {\rm log}(\Phi(I-P_{M^-})\Phi(P_{M^+, \rho})^*).
\end{equation}
\end{theorem}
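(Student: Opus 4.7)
The plan is to reduce the $\rho$-twisted gluing to the untwisted gluing of Theorem \ref{invertiblecase} by transporting the Calderon data of $M^+$ along $\rho^*$, and then to apply the variation formula of Theorem \ref{invertible} on the cut manifold. As a first step, I would reinterpret $M_\rho$ as $M_{cut}=M^+\sqcup M^-$ equipped with the transmission boundary condition $P_{\Delta,\rho}$. Indeed, smooth sections of $E$ over $M_\rho$ correspond bijectively to pairs $(s_+,s_-)\in C^\infty(E|_{M^+})\oplus C^\infty(E|_{M^-})$ whose boundary values satisfy $s_+|_N=\rho^*(s_-|_N)$, i.e. $(s_+|_N,s_-|_N)\in L_{\Delta_\rho}=\rho_{cut}^*(L_\Delta)=\ker P_{\Delta,\rho}$. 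By the hypothesis that $\rho_{cut}^*$ is unitary and commutes with $\tilde\gamma$, the subspace $L_{\Delta_\rho}$ is Lagrangian and $P_{\Delta,\rho}\in\Gr(\tilde A)$. Since sections meeting this boundary condition are precisely the sections of $E\to M_\rho$, one has
\[
\etab(D,M_\rho)=\etab(D_{P_{\Delta,\rho}},M_{cut}).
\]

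Next I would invoke Theorem \ref{invertible} on $M_{cut}$, comparing the transmission boundary condition $P_{\Delta,\rho}$ with the Calderon projector $P_{M_{cut}}$. Under the decomposition $\Gr(\tilde A)\cong\Gr(-A)\times\Gr(-A)$, the projector $P_{M_{cut}}$ splits as the pair made up of the Calderon projectors of $M^+$ and $M^-$; taking into account that the second boundary component of $M_{cut}$ carries the opposite orientation (so that $\tilde\gamma=-\gamma\oplus\gamma$ on $L^2(E_{|\partial M_{cut}})$), this pair is $(P_{M^+},I-P_{M^-})$. Consequently
\[
\etab(D_{P_{M_{cut}}},M_{cut})=\etab(D_{P_{M^+}},M^+)+\etab(D_{P_{M^-}},M^-),
\]
and Theorem \ref{invertible} yields
\[
\etab(D_{P_{\Delta,\rho}},M_{cut})-\etab(D_{P_{M_{cut}}},M_{cut})=\frac{1}{2\pi i}\operatorname{tr}\log\bigl(\Phi(P_{\Delta,\rho})\Phi(P_{M_{cut}})^*\bigr).
\]

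It then remains to rewrite the trace-log on the right in the claimed form $\frac{1}{2\pi i}\operatorname{tr}\log(\Phi(I-P_{M^-})\Phi(P_{M^+,\rho})^*)$. Under the splitting of $L^2(E_{|\partial M_{cut}})$ into the two boundary copies, $\Phi(P_{M_{cut}})$ is block diagonal with blocks $\Phi(P_{M^+})$ and $\Phi(I-P_{M^-})$, while $\Phi(P_{\Delta,\rho})$ is an anti-diagonal $\rho^*$-twist of $\Phi(P_\Delta)$. Multiplying these block matrices and using that trace-log (respectively the Fredholm determinant) is invariant under conjugation by the unitary $\rho_{cut}^*$, one may commute the $\rho^*$-twist past $\Phi(P_{M^+})$, which by the definition $L_{P_{M^+,\rho}}=\rho^*(L_{P_{M^+}})$ converts $\Phi(P_{M^+})$ into $\Phi(P_{M^+,\rho})$. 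The net effect is that the $2\times 2$ block trace-log collapses to the single scalar trace-log in the statement, yielding \eqref{iceq2}.

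The main obstacle is the careful bookkeeping of sign and orientation conventions: the sign flip $\tilde\gamma=-\gamma\oplus\gamma$ on the two boundary components of $M_{cut}$, the ensuing replacement of $P_{M^-}$ by $I-P_{M^-}$ in the identification of $P_{M_{cut}}$ with the pair $(P_{M^+},I-P_{M^-})$, and the tracking of $\rho^*$ through the map $\Phi$ and through the anti-diagonal structure of the transmission projection. Once these conventions are fixed consistently—so that $\rho_{cut}^*$ commutes with both $\tilde\gamma$ and the tangential operator $\tilde A$, the latter being automatic since $\rho$ is an isometry of the Dirac data—the remaining manipulations are the $2\times 2$ block-matrix computation repeating, mutatis mutandis, the Kirk-Lesch derivation of Theorem \ref{invertiblecase} for the untwisted case, and the specialization $\rho=\operatorname{id}$ recovers equation \eqref{iceq1} as a sanity check.
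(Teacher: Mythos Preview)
There is a genuine gap at the second step, where you invoke Theorem \ref{invertible} with the boundary projection $P_{\Delta,\rho}$ on $M_{cut}$. That theorem requires the projection to lie in $\Gr_\infty(\tilde A)$, i.e.\ to differ from the positive spectral projection $P_{>0}$ by a smoothing operator. The transmission projection $P_{\Delta}$ (and hence its $\rho$-twisted version $P_{\Delta,\rho}$) does \emph{not} satisfy this: its range is the diagonal in $L^2(E|_N)\oplus L^2(E|_N)$, which bears no relation to the spectral decomposition of $\tilde A$, and indeed the paper remarks that $P_{\Delta,\rho}$ is only a Fourier integral operator and belongs to $\Gr(\tilde A)$, not to $\Gr_\infty(\tilde A)$. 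Consequently the trace-log expression $\tr\log(\Phi(P_{\Delta,\rho})\Phi(P_{M_{cut}})^*)$ is not even defined in general (the operator inside the log need not be of determinant class), and the Scott--Wojciechowski formula does not apply.

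This is precisely the difficulty that the paper's proof circumvents by following the Kirk--Lesch strategy: one introduces the path $P(\theta,P,\rho)$, $\theta\in[0,\pi/4]$, interpolating between the transmission condition $P_{\Delta,\rho}$ and a block-diagonal condition $P(0,P,\rho)$ that \emph{does} lie in $\Gr_\infty(\tilde A)$. Along this path one shows separately that the continuous variation of $\eta$ vanishes (using that $\rho_{cut}^*$ commutes with $\tilde\gamma$ and the conjugation-invariance of the determinant) and that the spectral flow vanishes when $P=P_{M^+}$ (by explicitly computing the kernel along the path). Only \emph{after} arriving at $\etab(D,M_\rho)=\etab(D_{P_{M^+}},M^+)+\etab(D_{I-P_{M^+,\rho}},M^-)$ does the paper invoke Theorem \ref{invertible}, now legitimately, on $M^-$ alone. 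Your final block-matrix reduction step is also doubtful as stated: for an off-diagonal $2\times 2$ block unitary the trace-log does not simply collapse to a single-block trace-log without further argument (one would have to diagonalize, introducing square roots and branch choices), so even at the formal level the claimed simplification is not justified.
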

\begin{proof}
For $\theta \in [0,\pi/4]$ and $P\in Gr(A)$, let $P(\theta, P)$ be the path described in \cite{lesch}, Lemma 5.3 connecting $\Delta=P(\pi/4, P)$ and
\[
P(0,P)=\begin{pmatrix}P&0\\0&I-P\end{pmatrix} \in {\rm Gr}(\tilde A).
\]
Let $L_{\theta,\rho}=\rho_{cut}^*({\rm ker}(P(\theta,P)))$ and let $P(\theta, P,\rho)\in {\rm Gr}(\tilde A), \theta\in [0,\pi/4]$ be the corresponding path of projections. Then (compare (5.2) in \cite{lesch})
\[
\eta(D_{P(\pi/4, P,\rho)},M_{cut})= \eta(D_{P_{\Delta,\rho}},M_{cut})=\eta(D, M_\rho).
\]
On the other hand, by definition of $P(0,P,\rho)$:
\[
\eta(D_{P(0, P,\rho)},M_{cut})= \eta(D_{P},M^+)+\eta(D_{I-P_\rho},M^-).
\]
Thus we have
\begin{equation}\label{blaeta}
\begin{split}
\eta(D, M_\rho)=&\eta(D_{P},M^+)+\eta(D_{I-P_\rho},M^-)+\frac{1}{2}\int_0^s \frac{d}{dt}(\eta(D_{P(\theta,P,\rho)}, M_{cut})) dt\\
&+{\rm SF}(D_{P(\theta,P,\rho)})_{\theta\in[0,\pi/4]}.
\end{split}
\end{equation}
Now by Theorem 5.8 in \cite{lesch}, if $P^+=P^+(L)\in {\rm Gr}(\tilde A)$ is the APS-boundary condition associated to a Lagngagian $L\subset {\rm ker}(\tilde A)$
\[
\frac{d}{d\theta}(\eta(D_{P(\theta,P^+), M_{cut}})=0,
\]
hence by the remark below Theorem \ref{scottwoj}, the function $s\mapsto {\rm det}(\Phi(P(\theta,P^+)\Phi(P(0,P^+))^*)$ is identically zero and hence
\[
\begin{split}
s\mapsto &{\rm det}(\Phi(P(\theta,P^+, \rho)\Phi(P(0,P^+,\rho))^*)={\rm det}(\rho_{cut}^*\Phi(P(\theta,P^+)(\rho_{cut}^*)^{-1}\rho_{cut}^* \Phi(P(0,P^+))^*(\rho_{cut}^*)^{-1})\\
&={\rm det}(\rho^*\Phi(P(\theta,P^+)\Phi(P(0,P^+))^*(\rho^*)^{-1})={\rm det}(\Phi(P(\theta,P^+)\Phi(P(0,P^+))^*)
\end{split}
\]
vanishes identically (here we have used that $\Phi(P(\theta,P^+, \rho)=\rho_{cut}^*\Phi(P(\theta,P^+)(\rho_{cut}^*)^{-1}$ since $\rho_{cut}^*$ commutes with $\tilde \gamma$ and hence preserves its $\pm i$-eigenspaces). Hence the variation of eta-invariant-term in (\ref{blaeta}) vanishes. Then completely analogously to the proof of Theorem 5.9 in \cite{lesch} one deduces from (\ref{blaeta}) using the homotopy invariance of spectral flow that for any $P\in {\rm Gr}(A)$ we have
\[
\begin{split}
\etab(D,M_\rho)=&\etab(D_{P},M^+)+\etab(D_{I-P_\rho},M^-)+\\
&+{\rm SF}(D_{P(\theta,P,\rho)})_{\theta\in[0,\pi/4]}.
\end{split}
\]
Now for $P=P_{M^+}$ we claim that
\[
{\rm SF}(D_{P(\theta,P_{M^+},\rho)})_{\theta\in[0,\pi/4]}=0.
\]
Let $P_{M_{cut}}\in {\rm Gr}(\tilde A)$ be the Calderon projector of $M_{cut}$, then 
\[
{\rm ker}(D_{P(\theta,P_{M^+},\rho)})={\rm ker}\ P(\theta,P_{M^+},\rho)\cap {\rm ker}\ P_{M_{cut}}.
\]
Let $\psi=(\psi_+, \psi_-) \in {\rm ker}\ P(\theta,P_{M^+},\rho)\cap {\rm ker}\ P_{M_{cut}}$ (w.r.t. the splitting of $L^2(E_{|\partial M_{cut}})$ induced by the decomposition of $\partial M_{cut}$. By adopting \cite{lesch} Prop. 5.5 to our situation this is equivalent to
\[
\begin{split}
cos(\theta)P_{M^+}\psi_+&=sin(\theta)P_{M^+, \rho} \psi_-\\
0=sin(\theta)(I-P_{M^+})\psi_+&=cos(\theta)(I-P_{M^+, \rho}) \psi_-.
\end{split}
\]
Since $cos(\theta)\neq 0$, we infer $\psi_- \in {\rm im }\ P_{M^+, \rho}\cap {\rm im }\ P_{M^-}$. On the other hand, if $\psi_- \in {\rm im }\ P_{M^+, \rho}\cap {\rm im }\ P_{M^-}$, then put $\psi_+=tan(\theta)(\rho^*)^{-1})\psi_+$ and $(\psi_+, \psi_-)$ will lie in ${\rm ker}\ P(\theta,P_{M^+},\rho)\cap {\rm ker}\ P_{M_{cut}}$, so the latter is independent of $\theta$. So we arrive at
\[
\etab(D, M_\rho)=\etab(D_{P_{M^+}},M^+)+\etab(D_{I-P_{M^+,\rho}},M^-)
\]
Then applying Theorem \ref{invertible} gives 
\[
\etab(D_{I-P_{M^+,\rho}},M^-)=\etab(D_{P_{M^-}},M^-)+\frac{1}{2\pi i}{\rm tr}\ {\rm log}(\Phi(I-P_{M^+,\rho})\Phi(P_{M^-})^*)
\]
and ${\rm tr}\ {\rm log}(\Phi(I-P_{M^+,\rho})\Phi(P_{M^-})^*)={\rm tr}\ {\rm log}(\Phi(I-P_{M^-})\Phi(P_{M^+, \rho})^*)$ gives the assertion.
\end{proof}

\subsection{Signature operator and adiabatic stretching}\label{begriffe5}
Fix, as above,a $(2n-1)$-dimensional Riemannian manifold $(X,g)$ with nonempty boundary
$\partial X$ and assume that $g$ is of product form near the boundary, the
corresponding collar will be written as $\partial X \times [0,\epsilon)$. The
odd signature operator with trivial coefficients on $X$
\[
D:\oplus_p\Omega^{2p}(X, \mathbb{C}) \rightarrow
\oplus_p\Omega^{2p}(X,\mathbb{C})
\]
is defined to be
\[
D(\beta)=i^{n}(-1)^{p-1}(*d-d*)(\beta),\ {\rm for } \beta \in
\Omega^{2p}(X,\mathbb{C}),
\]
where $*:\Omega^{k}(X,\mathbb{C}) \rightarrow \Omega^{2n-1-k}(X,\mathbb{C})$
denotes the Hodge operator determined by the Riemannian metric on $X$. On the
collar, $D$ takes (modulo unitary conjugation) the form
\begin{equation}\label{product}
D=\gamma(\frac{\partial}{\partial x}+A)
\end{equation}
where the de Rham operator $A:\oplus_k \Omega^{k}(\partial X,\mathbb{C})
\rightarrow \oplus_k \Omega^{2p}(\partial X,\mathbb{C})$ is elliptic and
self-adjoint and given by
\[
A(\beta)= \left \{ \begin{matrix}-(D\hat*+\hat *d)\beta,\quad \beta
\in\oplus_k\Omega^{2k}(\partial X,\mathbb{C}), \\ \quad (D\hat*+\hat
*d)\beta,\quad \beta \in\oplus_k\Omega^{2k+1}(\partial
X,\mathbb{C}).\end{matrix}\right.
\]
Here $\hat *$ denotes the Hodge $*$ operator on $\partial X$ induced by
$g|\partial X$ and $\gamma:\oplus_p \Omega^{p}(\partial X,\mathbb{C})
\rightarrow \oplus_p\Omega^{p}(\partial X,\mathbb{C})$ coincides with $\hat *$
up to a constant:
\[
\gamma (\beta)=\left \{ \begin{matrix}i^{n}(-1)^{p-1}\hat * \beta,\quad \beta
\in\oplus_k\Omega^{2p}(\partial X,\mathbb{C}), \\ \ i^k(-1)^{n-1-q} \hat *\beta,
\quad \beta \in\oplus_k\Omega^{2q+1}(\partial X,\mathbb{C}).\end{matrix}\right.
\]
One has $\gamma^2=-I$, $\gamma A=-A\gamma$ and $\gamma$ is unitary with respect
to the $L^2$-inner product on $\Omega^*(\partial X,\mathbb{C})$ defined by
\[
<\beta_1,\beta_2>=\int_{\partial X}\beta_1\wedge \hat *\beta_2.
\]
Now the Hodge Theorem identifies ${\rm ker} \ A$ with the cohomology of the
complex $(\Omega^*(\partial X,\mathbb{C}), d)$ since the kernel of $A$
corresponds to the harmonic forms so
\[
{\rm ker}\  A={\rm ker }\ d \cap {\rm ker}\ d^* \subset {\rm ker}\ d \rightarrow {\rm
ker}\ d/{\rm image}\ d
\]
is an isomorphsim. Defining $\omega(x,y)=<x,\gamma y>$ one gets (up to a
constant) the intersection form
\[
\omega(\beta_1,\beta_2)=i^r\int_{\partial X}\beta_1\wedge\beta_2
\]
for some integer $r$ depending on the degrees of the $\beta_i$.\\
Then, as we already saw above, since $\partial X$ bounds, the signature of $i\gamma$ restricted to
${\rm ker}\ A$ vanishes, that is, the $i$ and $-i$-Eigenspaces of $\gamma$ acting on
${\rm ker}\ A$ have the same dimension. Recall this  implies that there are Lagrangian subspaces
$L \subset {\rm ker}\ A$ satisfying $\gamma(L)=L^\bot\cap {\rm ker} \ A$. Now let $(H,<\ , \ >,\gamma)$ be a
Hermitian symplectic Hilbert space, that is, a seperable complex Hilbert space
with an isomorphism $\gamma:H \rightarrow H$ with $\gamma^2=-I,
\gamma^*=-\gamma$ and such that the $i$ and $-i$-Eigenspaces have the same
dimension (resp. are both infinite-dimensional). For any subspace $W\subset H$, we define its annihilator
\[
W^0=\{x\in H\ |\ <x,\gamma y>=0 \ {\rm for\ all}\  y \in W\}.
\]
Then, a subspace $W\subset H$ is called isotropic, if $W\subset W^0$, coisotropic, if $W^0\subset W$ and the above Lagrangian-condition translates to $W=W^0$, since that $W^0=JW^\perp$. We the have the following result about symplectic reduction, as taken form Nicolaescu's paper \cite{Nicol2}:
\begin{theorem}\label{sympred2}
Consider $\Lambda\subset H$ Lagrangian and $W\subset H$ an isotropic subspace of $H$, with $W^0$ its annihilator. Then, if $(\Lambda,W^0)$ is a Fredholm pair of subspaces (as defined in \ref{fredholmlag}), then $\mathcal{H}_0=W^0/W$ carries an induced hermitian symplectic structure and
\begin{equation}\label{symplred}
\Lambda^W=(\Lambda\cap W^0)/W
\end{equation}
is a Lagrangian subspace in $\mathcal{H}_0=W^0/W$.
\end{theorem}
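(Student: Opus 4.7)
The plan is to build the symplectic reduction in two stages: first endow $\mathcal{H}_0$ with its induced Hermitian symplectic structure independently of $\Lambda$, then establish isotropy and coisotropy of $\Lambda^W$ separately, with the Fredholm hypothesis entering only at the closedness step.

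I would begin by observing that $\gamma^*=-\gamma$ together with $\gamma^2=-I$ forces $\gamma$ to be unitary, so $W^0=(\gamma W)^\perp$ yields $\gamma(W^0)=W^\perp$ and by symmetry $\gamma(W^\perp)=W^0$. Thus $\gamma$ preserves the closed subspace $K:=W^0\cap W^\perp$, and the quotient map $W^0\to\mathcal{H}_0$ restricts to a Hilbert space isometry $K\xrightarrow{\sim}\mathcal{H}_0$ via the orthogonal decomposition $W^0=W\oplus K$ (valid since $W$ is closed). Transporting $\gamma|_K$ gives an operator $\bar\gamma$ on $\mathcal{H}_0$ satisfying $\bar\gamma^2=-I$ and $\bar\gamma^*=-\bar\gamma$, with equal-dimensional $\pm i$-eigenspaces inherited from $(H,\gamma)$; equivalently, the form $\bar\omega([x],[y]):=\langle x,\gamma y\rangle$ is well-defined on $\mathcal{H}_0$ because $\omega(W,W^0)=0$ by the very definition of $W^0$, making $(\mathcal{H}_0,\bar\gamma)$ a Hermitian symplectic Hilbert space.

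Interpreting $\Lambda^W$ as the image of $\Lambda\cap W^0$ under $W^0\to\mathcal{H}_0$ (equivalently $((\Lambda+W)\cap W^0)/W$ by the modular law), isotropy follows immediately from $\bar\omega([\lambda_1],[\lambda_2])=\omega(\lambda_1,\lambda_2)=0$ for $\lambda_i\in\Lambda\cap W^0$, since $\Lambda$ is Lagrangian. Closedness is automatic: $\Lambda\cap W^0$ is finite-dimensional by the Fredholm hypothesis, hence so is its image, which is therefore closed in $\mathcal{H}_0$. For coisotropy I would take $[v]\in(\Lambda^W)^0$ with representative $v\in W^0$, observe that $\omega(v,\lambda)=0$ for all $\lambda\in\Lambda\cap W^0$ means $v\in(\Lambda\cap W^0)^0$, and apply the key algebraic identity
\[
(\Lambda\cap W^0)^0=(\gamma(\Lambda\cap W^0))^\perp=\overline{(\gamma\Lambda)^\perp+(\gamma W^0)^\perp}=\overline{\Lambda^0+(W^0)^0}=\overline{\Lambda+W},
\]
which uses $\gamma$ unitary, the Hilbert space relation $(C\cap D)^\perp=\overline{C^\perp+D^\perp}$, $\Lambda=\Lambda^0$, and $W^{00}=W$. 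Combined with $v\in W^0$ and the modular law $W^0\cap(\Lambda+W)=(W^0\cap\Lambda)+W$ (valid because $W\subset W^0$), this gives $v\in(\Lambda\cap W^0)+W$ and hence $[v]\in\Lambda^W$, provided the closure bar above can be eliminated.

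The main obstacle is precisely that elimination, i.e., showing $\Lambda+W$ is closed in $H$ so that $W^0\cap\overline{\Lambda+W}=W^0\cap(\Lambda+W)$. This is where the Fredholm hypothesis on $(\Lambda,W^0)$ is decisive: since $\Lambda+W^0$ is closed of finite codimension, $\Lambda\cap W^0$ is finite-dimensional, and $W\subset W^0$ is closed, I would invoke the perturbation theory for Fredholm pairs of subspaces (as in Kato, Chapter IV) to propagate closedness from $(\Lambda,W^0)$ to the pair $(\Lambda+W,W^0)$, deducing that $\Lambda+W$ itself is closed. Once the closure is removed, the argument above delivers $(\Lambda^W)^0\subset\Lambda^W$, and together with the already-established isotropy this yields $\Lambda^W=(\Lambda^W)^0$, i.e.\ $\Lambda^W$ is Lagrangian in $\mathcal{H}_0$.
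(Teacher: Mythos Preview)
The paper does not actually supply a proof of this theorem; it is simply quoted from Nicolaescu \cite{Nicol2}. So there is no in-paper argument to compare against, and your proposal stands or falls on its own.

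Your outline is sound and would go through, but the one step you yourself flag as ``the main obstacle'' --- the closedness of $\Lambda+W$ --- is justified too loosely. Invoking ``perturbation theory for Fredholm pairs (Kato, Ch.~IV)'' to pass from $(\Lambda,W^0)$ to $(\Lambda+W,W^0)$ is vague and not obviously the right mechanism. A clean and direct argument is available using only what you have already set up: for closed subspaces of a Hilbert space, $A+B$ is closed if and only if $A^\perp+B^\perp$ is closed. Here $\Lambda^\perp=\gamma\Lambda$ (Lagrangian) and $W^\perp=\gamma(W^0)$ (your earlier computation), so $\Lambda^\perp+W^\perp=\gamma(\Lambda+W^0)$, which is closed because $\Lambda+W^0$ is closed by the Fredholm hypothesis and $\gamma$ is unitary. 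This replaces the appeal to Kato and makes the argument self-contained. With that fix, your modular-law step $W^0\cap(\Lambda+W)=(W^0\cap\Lambda)+W$ and the coisotropy conclusion are correct.

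One further tacit assumption worth making explicit: you use that $W$ is closed (for $W^{00}=W$ and for the orthogonal splitting $W^0=W\oplus K$). The paper does not state this hypothesis, but it holds in every application made of the theorem (e.g.\ $W=F_\nu^-$ in Lemma~\ref{symred}), and the statement is false without it, so you should add it.
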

The self-adjoint operator $A$ induces a spectral decomposition of $L^2(\Omega^*_{\del X})$ in the following sense, let $E_\mu$ denote the $\mu$--eigenspace of $A$, then set
\begin{equation}\label{decomp43}
\begin{split}
E_\nu^+&=\oplus_{0<\mu \leq \nu} E_\mu,\quad E_\nu^-=\oplus_{-\nu \leq \mu  < 0} E_\mu,\\
F_\nu^+&= \oplus_{\mu > \nu} E_\mu, \quad F_\nu^-= \oplus_{\mu < -\nu} E_\mu,
\end{split}
\end{equation}
This gives at once the orthogonal decomposition
\begin{equation}\label{eigenspace}L^2(\Omega^*_{\del X})=
F^-_\nu\oplus E_\nu^-\oplus \ker A\oplus E_\nu^+\oplus
F^+_\nu.\end{equation}
Now, on the other hand one knows that $L^2(\Omega^*_{\del X})$ decomposes in the sense of Hodge as
\begin{equation}\label{hodge21}L^2(\Omega^*_{\del X})=\im d \oplus
\ker A\oplus \im d^*,
\end{equation}
where here, $d^*=-\hat *d\hat *$ is the $L^2$-adjoint of $d$ considered as a linear operator on $L^2(\Omega^*_{\del X})$, with $\hat *$ as above. Combining these two observations with Theorem \ref{sympred2} leads to the following Lemma (cf. \cite{lesch}) whose proof we include for convenience:
\begin{lemma}\label{symred}
Given the above notation, one has an orthogonal decomposition
\begin{equation}\label{bigsum2}
L^2(\Omega^*_{\del X})=
(F^-_\nu\oplus F^+_\nu)\oplus(d(E^+_\nu)\oplus d^*(E^-_\nu))\oplus
(d^*(E^+_\nu)\oplus d(E^-_\nu))\oplus\ker A.
\end{equation}
where here, the subspaces marked by parentheses as well as $E^-_\nu\oplus
\ker A\oplus E^+_\nu$ are Hermitian symplectic subspaces of $ L^2(\Omega^*_{\del X})$. Furthermore, given a Lagrangian subspace $L\subset   L^2(\Omega^*_{\del X})$ so that $(L, F_0^-)$ form a Fredholm pair of subspaces, then
\begin{equation}\label{symplF}
 R_\nu(L):=\frac{L\cap(F^-_\nu\oplus E^-_\nu\oplus \ker A\oplus
E^+_\nu)}{ L\cap F^-_\nu}\subset E^-_\nu\oplus \ker A\oplus E^+_\nu
\end{equation}
is a Lagrangian  subspace, called the symplectic reduction  of $L$ with respect to the isotropic subspace $F_\nu^-$.
\end{lemma}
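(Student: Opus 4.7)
The plan is to prove the orthogonal decomposition first, then verify Hermitian symplectic structure of each parenthesized summand, and finally apply the abstract symplectic reduction Theorem \ref{sympred2} with $W=F_\nu^-$.

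First I would combine the spectral decomposition (\ref{eigenspace}) with the Hodge decomposition (\ref{hodge21}) applied to each non-zero eigenspace. Since no element of $E_\mu$ is harmonic for $\mu\neq 0$, we have $E_\mu\subset \im d\oplus \im d^*$. Writing $\phi=\phi_1+\phi_2\in E_\mu$ with $\phi_1\in \im d$, $\phi_2\in \im d^*$ and using the identity $A\sim d+d^*$ (up to the signs built into the definition of $A$) together with $d^2=0$, one computes $d^*\phi_1=\mu\phi_2$ and $d\phi_2=\mu\phi_1$. Hence for every $\mu\neq 0$,
$$E_\mu=(E_\mu\cap \im d)\oplus (E_\mu\cap \im d^*)=\frac{1}{\mu}d(E_\mu)\oplus \frac{1}{\mu}d^*(E_\mu),$$
so $d(E_\nu^+)\subset \im d$, $d^*(E_\nu^+)\subset \im d^*$, and similarly for $E_\nu^-$. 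Summing over the finitely many eigenvalues in $(0,\nu]$ and $[-\nu,0)$ yields the orthogonal equality
$$E_\nu^-\oplus E_\nu^+= d(E_\nu^+)\oplus d^*(E_\nu^+)\oplus d(E_\nu^-)\oplus d^*(E_\nu^-),$$
where orthogonality of summands within a single $E_\mu$ follows from the Hodge decomposition, and orthogonality across different $E_\mu$'s from the self-adjointness of $A$. This, combined with (\ref{eigenspace}), gives the decomposition (\ref{bigsum2}) after regrouping.

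Next I would verify the $\gamma$-invariance (hence Hermitian symplectic character) of the four parenthesized summands. From $\gamma A=-A\gamma$ one gets $\gamma(E_\mu)=E_{-\mu}$, so $\gamma$ swaps $F_\nu^+\leftrightarrow F_\nu^-$ and $E_\nu^+\leftrightarrow E_\nu^-$, making both $F_\nu^-\oplus F_\nu^+$ and $E_\nu^-\oplus\ker A\oplus E_\nu^+$ $\gamma$-invariant. For the mixed summands, the key identity is $\gamma d=\pm d^*\gamma$, a consequence of $\hat *d\hat*=\pm d^*$ and the fact that $\gamma$ is a multiple of $\hat *$ on each degree. Therefore
$$\gamma(d(E_\nu^+))=d^*(\gamma E_\nu^+)=d^*(E_\nu^-),$$
so $d(E_\nu^+)\oplus d^*(E_\nu^-)$ is $\gamma$-invariant, and likewise $d^*(E_\nu^+)\oplus d(E_\nu^-)$. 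On each $\gamma$-invariant subspace the form $\omega(x,y)=\langle x,\gamma y\rangle$ restricts non-degenerately because $\gamma^2=-I$ is an isomorphism there.

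Finally, for the symplectic reduction claim, take $W=F_\nu^-$. Since $\gamma W=F_\nu^+\perp F_\nu^-$, we have $\omega|_W\equiv 0$, so $W$ is isotropic, and
$$W^0=(\gamma W)^{\perp}=(F_\nu^+)^\perp=F_\nu^-\oplus E_\nu^-\oplus\ker A\oplus E_\nu^+.$$
Because $F_0^-=F_\nu^-\oplus E_\nu^-$, the subspace $W^0$ differs from $F_0^-$ by the finite-dimensional complement $\ker A\oplus E_\nu^+$; adding or removing a finite-dimensional subspace preserves the Fredholm pair property, so the hypothesis $(L,F_0^-)\in\mathcal{L}_{\mathrm{Fred}}^2$ transfers to $(L,W^0)$. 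Theorem \ref{sympred2} then directly yields that $R_\nu(L)=(L\cap W^0)/(L\cap W)$ is Lagrangian in $W^0/W=E_\nu^-\oplus\ker A\oplus E_\nu^+$, which is exactly (\ref{symplF}).

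The main obstacle is the bookkeeping of signs in the identities $A=\pm(d\hat *+\hat *d)$ and $\gamma d=\pm d^*\gamma$, which depend on parity of form-degree and must be verified on even and odd components separately; once these are in place, the decomposition, the symplectic invariance, and the reduction are formal consequences of Hodge theory, spectral theory and Theorem \ref{sympred2}.
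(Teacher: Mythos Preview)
Your overall strategy matches the paper's: refine the spectral decomposition via Hodge theory, verify $\gamma$-invariance of each parenthesized block, and then invoke Theorem~\ref{sympred2} with $W=F_\nu^-$. The Fredholm pair argument and the identification $W^0=(F_\nu^+)^\perp$ are exactly as in the paper.

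There is, however, one genuine slip in the first step. The tangential operator here is \emph{not} $d+d^*$; it is $A=\pm(d\hat*+\hat*d)$, and the crucial algebraic fact (used by the paper) is that $d$ and $d^*$ \emph{anticommute} with $A$. Consequently $d(E_\mu)\subset E_{-\mu}$, not $E_\mu$, so your displayed equality $E_\mu=\tfrac{1}{\mu}d(E_\mu)\oplus\tfrac{1}{\mu}d^*(E_\mu)$ and the relations $d^*\phi_1=\mu\phi_2$, $d\phi_2=\mu\phi_1$ are false as stated. The correct version is $E_\mu=d(E_{-\mu})\oplus d^*(E_{-\mu})$: for $\phi_1\in E_\mu\cap\im d$ one has $d\phi_1=0$, so $A^2\phi_1=\Delta\phi_1=dd^*\phi_1=\mu^2\phi_1$, whence $\phi_1=\tfrac{1}{\mu^2}d(d^*\phi_1)$ with $d^*\phi_1\in E_{-\mu}$. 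Summing over $0<|\mu|\le\nu$ still yields exactly the decomposition \eqref{bigsum2}, and your computation $\gamma(d(E_\nu^+))=d^*(E_\nu^-)$ is already consistent with this corrected picture. So the fix is local: replace the ``$A\sim d+d^*$'' heuristic by the anticommutation $dA=-Ad$, and the rest of your argument goes through unchanged.
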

\begin{proof}
Observe first that if $S\subset L^2(\Omega^*_{\del X})$ is a closed subspace satisfying $\gamma(S)\perp S$, then $S\oplus \gamma(S)$ is a Hermitian symplectic subspace of $ L^2(\Omega^*_{\del X})$, and $S$ is a Lagrangian subspace of $S\oplus\gamma(S)$. This follows since $\gamma$ preserves $S\oplus \gamma(S)$ and $I\mp i\gamma:S\rightarrow K_{\pm i}$ are isomorphisms, where $K_{\pm i}$ denote the $\pm i$ eigenspaces of $\gamma$ acting on $S\oplus \gamma(S)$. Clearly, then  $S$ is a Lagrangian subspace of $S\oplus\gamma(S)$. Now note that $\hat *$  commutes with $A$, so $d$ and $d^*$ anticommute with $A$. We show that either of the summands in parentheses in (\ref{bigsum2}) is a symplectic subspace. For this, setting in $F^-_\nu\oplus F^+_\nu$  $S=F^-_\nu$ gives the assertion for the first summand in the decomposition \ref{bigsum2}. For $d(E^{\pm}_\nu)\oplus d^*(E^\mp_\nu)$, take $S=d(E^{\pm}_\nu)$, then $\gamma(S)=\hat{*}S =\hat{*}d(E^{\pm}_\nu)=\hat{*}d(\hat{*}E^{\mp}_\nu)=d^*(E^{\mp}_\nu) $. That $\ker A$ is symplectic was discussed above, hence the direct sum $E^-_\nu\oplus \ker A\oplus E^+_\nu$ is symplectic. Finally, the decomposition (\ref{bigsum2}) follows from (\ref{eigenspace}) by applying Hodge decomposition (\ref{hodge21}) to the latter symplectic subspace. Applying Theorem \ref{symplred} with $W^0=F_\nu^-\oplus E_\nu^-\oplus \ker A\oplus E_\nu^+$ and  $W=F_\nu^-$ gives the last assertion, note that since $(L,F_0^-)$ form a Fredholm pair and $W^0/F_0^-$ is finite-dimensional, also $(L,W^0)$ is Fredholm.
\end{proof}
In light of Lemma \ref{symred} and Theorem \ref{sympred2} we can now define the symplectic reduction of $L_X={\rm im }(P_X)$, $P_X$ being the Calderon projector of $D$ on $X$, with respect to the isotropic subspace $F_0^-$:
\begin{equation}\label{vee}
\Lambda_X=R_0(L_X)=\frac{L_X\cap (F_0^-\oplus\ker A)}{L_X\cap  F_0^- }
\subset \ker A.\end{equation}
Then, by the above, $\Lambda_X$ is a Lagrangian subspace of $\ker A$ and by definition of $L_X$ and the unique continuation property of $D$ we have:
\begin{equation}\label{vee1}
V_X=\left\{k\  |\ {\rm there \ is}\  \beta\in\Omega^{{\rm even}}_X \ {\rm s.t.}\  D\beta=0\ {\rm and }\ r(\beta)=f_-+k\in F^-_0\oplus \ker A.\right\}
\end{equation}
This subspace is called the space of {\it limiting values of extended
$L^2$ solutions of $D\beta=0$} in the sense of \cite{Atiyah}, a terminology which will be clear by regarding the first assertion of the following Theorem. For this for $r\ge 0$ define
$$X_r=([-r,0]\times \del X)\cup X$$
and
$$X_\infty= ((-\infty,0]\times \del X)\cup X.$$
We will say $X_r$ ($X_\infty$)  is the extension of $X$ by a collar of length $r$ ($\infty$), note that $D$ extends naturally to $X_{r,\infty}$ since it is induced by a metric having a collar along $\partial X$ (as in (\ref{metricol234}).Note for the following that there exists a $\nu\ge 0$ so that
the Cauchy data space $L_X$ of $D$ is transverse to $F^-_\nu$. This follows from the fact that $(L_X, F_0^-)$ form a Fredholm pair of subspaces so $L_X\cap F_0^-$ is finite-dimensional, so as $\nu$ increases,  $L_X\cap F^-_\nu$ decreases to zero. One calls the smallest such $\nu$ the {\it non--resonance level} for $D$ (calling it $\nu_0$ in the following). We have (cf. \cite{lesch}):
\begin{theorem}\label{adiabaticlimit}
Suppose that $\beta\in \Omega^{\text{\rm \tiny even}}_X$ satisfies $D \beta =0$ and $r(\beta)\in F_0^-\oplus\ker A$. Then one has $d\beta=0$, $d(*\beta)=0$, and $d (r(\beta)) =0$, With $\Lambda_X$ as above, this implies that there is an isomorphism
\begin{equation}\label{limitinglag}
V_X=\im i^*:H^*(X;\C)\rightarrow  H^*(\del X;\C).
\end{equation}
Now let $\nu\geq \nu_0$. Then there exists a subspace $W_X \subset d(E_\nu^+)\subset F^-_0$ being isomorphic to $L_X\cap F_0^-$ resp. to the image of
\begin{equation}\label{bla78}
H^{\text{\rm \tiny even}}(X,\del X;\C)\to H^{\text{\rm \tiny
even}}(X;\C),
\end{equation}
($W_X$ corresponds exactly to the $L^2$ solutions of $D$ on $X_\infty$) so that if $W_X^\perp$ denotes the orthogonal
complement of $W_X$ in $d(E^+_\nu)$, then with respect to the decomposition \ref{bigsum2} of $L^2(\Omega^*_{\del X})$, the 'adiabatic limit' of the
Cauchy data spaces on $X_r$ as $r\rightarrow \infty$ exists and decomposes as a direct sum of Lagrangian subspaces:
\begin{equation}\label{adiablim}
\lim_{r\to\infty}L^r_{X} = F^+_\nu\oplus (W_X\oplus \gamma(W_X^\perp))\oplus d(E^-_\nu) \oplus V_X.
\end{equation}
\end{theorem}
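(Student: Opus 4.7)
My plan is to follow the Atiyah--Patodi--Singer scattering-theoretic framework together with the symplectic-reduction formalism of Lemma \ref{symred}, essentially the strategy of Kirk--Lesch (\cite{lesch}). The argument divides into four stages.

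First stage: the equations $d\beta=d^*\beta=0$ and $d(r(\beta))=0$. Writing $r(\beta)=k+v_-$ with $k\in\ker A$ and $v_-\in F_0^-$, I extend $\beta$ to $X_\infty$ by solving the first-order ODE $(\partial_x+A)\tilde\beta=0$ on the cylindrical end $(-\infty,0]\times\del X$. This gives
\[
\tilde\beta(x,y)=k(y)+\sum_{\lambda<0}a_\lambda e^{-\lambda x}\psi_\lambda(y),
\]
whose non-harmonic tail decays exponentially as $x\to-\infty$ because every appearing $\lambda$ is strictly negative; so $\tilde\beta$ is an extended $L^2$ harmonic form on $X_\infty$. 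Since $D^2=\Delta$ on even forms (up to signs), Green's formula applied on the truncation $X_T=X\cup([-T,0]\times\del X)$ yields
\[
0=(\Delta\tilde\beta,\tilde\beta)_{X_T}=\|d\tilde\beta\|^2_{X_T}+\|d^*\tilde\beta\|^2_{X_T}+B_T,
\]
where the boundary remainder $B_T$ at $\{-T\}\times\del X$ decays exponentially (because $\tilde\beta-k$ and its derivatives do), while $k$ itself contributes nothing since $dk=d^*k=0$ on $\del X$. Letting $T\to\infty$ forces $d\tilde\beta=d^*\tilde\beta=0$ on $X_\infty$, hence on $X$. Decomposing $d=d_{\del X}+dx\wedge\partial_x$ in the collar and restricting $d\beta=0$ gives $d(r(\beta))=0$.

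Second stage: the identifications of $V_X$ and $W_X$. By the APS Hodge-type theorem (\cite{Atiyah}, Prop.~4.9) the space of extended $L^2$ harmonic even forms on $X_\infty$ is canonically isomorphic to $H^{\mathrm{even}}(X;\C)$, and via $\ker A\cong H^*(\del X;\C)$ the limiting-value map $\tilde\beta\mapsto k$ corresponds to $i^*:H^*(X;\C)\to H^*(\del X;\C)$. Hence $V_X=\im i^*$. Similarly, strict $L^2$ harmonic forms on $X_\infty$ (those with $k=0$, corresponding to $v\in L_X\cap F_0^-$) are identified with $\im(H^*(X,\del X)\to H^*(X))$. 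Because $\nu\geq\nu_0$ one has $L_X\cap F^-_\nu=0$, so the orthogonal projection $L_X\cap F^-_0\to E^-_\nu$ is injective. Stage one tells us every such $v$ is $d$-closed, and within the Hodge splitting $E^-_\nu=d(E^+_\nu)\oplus d^*(E^+_\nu)$ of eigenspaces (using $\{A,d\}=0$) the closed elements are exactly $d(E^+_\nu)$. I therefore define $W_X$ as this injective image of $L_X\cap F_0^-$ inside $d(E^+_\nu)$.

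Third stage: the adiabatic limit decomposition. I identify $L^r_X$ with $\{e^{Ar}u:u\in L_X\}$ by propagating Cauchy data along the cylinder via $\tilde\beta(x)=e^{-Ax}u$. Applying Lemma \ref{symred} with isotropic $W=F^-_\nu$ (so $W^0=F^-_\nu\oplus E^-_\nu\oplus\ker A\oplus E^+_\nu$), the action of $e^{Ar}$ on any bounded family kills the $F^-_\nu$ component exponentially and amplifies the $F^+_\nu$ component exponentially, so in the limit one absorbs the full Lagrangian $F^+_\nu$ inside $F^+_\nu\oplus F^-_\nu$; the middle summand contributes the symplectic reduction $R_\nu(L_X)$, a Lagrangian in $E^-_\nu\oplus\ker A\oplus E^+_\nu$ by Theorem \ref{sympred2}. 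To split $R_\nu(L_X)$ according to the four summands in Lemma \ref{symred}, I invoke Stage one: every representative $v$ is $d$-closed, so its $E^+_\nu\oplus E^-_\nu$ part sits in $d(E^-_\nu)\oplus d(E^+_\nu)$, and its $\ker A$ part sits in $V_X$ by definition (\ref{vee}). Closedness together with the Lagrangian count forces the contribution to $d(E^-_\nu)\oplus d^*(E^+_\nu)$ to be exactly $d(E^-_\nu)$. In $d(E^+_\nu)\oplus d^*(E^-_\nu)$, the $L^2$-solution subspace $L_X\cap F_0^-$ projects onto $W_X\subset d(E^+_\nu)$, and the Lagrangian condition then forces the complement to be $\gamma(W_X^\perp)\subset d^*(E^-_\nu)$, giving the summand $W_X\oplus\gamma(W_X^\perp)$. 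Assembling reproduces (\ref{adiablim}).

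The hard part will be making the notion of adiabatic limit rigorous: one must verify that the orthogonal projections $P^r_X$ converge in operator norm (not merely strongly) to the projection onto the claimed Lagrangian, i.e.\ that $(L^r_X)$ converges in the gap topology on the Fredholm Grassmannian. The exponential rates dictated by the spectral gap at $\pm\nu$ drive convergence on the infinite-dimensional tails $F^\pm_\nu$, while on the finite-dimensional middle convergence is guaranteed by continuity of symplectic reduction. The Lagrangian property of the limit then follows automatically because Lagrangians form a closed subset of the Fredholm Grassmannian and each $L^r_X$ is Lagrangian.
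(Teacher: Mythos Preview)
The paper does not give its own proof of this theorem; it is quoted from Kirk--Lesch \cite{lesch} (see the ``cf.\ \cite{lesch}'' preceding the statement). Your outline follows their strategy and gets Stages~1 and~2 essentially right: the APS extended-$L^2$ argument for $d\beta=d^*\beta=0$ and the identifications $V_X\cong\im i^*$ and $L_X\cap F_0^-\cong\im(H^{\mathrm{even}}(X,\partial X)\to H^{\mathrm{even}}(X))$ are standard and correct.

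Stage~3, however, contains a genuine gap. You assert that the finite-dimensional ``middle part'' of $\lim_{r\to\infty}L_X^r$ equals the symplectic reduction $R_\nu(L_X)$ itself. This is false: since $L_X^r=e^{Ar}(L_X)$ and $e^{Ar}$ preserves the eigenspace decomposition, one has $R_\nu(L_X^r)=e^{Ar}\big|_{E_\nu^-\oplus\ker A\oplus E_\nu^+}\bigl(R_\nu(L_X)\bigr)$, and the middle part of the adiabatic limit is $\lim_{r\to\infty}R_\nu(L_X^r)$, a \emph{further} limit under the finite-dimensional flow. To see that this differs from $R_\nu(L_X)$, note that the summand $d(E_\nu^-)\subset E_\nu^+$ in (\ref{adiablim}) depends only on the boundary operator $A$, whereas $R_\nu(L_X)$ genuinely depends on $X$; there is no reason for all of $d(E_\nu^-)$ to lie in $R_\nu(L_X)$.

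Relatedly, your appeal to Stage~1 to conclude that ``every representative $v$ is $d$-closed'' is illegitimate here: Stage~1 has the hypothesis $r(\beta)\in F_0^-\oplus\ker A$, but a general element of $L_X\cap W^0$ (where $W^0=F_\nu^-\oplus E_\nu^-\oplus\ker A\oplus E_\nu^+$) may have a nonzero $E_\nu^+$ component, in which case the cylindrical extension blows up and the Green's-formula argument collapses. What Stage~1 \emph{does} give you is that $R_\nu(L_X)\cap E_\nu^-$ (coming from $L_X\cap F_0^-$) lies in $d(E_\nu^+)$ and equals $W_X$, and that $R_0(L_X)=V_X$. The remaining work---which is the actual content of the Kirk--Lesch/Nicolaescu argument---is to run the flow $e^{Ar}$ on the finite-dimensional Lagrangian $R_\nu(L_X)$, using these two structural facts together with a Lagrangian dimension count to see that the limit straightens out into the direct sum $W_X\oplus\gamma(W_X^\perp)\oplus d(E_\nu^-)\oplus V_X$. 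Your sketch jumps over precisely this step.
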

To finish this section we state a glueing result (cf.\cite{lesch}) for the signature operator involving intrinsic and {\it finite}-dimensional data describing the difference of eta-invariants under decomposition. For this we define an analogue of the map $\Phi:\Gr(A)\to
\mathcal{U}(\mathcal{E}_{i},\mathcal{E}_{-i})$ (\ref{phimap}) in the finite--dimensional space $\ker A$. To any Lagrangian subspace
$L\subset \ker A$ we assign the unitary map
\begin{equation}\label{smallphimap}\phi(L):(\mathcal{E}_i\cap \ker
A)\to(\mathcal{E}_{-i}\cap \ker A)\end{equation} by the formula
$$L=\left\{x+\phi(L)(x)  \ | \ x\in (\mathcal{E}_i\cap \ker A)\right\}.$$
\newcommand{\even}{\operatorname{even}}
\begin{theorem}\label{invariantform} Let $D$ be the odd signature operator acting on the split
manifold $M=M^+\cup_N M^-$, then we have
\[
\begin{split}
\etab(D,M)&=\etab(D,M^+;V_{M_+}\oplus F_0^+)+
\etab(D,M^-;F_0^-\oplus V_{M_-})\\
&\quad  +\dim ( V_{M_+}\cap V_{M_-})
-\tfrac{1}{2\pi i}\tr\log(-\phi(V_{M_+})\phi(V_{M_-})^*).
\end{split}\]
\end{theorem}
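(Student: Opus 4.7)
My approach combines Theorem \ref{invertiblecase} with the Scott--Wojciechowski-type identity Theorem \ref{invertible} and the adiabatic-limit result Theorem \ref{adiabaticlimit}. First, equation (\ref{iceq1}) applied to the decomposition $M=M^+\cup_N M^-$ yields
\[
\etab(D,M)=\etab(D_{P_{M^+}},M^+)+\etab(D_{P_{M^-}},M^-)+\tfrac{1}{2\pi i}\tr\log\bigl(\Phi(I-P_{M^-})\Phi(P_{M^+})^*\bigr).
\]
Then Theorem \ref{invertible} applied on each side allows me to replace the Calderon projector $P_{M^\pm}$ by the APS boundary condition $P^+(V_{M^\pm})$ corresponding to the Lagrangian subspace $V_{M^\pm}\subset\ker A$: for each $\pm$,
\[
\etab(D_{P^+(V_{M^\pm})},M^\pm)-\etab(D_{P_{M^\pm}},M^\pm)=\tfrac{1}{2\pi i}\tr\log\bigl(\Phi(P^+(V_{M^\pm}))\Phi(P_{M^\pm})^*\bigr).
\]
Substituting expresses $\etab(D,M)$ as the desired sum $\etab(D,M^+;V_{M^+}\oplus F_0^+)+\etab(D,M^-;F_0^-\oplus V_{M^-})$ plus an algebraic combination of three $\tr\log$ terms involving $P_{M^\pm}$.

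To evaluate these $\tr\log$ terms, I would pass to the adiabatic limit via the stretching $M^\pm\leadsto M^\pm_r$. By the Bismut--Freed variational formula (\ref{etaconst}) applied inside the product collar, both the APS eta-invariants and the glued reduced eta are constant in $r$, so the identity above can be evaluated in the limit $r\to\infty$; there, by Theorem \ref{adiabaticlimit}, the Cauchy data spaces $L_{M^\pm}^r$ converge to explicit Lagrangians of the form $F^+_\nu\oplus(W_{M^\pm}\oplus\gamma(W_{M^\pm}^\perp))\oplus d(E^-_\nu)\oplus V_{M^\pm}$. Using Lemma \ref{grassmannian2} together with the orthogonal Hodge-symplectic decomposition (\ref{bigsum2}), the operators $\Phi(P^{\infty}_{M^\pm})$ become block-diagonal with respect to the four symplectic summands of (\ref{bigsum2}), and on the $\ker A$-block they reduce exactly to the finite-dimensional isometry $\phi(V_{M^\pm})$ of (\ref{smallphimap}). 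A careful matching, using the anticommutation $\gamma d=-d^{*}\gamma$ on the blocks $d(E^\pm_\nu)\oplus d^{*}(E^\mp_\nu)$, shows that on every infinite-dimensional block the contributions of the three $\tr\log$ terms cancel pairwise, leaving only the $\ker A$-part.

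Using $\Phi(I-P)=-\Phi(P)$ (cf.\ Lemma \ref{grassmannian2}) and $\tr\log(AB)=\tr\log(BA)$, the single surviving contribution evaluates to $-\tfrac{1}{2\pi i}\tr\log(-\phi(V_{M^+})\phi(V_{M^-})^*)$. The remaining integer term $\dim(V_{M^+}\cap V_{M^-})$ comes from bookkeeping on the branch (\ref{wind12}) of the logarithm: by Lemma \ref{grassmannian2}(3) the intersection $V_{M^+}\cap V_{M^-}$ is canonically identified with $\ker(I+\phi(V_{M^+})^{*}\phi(V_{M^-}))$, so $-1$ lies in $\spec(\phi(V_{M^+})\phi(V_{M^-})^{*})$ with multiplicity exactly $\dim(V_{M^+}\cap V_{M^-})$; tracking the jump of the reduced eta (Definition \ref{ML-S3.2}) through this kernel under a homotopy deforming the Calderon conditions on $M^{\mathrm{cut}}$ into $P^{+}(V_{M^+})\sqcup(I-P^{+}(V_{M^-}))$ produces precisely the stated integer contribution. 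The main obstacle is the pairwise block-cancellation in the second paragraph: one must verify that in the adiabatic limit the block components of $\Phi(P^{\infty}_{M^\pm})$ and $\Phi(P^{+}(V_{M^\pm}))$ away from $\ker A$ coincide on the nose, so that the infinite-dimensional pieces of all three $\tr\log$ terms cancel and only the finite-dimensional $\ker A$-determinant remains.
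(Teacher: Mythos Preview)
Your overall strategy---start from (\ref{iceq1}), convert Calderon to APS boundary conditions via Theorem \ref{invertible}, stretch the collar, and reduce the surviving $\tr\log$ terms to the finite-dimensional $\ker A$-block---is indeed the route taken in \cite{lesch}, which the paper cites rather than reproves. So the architecture is right. But two of your load-bearing steps are not justified as written.

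First, invoking the Bismut--Freed formula (\ref{etaconst}) to argue that the APS eta-invariants are constant under collar stretching is wrong: (\ref{etaconst}) concerns the adiabatic limit in the \emph{base} direction of a fibration (scaling $g^{S^1}$ by $1/\epsilon^2$), not the attachment of a cylinder $[-r,0]\times N$ to a fixed boundary. The continuous part of the variation of $\eta(D_{P^+(V)},M^\pm_r)$ does vanish, but for a different reason (the local variation density is supported where the metric changes, and on a product collar it is zero). More importantly, the \emph{reduced} eta $\etab$ can still jump if $\dim\ker D_{P^+(V)}$ changes with $r$. The paper singles out exactly this point in the paragraph following the theorem: the crucial fact is that $\dim\bigl((F_0^-\oplus\gamma(V))\cap L_{M^+}^r\bigr)$ is independent of $r\in[0,\infty]$, proved via the short exact sequence (\ref{exact1}) and the topological identifications of $W_{M^+}$ and $V_{M^+}$ in Theorem \ref{adiabaticlimit}. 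Without this, you cannot pass to $r=\infty$ in the $\etab$-terms.

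Second, your claim that ``on every infinite-dimensional block the contributions of the three $\tr\log$ terms cancel pairwise'' is too quick. Compare the computation in Lemma \ref{maslov23}: a single term $\tfrac{1}{\pi i}\tr\log\bigl(\Phi(\Lambda_{Y_0}\oplus F_0^-)\Phi(L_{Y_0}^\infty)^*\bigr)$ does \emph{not} vanish on the non-$\ker A$ blocks---it produces $\dim W_{Y_0}$, coming from the $(W_{Y_0}\oplus\gamma(W_{Y_0}^\perp))$ versus $d^*(E_\nu^+)\oplus d(E_\nu^+)$ mismatch. In the three-term combination you need, these $W_{M^\pm}$ contributions must be tracked and shown to cancel against the corresponding kernel terms; they do not simply disappear block by block. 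The paper's remark after the theorem (and the parallel calculations in Lemmas \ref{maslov23}--\ref{arb}) show the bookkeeping required.
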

Note that for the proof it is crucial to note that the dimension of the intersection $  (F_0^-\oplus
\gamma(V))\cap L_{M^+}^r$ is independent of $r\in [0,\infty]$ (analogously for $M_-$) for any Lagrangian $V\subset {\rm ker}\  A$, this will also be of use later (see Chapter \ref{etaquas}). To explain this, note that from the definition of $V_{M_+}$ in (\ref{vee}) there is an exact sequence
$$
0\to L_{M^+}\cap F^-_0\to L_{M^+}\cap (F^-_0+\ker A)\to V_{M_+}\to0.
$$
Then it follows  that for any subspace $V\subset \ker A$ there is an exact sequence
\begin{equation}
\label{exact1}0\to L_{M^+}\cap F^-_0\to L_{M^+}\cap (F^-_0+\gamma(V))\to  V_{M_+}\cap \gamma(V) \to 0.
\end{equation}
Now by Theorem \ref{adiabaticlimit}, $L_{M^+}\cap F^-_0$ is isomorphic to \ref{bla78}, i.e. it is independent of the collar length.
On the other hand, by the same Theorem, $ V_{M_+}$ is isomorphic to the image of $H^*(M^+;\C)\to
H^*(\del M^+;\C)$,  so $V_{M_+}\cap \gamma(V)$ is also independent of the collar-length. Consequently since the middle term in the
exact sequence (\ref{exact1}) is isomorphic to $W_{M_+}\oplus (V_{M_+}\cap\gamma(V))$, its dimension is independent of the collar length $r$.\\
We end this section with a notation which will be used in chapter \ref{etaquas}
 \begin{Def}\label{avmaslov}
For  $(H,\langle\ , \ \rangle, \gamma)$ a finite-dimensional Hermitian symplectic space we define (using the above notation)
$$ m_H:\mathcal{L}(H)\times \mathcal{L}(H) \to \R$$ by
\[
\begin{split}m_H(V,W)&=-\tfrac{1}{\pi
i}\tr\log(-\phi(V)\phi(W)^*)+  \dim(V\cap W)\\
&=-\tfrac{1}{\pi i}
\sum_{\begin{array}{c}\lambda\in\spec(-\phi(V)\phi(W)^*)\\
\lambda\neq -1\end{array}} \log \lambda.
  \end{split}
\]
\end{Def}
Note that, by definition. $m_H(\cdot,\cdot)$ is antisymmetric in its entries, additive on direct sums and, as is shown in \cite{lesch}, continously varying under transformations of the kind $t\mapsto m_H(h_t(\cdot),h_t(\cdot))$, where $h_t:H\rightarrow H$ is continous in $t$.

\section{Eta invariants and quasihomogeneous polynomials}\label{etaquas}
\subsection{General quasihomogeneous polynomials}
We first recall the definition of quasihomogeneous polynomials:
\[
\begin{split}
f \in \mathbb{C}[z_0,\dots,z_n] \quad {\it 'quasihomogeneous'}\  :\Leftrightarrow \  &
\exists \beta_0,\dots,\beta_n,\ \beta \geq 0\in \mathbb{N} \quad{\rm so\ that}\quad\\
&f(z^{\beta_0} z_0,\dots,z^{\beta_n} z_n)=z^\beta f(z_0,\dots,z_n),
\end{split}
\]
where $\{\beta_i/\beta\}_{i=0}^n \in \mathbb{Q}$ are called 'the weights' of $f$.
\begin{Def}\label{defmilnor323}
Let $f\in \mathbb{C}[z_0,\dots,z_n]$ be quasihomogeneous with isolated singularity $0 \in f^{-1}(0)$.
Then the locally trivial fibration
\begin{equation}\label{milnor323}
f:  Y=\bigcup_{u \in \delta S^1} X_u:=f^{-1}(u) \cap B^{2n+2}\rightarrow \delta S^1=:S^1_\delta,
\end{equation}
for $\delta >0$ sufficiently small is called the Milnor bundle of $f$ with Milnor fibres $X_u$.
\end{Def}
{\it Remark.} The Milnor fibres are homotopy equivalent to a 'wedge' of $n$-spheres, $X_u \simeq \bigvee_\mu S^n$, so their (reduced) cohomology is concentrated in $H^n(X_u,\mathbb{C})$.\\
The 'boundary fibration' $\partial Y= \bigcup_{u \in \delta S^1_\delta} \partial X_u \rightarrow \delta S^1$ extends smoothly to the unpunctured disk $D$ hence is trivial, more exactly:
\begin{lemma}
The smooth family of manifolds $\partial X=\{X_u\cap S^{2n+1}\}$, $u \in S^1_\delta$ admits a trivialization which is unique up to homotopy.
\end{lemma}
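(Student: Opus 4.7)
The plan is to extend the boundary fibration from $S^1_\delta$ to the closed disk $\bar D_\delta$ containing it, and then exploit the contractibility of $\bar D_\delta$ for both existence and uniqueness. The quasihomogeneity of $f$ is not really essential here; what is needed is only that $0$ is an isolated singularity, which is the input to the standard Milnor fibration setup.

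First I would establish the following transversality statement: for $\epsilon$ sufficiently small, the sphere $S^{2n+1}_\epsilon$ is transverse to $f^{-1}(u)$ for every $u$ in some closed disk $\bar D_\delta\subset \mathbb{C}$ centered at $0$ (including $u=0$). Equivalently, at every point $z\in S^{2n+1}_\epsilon$ with $|f(z)|\leq \delta$, the differentials $df_z$ and $d(|z|^2)_z$ are $\mathbb{R}$--linearly independent. If this failed, we could produce a sequence of points $z_k\to 0$ where $\nabla f(z_k)$ is a complex multiple of $\bar z_k$ with modulus bounded below by $c|z_k|$, contradicting the Milnor inequality $|\nabla f(z)|/|z|\to\infty$ near an isolated singularity (or, for the quasihomogeneous case, the explicit Euler identity $\sum \beta_i z_i\,\partial_i f = \beta f$). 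After shrinking both $\delta$ and $\epsilon$, the map
\[
f\colon f^{-1}(\bar D_\delta)\cap S^{2n+1}_\epsilon\longrightarrow \bar D_\delta
\]
is therefore a proper submersion onto the contractible base $\bar D_\delta$.

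Second, by Ehresmann's fibration theorem, this proper submersion is a smooth locally trivial fibre bundle over $\bar D_\delta$. Since $\bar D_\delta$ is contractible, the bundle admits a global trivialization
\[
\Phi\colon \bar D_\delta\times K \stackrel{\iso}{\longrightarrow} f^{-1}(\bar D_\delta)\cap S^{2n+1}_\epsilon,
\]
where $K = f^{-1}(0)\cap S^{2n+1}_\epsilon$ is the link of the singularity. Restricting $\Phi$ to $S^1_\delta = \partial \bar D_\delta$ gives the required trivialization of $\partial Y=\bigcup_{u\in S^1_\delta}\partial X_u$ as a product, with typical fibre diffeomorphic to $K$.

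For the uniqueness up to homotopy, given two such trivializations $\Phi_0,\Phi_1$ of the extended bundle over $\bar D_\delta$, the composition $\Phi_1^{-1}\circ\Phi_0$ is a gauge transformation, i.e., a smooth map $g\colon \bar D_\delta\to \Diff(K)$. Since $\bar D_\delta$ is contractible, $g$ is smoothly null-homotopic, producing a homotopy from $\Phi_0$ to $\Phi_1$ through bundle trivializations over $\bar D_\delta$; restricting this homotopy to $S^1_\delta$ gives a homotopy between the induced trivializations of $\partial Y$. The main obstacle is the transversality step: verifying that spheres and fibres meet transversally not just for $u\in S^1_\delta$ but uniformly on a closed disk around $0$ is exactly the classical analytic input from Milnor's theory. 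Once this is secured, the argument becomes a purely formal consequence of Ehresmann's theorem together with the contractibility of $\bar D_\delta$.
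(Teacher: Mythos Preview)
Your argument is correct and is exactly the approach the paper indicates in the sentence preceding the lemma (``extends smoothly to the unpunctured disk $D$ hence is trivial''): you simply supply the details the paper omits, namely the uniform transversality of small spheres with the fibres over $\bar D_\delta$ and the invocation of Ehresmann's theorem. One minor caveat worth recording: your uniqueness argument (and the paper's intended statement) concerns trivializations that \emph{extend over the disk}, which indeed form a single homotopy class; arbitrary trivializations of a trivial bundle over $S^1_\delta$ need not all be homotopic, since $[S^1,\Diff(K)]$ can be nontrivial, but this stronger claim is neither asserted nor needed.
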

Fix one fibre $X_u,  u\in S^1_\delta$ and denote a diffeomorphism as indicated by the lemma by
\begin{equation}\label{triv}
\Theta:\partial Y \rightarrow -\partial X_u \times S^1, \ \ {\rm s.t.}\ \Theta(\partial X_w) = -\partial X_u \times \{w\}, \ w \in S^1_\delta.
\end{equation}
Set $Y_0=-X_u\times S^1$, where the '$-$'-sign indicates to take the orientation of the fibre of $(Y_0,\partial Y_0)$ is opposite to that of $(X_u, \partial X_u)$, we will think of $\Theta$ as an (orientation-reversing) diffeomorphism of some neighborhood $U$ of $\partial Y$ in $Y$ onto a neighborhood $V$ of $\partial \tilde Y_0$ in $Y_0$, so $\Theta:U \subset Y \rightarrow V \subset Y_0$ is a diffeomorphism. Now $Tf= \{{\rm ker } f_*: TY \rightarrow TS^1_\delta\}$ carries a canonical metric $g^{Tf}$ induced by $\mathbb{C}^{n+1}$,
on the other hand consider the 'Euler vector field' on $\mathbb{C}^{n+1}$,
\[
X_f(z)=\sum_{i=0}^n 2\pi i w_i z_i\frac{\partial}{\partial z_i}, z \in Y \quad {\rm it\ satisfies}\quad (X_f.f)(z)=2\pi i f(z),
\]
so $X_f$ defines a horizontal distribution $H_{X_f} \subset TY$. Using this one can define a metric $g^Y$ on $Y$ as
\begin{equation}\label{metric}
g^Y=g^{Tf} \oplus f^*g^{S^1_\delta} \quad {\rm s.t.\  the \ splitting}\quad TY=Tf \oplus H_{X_f} \quad{\rm is \ orthogonal},
\end{equation}
where as above $g^{Tf}=g^{\mathbb{C}^{n+1}}|_{Tf}$. $g^{S^1}$ is the standard metric on $S^1$. With these definitions, $f:Y\rightarrow S^1_\delta$ becomes a Riemannian submersion. Note also that $L_{X_f}g^{Tf}=0$, i.e. the fibres of $(Y, g^Y)$ are totally geodesic by \cite{vilms}. In what follows, the next lemma will be essential.
\begin{lemma}\label{product123}
For any small $\delta>0$ as in (\ref{milnor323}) there is a Riemannian metric $\tilde g^Y$ on $Y$ and open neighbourhoods $\mathcal{U}\subset \mathcal{W}\subset Y$ of the boundary $\partial Y$, so that the geodesic distance ${\rm dist}(\partial Y, Y\setminus \mathcal{W})$ becomes arbitrarily small as $\delta\rightarrow 0$ and $\tilde g^Y=g^Y$ on $Y\setminus \mathcal{W}$. Further there is a horizontal vector field $\hat X_f$ on $Y$ that coincides with $X_f$ on $Y\setminus \mathcal{W}$ and so that setting $H_{\hat X_f}={\rm span}(\tilde X_f)$ we have that $\tilde g^Y|\mathcal{W}$ is a submersion metric as in (\ref{metric}) w.r.t. $H_{\hat X_f}$ and so that $L_{\hat X_f}\tilde g^Y=0$. Further there is an isometry $\Theta:\mathcal{U} \rightarrow \mathcal{U}_{0}:=(\mathcal{U}\cap X_u) \times S^1$ w.r.t. to $(\mathcal{U}, \tilde g^Y|\mathcal{U})$ and $(\mathcal{U}_{0}, (\tilde g^{Y}|X_u\oplus g^{S^1})|_{\mathcal{U}_0})$ for some fixed fibre $X_u$. Finally, the functions $(g^Y_{ij}-\tilde g^Y_{ij})(\delta):Y\rightarrow \mathbb{C}$, $i,j \in \{1,\dots, n\}$ and  all their derivatives converge locally uniformly to zero on $Y$ for $\delta\rightarrow 0$.
\end{lemma}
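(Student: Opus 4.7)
The plan is to construct $\hat X_f$ as an interpolation between the Euler vector field $X_f$ and a vector field that is purely $\partial/\partial \theta$ in the boundary trivialization $\Theta$, and to read off $\tilde g^Y$ as the submersion metric associated to the new horizontal distribution while keeping the fibre metric $g^{Tf}$ unchanged. The starting observation is that on $\partial Y$ the orbits of $X_f$ are closed and, because $\partial Y\to S^1_\delta$ is a trivial bundle, the induced holonomy on $\partial X_u$ is the identity; hence one may arrange $\Theta$ in (\ref{triv}) so that $\Theta_* X_f|_{\partial Y}$ is a constant multiple of $\partial /\partial \theta$.

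To propagate this structure inward, I would use the fibrewise radial function $\rho=|z|^2|_{X_u}$ to obtain an inward collar $\partial X_u\times [0,\epsilon_\delta)\hookrightarrow X_u$ with $\epsilon_\delta\to 0$ as $\delta\to 0$ (the fibres shrink onto the link of the singularity as $\delta\to 0$), and then transport this collar around $S^1_\delta$ by the flow of $X_f$, which is an isometry of fibres since $L_{X_f}g^{Tf}=0$. This yields a diffeomorphism $\Psi:\mathcal{W}\to \partial X_u\times [0,\epsilon_\delta)\times S^1$ under which $X_f$ takes the form $\partial/\partial \theta + Z(t,\cdot)$ for some vertical vector field $Z$ vanishing at $t=0$. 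Choosing a smooth cutoff $\chi:[0,\epsilon_\delta)\to [0,1]$ with $\chi\equiv 0$ on $[0,\epsilon_\delta/3]$ and $\chi\equiv 1$ near $\epsilon_\delta$, set
\[
\hat X_f=\Psi^*\bigl(\partial/\partial \theta+\chi(t)\,Z\bigr)\ \text{on}\ \mathcal{W},\qquad \hat X_f=X_f\ \text{outside}\ \mathcal{W},
\]
and let $\mathcal{U}=\Psi^{-1}\bigl(\partial X_u\times [0,\epsilon_\delta/3)\times S^1\bigr)$. I then define $\tilde g^Y$ by keeping $g^{Tf}$ unchanged, declaring $H_{\hat X_f}=\mathrm{span}(\hat X_f)$ orthogonal to $Tf$, and pulling back $g^{S^1_\delta}$ via $f$. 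Because the cutoff $\chi$ depends only on the intrinsic fibre coordinate $t$ and the original $X_f$ preserves $g^{Tf}$, the flow of $\hat X_f$ still acts by $g^{Tf}$-isometries, so $L_{\hat X_f}\tilde g^Y=0$ and by Vilms's criterion the fibres remain totally geodesic. On $\mathcal{U}$ the metric is literally a product, giving the required isometry $\Theta$.

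The main difficulty I anticipate is the convergence claim $(g^Y_{ij}-\tilde g^Y_{ij})(\delta)\to 0$ with all derivatives, locally uniformly on $Y$ as $\delta\to 0$, since this is what makes the lemma useful for the later perturbation arguments on eta-invariants. The two decisive inputs are (i) the support of the modification lies in $\mathcal{W}$, whose fibrewise width is $\epsilon_\delta\to 0$, so on any fixed compact subset of $Y$ the modification is trivial for all sufficiently small $\delta$; and (ii) the defect $Z(t,\cdot)$ vanishes on $\partial X_u$, and the quasi-homogeneity of $f$ (so that $X_f$ is a polynomial vector field scaling predictably under the weighted $\C^*$-action) controls $\|\chi(t)Z\|_{C^k}$ by a power of $\epsilon_\delta$. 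The geodesic smallness $\dist(\partial Y, Y\setminus \mathcal{W})\to 0$ is then immediate, as that distance is bounded by the $g^{Tf}$-length of a $\partial/\partial t$-segment of length $\epsilon_\delta$, uniformly over $\partial X_u$ and $S^1_\delta$.
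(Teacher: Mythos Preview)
There is a genuine gap at the very first step. You assert that because $\partial Y\to S^1_\delta$ is a trivial bundle, the holonomy of $X_f$ on $\partial X_u$ is the identity, and hence $\Theta$ can be chosen with $\Theta_*X_f|_{\partial Y}=\partial/\partial\theta$. This is false: triviality of the bundle only says there exists \emph{some} flat connection with trivial holonomy, not that the $X_f$-connection has it. The time-$1$ flow of $X_f$ is $(z_0,\dots,z_n)\mapsto (e^{2\pi i w_0}z_0,\dots,e^{2\pi i w_n}z_n)$, which is a nontrivial finite-order isometry of $\partial X_u$ whenever the weights $w_j=\beta_j/\beta$ are not all integers. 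Consequently, in any global trivialization $\Psi:\mathcal{W}\to \partial X_u\times[0,\epsilon_\delta)\times S^1$ the decomposition $X_f=\partial/\partial\theta+Z$ has $Z|_{t=0}\neq 0$, and your cutoff $\hat X_f=\partial/\partial\theta+\chi(t)Z$ does not agree with $X_f$ outside $\mathcal W$ in the way you need; worse, there is no reason for $L_{\hat X_f}g^{Tf}=0$, since even if $Z$ were Killing for $g^{Tf}$, the rescaled field $\chi(t)Z$ generally is not.

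The paper sidesteps this by changing the \emph{fibres} rather than merely the horizontal field: it replaces $X_z$ by $\tilde X_z=\{f(x)=\psi_m(|x|^2)z\}$, so that in the outer annulus ($\psi_m=0$) every fibre is literally the same subset of $f^{-1}(0)$, and works with $\tilde X_f=(X_f,1)$ on $\tilde Y\subset\mathbb{C}^{n+2}$, whose flow is automatically a fibrewise Euclidean isometry. The product structure on $\mathcal U$ is then obtained not by inspection but by interpolating $H_{\tilde X_f}(s)=\mathrm{span}(sX_f,1)$ over $s\in[0,1]$, collapsing $s=0$ to pass to a fibration over the disk, doubling to $S^2$, and invoking Vilms's theorem that a totally geodesic Riemannian submersion over a simply connected base is a metric product. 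This is precisely the device that handles the nontrivial $X_f$-holonomy you overlooked; your interpolation idea is in spirit similar, but it has to be carried out at the level of connections over a simply connected base, not by a scalar cutoff of a vertical defect.
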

\begin{proof}
Fix a cutoff function $\psi_m:[0,1]\rightarrow [0,1]$ with $\psi_m(t^2)=1$ for $t \leq 1-2/m$ and $\psi(t^2)=0$ for $t \geq 1-1/m$ for some $m>2, m \in \mathbb{N}$. Set $\tilde X_z=\{x \in B_1^{2n+2}|f(x) = \psi_m(|x|^2)z\}$ for $z \in D_\delta$ and choose $m>2$ so that $X_z$ is smooth for all $z \in D_\delta$. Set for $0<\delta$ small enough
\begin{equation}\label{milnorfibr5}
\tilde f: \tilde Y\ = \ \bigcup_{ z \in \partial D_\delta=S^1_\delta}\ \tilde X_z \times \{z\} \subset \mathbb{C}^{n+2}\longrightarrow S^1_\delta.
\end{equation}
let $\pi_1:\mathbb{C}^{n}\times \mathbb{C}\rightarrow \mathbb{C}^n$ be the obvious projection and $\tilde X_f=(X_f, 1) \in \Gamma(T\tilde Y)$ so that $X_f=(\pi_1)_*(\tilde X_f)$. For $z \in S^1_\delta$, set
\[
B_z(m)= \{x\in X_x|\psi_{m}(|x|^2)=0\}\subset \tilde X_z, \ \mathcal{U}^0 =\bigcup_{z\in S^1_\delta} B_z(m).
\]
Let $\tilde Y\setminus\mathcal{W}^0 =\bigcup_{z\in S^1_\delta} \tilde A_z(m)$ where $\tilde A_z(m)= \{x\in X_x|\psi_{m}(|x|^2)=1\}\subset X_z$. It follows from Lemma 4.1.2 in \cite{klein1} that $\delta, m$ can be chosen s.t.  ${\rm dist}(\partial Y, Y\setminus \mathcal{W})\rightarrow 0$ as $\delta\rightarrow 0$. Let $g^{T\tilde f}$ be the metric on $T\tilde f={\rm ker} \tilde f_*$ induced by restriction of the Euclidean metric $g_{\mathbb{C}^{n+2}}$ to $T\tilde f$. Then define a metric $g^{\tilde Y}$ on $\tilde Y$ by
\begin{equation}
g^{\tilde Y}=g^{T\tilde f} \oplus \tilde f^*g^{S^1_\delta} \quad {\rm s.t.\  the \ splitting}\quad T\tilde Y=T\tilde f \oplus H_{\tilde X_f} \quad{\rm is \ orthogonal}.
\end{equation}
By Lemma 4.1.2 of \cite{klein1}, there is a family of diffeomorphisms $\Psi_z:X_z\rightarrow \tilde X_z, \ z \in S^1_\delta$ which assemble to a diffeomorphism $\Psi:X\rightarrow \tilde X$. Set $\tilde g^Y=\Psi^*(g^{\tilde Y})$, $\mathcal{U}=\Psi^{-1}(\mathcal{U}^0),\  \mathcal{W}=\Psi^{-1}(\mathcal{W}^0)$ and $\hat X_f=\Psi^{-1}_*(\tilde X_f)$. Over $\mathcal{U}^0$, set $H_{\tilde X_f}(s)={\rm span}(sX_f, 1) \subset T \mathcal{U}^0, \ s \in [0,1]$ and define a locally trivial fibration $\tilde p: \tilde E:=[0,1]\times \mathcal{U}^0\rightarrow [0,1]\times S^1_\delta, \ \tilde p(s,x)= (s, \tilde f(x))$ with a family of horizontal subspaces $\tilde H\subset TE$ by $\tilde H(s, x)=H_r(s)\oplus H_{\tilde X_f}(x,s), \ s \in [0,1], \ x \in \mathcal{U}^0$, where $H_r(s)={\rm span}(\partial/\partial s)$. It is then clear that $\tilde H$ factorizes to a well-defined horizontal distribution $H_E$ on $p:E=\tilde E/(0,z,x)\sim (0,z',x) \rightarrow D_\delta$ where $(0,z,x)\sim (0,z',x), z, z'\in S^1_\delta$ means identifying the fibres of $\{0\}\times \mathcal{U}^0$ by the identity. Then by construction parallel transport along lifts of any curves in $D_\delta$ w.r.t. $H_E$ are fibrewise isometries. Glueing a copy of the bundle $E$ with reversed orientation along the boundary of $D_\delta$, one gets a complete fibration $\hat p:\hat E\rightarrow S^2$ which is a totally geodesic simply connected Riemannian submersion in the sense of Vilms (\cite{vilms}). By \cite{vilms} (Corollary 3.7) it follows that $\hat E$ is a Riemannian product with projection $\hat p$, in particular $\hat p|p^{-1}(S^1_\delta)=\tilde f|\mathcal{U}^0: \mathcal{U}^0\rightarrow S^1_\delta$ itself is a Riemannian product by restriction, denoting the corresponding isometry by $\tilde \Theta$ and composing $\Theta=\tilde \Theta\circ \Psi$ we arrive at the assertion.\end{proof}
Note that in the following we will only consider the metric $\tilde g^Y$ on $Y$ and denote it by $g^Y$. Using the lemma, we can furthermore assume $g^Y$ to be perturbed in a neighbourhood of the boundary of $Y$ so that it has a metric collar thereon that is compatible with (\ref{triv}) in the sense that the metric collar is induced by a collar on the fixed fibre $X_u$ by $\Theta^{-1}$. To be precise, we use the isometry $\Theta^{-1}$ to glue a metric collar to $Y$ which is given explicitly by (note that $Y$ carries a 'left-boundary' as in (\ref{metricol234}), while $Y_0$ carries the opposite collar $(-\epsilon,0]\times \partial X_u$)
\begin{equation}\label{product999}
\left([0,\epsilon)\times \partial X_u\times S^1, g^{[0,\epsilon)} \oplus g^{\partial X_u}\oplus f^*g^{S^1}\right)
\end{equation}
where $g^{[0,\epsilon)}$ denotes the standard metric on $ [0,\epsilon)$ for some fixed fibre $F:=X_u\subset Y$, we will denote the resulting manifold (resp. metric) again by $Y$ (resp. $g^Y$). Note that this also means that the above 'Euler vector field' $\hat X_F$ extends to a horizontal distribution on $Y\cup_\Theta Y_0$ which coincides over $Y_0$ with $TS_\delta^1\times \{0\}\subset TY_0$, i.e. it induces is a fibrewise isometry on $Y\cup_\Theta Y_0$, this will be useful later.\\
Let now be $D$ the signature operator on $Y$ with respect to $g^Y$, let $A$ be its tangential operator on $\partial Y$. Fix monomials
\[
\{z^\alpha : \alpha \in \Lambda \subset
\mathbb{N}^{n+1}, |\Lambda|= \mu\}\ {\rm s. t.}\ \{[z^\alpha]\}_\alpha \ {\rm spans}\
\mathcal{O}_{\mathbb{C}^{n+1}}/\rm{grad}(f)\mathcal{O}_{\mathbb{C}^{n+1}}=:M(f)
\]
where dim $M(f)=\mu$, write $l(\alpha)=\sum_{i=0}^n(\alpha_i+1)w_i$.
We will proof the following, let $\Lambda_Y\subset {\rm ker} A$ be the space of limiting values of extended $L^2$-solutions of $D\beta=0$ on $Y$, $P_{Y_0}$ be the Calderon-Projector of the trivial bundle $Y_0=-F \times S^1$ with respect to product metric, $P_{Y}^\infty$ the adiabatic limit (see above) of the Calderon projector $P_Y$ on $Y$. In the following, we set ${\rm arg}(re^{i\theta})=\theta \in[0,2\pi), r>0$.
\begin{theorem}\label{eta}
Let $D_{I-P_{Y_0}}, D_{P^+(\Lambda_Y)}$ be defined as in section \ref{begriffe1}. Then the eta-invariant $\eta(D_{I-P_{Y_0}})$ for $n=2k$ or $n=2k+1$, $k \in \mathbb{N}$, equals:
\[
\eta(D_{I-P_{Y_0}})  =\sum_{\alpha \in \Lambda, l(\alpha) \notin \mathbb{Z}}(-1)^{[l(\alpha)]+n}\left(1-2\{l(\alpha)\}\right) +\frac{{\rm arg}(-1+\frac{4}{3}i)}{\pi}\sum _{\alpha \in \Lambda, l(\alpha)\in \mathbb{Z}}(-1)^{l(\alpha)+n},
\]
where the first sum represents the contribution of the non-trivial part of the algebraic monodromy $\rho\in {\rm Aut}(H_n(F,\mathbb {C}))$ of $Y$, the second sum amounts to a summation over a basis for the $\lambda=1$-eigenspace of $\rho$. Note that $\{\cdot\}$ denotes the fractional part, $[\cdot]$ is the integer part function. Furthermore the above eta-invariant equals the eta-invariants given by $APS$-boundary conditions induced by $\Lambda_Y$ and relative to the adiabatic limit of the Calderon-projector on $Y$:
\[
\eta(D_{I-P_{Y_0}}) = \eta(D_{P^+(\Lambda_Y)})  = \eta(D_{P_{Y}^{\infty}})
\]
where both equalities are in fact valid in $\mathbb{R}$.
\end{theorem}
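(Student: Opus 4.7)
The plan is to reduce $\eta(D,I-P_{Y_0})$ to the eta-invariant of the odd signature operator on a closed bundle, and then read off the answer from the spectrum of the extended monodromy. First, I would form $Y^e = Y \cup_\Theta Y_0$ by glueing the trivial piece $Y_0 = -F \times S^1$ along the isometric collar produced in Lemma \ref{product123}; this is a closed fibration over $S^1$ with totally geodesic closed fibers $F^e = F \cup (-F)$ and algebraic monodromy of the block form $\rho^e$ of (\ref{var}). Since $Y_0$ is a metric product, its reduced eta-invariant with the Calderon projector vanishes (the operator anticommutes with the involution coming from the $S^1$-factor), so the glueing identity (\ref{etabruening}), a direct consequence of Theorem \ref{invertiblecase}, yields $\tilde\eta(D_{Y^e}) = \tilde\eta(D, I-P_{Y_0})$.

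Next, I would compute $\tilde\eta(D_{Y^e})$ spectrally. Because the fibers are totally geodesic, the Bismut--Freed formula (\ref{etaconst}) makes $\tilde\eta(D_{Y^e})$ invariant under adiabatic stretching, so the computation reduces to the adiabatic limit, i.e.\ to the mapping-torus formula for the fiberwise isometry $\rho^e$ acting on $H^*(F^e,\mathbb{C}) \cong U \oplus U^*$. Each eigenvalue $e^{2\pi i l(\alpha)} \ne 1$ of $\rho^e$ contributes the standard term $\pm(1 - 2\{l(\alpha)\})$, with sign $(-1)^{[l(\alpha)]+n}$ dictated by the Hodge involution on the harmonic representative of the corresponding class. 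Identifying the monodromy eigenvalues with $e^{2\pi i l(\alpha)}$ via the Scherk--Steenbrink description of the monomial basis $\{z^\alpha\}_{\alpha\in\Lambda}$ of $M(f)$ then produces the first sum in the formula.

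The main obstacle is the $\lambda = 1$ eigenspace, i.e.\ the $\alpha$ with $l(\alpha) \in \mathbb{Z}$, on which $\rho^e$ restricts to a unipotent shear through the variation mapping $V$, so the naive mapping-torus contribution is singular. To handle this I would apply the finite-dimensional glueing Theorem \ref{invariantform} to the decomposition $Y^e = Y \cup Y_0$: the Lagrangians $V_Y, V_{Y_0} \subset \ker A$ of limiting values of extended $L^2$-solutions on each side are computable on the $\lambda = 1$ part, and the correction $-\tfrac{1}{2\pi i}\mathrm{tr}\log(-\phi(V_Y)\phi(V_{Y_0})^*)$ is evaluated by diagonalizing the resulting $2 \times 2$ unitary block built from the Seifert form. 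That diagonalization produces the characteristic value $-1 + \tfrac{4}{3}i$ whose argument divided by $\pi$ appears in the statement, and summing with sign $(-1)^{l(\alpha)+n}$ over the integer $l(\alpha)$'s yields the second sum.

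Finally, the equalities $\eta(D, I-P_{Y_0}) = \eta(D, P^+(\Lambda_Y)) = \eta(D, P_Y^\infty)$ in $\mathbb{R}$ follow from Theorem \ref{adiabaticlimit}, which identifies $\Lambda_Y$ with the image of $H^*(Y;\mathbb{C}) \to H^*(\partial Y;\mathbb{C})$ and exhibits $P_Y^\infty$ as the APS-projector $P^+(\Lambda_Y)$; this, combined with the continuity of $m_H$ (Definition \ref{avmaslov}) under homotopies in $\mathrm{Gr}_\infty(A)$ and a spectral-flow-vanishing argument exactly as in the proof of Theorem \ref{generalglue}, upgrades the mod-$\mathbb{Z}$ identity to an equality in $\mathbb{R}$ and identifies all three invariants.
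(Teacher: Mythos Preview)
Your first reduction is correct and matches the paper: glueing $Y_0$ to $Y$, using the anticommuting isometry on $Y_0$ to kill $\tilde\eta(D_{P_{Y_0}},Y_0)$, and invoking Theorem \ref{invertiblecase} gives $\tilde\eta(D,I-P_{Y_0})=\tilde\eta(D,Y^e)$.

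The main gap is in how you propose to extract the formula from $Y^e$, and it shows up precisely at the $\lambda=1$ step. You want to apply Theorem \ref{invariantform} to the decomposition $Y^e=Y\cup Y_0$ and read off the $-1+\tfrac{4}{3}i$ contribution from the correction term $-\tfrac{1}{2\pi i}\tr\log(-\phi(V_Y)\phi(V_{Y_0})^*)$. But this term is \emph{zero}: the paper proves (Lemma \ref{limiting}) that $\Lambda_Y=\Lambda_{Y_0}$, since both restrict to harmonic representatives of $H^{0,n}(\partial F,\C)\otimes H^*(S^1,\C)$, so $m_{H^*(\partial Y,\C)}(\Lambda_Y,\Lambda_{Y_0})=0$ by antisymmetry. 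That decomposition is used only to identify $\eta(D,P^+(\Lambda_Y))$ with $\eta(D,Y^e)$; it cannot see the variation structure at all, because the Seifert/variation data live in the fibre cohomology $H^n(F)$, not in the boundary cohomology $H^*(\partial Y)$ where $\ker A$ sits.

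What the paper actually does to compute $\eta(D,Y^e)$ is a \emph{second, transverse} cut: it views $Y^e$ as the mapping torus of $\Phi_f^e(1)$ (Lemma \ref{isometry}) and splits it along a fibre into $M^\pm=\pm F^e\times[0,1]$, glued by $\rho^e$. Now the tangential operator $\hat A$ acts on $\Omega^*(F^e\sqcup(-F^e))$ and $\ker\hat A\cong H^*(F^e)\oplus H^*(-F^e)$, which via Lemma \ref{betaisom} is $(U\oplus U^*)^2$. The Lagrangian $\Lambda_0$ of limiting values on the cylinder is the diagonal, and Theorem \ref{generalglue} gives $\eta(D,Y^e)=m_{H^*(N,\C)}(\Lambda_0,\hat\rho^e\Lambda_0)$. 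The $2\times2$ blocks you are looking for, and the eigenvalue $\tfrac{3}{5}\mp\tfrac{4}{5}i$ producing the $\arg(-1+\tfrac{4}{3}i)$ term, come from the explicit action of $\rho^e=\begin{pmatrix}I&0\\V&\rho\end{pmatrix}$ on the pairs $(\theta_i,\omega_i)\in U\oplus U^*$ in this picture (Lemma \ref{bladecomp}), not from anything on $\partial Y$. Your ``mapping-torus formula'' for $\lambda\neq 1$ is in effect this same computation, but you have not set it up, and treating the two eigenvalue ranges by two different decompositions cannot work since the first one carries no information.

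A smaller but genuine error in your last paragraph: Theorem \ref{adiabaticlimit} does \emph{not} say $P_Y^\infty=P^+(\Lambda_Y)$. The adiabatic limit is $F_\nu^+\oplus(W_Y\oplus\gamma(W_Y^\perp))\oplus d(E_\nu^-)\oplus\Lambda_Y$, which differs from $F_0^+\oplus\Lambda_Y$ by the $W_Y$ piece whenever $\im\big(H^{\even}(Y,\partial Y)\to H^{\even}(Y)\big)\neq 0$. The paper gets $\eta(D,P^+(\Lambda_Y))=\eta(D,P_Y^\infty)$ not by equality of projectors but by showing (as in Lemma \ref{maslov23}) that the ${\rm tr}\log$ correction exactly cancels $\dim\ker D_{P^+(\Lambda_Y)}=\dim W_Y$.
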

{\it Remarks. }It is conjectured that the above eta-invariant actually equals or at least contributes to the adiabatic limit of the eta-invariant of the signature operator relative to the metric induced by restriction of the Euclidean metric of $\mathbb{C}^{n+1}$. Note that the first formula is valid in $\mathbb{R}$ for these specific boundary conditions, general Lagrangians $\Lambda$ produce additional terms (see Lemma \ref{arb} below). Note also that if $b_\lambda, \lambda \in {\rm spec}( \rho)$ is the Eigenvalue decomposition of the (degenerate) homological intersection form $b$ on the Milnor fibre $X_u=F$ with respect to the unitary map $\rho\in {\rm Aut}(H_n(F,\mathbb {C}))$, we have by (\ref{weightvar}) and (\ref{nemweight}) below:
\begin{equation}\label{bdecomp}
\eta(D_{P^+(\Lambda_Y)})= \sum_{\lambda \in {\rm spec}(\rho), \lambda
\neq 1} {\rm\ sign}(b_\lambda)(1-2c)+\frac{{\rm arg}(-1+\frac{4}{3}i)}{\pi}\sum _{\alpha \in \Lambda, l(\alpha)\in \mathbb{Z}}(-1)^{l(\alpha)+n},
\end{equation}
when $\lambda = e^{2\pi i c},\ c \in (0,1]$, the second sum amounts to a summation
over a basis for the $\lambda=1$-eigenspace of $\rho$.\\
We can also give the eta-invariant on the Milnor bundle relative to {\it arbitrary} $APS$-boundary conditions, so that all expressions involve finite-dimensional terms:
\begin{theorem}\label{eta2}
Set $P^+(\Lambda) \in Gr_\infty(A)$ s.t. ${\rm im}(P^+(\Lambda))=F_0^+\oplus \Lambda$ for $\Lambda \subset{\rm ker\ A}$ arbitrary Lagrangian and $n=2k$ or $n=2k+1$, then:
\[
\begin{split}
\tilde \eta(D_{P^+(\Lambda)})= &\frac{1}{2}\eta(D_{I-P_{Y_0}})+\frac{1}{2\pi i}{\rm tr \log}(\phi(\Lambda)\phi(\Lambda_Y)^*)\\
&+\frac{1}{2}{\rm dim}\ (im(H^{even}(Y,\partial Y,\mathbb{C})\rightarrow H^{even}(Y,\mathbb{C})),
\end{split}
\]
here, $\tilde \eta(D_P)=(\eta(D_P) + {\rm dim\ ker }(D_{P}))/2$ for $P\in Gr(A)$ denotes the reduced eta-invariant and the image of $\phi:Gr({\rm ker}(A)) \rightarrow \mathcal{U}({\rm ker}(A)\cap \mathcal{E}_i,{\rm ker}(A)\cap \mathcal{E}_{-i})$ is a finite dimensional unitary mapping.
\end{theorem}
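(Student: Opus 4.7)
The strategy is to reduce Theorem~\ref{eta2} to Theorem~\ref{eta} by comparing the APS boundary conditions $P^+(\Lambda)$ and $P^+(\Lambda_Y)$ via the Scott--Wojciechowski/Kirk--Lesch framework. Applying Theorem~\ref{invertible} to $P=P^+(\Lambda)$ and to $P=P^+(\Lambda_Y)$, both relative to the Calder\'on projector $P_Y$, and subtracting, one obtains (after telescoping the $\Phi(P_Y)^*$ factors)
\[
\tilde\eta(D_{P^+(\Lambda)})-\tilde\eta(D_{P^+(\Lambda_Y)})=\tfrac{1}{2\pi i}\,{\rm tr}\,{\rm log}\bigl(\Phi(P^+(\Lambda))\Phi(P^+(\Lambda_Y))^*\bigr).
\]
Since $P^+(\Lambda)$ and $P^+(\Lambda_Y)$ differ from $P_{>0}$ only by finite-rank projections onto Lagrangian subspaces of $\ker A$, the unitary $\Phi(P^+(\Lambda))\Phi(P^+(\Lambda_Y))^*$ acts as the identity on the orthogonal complement of $\ker A\cap\mathcal{E}_i$ inside $\mathcal{E}_i$ and reduces on $\ker A\cap\mathcal{E}_i$ to the finite-dimensional unitary $\phi(\Lambda)\phi(\Lambda_Y)^*$ from (\ref{smallphimap}). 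Hence the trace-log collapses to $\tfrac{1}{2\pi i}\,{\rm tr}\,{\rm log}(\phi(\Lambda)\phi(\Lambda_Y)^*)$.

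Next, I would compute $\dim\ker D_{P^+(\Lambda_Y)}$. By Lemma~\ref{kerdp} this dimension equals $\dim(L_Y\cap\ker P^+(\Lambda_Y))$, and since $\ker P^+(\Lambda_Y)=F_0^-\oplus\gamma(\Lambda_Y)$, the exact sequence (\ref{exact1}) with $V=\Lambda_Y$ reads
\[
0\to L_Y\cap F_0^-\to L_Y\cap(F_0^-\oplus\gamma(\Lambda_Y))\to V_Y\cap\gamma(\Lambda_Y)\to 0.
\]
Now $V_Y=\Lambda_Y$ by (\ref{vee1}) and $\gamma(\Lambda_Y)=\Lambda_Y^\perp$ in $\ker A$ since $\Lambda_Y$ is Lagrangian, so the right-hand term vanishes. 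Hence $\dim\ker D_{P^+(\Lambda_Y)}=\dim(L_Y\cap F_0^-)$, which by Theorem~\ref{adiabaticlimit} and the isomorphism (\ref{bla78}) equals $\dim\,\im\bigl(H^{\rm even}(Y,\partial Y,\C)\to H^{\rm even}(Y,\C)\bigr)$.

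Combining the difference formula with $\tilde\eta(D_{P^+(\Lambda_Y)})=\tfrac{1}{2}\bigl(\eta(D_{P^+(\Lambda_Y)})+\dim\ker D_{P^+(\Lambda_Y)}\bigr)$ and invoking Theorem~\ref{eta} (which gives $\eta(D_{P^+(\Lambda_Y)})=\eta(D_{I-P_{Y_0}})$ in $\R$) yields the claimed formula. The principal obstacle is the $\Z$-ambiguity inherent in the first step: since $\log$ is not additive on unitary operators, the telescoped identity for $\Phi(P^+(\Lambda))\Phi(P^+(\Lambda_Y))^*$ holds only modulo $2\pi i\,\Z$ from Theorem~\ref{invertible} alone. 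To pin down the lift to $\R$ with the branch (\ref{wind12}), I would run the Kirk--Lesch path-lifting argument along a homotopy $P_t\in\Gr_\infty(A)$ joining $P^+(\Lambda_Y)$ to $P^+(\Lambda)$ entirely within the finite-dimensional sector (only the Lagrangian in $\ker A$ varies); the infinite-dimensional part of $\Phi(P_t)\Phi(P^+(\Lambda_Y))^*$ then stays equal to the identity along the path and contributes no winding, so that the branch (\ref{wind12}) is inherited directly from the finite-dimensional factor $\phi(\Lambda_t)\phi(\Lambda_Y)^*$.
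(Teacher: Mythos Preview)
Your overall strategy coincides with the paper's: subtract two applications of Theorem~\ref{invertible} (with respect to the Calder\'on projector $P_Y$), reduce the resulting operator $\Phi(P^+(\Lambda))\Phi(P^+(\Lambda_Y))^*$ to the finite-dimensional block $\phi(\Lambda)\phi(\Lambda_Y)^*$ on $\ker A\cap\mathcal{E}_i$, compute $\dim\ker D_{P^+(\Lambda_Y)}$ via the exact sequence~(\ref{exact1}) and Theorem~\ref{adiabaticlimit}, and finally invoke Theorem~\ref{eta}. Steps~2--4 of your sketch are carried out essentially verbatim in the paper (see Lemma~\ref{arb} and the lemma following it, together with the kernel computation already done in the Corollary after Lemma~\ref{maslov23}).

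The substantive difference is in how the integer ambiguity is resolved. Subtracting the two instances of Theorem~\ref{invertible} does not give $\tfrac{1}{2\pi i}{\rm tr}\,{\rm log}(\Phi(P^+(\Lambda))\Phi(P^+(\Lambda_Y))^*)$ directly; it gives the difference of two ${\rm tr}\,{\rm log}$ terms, and the paper rewrites this as $-\tfrac{1}{2\pi i}{\rm tr}\,{\rm log}(\Phi(P^+(\Lambda))\Phi(P^+(\Lambda_Y))^*)+\tau_\mu(P^+(\Lambda),P^+(\Lambda_Y),L_Y)$, where $\tau_\mu\in\Z$ is the Kirk--Lesch triple index. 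The paper then shows $\tau_\mu=0$ by first replacing $L_Y$ with its adiabatic limit $L_Y^\infty$ (legitimate because the intersections $L_Y^r\cap(F_0^-\oplus\gamma(V))$ have constant dimension in $r$) and then using additivity of $\tau_\mu$ on the direct-sum decomposition of $L_Y^\infty$ from Theorem~\ref{adiabaticlimit}; each summand has two entries equal and hence vanishes.

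Your proposed alternative, a path-lifting along $P_t=P^+(\Lambda_t)$, is not complete as written. The Scott--Wojciechowski lift in the remark after Theorem~\ref{scottwoj} tracks only the smooth variation $\tfrac{1}{2}\int\frac{d}{dt}\eta(D_{P_t})\,dt$; it does not see the spectral-flow contribution to $\tilde\eta(D_{P_1})-\tilde\eta(D_{P_0})$. Saying that the infinite-dimensional part of $\Phi(P_t)\Phi(P_0)^*$ stays at the identity rules out winding from that sector, but it does not rule out eigenvalues of $D_{P_t}$ crossing zero, which is precisely what $\tau_\mu$ encodes. To make your argument rigorous you would have to show ${\rm SF}(D_{P^+(\Lambda_t)})_{t\in[0,1]}$ equals the integer discrepancy between your ${\rm tr}\,{\rm log}$ and the difference of reduced etas, which amounts to re-proving the vanishing of the triple index by other means.
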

The proof of the Theorems will be devided in a series of Lemmata. Fixing any fibre $X_u=:F$ we have the above described diffeomorphism
\[
\Theta:\partial Y  \rightarrow -\partial F \times S^1_\delta,
\]
furthermore the metric splits orthogonally $g^Y|_{\partial Y}= g^{\partial Y}\oplus f^*g^{S^1}$. As mentioned above, we define the metric collar of $Y$ so that it extends this trivialization to an orthogonal splitting on a collar of $\partial Y$.
\[
g^Y_{[0,\epsilon)\times \partial Y}=(g^{[0,\epsilon)}\oplus g^{\partial Y})\oplus f^*g^{S^1_\delta} .
\]
Now define the {\it double} $Y^e$ of $Y$ by
\begin{equation}\label{double}
Y^e=Y \cup_\Theta (- F \times S^1_\delta), \quad  \pi^e: Y^e
\rightarrow S^1_\delta, \quad (\pi^e)^{-1}(u)=F\cup (-F)=:F^e,
\end{equation}
and set $g^{-F\times S^1_\delta}=g^F\oplus f^*g^{S^1_\delta}$, this fits together with the above to define a smooth metric  $g^{Y^e}$ on $Y^e$. The flow of $\hat X_f$ on $Y\setminus \mathcal{W}$ is given by
\[
\Phi_f(t,z_0, \dots, z_n)=(e^{2\pi i t w_0},\dots, e^{2\pi i tw_n})
\]
hence $(\Phi_f(t))^*g^{Tf}=g^{Tf}$ for all $t \in [0,1]$. For the following Lemma we assume that $Y$ resp. $H_{X_f}$ is modified in a neighbourhood of $Y$ in the sense of the discussion below Lemma \ref{product123}, i.e. the trivial horizontal distribution on $S^1 \times F$ and $H_{\hat X_f}$ glue smoothly in (\ref{double}). Then
\begin{lemma}\label{isometry}
Let $X_c(u)=c'(u), u \in S^1_\delta$ and let $X^e_f$ be its horizontal lift to $H_{X_f}$ in $Y^e$, $\Phi^e_f(t,z), \ t \in [0,1], \ z \in \tilde Y^e$ be its
flow. Then $Y^e$ is isometric to  $[0,1] \times Y_e^{u}/(0,z) \sim (1,\Phi_f^e(1,z))$, where the latter is furnished with the factorized product metric on $[0,1]\times Y^e$.
\end{lemma}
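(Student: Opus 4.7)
The plan is to produce an explicit isometry from the mapping torus $T^e := [0,1]\times F^e/\!\sim$ onto $Y^e$ by using the flow of the horizontal lift $X^e_f$, and to check the metric identification fibrewise.

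First I would verify that $X^e_f$ is a Killing field for $g^{Y^e}$. By Lemma \ref{product123} the modified metric satisfies $L_{\hat X_f}\tilde g^Y=0$ on the interior part, and by the construction preceding the lemma $\hat X_f$ agrees on a collar of $\partial Y$ with the tangent field of the $S^1_\delta$-factor in $-F\times S^1_\delta$ under the isometry $\Theta$. Since that tangent field is obviously Killing for the product metric $g^F\oplus f^*g^{S^1_\delta}$, the two pieces glue to a globally defined Killing field $X^e_f$ on $(Y^e,g^{Y^e})$. Its flow $\Phi^e_f(t,\cdot)$ is therefore an isometry of $Y^e$ for each $t$, and since $\pi^e_*(X^e_f)=X_c$, it maps the fibre $F^e=(\pi^e)^{-1}(u)$ onto the fibre over $c(t)$; parameterizing $c$ by arclength so that $c(1)=u$, we obtain $\Phi^e_f(1,F^e)=F^e$.

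Next I would define
\[
\Psi: [0,1]\times F^e \longrightarrow Y^e,\qquad \Psi(t,z)=\Phi^e_f(t,z),
\]
observe that $\Psi(0,z)=z$ and $\Psi(1,z)=\Phi^e_f(1,z)$, so $\Psi$ descends to a smooth map $\bar\Psi:T^e\to Y^e$ after passing to the quotient $(0,z)\sim(1,\Phi^e_f(1,z))$. Surjectivity of $\bar\Psi$ is clear since every point of $Y^e$ lies on a unique integral curve of $X^e_f$ through some point in $F^e$; injectivity modulo the identification follows because $\Phi^e_f$ is a group action of $\mathbb{R}$ by isometries with $F^e$ as a global transversal.

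Finally I would check $\bar\Psi$ is an isometry. The decomposition $TY^e = T^vY^e \oplus H_{X^e_f}$ is orthogonal by construction, and $\Psi_*(\partial/\partial t)=X^e_f$ has constant unit length (this is where the parametrization of $[0,1]$ must match $|X_c|$; rescaling the base circle if necessary, this is immediate). For the vertical part, $\Psi$ restricted to $\{t\}\times F^e$ equals $\Phi^e_f(t,\cdot)|_{F^e}$, which is an isometry onto $(\pi^e)^{-1}(c(t))$ by the Killing property; hence the pullback of the vertical metric is independent of $t$ and equals $g^{F^e}$. Matching orthogonal decompositions then gives $\Psi^*g^{Y^e}=dt^2\oplus g^{F^e}$, which is the product metric. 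The only genuinely delicate point, and hence the main obstacle, is the calibration of the parameter $t$ with the arclength along integral curves of $X^e_f$ so that the identification at $t=0,1$ agrees with the time-one flow; once the base circle $S^1_\delta$ is parametrized compatibly with the weighted $\mathbb{C}^*$-action this is automatic.
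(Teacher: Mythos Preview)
Your argument is correct and reaches the same conclusion as the paper, but by a more direct route. The paper first notes that the flow $\Phi^e_f(t,\cdot)$ is a fibrewise isometry, then invokes Vilms' theorem to conclude that $Y^e$ is a totally geodesic fibration and hence locally a metric product over the base; the global isometry is then obtained by passing to the universal cover $\tilde Y^e$, constructing an explicit $\mathbb{Z}$-equivariant isometry $\tilde Y^e\to\mathbb{R}\times Y^e_u$ (where $\mathbb{Z}$ acts by deck transformations on one side and by $(t,x)\mapsto(t+z,\Phi^e_f(z,x))$ on the other), and factorizing to the quotient.

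Your approach bypasses both Vilms and the universal cover: you observe directly that $X^e_f$ is Killing for $g^{Y^e}$ (since $L_{\hat X_f}\tilde g^Y=0$ on $Y$ and the extension to $-F\times S^1_\delta$ is the obvious $S^1$-tangent field), and then check by hand that the flow map $\Psi(t,z)=\Phi^e_f(t,z)$ pulls back $g^{Y^e}$ to $dt^2\oplus g^{F^e}$ using the orthogonal splitting $TY^e=T^vY^e\oplus H_{X^e_f}$. This is more elementary and arguably cleaner; the paper's detour through Vilms and the covering space is not strictly necessary once the Killing property is in hand, though it does make the connection to the general theory of totally geodesic Riemannian submersions explicit. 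The calibration issue you flag (matching the parameter $t$ to arclength along $X_c$) is handled implicitly in the paper as well, and is indeed the only point requiring care.
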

\begin{proof}
That $\Phi^e_f(t,\cdot)$ is a fibrewise isometry on $Y\subset Y^e$ for the restricted metric follows directly from its explicit form, note that $H_{\hat X_f}$ extends naturally to the collar as constructed above and the associated flow is a fibrewise isometry thereon by definition of the metric as in (\ref{product999}). By \cite{vilms} it follows that $Y^e$ is a totally geodesic fibration, hence again by \cite{vilms}, it is locally over the base a metric product. Explicitly, note that we have a homomorphism $\kappa: \pi_1(S^1) \simeq \mathbb{Z}\rightarrow {\rm Diff}(\tilde Y^e_u)$ so that $\kappa(1)=\Phi^e_f(1,\cdot)$. This defines a free $\mathbb{Z}$-left action on $\mathbb{R}\times Y^e_u$ by $\tilde \kappa (z,t,x)=(t+z,\kappa(z)x)$, if $z \in \mathbb{Z}, t \in \mathbb{R}$, $x \in Y^e$. Consider now the universal covering $\tilde p:\tilde Y^e\rightarrow Y^e$ with the induced metric. Then $\Phi^e_f(1,\cdot):Y^e\rightarrow  Y^e$ lifts to $\Phi^e_f(1,\cdot):\tilde Y^e\rightarrow \tilde Y^e$. Thus $\hat \kappa(z, \tilde x)=(\Phi^e_f)(z,\tilde x)$ gives a free, properly discontinous and isometric $\mathbb{Z}$-action on $\tilde Y^e$ and the two actions $\tilde \kappa, \hat \kappa $ are equivariant w.r.t. the diffeomorphism
\[
\Psi: \tilde Y^e \rightarrow \mathbb{R}\ \times Y^e_u,\quad
\Psi(\tilde x)=((\Phi^e_f)(t+z,x) ,t),
\]
where here, $\tilde x=(\pi_1)_*(z)(\hat x)$, $\pi(\tilde x)=x$, $\pi^e\circ \tilde p(\tilde x)=\delta e^{2\pi i t}u$ and $\hat x \in  \tilde Y^e_u$ where $\tilde Y^e_u\subset \tilde Y^e$ is a fixed lift of $x \in Y^e_u$. Note that $\Psi$ is actually an isometry if its image is equipped with the product metric $dt^2+g_{Y^e_u}$. Then set $Z:=[0,1] \times Y^e_{u}/(0,z) \sim (1,\Phi_f^e(1,z)):= \mathbb{R}\times Y^e_u/\mathbb{Z}$, that is $Z$ is the associated bundle to the principal bundle $\mathbb{R} \rightarrow \mathbb{R}/\mathbb{Z}$ with fibre $\tilde Y^e_u$. It carries the factorized product metric. Since $\Psi$ is an isometry and equivariant, it factorizes to an isometry $\hat \Psi: \tilde Y^e/\mathbb {Z}\rightarrow Z$ and $\tilde Y^e/\mathbb {Z}$ is of course isometric to $Y^e$ with the metric $g^{Y^e}$.
\end{proof}
Let $\Lambda_{Y_0}$ be the space of limiting values of extended $L^2$-solutions of $D\beta=0$ as introduced in (\ref{vee}) resp. (\ref{vee1}) , where $D$ is the signature operator on $Y_0:=-F \times S^1$ w.r.t the product metric, let $P_{Y_0}$ be the Calderon Projector of the signature operator on $Y_0$, $L_{Y_{0}}$ its image.
\begin{lemma}\label{symm}
With the above notations we have
\[
\eta(D_{L_{Y_0}}, Y_0)=0,
\]
on the other hand if $P_{Y_0}^r$ is the Calderon projector of $Y_0$ with an attached collar of length $r$, $L^r_{Y_0}$ its image, $P_{Y_0}^\infty$ resp. $L^\infty_{Y_0}$ their welldefined adiabatic limits (cf. Theorem \ref{adiabaticlimit}), then
\[
\eta(D_{F_0^-\oplus \Lambda_{Y_0}}, Y_0)=\frac{1}{\pi i}{\rm tr \ log}(\Phi(P_{\Lambda_{Y_0}\oplus F_0^-})\Phi(P_{Y_0}^\infty)^*) - {\rm dim \ ker \ }D_{\Lambda_{Y_0}\oplus F_0^-}.
\]
\end{lemma}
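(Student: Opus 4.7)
The plan is to handle the two assertions in sequence, deducing the second from the first via the Scott--Wojciechowski-type comparison formula of Theorem \ref{invertible}.

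For the first assertion, I would exploit the explicit product structure of $(Y_0, g^{Y_0}) = (-F \times S^1, g^F \oplus g^{S^1})$. The map $\tau : Y_0 \to Y_0$, $(x, e^{i\theta}) \mapsto (x, e^{-i\theta})$ is an isometry which reverses the orientation of the one-dimensional factor $S^1$, hence of $Y_0$ itself; consequently its pullback on differential forms satisfies $\tau^* D (\tau^*)^{-1} = -D$, since the Hodge star changes sign and so does the odd signature operator $D = i^{n}(-1)^{p-1}(*d-d*)$. Since the Cauchy data space $L_{Y_0}$ is characterized intrinsically via the unique continuation property (Theorem \ref{UCP}), and $\tau$ is an isometry of $Y_0$ mapping its boundary to itself, one has $\tau^*(L_{Y_0}) = L_{Y_0}$. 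Hence $\tau^*$ intertwines $D_{L_{Y_0}}$ with $-D_{L_{Y_0}}$, producing a bijection between eigenspaces for eigenvalue $\lambda$ and eigenvalue $-\lambda$. By Lemma \ref{kerdp}, $D_{L_{Y_0}}$ is invertible, so no zero-eigenvalue pathology arises, and the eta-function $\eta(D_{L_{Y_0}})(s) = \sum \mathrm{sign}(\lambda_i)|\lambda_i|^{-s}$ vanishes identically; in particular $\eta(D_{L_{Y_0}}) = 0$.

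For the second assertion, I apply Theorem \ref{invertible} (equivalently Theorem \ref{scottwoj} with a fixed branch of the logarithm) to compare $\etab(D_{F_0^- \oplus \Lambda_{Y_0}})$ with the reduced eta-invariant taken relative to the adiabatic-limit projector $P_{Y_0}^\infty$; this projector lies in $\mathrm{Gr}_\infty(A)$ since, by Theorem \ref{adiabaticlimit}, $P_{Y_0}^\infty$ differs from the Calder\'on projector $P_{Y_0}$ only by a finite-rank smoothing modification involving $V_{Y_0}$ and $W_{Y_0}$. The same symmetry argument as in the first part applies: the building blocks of $P_{Y_0}^\infty$ (the spectral subspaces of $A$, $V_{Y_0} = \Lambda_{Y_0}$, and $W_{Y_0}$) are all intrinsic to the product geometry, hence $\tau$-invariant, so the argument yields $\etab(D_{P_{Y_0}^\infty}) = 0$ as well. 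The comparison formula then gives
\[
\etab(D_{F_0^- \oplus \Lambda_{Y_0}}) \;=\; \tfrac{1}{2\pi i}\,\mathrm{tr}\,\mathrm{log}\bigl(\Phi(P_{F_0^- \oplus \Lambda_{Y_0}})\Phi(P_{Y_0}^\infty)^*\bigr),
\]
and converting from reduced to unreduced eta through $\eta = 2\etab - \dim\ker$ produces exactly the stated identity.

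The main obstacle is making the $\tau$-equivariance precise, in particular tracking how $\tau^*$ interacts with the involution $\gamma$ (which itself acquires a sign under orientation reversal) and with the tangential operator $A$, to ensure that the Lagrangian boundary condition is preserved exactly and not merely up to signs that would spoil the spectral symmetry; this requires verifying carefully on even and odd forms that the induced action of $\tau^*$ on $L^2(E_{|\partial Y_0}) = \mathcal{E}_i \oplus \mathcal{E}_{-i}$ is compatible with the symplectic structure of Definition \ref{fredholmgr}. A secondary technicality is extending Theorem \ref{invertible} from the Calder\'on projector proper to its adiabatic limit: this is however automatic because $\Phi(P)\Phi(P_{Y_0}^\infty)^* - I$ remains smoothing (by property 4 of Lemma \ref{grassmannian2}, since both projections lie in $\mathrm{Gr}_\infty(A)$), so the determinant-class functional calculus of (\ref{fredholmdet}) applies verbatim and the comparison identity goes through.
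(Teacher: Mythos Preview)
Your argument for the first assertion is essentially correct and parallels the paper's: the paper also exhibits an explicit isometry anticommuting with $D$ and preserving the Calder\'on boundary condition. The paper works in the product decomposition $D=\tilde\gamma(\partial_\theta+\tilde A)$ and takes $(Tf)(\theta)=\tilde\gamma f(-\theta)$, whereas you use the geometric reflection $\tau^*$ directly; these differ by the factor $\tilde\gamma$, but both yield the required spectral symmetry. Your version is a little cleaner conceptually, though you should check (as you note) that $\tau^*$ genuinely respects the symplectic data; the paper's $T$ avoids this bookkeeping by computing $DT=-TD$ directly from the product form.

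The second assertion, however, has a real gap. You invoke Theorem~\ref{invertible} with $P_{Y_0}^\infty$ in place of the Calder\'on projector $P_{Y_0}$, and dismiss this substitution as automatic because $\Phi(P)\Phi(P_{Y_0}^\infty)^*-I$ is smoothing. That only guarantees the trace-log is \emph{defined}; it does not guarantee that it equals the difference of reduced eta-invariants. Theorem~\ref{invertible} is proved specifically for the Calder\'on projector (using the invertibility of $D_{P_X}$), and for a general pair $P,Q\in\mathrm{Gr}_\infty(A)$ the identity $\tilde\eta(D_P)-\tilde\eta(D_Q)=\frac{1}{2\pi i}\mathrm{tr}\,\mathrm{log}(\Phi(P)\Phi(Q)^*)$ fails by an integer, namely the triple index $\tau_\mu(P,Q,P_X)$ (cf.\ the proof of Lemma~\ref{arb}). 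So your argument establishes the formula only modulo $\mathbb{Z}$.

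The paper's proof is designed precisely to avoid this integer ambiguity. It applies Theorem~\ref{invertible} on the stretched manifold $Y_0^r$ with the \emph{genuine} Calder\'on projector $P_{Y_0}^r$ (so the theorem applies verbatim), observes that $\tilde\eta(D_{P_{Y_0}^r},Y_0^r)=0$ for every $r$ by the first part, and then shows via an auxiliary asymmetric-double argument $M^r=Y_0^r\cup(-Y_0)$ that the left-hand side $\tilde\eta(D_{F_0^-\oplus\Lambda_{Y_0}},Y_0^r)$ is independent of $r$. Only then does one let $r\to\infty$, using continuity of $r\mapsto P_{Y_0}^r$, to replace $P_{Y_0}^r$ by $P_{Y_0}^\infty$ in the trace-log term. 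This limiting procedure is not a technicality you can skip; it is the mechanism that controls the integer part.
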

{\it Remark. } Here we used the notation of  \cite{lesch} (resp. the discussion above (\ref{fredholmdet})) to refer to define ${\rm log}$ using functional calculus, using the convention
\[
{\rm log} (re^{it})=ln \ r+it,\  r>0, -\pi < t \leq \pi,
\]
for the logarithm.
\begin{proof}
For the first assertion we are done if  for $E=\oplus_p\Omega^{2p}((-F)\times S^1=Y_0,\mathbb{C})$
we can find an isometry
\[
T:L^2(S^1,L^2(E|_{F}))\rightarrow L^2(S^1,L^2(E|_{F})),
\]
which anticommutes with the signature operator $D$ on $H^1(E)$ and maps the
domain of $\tilde D_{\Lambda_{Y_0}}$ isomorphically to itself. Now
\begin{equation}\label{prod}
D=\tilde\gamma(\partial/\partial \theta+\tilde A),\quad\tilde \gamma^2=-1, \ \tilde \gamma \tilde A=-\tilde A \tilde \gamma,
\end{equation}
on $C^\infty_0(E)\simeq C^\infty(S^1,C_0^\infty(E|_{F}))$ for some first order self-adjoint operator operator $\tilde A$ on $C^\infty_0(E|_{F})$ and an endomorphism $\tilde \gamma:E|_{F}\rightarrow E|_{F}$. Now define the  isometry $T:C^\infty(S^1,C_0^\infty(E|_{F}))\rightarrow C^\infty(S^1,C_0^\infty(E|_{F}))$
\[
(Tf)(\theta)=\tilde \gamma f(-\theta), \ \theta \in S^1.
\]
It is a straightforward calculation that $DT=-TD$ on $C^\infty_0(E)$. Now {\it assume} that the bounded extension of $T$ to $L^2(S^1,L^2(E|_{F}))$ preserves the domain $\mathcal{D}_{L_{Y_0}}$ of $D_{L_{Y_0}}$. Then since  $D_{L_{Y_0}}$ is the closure of the operator
\[
D_{L_{Y_0},0}=D|\{\phi \in C_0^{\infty}(E)|\phi \in C_0^\infty(E), P_{Y_0}(\phi|_{\partial Y_0})=0\}
\]
and we have $T^{-1}D_{L_{Y_0},0}T=-D_{L_{Y_0},0}$ on their common domain and since $D\subset D^*$ because of formal self-adjointness of $D$ one has
\[
-D_{L_{Y_0}}=-D_{L_{Y_0},0}^{**}=T^{-1}D_{L_{Y_0},0}^{**}T=T^{-1}D_{L_{Y_0}}T,
\]
where we also used that $T$ is bounded and unitary. So we still have to show that $T$ maps
\[
\mathcal{D}_{L_{Y_0}}=\{\phi \in L^2(E)|\phi \in H^1(E), P_{Y_0}(\phi|_{\partial Y_0})=0\}\subset L^2(E)
\]
onto itself. Now
\[
{\rm im}(P_{Y_0})=r(K:={\rm ker}\ D:H_{1/2}(E)\rightarrow H_{-1/2}(E)) \subset L^2(E|{\partial Y_0}),
\]
where $r$ is the restriction to the boundary, due to unique continuation, this
is injective. Let $\phi \in \mathcal{D}_{L_{Y_0}}$. Then  $\phi \notin K$, assume now $T\phi
\in K$, then $DT\phi=-TD\phi=0$, since $T$ is injective $\phi$ would be in $K$,
so $T\phi \notin K$, since $T$ preserves $H^1(E)$, $T\phi \in \mathcal{D}_{P_{Y_0}}$,
since $T^2=-id$ it is thus surjective.\\
For the second assertion, note that following Kirk and Lesch (\cite{lesch}), Theorem 4.2 (see Theorem \ref{invertible} in Chapter \ref{chapter1}) the difference of reduced eta-invariants of a Dirac-Operator $D$ relative to an arbitrary Lagrangian projections $P_1 \in Gr_{\infty}(A)$ and the Calderon projection $P_X \in Gr_{\infty}(A)$ on a manifold $X$ is given by
\[
\tilde \eta(D_{P_1},X)-\tilde \eta(D_{P_X},X)=\frac{1}{2\pi i}{\rm tr \log}(\Phi(P_1)\Phi(P_X)^*),
\]
where the 'reduced' eta-invariant is defined as $\tilde \eta(D_P)=(\eta(D_P)+{\rm dim \ ker}D_P)/2$ for any projection $P \in Gr_{\infty}$. Now set $X=Y_0$ and substitute for $P_1$ the projection onto $\Lambda_{Y_0}\oplus F_0^-$, where $\Lambda_{Y_0}$ as above. Stretching the collar of $Y_0$ to the length $r$ and noting that following the first part of this lemma, whose argumentation remains valid for any collar length $r$, $\tilde \eta(D_{P^r_{Y_0}},Y^r_0)=0$ for all $r>0$, one has
\[
\tilde \eta(D_{F_0^-\oplus \Lambda_{Y_0^r}}, Y^r_0)=\frac{1}{2\pi i}{\rm tr \ log}(\Phi(P_{\Lambda_{Y^r_0}\oplus F_0^-})\Phi(P_{Y_0}^r)^*),\ r >0.
\]
Now clearly $\Lambda_{Y_0^r}= \Lambda_{Y_0}, r >0$ since the space of limiting values of extended $L^2$-solutions to $D\beta=0$ on $Y_0$ is independent of its collar length. Furthermore we assert that the reduced eta invariant $\tilde \eta(D_{F_0^-\oplus \Lambda_{Y_0}}, Y^r_0)$ is independent of $r>0$. To see this, consider Theorem \ref{invariantform}, applied to the case of the closed 'asymmetric' double $M^r=Y_0^r \cup_ -Y_0$, where the glueing is along the boundaries, which asserts in this case
\[
\tilde \eta(D,M^r)-\tilde \eta(D_{F_0^+\oplus \gamma (\Lambda_{Y_0})}, -Y_0)=\tilde \eta(D_{F_0^-\oplus \Lambda_{Y_0^r}}, Y^r_0).
\]
Since $M^r=Y_0^{r/2}\cup -Y_0^{r/2}$, there is an orientation reversing isometry on $M^r$, so that $\eta(D,M^r)$ is zero, the kernel of $D$ on $M^r$ is a topological invariant and hence independent of $r$ so the left hand side of the above equation is in fact independent of $r$ which gives the assertion. So we conclude
\[
\eta(D_{F_0^-\oplus \Lambda_{Y_0}}, Y_0)=\frac{1}{\pi i}{\rm tr \ log}(\Phi(P_{\Lambda_{Y_0}\oplus F_0^-})\Phi(P_{Y_0}^r)^*) - {\rm dim \ ker \ }D_{\Lambda_{Y_0}\oplus F_0^-},\ r >0,
\]
where $D$ is considered to be defined on $Y_0$. Since the path of projections $r \mapsto P_{Y_0}^r$ is continous for $r \in [0,\infty]$ (Theorem \ref{adiabaticlimit}) and has a well-defined limit, we arrive at the assertion of the lemma.
\end{proof}
Now one derives from Bruening/Lesch (\cite{brle}) resp. Theorem \ref{invertiblecase}, if $P_0$ is the Calderon Projector of the signature operator on the trivial bundle $S^1 \times (-\tilde F)$ with respect to the product metric the equality
\begin{equation}\label{bla}
\eta(D_{\gamma(L_{Y_0})},Y)=\eta(D, Y^e) +\eta(D_{L_{Y_0}}, Y_0),
\end{equation}
but the latter is zero by the lemma. On the other hand (see Definition \ref{avmaslov} explaining the function $m_{H^*(\partial Y,\mathbb{C})}(\cdot,\cdot)$):
\begin{lemma}\label{limiting}
The spaces of limiting values of extended $L^2$-solutions of the signature operators on $Y$ resp. $Y_0$ are equal, that is $\Lambda_Y=\Lambda_{Y_0}$, i.e. $m_{H^*(\partial Y,\mathbb{C})}(\Lambda_Y,\Lambda_{Y_0})=0$.
\end{lemma}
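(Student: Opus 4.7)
The strategy is to translate the analytic identity $\Lambda_Y=\Lambda_{Y_0}$ into a comparison of cohomological restriction maps via Theorem \ref{adiabaticlimit}, and then to verify this comparison by a short Wang/Leray argument that exploits the quasi-homogeneity of $f$.

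By Theorem \ref{adiabaticlimit} (identification \eqref{limitinglag}),
$\Lambda_Y\cong\im\bigl(i_Y^*:H^*(Y;\C)\to H^*(\partial Y;\C)\bigr)$
and $\Lambda_{Y_0}\cong\im\bigl(i_{Y_0}^*:H^*(Y_0;\C)\to H^*(\partial Y_0;\C)\bigr)$, while the isometry $\Theta$ of Lemma \ref{product123} identifies $\partial Y\cong \partial Y_0=-\partial F\times S^1_\delta$. For $Y_0=-F\times S^1_\delta$, K\"unneth yields directly
$\im(i_{Y_0}^*)=\im(i_F^*)\oplus \im(i_F^*)\otimes[d\theta]$
inside $H^*(\partial F;\C)\oplus H^{*-1}(\partial F;\C)\otimes[d\theta]$, where $i_F:\partial F\hookrightarrow F$. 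For $Y\to S^1_\delta$ with monodromy $\rho=h^*$ on $H^*(F;\C)$, the Wang exact sequence produces the short exact sequence $0\to H^{k-1}(F)_\rho\to H^k(Y;\C)\to H^k(F)^\rho\to 0$; its naturality with respect to $\partial Y\hookrightarrow Y$, combined with the fact that $\rho|_{H^*(\partial F)}=\id$ (since $\Theta$ trivializes the boundary bundle), identifies
\[
\im(i_Y^*)=i_F^*\bigl(H^*(F)^\rho\bigr)\oplus i_F^*\bigl(H^{*-1}(F)_\rho\bigr)\otimes[d\theta].
\]

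Quasi-homogeneity then closes the argument. The geometric monodromy of $Y$ is the restriction to $F$ of the weighted circle action $\Phi_f(1,\cdot)$, so $\rho^\beta=\id$, whence $\rho$ is unitarily diagonalizable on $H^*(F;\C)$ and admits the canonical splitting $H^*(F)=H^*(F)^\rho\oplus (\rho-\id)H^*(F)$. Since $\rho$ is trivial on $H^*(\partial F)$, one has $i_F^*\circ(\rho-\id)=0$, so the second summand lies in $\ker i_F^*$; hence $i_F^*(H^*(F)^\rho)=i_F^*(H^*(F))$, and under the same finite-order identification of invariants with coinvariants also $i_F^*(H^*(F)_\rho)=i_F^*(H^*(F))$. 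Combining gives
\[
\im(i_Y^*)=\im(i_F^*)\oplus \im(i_F^*)\otimes[d\theta]=\im(i_{Y_0}^*),
\]
so $\Lambda_Y=\Lambda_{Y_0}$ as Lagrangian subspaces of $\ker A$. The equality $m_{H^*(\partial Y;\C)}(\Lambda_Y,\Lambda_{Y_0})=0$ is then immediate from Definition \ref{avmaslov}: the operator $-\phi(\Lambda_Y)\phi(\Lambda_{Y_0})^*$ equals $-I$, whose only eigenvalue is $-1$ and is excluded from the defining sum.

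\textbf{Main obstacle.} The delicate step is to confirm that the abstract Wang decomposition of $H^*(Y)$ is compatible, via $\Theta$, with the K\"unneth decomposition of $H^*(\partial Y)=H^*(\partial F\times S^1)$, so that $i_Y^*$ really reduces to $i_F^*$ summandwise in the coordinates used on the $Y_0$-side rather than sitting across the two Wang pieces. This is naturality of the Wang sequence, but has to be unpacked against the concrete geometric trivialization $\Theta$ given in Lemma \ref{product123}, in particular tracking how the horizontal Euler direction $\hat X_f$ on $Y$ matches the $\partial/\partial\theta$ direction on $Y_0$ to preserve the $[d\theta]$-component.
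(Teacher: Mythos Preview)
Your proof is correct and follows essentially the same route as the paper: both arguments identify $\Lambda_Y$ and $\Lambda_{Y_0}$ with the images of the cohomological restriction maps via Theorem~\ref{adiabaticlimit}, compute $\im(i_{Y_0}^*)$ by K\"unneth, and use the Wang sequence together with the triviality of the monodromy on $H^*(\partial F)$ to see that $\im(i_Y^*)$ coincides with it. The only notable difference is that where the paper invokes an external result (Corollary~5.3 of \cite{klein3}) and the long exact sequence of $(F,\partial F)$, you instead use the finite order of $\rho$ to split $H^*(F)=H^*(F)^\rho\oplus(\rho-\id)H^*(F)$ and the naturality relation $i_F^*\circ(\rho-\id)=0$ directly; this is a slightly more self-contained route to the same conclusion.
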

\begin{proof}
$\Lambda_Y$ resp. $\Lambda_{Y_0}$ are isomorphic to (harmonic representatives of) ${\rm im}H^*(Y;\mathbb{C})\rightarrow H^*(\partial Y,\mathbb{C})$ resp. ${\rm im}H^*(Y_0;\mathbb{C})\rightarrow H^*(\partial Y,\mathbb{C})$. Considering  the cohomological Wang exact sequence ($h$ being the monodromy on $Y$, acting on a given fibre $F=Y_u$)
\[
0 \rightarrow H^n(Y, \mathbb{C}) \xrightarrow {{\rm restr}}H^n (F,\mathbb{C})\xrightarrow {\rm h -id}H^n(F,\mathbb{C}) \rightarrow H^{n+1}(Y,\mathbb{C}),
\]
hence the elements of $H^n(Y;\mathbb{C})$ restrict fibrewise to exactly those classes in $H^n(F,\mathbb{C})$ on which the monodromy acts trivially. Invoking Corollary 5.3 in \cite{klein3}, we see that $H^n(Y,\mathbb{C})$ can be represented by forms $\alpha\otimes \gamma$, where, using the notation in loc. cit., $\alpha  \in \Gamma_0(\mathcal{K}_F)$ and additionally $\alpha|Y_u\in {\rm ker(h-id)}$ and $\gamma\in H^*(S^1,\mathbb{C})$. On the other hand, by comparing with the long exact sequence of $(F,\partial F)$, the classes ${\rm ker(h-id)}$ restrict to the elements of $H^n(\partial F,\mathbb{C})$. Consequently, since (the space of harmonic representatives of) $H^{0,n}(\partial F,\mathbb{C})\otimes H^*(S^1,\mathbb{C})$ is Lagrangian in $H^*(\partial Y,\mathbb{C})$ and is contained in the image of the restriction of $H^*(Y,\mathbb{C})$, it equals this image. On the other hand the elements of the restriction of $H^*(Y_0,\mathbb{C})$ to the boundary are clearly given by $H^{0,n}(\partial F,\mathbb{C})\otimes H^*(S^1,\mathbb{C})$, so both restrictions are equal. Now using the antisymmetry of $m_{H^*(\partial Y,\mathbb{C})}(\cdot,\cdot)$ (see the remark below Definition.\ref{avmaslov}) the assertion follows.
\end{proof}
Now by Theorem \ref{invariantform} resp. Definition \ref{avmaslov} (see also \cite{lesch}) one has with $Y^e= Y \cup (Y_0)$ as above:
\[
\eta(D, Y^e)=\eta(D, Y, F_0^+ \oplus \Lambda_Y) +\eta(D,Y_0, F_0^{-}\oplus \Lambda_{Y_0}) + m_{H^*(\partial Y,\mathbb{C})}(\Lambda_Y,\Lambda_{Y_0}),
\]
but the latter summand is zero by the previous lemma.
\begin{folg}
Combining the former two Lemmas, one has on one hand
\[
\eta(D_{\gamma(L_{Y_0})},Y)=\eta(D,Y^e),
\]
and on the other
\begin{equation}\label{eta23}
\eta(D, Y, F_0^+ \oplus \Lambda_Y) =\eta(D, Y^e)- \frac{1}{\pi i}{\rm tr \ log}(\Phi(\Lambda_{Y_0}\oplus F_0^-)\Phi(L_{Y_0}^\infty)^*) + {\rm dim \ ker \ }D_{\Lambda_{Y_0}\oplus F_0^-}.
\end{equation}
\end{folg}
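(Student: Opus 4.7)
The plan is to assemble the corollary directly from the two preceding lemmas together with the gluing formulas cited from Brüning--Lesch and Theorem \ref{invariantform}, without any further analytic input.

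For the first equality, I would start from the gluing identity (\ref{bla}), which is the specialization of Theorem \ref{invertiblecase} to the decomposition $Y^e = Y \cup_\Theta Y_0$ along $\partial Y$, applied with the boundary projection on the $Y$-side being $\gamma(L_{Y_0})$ so that on $Y_0$ its complement is precisely the Calderón projection $P_{Y_0}$. This gives
\[
\eta(D_{\gamma(L_{Y_0})},Y) \;=\; \eta(D, Y^e) + \eta(D_{L_{Y_0}},Y_0).
\]
The first assertion of Lemma \ref{symm} (the vanishing of $\eta(D_{L_{Y_0}},Y_0)$ obtained via the orientation--reversing isometry $T$ on the trivial bundle $Y_0$) kills the last summand and yields the claim.

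For the second equality, I would apply the Kirk--Lesch gluing theorem in the form of Theorem \ref{invariantform}, again to the decomposition $Y^e = Y \cup Y_0$. That theorem asserts
\[
\eta(D,Y^e) \;=\; \eta(D,Y,F_0^+\oplus \Lambda_Y) + \eta(D,Y_0,F_0^-\oplus \Lambda_{Y_0}) + m_{H^*(\partial Y,\mathbb{C})}(\Lambda_Y,\Lambda_{Y_0}).
\]
Lemma \ref{limiting} identifies $\Lambda_Y$ and $\Lambda_{Y_0}$ as the same Lagrangian in $\ker A$ (both being the image of the restriction $H^*(\cdot;\mathbb{C})\to H^*(\partial Y;\mathbb{C})$), so the antisymmetry of $m_{H^*(\partial Y,\mathbb{C})}(\cdot,\cdot)$ noted after Definition \ref{avmaslov} forces the Maslov--type correction to vanish. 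Substituting the explicit value of $\eta(D_{F_0^-\oplus \Lambda_{Y_0}},Y_0)$ supplied by the second assertion of Lemma \ref{symm}, namely
\[
\eta(D_{F_0^-\oplus \Lambda_{Y_0}}, Y_0) \;=\; \frac{1}{\pi i}\,\mathrm{tr}\,\mathrm{log}\bigl(\Phi(\Lambda_{Y_0}\oplus F_0^-)\Phi(L_{Y_0}^\infty)^*\bigr) - \dim\ker D_{\Lambda_{Y_0}\oplus F_0^-},
\]
and solving for $\eta(D,Y,F_0^+\oplus \Lambda_Y)$ gives (\ref{eta23}) after transposition of signs.

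There is essentially no obstacle here beyond bookkeeping, since the two lemmas and Theorem \ref{invariantform} have already done the analytic work; the only points that require care are checking that the boundary orientations and collar conventions in Theorem \ref{invertiblecase} match the convention under which the complement of $\gamma(L_{Y_0})$ on the $Y$-side corresponds to $L_{Y_0}$ on the $Y_0$-side, and verifying that the Calderón projector appearing in the statement of Lemma \ref{symm} is its adiabatic limit $L_{Y_0}^\infty$ (so that one uses the $r\to\infty$ form of that lemma, which is already justified there through the $r$--independence of both sides). Once these conventions are fixed, the two equalities of the corollary follow by direct substitution.
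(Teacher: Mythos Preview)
Your proposal is correct and follows exactly the paper's own argument: the first equality comes from (\ref{bla}) together with the vanishing $\eta(D_{L_{Y_0}},Y_0)=0$ from Lemma \ref{symm}, and the second from Theorem \ref{invariantform} with the Maslov term killed by Lemma \ref{limiting} and the remaining $Y_0$-term replaced by the second formula of Lemma \ref{symm}. The paper does not provide any additional content beyond this bookkeeping, so your write-up is effectively the intended proof.
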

{\it Remark.} So in the latter case, the calculation of the eta-invariant on $Y$ with respect to 'topological' APS-boundary conditions is reduced to the calculation of the eta-invariant on a {\it closed} manifold and the calculation of the 'Maslov-type' term on the right hand side of (\ref{eta23}), which turns out to be zero, as is the result of the next lemma.
\begin{lemma}\label{maslov23}
With the above notation,
\[
\frac{1}{\pi i}{\rm tr \ log}(\Phi(\Lambda_{Y_0}\oplus F_0^-)\Phi(L_{Y_0}^\infty)^*) - {\rm dim \ ker \ }D_{\Lambda_{Y_0}\oplus F_0^-}=0.
\]
\end{lemma}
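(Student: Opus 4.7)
My plan is to observe that, by the second assertion of Lemma \ref{symm} combined with the continuity of $r\mapsto P_{Y_0}^r$ at $r=\infty$, the asserted equality is equivalent to the vanishing $\eta(D_{F_0^-\oplus\Lambda_{Y_0}}, Y_0)=0$. I would then establish this vanishing by extending the involution argument that was used in the first assertion of Lemma \ref{symm}. Specifically, I reuse the unitary $T:L^2(E)\to L^2(E)$ defined through the product structure $Y_0=-F\times S^1$ by $(Tf)(\theta)=\tilde\gamma f(-\theta)$; the calculation in Lemma \ref{symm} already gives $T^2=-I$ and $DT=-TD$ on the smooth core. Provided that $T$ additionally preserves the domain $\mathcal{D}_{F_0^-\oplus\Lambda_{Y_0}}$, the same closure argument will yield $T^{-1}D_{F_0^-\oplus\Lambda_{Y_0}}T=-D_{F_0^-\oplus\Lambda_{Y_0}}$, so the spectrum of $D_{F_0^-\oplus\Lambda_{Y_0}}$ is symmetric about the origin (with multiplicities) and its eta-invariant vanishes.

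The core task is therefore to show that $T$ preserves $\ker P=F_0^+\oplus\gamma(\Lambda_{Y_0})\subset L^2(E|_{\partial Y_0})$, where $P$ projects onto $F_0^-\oplus\Lambda_{Y_0}$. For the summand $F_0^+$, I will reconcile the two collar decompositions $D=\gamma(\partial_x+A)$ on $[0,\epsilon)\times\partial Y_0$ and $D=\tilde\gamma(\partial_\theta+\tilde A)$ on $Y_0$, writing $\tilde A=\gamma_F\partial_x+B$ on the collar of $\partial F$ inside $F$. The anticommutation $\tilde\gamma\tilde A=-\tilde A\tilde\gamma$ forces $\tilde\gamma\gamma_F=-\gamma_F\tilde\gamma$ and $\tilde\gamma B=-B\tilde\gamma$, and comparing the two expressions for $D$ on the corner then gives $\gamma=\tilde\gamma\gamma_F$ and $A=-\gamma_F(\partial_\theta+B)$. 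A direct calculation using these relations together with the chain rule under $\theta\mapsto-\theta$ yields $[T,A]=0$ on the boundary, so $T$ preserves every spectral subspace of $A$, in particular $F_0^+$, $F_0^-$, and $\ker A$. For the summand $\gamma(\Lambda_{Y_0})\subset\ker A$, since $T$ already preserves $\ker A$, it suffices to check $T(\Lambda_{Y_0})=\Lambda_{Y_0}$: by Lemma \ref{limiting}, $\Lambda_{Y_0}$ is identified with the harmonic representatives of $\im(H^*(Y_0;\C)\to H^*(\partial Y_0;\C))$, which Künneth-factorizes as $\im(H^*(F)\to H^*(\partial F))\otimes H^*(S^1)$; this subspace is stable under the fibrewise Clifford twist $\tilde\gamma$ on $\partial F$ and under the reflection $\theta\mapsto-\theta$, which merely rescales $d\theta$, and therefore under $T$.

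The hard part will be the sign-critical verification that $[T,A]=0$ rather than $\{T,A\}=0$: a single mishandled sign in the anticommutation relations among $\tilde\gamma$, $\gamma_F$, and $B$ would cause $T$ to swap $F_0^+$ with $F_0^-$, and the involution argument would collapse. A related subtlety is that the fibrewise action of $\tilde\gamma$ on the finite-dimensional cohomology image $\im(H^*(F)\to H^*(\partial F))$ must be tracked carefully to confirm that it preserves $\Lambda_{Y_0}$ rather than carrying it to its $\gamma$-Lagrangian complement inside $\ker A$; the latter would again derail the domain-preservation claim.
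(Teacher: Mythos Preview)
Your approach is correct and genuinely different from the paper's. The paper proves the lemma by direct computation: it inserts the explicit adiabatic-limit decomposition $L_{Y_0}^\infty = F_\nu^- \oplus d(E_\nu^+) \oplus (\gamma(W_{Y_0}^\perp)\oplus W_{Y_0}) \oplus \Lambda_{Y_0}$ from Theorem~\ref{adiabaticlimit}, reduces the $\mathrm{tr\,log}$ term to a finite-dimensional pairing inside $E_\nu^+\oplus E_\nu^-$, and shows this equals $\dim W_{Y_0}$; a short exact sequence then gives $\dim\ker D_{\Lambda_{Y_0}\oplus F_0^-}=\dim W_{Y_0}$ as well. Your symmetry route via $\eta(D_{F_0^-\oplus\Lambda_{Y_0}},Y_0)=0$ is more conceptual and bypasses the Hodge-decomposition bookkeeping entirely, at the price of the domain-preservation check for $T$.

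Two remarks that dissolve the ``hard parts'' you flag. First, $\{T,\gamma\}=0$ (hence $[T,A]=0$) follows abstractly from Green's formula without any corner algebra: since $T$ is unitary and $DT=-TD$, comparing $(DT\phi,T\psi)-(T\phi,DT\psi)=\langle T\phi|_\partial,\gamma T\psi|_\partial\rangle$ with $-\bigl((D\phi,\psi)-(\phi,D\psi)\bigr)=-\langle\phi|_\partial,\gamma\psi|_\partial\rangle=-\langle T\phi|_\partial,T\gamma\psi|_\partial\rangle$ forces $\gamma T+T\gamma=0$; together with $[T,\partial_x]=0$ this gives $[T,A]=0$. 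Second, $T(\Lambda_{Y_0})=\Lambda_{Y_0}$ follows directly from the \emph{definition} (\ref{vee1}) rather than from K\"unneth: if $D\beta=0$ with $\beta|_\partial=f_-+k\in F_0^-\oplus\ker A$, then $D(T\beta)=0$ and $(T\beta)|_\partial=Tf_-+Tk$ with $Tf_-\in F_0^-$, $Tk\in\ker A$ by $[T,A]=0$, so $Tk\in\Lambda_{Y_0}$. This sidesteps the genuinely delicate question of how $\tilde\gamma$ acts on $\mathrm{im}(H^*(F)\to H^*(\partial F))$. The paper's computation is longer but has the side benefit of identifying both terms separately with $\dim W_{Y_0}=\dim\mathrm{im}\bigl(H^{\mathrm{even}}(Y_0,\partial Y_0)\to H^{\mathrm{even}}(Y_0)\bigr)$, a fact reused in Lemma~\ref{arb}.
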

\begin{proof}
Remember that with respect to the decomposition
\[
L^2(\Omega^*_{\partial X})=(F^-_\nu\oplus F_\nu^+) \oplus(d(E_\nu^+)\oplus
d^*(E_\nu^-))\oplus (d^*(E_\nu^+)\oplus d(E_\nu^-))\oplus \rm{ker}\ A.
\]
one has with the notation from chapter one
\begin{equation}
\lim_{r \rightarrow \infty}L^r_{Y_0}=F_\nu^- \oplus d(E^+_\nu)\oplus
(\gamma(W_{Y_0}^\perp) \oplus W_{Y_0})\oplus \Lambda_{Y_0}.
\end{equation}
where $W_{Y_0} \subset d(E_\nu^-)\subset F_0^-$ s.t. $W_{Y_0}\simeq (im(H^{even}(Y_0,\partial Y_0,\mathbb{C})\rightarrow H^{even}(Y_0,\mathbb{C}))$, $\Lambda_{Y_0}\subset {\rm  ker\ }A$ is the space of limiting values of $L^2$ solutions of $D$ on $Y_{0,\infty}$ as above. On the other hand
\begin{equation}
F_0^- \oplus \Lambda_{Y_0}=F_\nu^-\oplus d^*(E_\nu^+)\oplus d(E_\nu^+)\oplus \Lambda_{Y_0},
\end{equation}
so we see at once that we can reduce to finite dimensions and drop the summand in ${\rm ker}(A)$ since equal summands in both Lagrangians do not contribute by definition:
\[
{\rm log \ tr}(\Phi(\Lambda_{Y_0}\oplus F_0^-)\Phi(L_{Y_0}^\infty)^*)
={\rm log\ tr}(\phi(d^*(E_\nu^+)\oplus d(E_\nu^+))\phi(d(E^+_\nu)\oplus
(\gamma(W_{Y_0}^\perp) \oplus W_{Y_0}))^*),
\]
where $\phi:\mathcal{E}_+\cap (E_\nu^+\oplus E_\nu^-) \rightarrow \mathcal{E}_-\cap(E^+_\nu \oplus E_\nu^-)$ is the corresponding finite dimensional unitary map.
Since
\[
\gamma(d^*(E_\nu^+)\oplus d(E_\nu^+))=d(E_\nu^-)\oplus d^*(E_\nu^-)
\]
and using $\phi(\gamma(V))=-\phi(V)$ for any Lagrangian $V$ we have
\[
{\rm log \ tr}(\Phi(\Lambda_{Y_0}\oplus F_0^-)\Phi(L_{Y_0}^\infty)^*)=
{\rm log\ tr}(-\phi(d^*(E_\nu^-)\oplus d(E_\nu^-))\phi(d(E^+_\nu)\oplus
(\gamma(W_{Y_0}^\perp) \oplus W_{Y_0}))^*),
\]
Taking an orthonormal basis of $d^*(E_\nu^-)\oplus d(E_\nu^-)$ that is adapted to the summands and to the decomposition $d(E_\nu^-)=W_{Y_0}\oplus W_{Y_0}^\perp$ and noting $\gamma(d(E_\nu^+))=d^*(E_\nu^-)$ one sees that since
\[
(d^*(E_\nu^-)\oplus d(E_\nu^-))\cap (d(E^+_\nu)\oplus
(\gamma(W_{Y_0}^\perp) \oplus W_{Y_0})) =W_{Y_0},
\]
only $W_{Y_0}$ contributes with its dimension (for two Lagrangians $U,V$, $U\cap V$ is equal to the $-1$ eigenspace of $-\phi(V)\phi(W)$), the other subspaces in $d^*(E_\nu^-)\oplus d(E_\nu^-)$ cancel in the logarithm with their $\gamma$-composed counterparts (adding ${\rm log}(1)$ summands), so finally since the logarithm was chosen so that ${\rm log}(-1)=\pi i$ we have
\[
\frac{1}{\pi i}{\rm log \ tr}(\Phi(\Lambda_{Y_0}\oplus F_0^-)\Phi(L_{Y_0}^\infty)^*)={\rm dim}\ W_{Y_0}.
\]
We show now that ${\rm dim}\  W_{Y_0}$ is equal to ${\rm dim \ ker \ }D_{\Lambda_{Y_0}\oplus F_0^-}$. Note that the latter is equal to ${\rm dim}\ \gamma(F_0^-\oplus \Lambda_{Y_0})\cap L_{Y_0}$ by Lemma \ref{kerdp}.
By the definition of $\Lambda_{Y_0}$ as a symplectic reduction there is an exact sequence
\[
0\rightarrow L_{Y_0}\cap F_0^+\rightarrow L_{Y_0}\cap(F_0^+ \oplus \gamma(\Lambda_{Y_0}))\rightarrow \Lambda_{Y_0}\cap\gamma(\Lambda_{Y_0})\rightarrow 0,
\]
so ${\rm dim \ ker} \ D_{\Lambda_{Y_0}\oplus F_0^-}={\rm dim}(L_{Y_0}\cap F_0^+)$.
Since $L_{Y_0}\cap F_0^+$ is equal to the $L^2$ solutions of $D\beta=0$ on $Y_{0,\infty}$ which is known to be equal to $W_{Y_0}\simeq {\rm im}(H^{even}(Y_0,\partial Y_0,\mathbb{C})\rightarrow H^{even}(Y_0,\mathbb{C}))$ (compare Theorem \ref{adiabaticlimit} and the remark below Lemma \ref{invariantform}), we arrive at the assertion.
\end{proof}
Using this, we can also identify the $\eta$-invariant with respect to the adiabatic limit of the Calderon projector of $Y$, at least its fractional part:
\begin{folg}
With the above notation,
\[
\begin{split}
\eta(D, Y, F_0^+ \oplus \Lambda_Y) =\eta(D, Y^e)= \eta(D,Y,L^\infty_{Y}),
\end{split}
\]
\end{folg}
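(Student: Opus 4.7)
The first equality $\eta(D,Y,F_0^+\oplus \Lambda_Y) = \eta(D,Y^e)$ is already a direct consequence of (\ref{eta23}) together with Lemma \ref{maslov23}: the Maslov-type correction in (\ref{eta23}) cancels exactly against the kernel-dimension term. So the substantive task is the second equality $\eta(D,Y,F_0^+\oplus \Lambda_Y) = \eta(D,Y,L_Y^\infty)$. My plan is to apply Theorem \ref{invertible} on the stretched manifold $Y^r$, comparing the APS projection $F_0^+\oplus \Lambda_Y$ against the Calderon projector $P_{Y^r}$. Using that (i) $\tilde\eta(D_{F_0^+\oplus \Lambda_Y},Y^r)$ is independent of $r$ (APS-type boundary conditions associated to a Lagrangian in $\ker A$ have collar-length-independent eta, cf.\ Theorem~5.8 of \cite{lesch}) and (ii) $P_{Y^r} \to L_Y^\infty$ as $r\to\infty$ together with continuity of the associated Fredholm determinant, passing to the limit yields
\[
\tilde\eta(D_{F_0^+\oplus \Lambda_Y},Y) - \tilde\eta(D_{L_Y^\infty},Y) = \tfrac{1}{2\pi i}\operatorname{tr}\log\bigl(\Phi(F_0^+\oplus \Lambda_Y)\Phi(L_Y^\infty)^*\bigr).
\]

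Next I would evaluate this Maslov-type expression by the same finite-dimensional reduction as in the proof of Lemma \ref{maslov23}. Using (\ref{adiablim}), $L_Y^\infty = F_\nu^+\oplus d(E_\nu^-)\oplus (W_Y\oplus \gamma(W_Y^\perp))\oplus \Lambda_Y$, and decomposing $F_0^+\oplus \Lambda_Y$ via Lemma \ref{symred}, the summands $F_\nu^+$ and $\Lambda_Y$ cancel in the Fredholm determinant. The residual trace-log lives on the finite-dimensional symplectic subspace $(d(E_\nu^+)\oplus d^*(E_\nu^-))\oplus (d^*(E_\nu^+)\oplus d(E_\nu^-))$. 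Applying $\phi(\gamma V) = -\phi(V)$ and choosing an orthonormal basis respecting the decomposition $d(E_\nu^+) = W_Y\oplus W_Y^\perp$, one sees that only the intersection $W_Y$ contributes non-trivially, yielding $\tfrac{1}{\pi i}\operatorname{tr}\log(\cdots) = \dim W_Y$ under the branch convention $\log(-1) = i\pi$. Simultaneously, Lemma \ref{kerdp} together with the exact sequence (\ref{exact1}) applied to $Y$ (with $V=\Lambda_Y$) identifies the kernel-dimension difference $\dim \ker D_{F_0^+\oplus \Lambda_Y} - \dim \ker D_{L_Y^\infty}$ as $\dim W_Y$ as well. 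Converting from reduced to unreduced eta, these integer contributions cancel and produce the desired equality $\eta(D_{F_0^+\oplus \Lambda_Y},Y) = \eta(D_{L_Y^\infty},Y)$ in $\mathbb{R}$.

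The principal obstacle is the finite-dimensional Maslov bookkeeping in the presence of the non-trivial space $W_Y\cong \operatorname{im}(H^{\text{even}}(Y,\partial Y;\mathbb{C})\to H^{\text{even}}(Y;\mathbb{C}))$. Whereas in Lemma \ref{maslov23} the analogous $W_{Y_0}$ arose on the trivial bundle $Y_0$ and its contribution was matched in a clean way with no $\Lambda_{Y_0}$ piece in the APS projection $\Lambda_{Y_0}\oplus F_0^-$ that interacted with $W_{Y_0}$, here I must verify that $W_Y$ is balanced precisely against the kernel-dimension difference, with the $\Lambda_Y$-summand of the adiabatic limit $L_Y^\infty$ interacting non-trivially with the $(W_Y\oplus \gamma(W_Y^\perp))$-piece. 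A secondary (but routine) issue is to justify the interchange of limits $r\to\infty$ and $\operatorname{tr}\log$, for which one relies on the norm-continuity of $\Phi$ on $\operatorname{Gr}_\infty(A)$ established in Section~\ref{begriffe1} and the convergence of $P_{Y^r}$ provided by Theorem \ref{adiabaticlimit}.
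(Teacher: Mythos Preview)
Your approach is essentially the same as the paper's: apply Theorem~\ref{invertible} on the stretched manifold, use collar-length independence of $\eta(D,Y^r,F_0^+\oplus\Lambda_Y)$ (arguing as in Lemma~\ref{symm}), pass to the adiabatic limit, and evaluate the resulting finite-dimensional $\operatorname{tr}\log$ term exactly as in Lemma~\ref{maslov23} with $Y_0,\Lambda_{Y_0},F_0^-$ replaced by $Y,\Lambda_Y,F_0^+$, obtaining $\dim W_Y$ which cancels against $\dim\ker D_{F_0^+\oplus\Lambda_Y}$.

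One minor point: the paper works with the genuine Calder\'on projector $P_{Y^r}$ at each finite $r$ (so that $\ker D_{P_{Y^r}}=0$ automatically) and only passes to the limit in the $\operatorname{tr}\log$ term at the very end; this sidesteps your worry about computing $\dim\ker D_{L_Y^\infty}$ directly. Your ``principal obstacle'' concerning the interaction of $\Lambda_Y$ with $W_Y\oplus\gamma(W_Y^\perp)$ is not really an issue---these live in orthogonal symplectic summands of the decomposition~(\ref{bigsum2}), so the reduction goes through verbatim from Lemma~\ref{maslov23}.
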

\begin{proof}
The first equality is immediate from the above considerations and Lemma \ref{maslov23}, for the second note that
\[
\eta(D,Y,F_0^+ \oplus \Lambda_Y)-\eta(D,Y,L^r_{Y})=\frac{1}{\pi i}{\rm tr \log}(\Phi(F_0^+ \oplus \Lambda_Y)\Phi(L^r_{Y})^*) - {\rm dim \ ker \ }D_{F_0^+\oplus \Lambda_{Y}},
\]
for any $r>0$. Now, as in the proof of Lemma \ref{symm}, we see that $\eta(D,Y,F_0^+ \oplus \Lambda_Y)$ is independent of $r$, the same holds for ${\rm ker \ }D_{F_0^+\oplus \Lambda_{Y}}$ since one argues as in the proof of Lemma \ref{maslov23} with $F_0^-$ replaced by $F_0^+$ and $Y_0$, $\Lambda_{Y_0}$ replaced by $Y$, $\Lambda_Y$ to conclude that is isomorphic to $W_Y\simeq{\rm im}(H^{even}(Y,\partial Y,\mathbb{C})\rightarrow H^{even}(Y,\mathbb{C}))$. Taking the limit $r\rightarrow \infty$, we see in the same way that the limit of first term on the right hand side equals the dimension of $W_Y$, hence both terms on the right cancel and we arrive at the assertion.
\end{proof}
To complete the discussion, we give a formula for the eta-invariant relative to an arbitrary boundary projection $P \in Gr_\infty(A)$. For this, note that if $P,Q,R\in Gr_\infty(A)$, then $(P,Q), (Q,R), (P,R)$ are Fredholm and the differences $P-Q,Q-R,P-R$ are smoothing, hence trace-class, implying $\tau_\mu(P,Q,R)\in \mathbb{Z}$, the triple index as defined in (\cite{lesch}, Definition 6.8) is well-defined.
\begin{lemma}\label{arb}
Let $P \in Gr_\infty(A)$, then the corresponding (reduced) eta-invariant $\tilde \eta(D,Y,{\rm im}\ P)$ can be expressed as follows:
\begin{equation}
\begin{split}
\tilde \eta(D,Y,{\rm im}\ P)= &\frac{1}{2}\eta(D, Y^e)+\frac{1}{2\pi i}{\rm tr \log}(\Phi(P)\Phi(F_0^+ \oplus \Lambda_Y)^*) -\tau_\mu({\rm im}\ P,F_0^+ \oplus \Lambda_Y, L_Y)\\
&+\frac{1}{2}{\rm dim}\ ({\rm im}(H^{even}(Y,\partial Y,\mathbb{C})\rightarrow H^{even}(Y,\mathbb{C})).
\end{split}
\end{equation}
\end{lemma}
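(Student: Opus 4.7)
The plan is to compare the reduced eta-invariant at $P$ with the one at the reference projection $Q:=F_0^+\oplus\Lambda_Y$, using Theorem \ref{invertible} and the preceding Corollary, and then to pay for the fact that $Q$ is not the Calder\'on projector by inserting a triple-index correction.

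First, Theorem \ref{invertible} applied both to $P$ and to $Q=F_0^+\oplus\Lambda_Y$, using the same reference $L_Y=\mathrm{im}(P_Y)$, gives the two identities
\[
\tilde\eta(D_P)-\tilde\eta(D_{L_Y})=\frac{1}{2\pi i}\,\mathrm{tr\,log}(\Phi(P)\Phi(L_Y)^*),
\]
\[
\tilde\eta(D_Q)-\tilde\eta(D_{L_Y})=\frac{1}{2\pi i}\,\mathrm{tr\,log}(\Phi(Q)\Phi(L_Y)^*).
\]
Both are valid in $\mathbb{R}$, the chosen branch of $\log$ being (\ref{wind12}). Subtracting, I obtain
\[
\tilde\eta(D_P)-\tilde\eta(D_Q)=\frac{1}{2\pi i}\bigl[\mathrm{tr\,log}(\Phi(P)\Phi(L_Y)^*)-\mathrm{tr\,log}(\Phi(Q)\Phi(L_Y)^*)\bigr].
\]

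Next I would invoke the logarithm-cocycle identity on ${\rm Gr}_\infty(A)$: for three projections $P,Q,R\in{\rm Gr}_\infty(A)$ whose pairwise differences are smoothing, the failure of additivity of $\mathrm{tr\,log}$ is precisely the Lesch--Kirk triple index (\cite{lesch}, Def.~6.8), so that
\[
\frac{1}{2\pi i}\bigl[\mathrm{tr\,log}(\Phi(P)\Phi(R)^*)-\mathrm{tr\,log}(\Phi(Q)\Phi(R)^*)-\mathrm{tr\,log}(\Phi(P)\Phi(Q)^*)\bigr]=-\tau_\mu(P,Q,R).
\]
Applying this with $R=L_Y$ and $Q=F_0^+\oplus\Lambda_Y$ turns the difference above into
\[
\tilde\eta(D_P)-\tilde\eta(D_{F_0^+\oplus\Lambda_Y})=\frac{1}{2\pi i}\,\mathrm{tr\,log}(\Phi(P)\Phi(F_0^+\oplus\Lambda_Y)^*)-\tau_\mu(P,F_0^+\oplus\Lambda_Y,L_Y).
\]

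Finally I use the Corollary just before the lemma, which identifies $\eta(D,Y,F_0^+\oplus\Lambda_Y)=\eta(D,Y^e)$ together with the computation from Lemma \ref{maslov23} of $\ker D_{F_0^+\oplus\Lambda_Y}\simeq W_Y\simeq\mathrm{im}\bigl(H^{\mathrm{even}}(Y,\partial Y;\mathbb{C})\to H^{\mathrm{even}}(Y;\mathbb{C})\bigr)$. By definition of the reduced eta-invariant this yields
\[
\tilde\eta(D_{F_0^+\oplus\Lambda_Y})=\tfrac{1}{2}\eta(D,Y^e)+\tfrac{1}{2}\dim\,\mathrm{im}\bigl(H^{\mathrm{even}}(Y,\partial Y;\mathbb{C})\to H^{\mathrm{even}}(Y;\mathbb{C})\bigr),
\]
and substituting into the preceding display gives the claimed formula.

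The main obstacle I anticipate is the cocycle step: while $\mathrm{tr\,log}$ is known to satisfy additivity modulo $2\pi i\mathbb{Z}$ by (\ref{fredholmdet}), pinning down the integer $\tau_\mu(P,F_0^+\oplus\Lambda_Y,L_Y)$ with the correct sign requires care with the principal branch, the spectrum of $\Phi(P)\Phi(Q)^*$ near $-1$, and the compatibility of the Fredholm pairs involved. Once the sign convention for $\tau_\mu$ of \cite{lesch}, Def.~6.8 is fixed, the rest is bookkeeping; all other ingredients (Theorem \ref{invertible}, the identification of $\ker D_{F_0^+\oplus\Lambda_Y}$, and the equality $\eta(D,Y,F_0^+\oplus\Lambda_Y)=\eta(D,Y^e)$) have already been established.
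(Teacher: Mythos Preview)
Your proposal is correct and follows essentially the same route as the paper: apply Theorem \ref{invertible} to both $P$ and $Q=F_0^+\oplus\Lambda_Y$ relative to $L_Y$, subtract, rewrite the difference of $\mathrm{tr\,log}$-terms using the triple-index cocycle identity from \cite{lesch}, and then substitute the already established facts $\eta(D,Y,F_0^+\oplus\Lambda_Y)=\eta(D,Y^e)$ and $\ker D_{F_0^+\oplus\Lambda_Y}\simeq W_Y$. One small remark: Lemma \ref{maslov23} itself is stated for $Y_0$, not $Y$; the paper invokes the same exact-sequence argument with $Y_0$ replaced by $Y$ to obtain $\ker D_{F_0^+\oplus\Lambda_Y}\simeq W_Y$, so you should phrase this as ``arguing as in Lemma \ref{maslov23}'' rather than citing it directly.
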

\begin{proof}
By substracting the two relations (note that the right hand sides are well-defined since $I-\Phi(F_0^+ \oplus \Lambda_Y)\Phi(L_{Y})^*$ resp. $I-\Phi(P)\Phi(L_{Y})^*$ are by definition smoothing, hence trace class)
\[
\begin{split}
\tilde \eta(D,Y,F_0^+ \oplus \Lambda_Y)-\tilde \eta(D,Y,L_{Y})&=\frac{1}{2\pi i}{\rm tr \log}(\Phi(F_0^+ \oplus \Lambda_Y)\Phi(L_{Y})^*),\\
\tilde \eta(D,Y,{\rm im}\ (P))-\tilde \eta(D,Y,L_{Y})&=\frac{1}{2\pi i}{\rm tr \log}(\Phi(P)\Phi(L_{Y})^*),\\
\end{split}
\]
we infer
\begin{equation}
\begin{split}
\tilde \eta(D,Y,F_0^+ \oplus \Lambda_Y)-\tilde \eta(D,Y,{\rm im}\ P)&=\frac{1}{2\pi i}{\rm tr \log}(\Phi(F_0^+ \oplus \Lambda_Y)\Phi(L_{Y})^*)-\frac{1}{2\pi i}{\rm tr \log}(\Phi(P)\Phi(L_{Y})^*)\\
&= \tau_\mu({\rm im}\ P,F_0^+ \oplus \Lambda_Y, L_Y)-\frac{1}{2\pi i}{\rm tr \log}(\Phi(P)\Phi(F_0^+ \oplus \Lambda_Y)^*),\\
\end{split}
\end{equation}
where $\tau_{\mu}({\rm im}\ P,F_0^+ \oplus \Lambda_Y, L_Y)$ is well defined since the difference of the projections onto $L_Y$ resp. $F_0^+\oplus \Lambda_Y$ and $P$ is trace-class. As in Lemma \ref{maslov23} we deduce that ${\rm ker} \ D_{F_0^+\oplus \Lambda_Y}$ is equal to $W_Y\simeq ({\rm im}(H^{even}(Y,\partial Y,\mathbb{C})\rightarrow H^{even}(Y,\mathbb{C}))$,  so since $\eta(D,Y,F_0^+ \oplus \Lambda_Y)=\eta(D, Y^e)$ we arrive at the assertion.
\end{proof}
From this, one can deduce a formula for general $APS$-Lagrangians which is determined only by {\it finite}-dimensional expressions:
\begin{lemma}
Set $P^+(\Lambda) \in Gr_\infty(A)$ s.t. ${\rm im}(P^+(\Lambda))=F_0^+\oplus \Lambda$ for $\Lambda \subset{\rm ker\ A}$ arbitrary and $n=2k$, then:
\[
\begin{split}
\tilde \eta(D,Y,P^+(\Lambda))= &\frac{1}{2}\eta(D, Y^e)+\frac{1}{2\pi i}{\rm tr \log}(\phi(\Lambda)\phi(\Lambda_Y)^*)\\
&+\frac{1}{2}{\rm dim}\ (im(H^{even}(Y,\partial Y,\mathbb{C})\rightarrow H^{even}(Y,\mathbb{C})),
\end{split}
\]
where here, the image of $\phi:Gr({\rm ker} A) \rightarrow \mathcal{U}({\rm ker}(A)\cap \mathcal{E}_i,{\rm ker}(A)\cap \mathcal{E}_{-i})$ is a finite dimensional unitary mapping.
\end{lemma}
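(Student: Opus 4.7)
The plan is to specialize Lemma \ref{arb} to the case $P = P^+(\Lambda)$, where ${\rm im}(P^+(\Lambda)) = F_0^+ \oplus \Lambda$, and then show that two simplifications happen: (i) the triple index term $\tau_\mu({\rm im}\ P^+(\Lambda),\ F_0^+ \oplus \Lambda_Y,\ L_Y)$ vanishes, and (ii) the infinite-dimensional Fredholm determinant ${\rm tr \log}(\Phi(P^+(\Lambda))\Phi(F_0^+ \oplus \Lambda_Y)^*)$ collapses to the finite-dimensional expression ${\rm tr \log}(\phi(\Lambda)\phi(\Lambda_Y)^*)$ on $\ker A$. Combined, these reductions turn Lemma \ref{arb} directly into the asserted formula.

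First I would verify the reduction of the trace-log term. The key observation is that both ${\rm im}(P^+(\Lambda)) = F_0^+ \oplus \Lambda$ and $F_0^+ \oplus \Lambda_Y$ have the same infinite-dimensional part $F_0^+$, so the associated isometries $\Phi(P^+(\Lambda))$ and $\Phi(F_0^+ \oplus \Lambda_Y)$ from Lemma \ref{grassmannian2} agree on the orthogonal complement of $\ker A \cap \mathcal{E}_i$ in $\mathcal{E}_i$. Consequently, $\Phi(P^+(\Lambda))\Phi(F_0^+ \oplus \Lambda_Y)^*$ acts as the identity off $\ker A \cap \mathcal{E}_i$, and on the finite-dimensional subspace $\ker A \cap \mathcal{E}_i$ it coincides with $\phi(\Lambda)\phi(\Lambda_Y)^*$, using the correspondence between $\Phi$ on ${\rm Gr}(A)$ and its counterpart $\phi$ on the Grassmannian of Lagrangians of $\ker A$ (see (\ref{smallphimap})). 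Since $\log$ of the identity is zero, the trace reduces to the finite-dimensional one.

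Next I would argue the vanishing of the triple index. The triple index $\tau_\mu$ of three projections in ${\rm Gr}_\infty(A)$ is, by its definition in \cite{lesch}, determined by dimensions of certain intersections of kernels and images; under continuous deformation within pairs of Fredholm projections that remain close to each other by smoothing operators, it is homotopy-invariant modulo its jumps at transversality failures. Here both ${\rm im}(P^+(\Lambda))$ and $F_0^+ \oplus \Lambda_Y$ differ from $P_{>0}$ only by a finite-rank modification inside $\ker A$, while $L_Y$ is the Cauchy data space. I would exploit the natural path $\Lambda_t$ in the finite-dimensional Lagrangian Grassmannian of $\ker A$ connecting $\Lambda$ to $\Lambda_Y$ (the Grassmannian is path-connected by Proposition \ref{connected}) and check that the corresponding path of triple indices is constant and vanishes at $\Lambda_t = \Lambda_Y$, where ${\rm im}\ P^+(\Lambda_t) = F_0^+ \oplus \Lambda_Y$ equals the middle entry and the triple index is trivially zero.

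The main obstacle I anticipate is making the vanishing of $\tau_\mu$ fully rigorous, since the triple index can jump when the Fredholm pairs fail to remain invertible along the deformation. To handle this I would either verify that $L_Y$ satisfies the transversality needed with $F_0^+ \oplus \Lambda_t$ throughout the deformation (using that $\Lambda_Y$ was constructed precisely as a symplectic reduction of $L_Y$, so the intersections $L_Y \cap (F_0^+ \oplus \Lambda_t)$ vary by finite-dimensional corrections that cancel pairwise in the triple index), or alternatively compute $\tau_\mu$ directly from its combinatorial definition in \cite{lesch}, observing that the three projections share $F_0^+$ as a common summand and the remaining $\ker A$-part is finite-dimensional, so $\tau_\mu$ reduces to a Maslov triple index on $\ker A$ which vanishes because $L_Y \cap \ker A = \Lambda_Y$ lies in the middle Lagrangian. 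The remaining ${\rm dim}\ {\rm ker}\ D_{F_0^+ \oplus \Lambda_Y} = {\rm dim}\ W_Y$ is the correction term, absorbed into the last summand of the formula through the definition of the reduced eta-invariant, as in the proof of Lemma \ref{arb}.
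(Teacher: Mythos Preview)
Your reduction of the trace-log term to the finite-dimensional $\phi(\Lambda)\phi(\Lambda_Y)^*$ is correct and is exactly what the paper does.

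The gap is in your treatment of the triple index $\tau_\mu(F_0^+\oplus\Lambda,\ F_0^+\oplus\Lambda_Y,\ L_Y)$. Your second approach claims that the three projections share $F_0^+$ as a common summand and that $L_Y\cap\ker A=\Lambda_Y$; neither is true. The Cauchy data space $L_Y$ does \emph{not} in general split off $F_0^+$, and $\Lambda_Y$ is the symplectic reduction $(L_Y\cap(F_0^-\oplus\ker A))/(L_Y\cap F_0^-)$, which is not the naive intersection $L_Y\cap\ker A$. Your first approach (a homotopy $\Lambda_t$ from $\Lambda$ to $\Lambda_Y$) does not help either: the triple index is an integer and can jump along such a path, so constancy would have to be proved, not assumed.

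The paper's argument supplies the missing idea: one first replaces $L_Y$ by its adiabatic limit $L_Y^\infty$. This is justified by showing, via the exact sequence
\[
0\to L_Y^r\cap F_0^+\to L_Y^r\cap(F_0^+\oplus\Lambda)\to \Lambda_Y\cap\gamma(\Lambda)\to 0,
\]
that the relevant intersection dimensions are independent of the collar length $r$, so Lemma~6.10 of \cite{lesch} applies. Only after this replacement does one have the explicit decomposition of Theorem~\ref{adiabaticlimit},
\[
L_Y^\infty=F_\nu^+\oplus(W_Y\oplus\gamma(W_Y^\perp))\oplus d(E_\nu^-)\oplus\Lambda_Y,
\]
which, together with the corresponding decompositions of $F_0^+\oplus\Lambda$ and $F_0^+\oplus\Lambda_Y$, allows $\tau_\mu$ to be split additively into four triple indices on the symplectic summands of (\ref{bigsum2}); each of these vanishes by Proposition~6.11 of \cite{lesch} because in every summand at least two of the three Lagrangians coincide.
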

\begin{proof}
That
\[
\frac{1}{2\pi i}{\rm tr \log}(\Phi(F_0^+ \oplus \Lambda)\Phi(F_0^+ \oplus \Lambda_Y)^*)=\frac{1}{2\pi i}{\rm tr \log}(\phi(\Lambda)\phi(\Lambda_Y)^*)\\
\]
is clear directly from the definition since $F_0^+$ contributes with $log(1)$-summands. So we are left with showing that
the triple index $\tau_\mu(F_0^+\oplus \Lambda,F_0^+ \oplus \Lambda_Y, L_Y)$ vanishes. For this, note that one has
\[
\tau_\mu(F_0^+\oplus \Lambda,F_0^+ \oplus \Lambda_Y, L_Y)=\tau_\mu(F_0^+\oplus \Lambda,F_0^+ \oplus \Lambda_Y, {\rm lim}_{r \rightarrow \infty} L^r_Y),
\]
since by the exact sequence
\[
0\rightarrow L^r_Y \cap F_0^+\rightarrow L^r_{Y_0}\cap(F_0^+ \oplus \Lambda)\rightarrow \Lambda_{Y}\cap\gamma(\Lambda)\rightarrow 0,
\]
the middle term is isomorphic to $W_Y\oplus (\Lambda \cap \Lambda_Y)$, since $W_Y,\Lambda_Y$ are topological quantities, hence of constant dimension under varying collar length, $L^r_{Y}\cap(F_0^+ \oplus \gamma(\Lambda))$ is independent of $r$, so we can invoke (\cite{lesch}, Lemma 6.10). Finally note that for two triples of projections $(P,Q,R),(P',Q',R')$ in $Gr_\infty(A)$ such that the corresponding triple indices are well-defined one has
\[
\tau_\mu(P\oplus P',Q\oplus Q',R\oplus R')=\tau_\mu(P,Q,R)+\tau_\mu(P',Q',R'),
\]
so using again
\[
\begin{split}
L^\infty_{Y}&=F_\nu^+ \oplus (W_Y\oplus\gamma(W_Y^\perp))\oplus d(E^-_\nu)\oplus \Lambda_Y,\\
F_0^+ \oplus \Lambda_{Y}&=F_\nu^+\oplus d^*(E_\nu^-)\oplus d(E_\nu^-)\oplus \Lambda_{Y},
\end{split}
\]
we have that  $\tau_\mu(F_0^+\oplus \Lambda,F_0^+ \oplus \Lambda_Y, L^\infty_Y)$ equals
\[
\begin{split}
&\tau_\mu(F_\nu^+,F_\nu^+,F_\nu^+)_{F_\nu^-\oplus F_\nu^+} + \tau_\mu(d^*(E_\nu^-), d^*(E_\nu^-), W_Y\oplus\gamma(W_Y^\perp))_{d(E_\nu^+)\oplus d^*(E_\nu^-)}\\
&+\tau_\mu(d(E_\nu^-), d(E_\nu^-),d(E_\nu^-))_{d^*(E_\nu^+)\oplus d(E_\nu^-)}+\tau_\mu(\Lambda, \Lambda_Y,\Lambda_Y)_{d^*(E_\nu^+)\oplus d(E_\nu^-)}=0,\\
\end{split}
\]
using Proposition 6.11 of \cite{lesch}.
\end{proof}
Interpreting $Y^e$ as a mapping cylinder, using Lemma \ref{isometry}, we can find a formula for its monodromy, acting on $H^n(Y^e_u,\mathbb{C})$. For this,fix one fibre $Y^e_u= F^e=F\cup (-F)$ (here and in the following, we denote $-F$ the oppositely oriented pair $(F,\partial F$) for a given oriented manifold with boundary $F$) and consider the maps
\[
\begin{split}
&i_1:F^e \rightarrow F \quad {\rm induced \ by\ identifying} \quad  -F \rightarrow F,\ F \rightarrow F,\\
&i_2:(F^e, \varnothing) \hookrightarrow (F^e,(-F)).
\end{split}
\]
These induce maps on cohomology
\[
\beta_1=i^*_1:H^n(F,\mathbb{C})=:U^* \rightarrow H^n(F^e,\mathbb{C})=: U^e, \quad
\beta_2=i_2^*:H^n(F^e, -F,\mathbb{C})=:U \rightarrow H^n(F^e,\mathbb{C}),
\]
\begin{lemma}\label{betaisom}
$(\beta_1,\beta_2):U^* \oplus U \rightarrow U^e$ is an isomorphism.
Set $\rho^e \in Aut(U^e, b^e)$, where $b^e$ is the nondegenerate intersection
form on $\tilde F^e$ equal to
\[
\rho^e=\begin{pmatrix} I & 0 \\ V & \rho\end{pmatrix},
\]
where $\rho:U \rightarrow U$ is the algebraic monodromy of the Milnor bundle $f:Y \rightarrow
S^1_\delta$, $V:U^*\rightarrow U$ its variation map. Set $(\Phi_f^e)^*(\omega)=\Phi_f^e(1,\cdot)^*(\omega)$ for $\omega \in
H^*(F^e)$, then
\[
(\Phi_f ^e)^*=\rho^e.
\]
\end{lemma}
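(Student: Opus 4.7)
The plan is to handle the isomorphism claim and the monodromy formula as two separate arguments, the first cohomological and the second a direct computation using the mapping cylinder structure established in Lemma \ref{isometry}.

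For $(\beta_{1},\beta_{2})\colon U^{*}\oplus U\to U^{e}$, I would exploit the long exact sequence of the pair $(F^{e},-F)$ together with excision. Since $F\simeq\bigvee_{\mu}S^{n}$, one has $H^{n-1}(-F,\mathbb{C})=0$ (for $n\ge 2$; the case $n=1$ is handled by hand using that $\partial F$ is a disjoint union of circles), and the map $H^{n}(F^{e})\to H^{n}(-F)$ is surjective because the identification $i_{1}\colon F^{e}\to F$ provides an explicit section, namely $\beta_{1}$ itself. Excision identifies $H^{n}(F^{e},-F)$ with $H^{n}(F,\partial F)=U$, and this excision map is precisely $\beta_{2}$. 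Thus the exact sequence
\begin{equation*}
0\longrightarrow U\xrightarrow{\ \beta_{2}\ }U^{e}\longrightarrow H^{n}(-F)\longrightarrow 0
\end{equation*}
splits via $\beta_{1}$, giving the claimed direct sum decomposition. (In cohomological degrees different from $n$ the assertion is trivial since both sides vanish except in degree $0$, where the map is obvious.)

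For the second assertion, Lemma \ref{isometry} presents $Y^{e}$ as the mapping torus of $\Phi_{f}^{e}(1,\cdot)\colon F^{e}\to F^{e}$. Because $Y_{0}=-F\times S^{1}_{\delta}$ is a metric product and the horizontal distribution on $Y^{e}$ was built so that $\hat{X}_{f}$ restricts to the coordinate vector field on the $S^{1}$-factor over $Y_{0}$, the time-one flow acts as the geometric monodromy $h$ of $f$ on the $F$-half and as the identity on the $-F$-half; it is continuous across $\partial F$ because $h|_{\partial F}$ is isotopic to the identity through the trivialization $\Theta$ used to glue $Y$ and $Y_{0}$.

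I would then compute $(\Phi_{f}^{e})^{*}$ in the basis $\beta_{1}(U^{*})\oplus\beta_{2}(U)$. For $\alpha\in U=H^{n}(F,\partial F)$, the class $\beta_{2}(\alpha)$ vanishes on $-F$, and since $\Phi_{f}^{e}$ is the identity on $-F$ the same is true of the pullback, so $(\Phi_{f}^{e})^{*}\beta_{2}(\alpha)$ lies in $\beta_{2}(U)$ and equals $\beta_{2}(\rho\alpha)$ where $\rho$ denotes the monodromy action on $H^{n}(F,\partial F)$. For $\omega\in U^{*}=H^{n}(F)$, the restriction of $(\Phi_{f}^{e})^{*}\beta_{1}(\omega)$ to $-F$ is still $\omega$ (the flow is the identity there and $\beta_{1}(\omega)|_{-F}=\omega$ via $i_{1}$), so writing $(\Phi_{f}^{e})^{*}\beta_{1}(\omega)=\beta_{1}(a)+\beta_{2}(b)$ and restricting to $-F$ forces $a=\omega$. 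Restricting instead to $F$ gives $\omega+j^{*}b=\rho^{*}\omega$, where $j^{*}\colon H^{n}(F,\partial F)\to H^{n}(F)$ is the natural map. Since $\rho^{*}\omega-\omega$ vanishes on $\partial F$ (the monodromy is the identity there), it admits a unique lift $b=V\omega\in H^{n}(F,\partial F)$; this is precisely the definition of the variation mapping recalled in Appendix A. Assembling the two computations yields the matrix form stated.

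The only subtle point is the uniqueness of the lift $V\omega$, which requires $H^{n-1}(\partial F)\to H^{n}(F,\partial F)$ to be controlled; this follows from the same Wang-type considerations already used in Lemma \ref{limiting} and is standard in the theory of isolated singularities. The rest is bookkeeping with the Mayer--Vietoris/excision identifications.
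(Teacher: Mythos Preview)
Your argument is correct and follows essentially the same route as the paper: split the long exact sequence of the pair $(F^{e},-F)$ using $\beta_{1}$ as a section, then compute the time-one pullback on the two summands separately using that $\Phi_{f}^{e}(1,\cdot)$ restricts to the geometric monodromy on $F$ and to the identity on $-F$. One remark: your final paragraph worries about the uniqueness of the lift $b$ solving $j^{*}b=\rho^{*}\omega-\omega$, but this is already forced by the first part of the lemma---once $U^{e}=\beta_{1}(U^{*})\oplus\beta_{2}(U)$ is established as a direct sum, the coefficients $a,b$ in $(\Phi_{f}^{e})^{*}\beta_{1}(\omega)=\beta_{1}(a)+\beta_{2}(b)$ are automatically unique, and you only need to check that $b=V\omega$ satisfies the restriction conditions, which it does by the very definition of the variation map (a form-level construction, not a lift through $j^{*}$). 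So the appeal to Wang-type control of $H^{n-1}(\partial F)\to H^{n}(F,\partial F)$ is unnecessary.
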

Note that the variation map $V$ is defined as the mapping $c \in H^*(F,\mathbb{C}) \mapsto [(\rho-id)^*c] \in H^*(F,\partial F,\mathbb{C})$ if $\rho$ is a representative of the geometric monodromy of $Y$, that is $\rho=\Phi_f(1)$.
\begin{proof}
We set $\hat F= -F$, the long exact sequence for the pair $(F^e,\hat F)$ gives a short exact sequence
\[
0 \rightarrow H^{n}(F^e,\hat F, \mathbb{C}) \xrightarrow
{i_2^*} H^n(F^e,\mathbb{C}) \xrightarrow {r} H^{n}(\hat F,\mathbb{C})
\rightarrow 0.
\]
Defining $i_1$ as above (and identifying $H^{n}(\hat F,\mathbb{C})$ with $H^{n}(F,\mathbb{C})$) it satisfies
\[
r \circ i_1^*= id,
\]
so the sequence splits, i.e. the map $\phi=i_1^* \circ r$ is idempotent and
we have $\rm{ker}(\phi)=\rm{ker}(r)=\rm{im}(i_2^*)$ and
\[
H^n(F^e,\mathbb{C})= \rm{ker}(\phi)\oplus\rm{im}(\phi).
\]
Since $\rm{im}(\phi)=\rm{im}(i_1^*)$, we have the first assertion since
$\beta_{1,2}=i_{1,2}^*$.\\
For the second assertion note that there are isotopies $\Phi^e_f (1)|_F \simeq
\rho_g$ and $\Phi^e_f (1)|_{\hat F}\simeq id$, where $\rho_g:F
\rightarrow F$ denotes a representative of the geometric monodromy, viewing
$F, \hat F \subset F^e$. Given $(\theta,\omega) \in
U^*\oplus U=H^n(F,\mathbb{C})\oplus H^n(F^e,\hat F,\mathbb{C})$ we calculate
\[
(\beta_1,\beta_2)^{-1}\circ (\Phi^e_f)^*\circ
(\beta_1,\beta_2)(\theta,0)=(\beta_1,\beta_2)^{-1}((\Phi^e_f)^*(\theta +
\hat \theta))=(\beta_1,\beta_2)^{-1}(\rho_g^*(\theta)+\hat \theta),
\]
where $\theta + \hat \theta$ denotes the cocycle on $F^e$ restricting to $\theta$ on $F\subset F^e$ and to $\hat \theta:=r^*(\theta)$, where $r:-F\rightarrow F, \ r(x)=x$ (see the remark below) on $-F$. Now one sees that
\[
(\beta_1,\beta_2)(\theta, \rho(\theta)-\theta)=(\rho(\theta)+ \hat \theta),
\]
note that $V(\theta)=\rho(\theta)-\theta \in H^n(F^e,\hat F,\mathbb{C})$ since
the latter can be considered as the cocycles vanishing on boundary cycles so we
have
\[
(\Phi^e_f)^*((\theta,0))=(\theta,V(\theta)) \in U^*\oplus U.
\]
On the other hand
\[
\begin{split}
(\beta_1,\beta_2)^{-1}\circ (\Phi^e_f)^*\circ (\beta_1,\beta_2)(0,\omega)&=
(\beta_1,\beta_2)^{-1}((\Phi^e_f)^*(i_2^*(\omega))=(\beta_1,\beta_2)^{-1}(\rho_g^*(i_2^*(\omega))\\
&=(0, \rho(\omega)),
\end{split}
\]
since  $i_2^*(\omega)$ restricts to the zero cocycle on $\hat F\subset F$, note that here, $\rho$ is considered to act on relative cocycles. Writing this again as
\[
(\Phi^e_f)^*((0,\omega))=(0,\rho(\omega)) \in U^*\oplus U,
\]
we arrive at the assertion.
\end{proof}
{\it Remark.} Note that on the closed manifold $F^e$, $H^n(F^e,\mathbb{C})$ is naturally identified with the set of harmonic $n$-forms chracterized (in the closed case) by $d\beta=\delta\beta=0$, where $\delta=(-1)^{2n(n+1)+1}*d*:\Omega^n(F^e,\mathbb {C})\rightarrow \Omega^{n-1}(F^e,\mathbb{C})$ is the codifferential, in the following we will always implicitly use this identification, since ${\rm ker} \tilde A$ consists exactly of the harmonic forms on $F^e$, $\tilde A$ being the signature operator on $F^e$ (see \cite{Atiyah}).  For any $p$-form $\omega$ on $F$ (or $-F$) one has along $\partial F$ the decomposition
\begin{equation}\label{bdec}
\omega=\omega_{tan}+\omega_{norm}
\end{equation}
where $\omega_{tan}$ agrees with $\omega$ on $p$-tuples of vectors being tangent to $\partial F$ and is zero otherwise, $\omega_{norm}$ is defined so that
(\ref{bdec}) holds. Then it is well-known that one has the following identification (see  \cite{capmil})
\[
\begin{split}
H^*(F,\mathbb{C})&\simeq \{\omega \in \Omega^*(F,\mathbb{C}) \ |\ d\omega=0,\ \delta\omega=0,\ \omega_{norm}=0\}\\
H^*(F,\partial F, \mathbb{C})&\simeq \{\omega \in \Omega^*(F,\mathbb{C}) \ |\ d\omega=0,\ \delta\omega=0,\ \omega_{tan}=0\},
\end{split}
\]
Using this, one sees that $\beta_1$ corresponds to the mapping which takes a form $\omega\in H^n(F,\mathbb{C})$ to the harmonic form $\omega + \hat \omega\in H^n(F^e,\mathbb{C})$ whose restriction to $F\subset F^e$ coincides with $\omega$ and so that $(\omega+\hat \omega)|_{\hat F}=\hat \omega$, where $\hat \omega$ means $\omega$ reflected to $-F$ by $r:F\rightarrow -F, r(x)= x)$, the form $\omega+\hat \omega$ is well-defined since $\omega_{norm}=0$ along $\partial F$. $\beta_2(\omega)$ corresponds to the unique solution $\tilde \omega$ of $(d+\delta)\omega=0$ on $F^e$ which restricts to $\omega$ on $F$ (the uniqueness being a consequence of  'unique continuation', see e.g. Theorem \ref{UCP}).\\
Set for the following
\[
M^{\pm}=\pm F^e\times [0,1] \quad {\rm s.t.} \quad Y^e \simeq M^+ \cup_{(\Phi_f^e(1),id) }M^-,
\]
where the latter isomorpism is an isometry following Lemma \ref{isometry}. \\
Let now $N=(F^e \sqcup (-F^e))$ be the boundary of $M^+$ (so $-N =\partial M^-$), $\hat A$ the signature operator w.r.t. $g^Y|_N$ (here, the $-$-sign indicates 'right-boundaries')
\begin{lemma}\label{glue}
Let $\Lambda_0= \subset ker \hat A$ be the subspace of limiting values of extended $L^2$-solutions of $D\beta=0$ for the signature operator in the form $D=\hat \gamma(\frac{\partial}{\partial z_i}+\hat A)$ on the cylinder $Z=[0,1]\times  F^e$ with the above
product metric, then
\[
\eta(D,Y^e)=m_{H^*(N,\mathbb{C})}(\Lambda_0,\hat \rho^e(\Lambda_0)) \in \mathbb{R},
\]
where the latter refers to the so called  'averaged Maslov index' $m_{H^*(N,\mathbb{C})}(L,\hat \rho^e(L))$ (see Definition \ref{avmaslov}, also  \cite {lesch}).
\end{lemma}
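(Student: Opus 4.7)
The plan is to split $Y^e$ along the separating hypersurface $N = F^e \sqcup (-F^e)$ into the two cylinders $M^{\pm}$ and apply the glueing formula (Theorem~\ref{invariantform}, in its twisted form from Theorem~\ref{generalglue}) with the identifying isometry given by $(\Phi_f^e(1), \mathrm{id})$. Under this decomposition, and by Lemma~\ref{isometry}, $Y^e$ is isometric to the mapping cylinder built from $M^+ \cup_\rho M^-$ with $\rho = \Phi_f^e(1) \sqcup \mathrm{id}$, so the glueing formula gives
\[
\etab(D,Y^e) = \etab(D_{P^+(\Lambda_{M^+})}, M^+) + \etab(D_{P^+(\Lambda_{M^-})}, M^-) + (\text{twisted Maslov term}),
\]
where $\Lambda_{M^\pm}$ are the spaces of limiting values of extended $L^2$-solutions of $D\beta = 0$ on the extensions of $M^\pm$ by infinite collars.

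First I would show that both boundary-value eta-invariants on $M^{\pm}$ vanish. This is exactly the situation of Lemma~\ref{symm}: $M^{\pm}$ is a Riemannian product $\pm F^e \times [0,1]$ with product metric, and the isometry $T(f)(\theta) = \gamma f(-\theta)$ (after an appropriate reflection in the $[0,1]$ factor, or equivalently viewing $M^\pm$ as half of a symmetric double) anticommutes with $D$ and preserves the APS-type domain associated to $F_0^+ \oplus \Lambda_{M^\pm}$. The proof of Lemma~\ref{symm} applies verbatim once one checks that this reflection preserves the given Lagrangian, so each cylinder contributes zero to the eta-invariant.

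Next I would identify $\Lambda_{M^\pm}$ with $\Lambda_0 \subset \ker\hat A$ from the hypothesis. Since $M^{\pm}$ are product cylinders on $F^e$, the spaces of limiting values of extended $L^2$-solutions are insensitive to collar length and, via the identification \eqref{limitinglag}, correspond to the image of $H^*(M^\pm;\C) \to H^*(\partial M^\pm;\C)$. For a cylinder, both copies of the boundary pull back the same cohomology class, which is precisely the description of $\Lambda_0$ as given after Theorem~\ref{adiabaticlimit}. Up to the symplectic reduction to $\ker\hat A$, both $\Lambda_{M^+}$ and $\Lambda_{M^-}$ are canonically identified with $\Lambda_0$ inside $\ker \hat A$ on each component of $N$.

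Finally I would compute the Maslov correction. Because the identifying isometry is $(\Phi_f^e(1), \mathrm{id})$, the pullback action on the cohomology of the glued boundary acts as $\hat \rho^e$ on one of the two Lagrangians, by Lemma~\ref{betaisom}; the other is left untouched. Using the definition of $m_{H^*(N,\C)}$ (Definition~\ref{avmaslov}) and the compatibility of the generalized glueing formula in Theorem~\ref{generalglue} with the monodromy transport (which conjugates $\Phi$ by $\rho_{\mathrm{cut}}^*$ as in the proof of that theorem, leaving the trace of $\log$ invariant up to the effect of $\hat \rho^e$ on one Lagrangian), the twisted Maslov term collapses to $m_{H^*(N,\C)}(\Lambda_0, \hat\rho^e(\Lambda_0))$. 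The main obstacle is the bookkeeping: orienting the two boundary components of $M^\pm$ consistently with the endomorphism $\tilde\gamma|\partial M_{\mathrm{cut}} = -\gamma \oplus \gamma$ (see the setup preceding Theorem~\ref{generalglue}) and checking that under this sign convention the twist by $\hat\rho^e$ appears on precisely one factor, so that the contribution from the $\mathrm{id}$-glueing cancels (giving a $\log 1$ summand) and only the $\hat\rho^e$-twisted part survives. Once this is verified, combining the three steps yields the claimed identity, and since $m_{H^*(N,\C)}(\cdot,\cdot)$ is real-valued by Definition~\ref{avmaslov}, the equality holds in $\R$ as asserted.
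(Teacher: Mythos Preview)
Your overall strategy—split $Y^e$ along $N$ into the two product cylinders $M^\pm$ and apply the twisted glueing formula of Theorem~\ref{generalglue}—is the same as the paper's. However, two steps in your sketch are not adequately justified and differ from how the paper handles them.

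First, your claim that $\etab(D_{P^+(\Lambda_{M^\pm})},M^\pm)=0$ because ``the proof of Lemma~\ref{symm} applies verbatim'' is not correct as stated. Lemma~\ref{symm} treats $Y_0=-F\times S^1$, where the reflection $\theta\mapsto -\theta$ is a well-defined orientation-preserving map on the closed base $S^1$; on $M^\pm=F^e\times[0,1]$ the analogous reflection $t\mapsto 1-t$ swaps the two boundary components and reverses orientation, so one must check carefully how it interacts with $\hat\gamma=-\gamma\oplus\gamma$ and with the $F_0^+$ part of the APS projection (which is \emph{not} obviously preserved under the swap, since the tangential operator changes sign between the two components). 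The paper sidesteps this entirely: it writes the glueing formula \emph{twice}, once for $Y^e=M^+\cup_{(\Phi_f^e(1),\mathrm{id})}M^-$ and once for the trivially glued $Y^e_0=M^+\cup_{\mathrm{id}}M^-=F^e\times S^1$, so that the two cylinder eta-terms cancel upon subtraction. What remains is $\eta(D,Y^e)-\eta(D,Y^e_0)=m_{H^*(N,\mathbb{C})}(\Lambda_0,\hat\rho^e(\Lambda_0))$, and then $\eta(D,Y^e_0)=0$ follows from the genuine closed-manifold anticommuting-isometry argument on $F^e\times S^1$.

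Second, your passage from the Calder\'on-projector formulation of Theorem~\ref{generalglue} to the finite-dimensional Maslov term $m_{H^*(N,\mathbb{C})}(\Lambda_0,\hat\rho^e(\Lambda_0))$ skips a necessary reduction. Theorem~\ref{generalglue} gives a $\mathrm{tr}\log$ involving $\Phi(P_{M^\pm})$, not $\phi(\Lambda_{M^\pm})$. To collapse this to the finite-dimensional expression one needs that the non-resonance level of $M^\pm$ is zero, i.e.\ $W_{M^\pm}=0$, so that $L_{M^\pm}=F_0^\pm\oplus\Lambda_{M^\pm}$ exactly (not just adiabatically). The paper proves this by observing that $\mathrm{im}\bigl(H^*(M^\pm,\partial M^\pm;\mathbb{C})\to H^*(M^\pm;\mathbb{C})\bigr)=0$ via K\"unneth on $[0,1]\times F^e$, and also checks that the Calder\'on projectors are collar-length independent. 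You should add this step; without it the ``bookkeeping'' you allude to cannot be completed.
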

Note that here $\hat \rho^e=\rho^e \oplus id$, corresponding to the decomposition $H^*(N,\mathbb{C})=H^*(F^e,\mathbb{C})\oplus H^*(-F^e,\mathbb{C})$.
\begin{proof}
We claim that using Theorem \ref{invariantform}, Theorem \ref{generalglue} resp. Definition \ref{avmaslov}, one has denoting $Y^e_0:=F^e\times S^1=M^+\cup_{id}M^-$ the 'trivially glued' cylinder with the induced product metric that
\begin{equation}\label{cutandpaste}
\begin{split}
\eta(D,Y_0^e)&=\eta(D,M^+,\Lambda_0\oplus F_0^+)+ \eta(D;M^-,F_0^-\oplus \Lambda_0)+m_{H^*(N,\mathbb{C})}(\Lambda_0,\Lambda_0))\\
\eta(D,Y^e)&=\eta(D,M^+,\Lambda_0\oplus F_0^+)+ \eta(D;M^-,F_0^-\oplus\Lambda_0)
+m_{H^*(N,\mathbb{C})}(\Lambda_0,\hat \rho^e(\Lambda_0))
\end{split}
\end{equation}
where the first line follows directly from Theorem \ref{invariantform}. To derive the second line from Theorem \ref{generalglue}, we first claim two facts. For this let $M_r^\pm$ be $M^\pm$ with a metric collar $N \times [0,r]$ of length $r$ glued to $\partial M^\pm$. Then
\begin{enumerate}
\item The Calderon projectors $P_{M^\pm_r}, r>0$ are independent of $r$.
\item The subspaces $W_{M^\pm}$ are zero, that is the non-resonance-levels of $M^\pm$ are zero.
\end{enumerate}
To prove the fist claim, note that $T_r:L^2(\Omega^{even}(N \times [0,1]))\rightarrow L^2(\Omega^{even}(N \times [0,r]))$ given by $\omega(x,s)\mapsto \sqrt{r}\omega(x,(1/r)s)$ is an isometry w.r.t the $L^2$-metric. Also it follows from a direct computation that $T_r(H_{1/2}(\Omega^{even}(N \times [0,1]))\subset H_{1/2}(\Omega^{even}(N \times [0,r]))$.
Hance $T_r$ preserves the $0$-eigenspace of $D$ acting on $H_{1/2}$, hence $r({\rm ker}\ (D\circ T_r^{-1}))$ equals $r({\rm ker}\ D)$. Finally the second claim follows directly from the fact that ${\rm im}(H^*(M^\pm, \partial M^\pm, \mathbb{C})\rightarrow H^*(M^\pm, \mathbb{C}))$ is zero by the Kuenneth formula and the fact that ${\rm im}(H^*([0,1], \{0,1\}, \mathbb{C})\rightarrow H^*([0,1]), \mathbb{C})$ is zero. We thus have, note that $F_0^-\cap L_{M^+}=0$ and $F_0^+\cap L_{M^-}=0$, by the last of the above two facts (cf. \cite{Nicol2}). Thus we can write
\[
\begin{split}
L_{M^+}&=F_0^+ \oplus \Lambda_{M^+}\\
\gamma(L_{M^-})&=\gamma(F_0^- \oplus \Lambda_{M^-})=F_0^+\oplus \gamma(\Lambda_{M^-})
\end{split}
\]
Now note that $\Phi_f^e(1)^*\oplus id =\hat \rho^e$ preserves the identification of harmonic forms with elements of ${\rm ker} \hat A$ while $\hat \rho^e(F_0^+)=F_0^+$, furthermore since ${\rm ker}\ D$ on $Y^e$ can be identified via harmonic forms with $H^{even}(Y^e, \mathbb{C})$ we deduce by using the Wang exact sequence oy $Y^e$ (see the proof of Lemma \ref{limiting}) that 
\[
{\rm dim}{\rm ker}\ D ={\rm dim\ ker}(\Phi_f^e(1)^*-Id)={\rm dim}(\hat \rho^e(\Lambda_0\cap \Lambda_0)
\]
where for the latter equality we have used Lemma \ref{limiting}. Then using Theorem \ref{generalglue} and Definition \ref{avmaslov} we arrive at the second line in (\ref{cutandpaste}) (noting $\Lambda_0=\Lambda_{M^\pm}$). Substracting the two identities and using the antisymmetry of $m_{H^*(N,\mathbb{C})}(\cdot,\cdot)$ one arrives at
\[
\eta(D,Y^e)-\eta(D,Y_0^e)=m_{H^*(N,\mathbb{C})}(\Lambda_0,\hat \rho^e(\Lambda_0)),
\]
note that this equality is valid in $\mathbb{R}$ only for this special choice of $APS$-Lagrangians. Finally using that $\eta(D,S^1\times F^e)=0$ since there is an isometry which anticommutes with $D$ (as in the proof of Lemma \ref{symm}) one arrives at the assertion.
\end{proof}
So calculating the invariant $m_{H^*(N,\mathbb{C})}(\Lambda_0,\hat \rho^e(\Lambda_0))$ proofs the theorem. For this one calculates the following.
\begin{prop}\label{maslov}
Let $\Lambda_0 \subset {\rm ker}\ \hat A\simeq H^*(F^e  \times (-F^e),\mathbb{C})$ be the space of limiting values of extended $L^2$-solutions of $Df=0$ on $[0,1]\times F^e$ relative to the product metric, then for $n=2k$ or $n=2k+1$:
\[
m_{H^*(N,\mathbb{C})}(\Lambda_0,\rho^e(\Lambda_0))=\sum_{\alpha \in \Lambda, l(\alpha) \notin \mathbb{Z}}(-1)^{[l(\alpha)]+n}\left(1-2\{l(\alpha)\}\right) +\frac{{\rm arg}(-1+\frac{4}{3}i)}{\pi}\sum_{\alpha \in \Lambda, l(\alpha)\in \mathbb{Z}}(-1)^{l(\alpha)+n},
\]
so the first sum represents the contribution of the non-trivial part of $\rho$, the second sum amounts to a summation over a basis for the $\lambda=1$-eigenspace of $\rho$.
\end{prop}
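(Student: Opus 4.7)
The strategy is to reduce the Maslov-type quantity $m_{H^*(N,\C)}(\Lambda_0,\hat\rho^e(\Lambda_0))$ to a finite sum of one-dimensional contributions indexed by the eigenvalues of the algebraic monodromy $\rho^e$, and then to translate eigenvalue data into monomial data via the Steenbrink--Varchenko description of $\spec(\rho^e)$ in terms of the weighted degrees $l(\alpha)$.

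First I would identify $\Lambda_0$ explicitly. Because $[0,1]\times F^e$ carries a product metric and $D$ on the cylinder is just $\gamma(\partial_x+\hat A)$ with $\hat A$ independent of $x$, the extended $L^2$-solutions of $D\beta=0$ are exactly the constant sections into $\ker \hat A$. Under the Hodge identification $\ker\hat A\simeq H^*(F^e,\C)\oplus H^*(-F^e,\C)=H^*(N,\C)$, this shows that
\[
\Lambda_0=\bigl\{(v,v)\suchthat v\in H^*(F^e,\C)\bigr\},\qquad \hat\rho^e(\Lambda_0)=\bigl\{(\rho^e v,v)\suchthat v\in H^*(F^e,\C)\bigr\},
\]
so both are graphs of isometries inside $H^*(F^e,\C)\oplus H^*(F^e,\C)$, the sign discrepancy in the second $\gamma$-factor coming from the $-F^e$ boundary component (as recorded below Theorem~\ref{generalglue}).

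Next, to compute $\phi(\Lambda_0)$ and $\phi(\hat\rho^e(\Lambda_0))$ from Definition~\ref{avmaslov}, I would write both Lagrangians as graphs with respect to the $\pm i$-eigenspaces of $\gamma$ on $H^*(N,\C)$. A direct computation using $\gamma|_{N}=\gamma_{F^e}\oplus(-\gamma_{F^e})$ shows that the diagonal Lagrangian corresponds to $\phi(\Lambda_0)=-I$ (up to a fixed unitary), while $\phi(\hat\rho^e(\Lambda_0))$ is essentially $-\rho^e$ expressed in the same trivialization. Hence the product $-\phi(\Lambda_0)\phi(\hat\rho^e(\Lambda_0))^*$ becomes $(\rho^e)^*$, reducing the averaged Maslov index to an evaluation of $-\tfrac{1}{\pi i}\tr\log\rho^e$ on $H^n(F^e,\C)$, corrected by $\dim\ker(\rho^e-I)$ (which accounts for the antisymmetrized treatment of the $\lambda=1$ eigenspace in Definition~\ref{avmaslov}).

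Then I would diagonalize $\rho^e$ in Jordan form. Since $\rho^e$ is an isometry of the non-degenerate intersection form $b^e$ on $F^e$, its eigenvalues lie on $S^1$. For each eigenvalue $\lambda=e^{2\pi i\theta}\neq 1$ the contribution to $-\tfrac{1}{\pi i}\tr\log(-\phi(\Lambda_0)\phi(\hat\rho^e(\Lambda_0))^*)$ is, after fixing the branch $\log(re^{it})=\ln r+it$ with $-\pi<t\le\pi$ of (\ref{wind12}), of the form $1-2\{\theta\}$ with a sign governed by $(-1)^{[\theta]+n}$ (the $n$ coming from the degree shift $i^n$ in the definition of $\gamma$ on even/odd forms). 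For $\lambda=1$ the log is indeterminate on the kernel; here one uses the recipe implicit in Lemma~\ref{maslov23} by perturbing $-\phi(\Lambda_0)\phi(\hat\rho^e(\Lambda_0))^*$ off the singular value $-1$ via the canonical model $-1+\tfrac{4}{3}i$ coming from the adiabatic construction, giving the universal factor $\arg(-1+\tfrac{4}{3}i)/\pi$ per one-dimensional contribution. Combining over eigenspaces yields the two sums of the proposition, with the $\lambda=1$ summand compensated exactly by the $+\dim(\Lambda_0\cap\hat\rho^e(\Lambda_0))$ term in Definition~\ref{avmaslov}.

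Finally, to pass from eigenvalues of $\rho^e$ to the monomial parameters $l(\alpha)$, I would invoke the Steenbrink--Varchenko description: the monomials $\{z^\alpha\}_{\alpha\in\Lambda}$ spanning the Milnor algebra $M(f)$ correspond bijectively to generalized eigenvectors of the algebraic monodromy with eigenvalues $e^{2\pi i\, l(\alpha)}$, and under the passage from $\rho$ on $U$ to $\rho^e$ on $U^e$ via Lemma~\ref{betaisom} the spectrum extends by adding a $1$-eigenspace of the right dimension so that the $b^e$-form is non-degenerate. Substituting $\theta=l(\alpha)$ into the eigenvalue-by-eigenvalue formulas of the previous step gives precisely the expression in the proposition. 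The main obstacle is the bookkeeping in this last step: one must check that the Jordan decomposition of $\rho^e$ does not spoil the trace-log computation (i.e.\ only semisimple information enters, which is automatic for unitary operators but needs the observation that $\rho^e$ is unitary on $(U^e,b^e)$), and one must align the branch cut convention in (\ref{wind12}) with the sign $(-1)^{[l(\alpha)]+n}$ so that both eigenvalues $e^{\pm 2\pi i\theta}$ paired by complex conjugation contribute consistently.
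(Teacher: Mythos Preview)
Your strategy breaks down at the step where you claim $-\phi(\Lambda_0)\phi(\hat\rho^e(\Lambda_0))^*$ reduces to $(\rho^e)^*$. The map $\phi$ in Definition~\ref{avmaslov} is built from the $\pm i$-eigenspace decomposition of $\hat\gamma$, which is tied to the $L^2$ inner product on $H^*(N,\C)$. The extended monodromy $\rho^e=\begin{pmatrix} I & 0\\ V & \rho\end{pmatrix}$ is an isometry only of the intersection form $b^e$, not of the $L^2$ metric, so it is \emph{not} unitary in the sense required to identify $\phi(\hat\rho^e(\Lambda_0))$ with $-\rho^e$. Concretely, the eigenvalues of $\rho^e$ as a linear map are just the eigenvalues of $\rho$ together with a $1$ of multiplicity $\mu$; if your identification were correct the sign $(-1)^{[l(\alpha)]+n}$ could never appear and the $\lambda=1$ part would contribute nothing. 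What the paper does instead is fix a basis $\{\theta_i\}$ of $U$ diagonalizing the variation structure as in~(\ref{weightvar}), build from it an explicit orthonormal basis $e_i^{\pm,0},\tilde e_i^{\pm,0},f_i^{\pm,0},\tilde f_i^{\pm,0}$ of the $\pm i$-eigenspaces of $\hat\gamma$ using Lemma~\ref{hodge} and Lemma~\ref{unitary}, and then compute $\hat\rho^e(L)$ block by block. The resulting $2\times2$ blocks $P^\pm(j)$ have eigenvalues $\{1,e^{\pm 2\pi i l(\alpha_j)}\}$, with the choice of sign dictated by ${\rm sign}\,b|_{\theta_j}=(-1)^{[l(\alpha_j)]+n}$; this is the true source of the sign in the first sum, not the factor $i^n$ in $\gamma$.

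Your explanation of the constant $\arg(-1+\tfrac{4}{3}i)/\pi$ is also off: it has nothing to do with Lemma~\ref{maslov23} or any adiabatic perturbation. It arises because for $\lambda=1$ the variation structure is $\mathcal{W}_1(\pm1)=(\mathbb{C},0,1_\mathbb{C},\pm i^{n^2+1})$, so the variation map $V=\pm i$ enters the off-diagonal of $\rho^e$. Working this through the same $2\times2$ computation (Lemma~\ref{bladecomp}) gives the block $P^0(j)$ with eigenvalues $\{1,\tfrac{3}{5}\mp\tfrac{4}{5}i\}$, and $-\overline{(\tfrac{3}{5}-\tfrac{4}{5}i)}=\tfrac{-3+4i}{5}$ has the same argument as $-1+\tfrac{4}{3}i$. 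In short, the specific numerical constant is forced by the explicit form of the indecomposable variation structure at the eigenvalue $1$, and cannot be recovered from your spectral heuristic.
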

\begin{proof}
In the following, we will consider the case $n=2k, k \in \mathbb{N}$, the case $n=2k+1$ is analogous and the result differs from the fist case by a global sign $(-1)^n$. First consider the exact sequence (we identify $F$ and $\tilde F$)
\begin{equation}\label{seq2}
0 \rightarrow H^{n-1}(\partial F,\mathbb{C}) \xrightarrow {\delta}
H^n(F,\partial F,\mathbb{C})
\xrightarrow {b} H^{n}(F,\partial F,\mathbb{C})^* \xrightarrow {r} H^{n}
(\partial F,\mathbb{C})\rightarrow 0.
\end{equation}
Here, we have identified the natural map  $j:H^n(F,\partial F,\mathbb{C})\rightarrow H^n(F,\mathbb{C})$ induced by ithe inclusion $\hat j:(F,\emptyset)\rightarrow (F,\partial F)$ with $b:H^n(F,\partial F,\mathbb{C})\rightarrow H^n(F,\partial F, \mathbb{C})^*$ , where $b(\alpha,\cdot)=<\alpha, j(\cdot)>$ and $<\ ,\ >:H^n(F,\partial F,\mathbb{C})\times H^n(F,\mathbb{C})\rightarrow \mathbb{C}$ is the Poincare-Lefschetz duality pairing identifying $H^n(F,\mathbb{C})$ with $H^n(F,\partial F,\mathbb{C})^*$. We can decompose $U=H^n(F,\partial F,\mathbb{C})=\rm{ker}(b) \oplus V_1$, $U^*=H^n(F,\mathbb{C})=\rm{coker} (b) \oplus V_2$, so that
$b:V_1\rightarrow V_2$ is an isomorphism. We then have
\begin{lemma}
Let $*: \Omega^n(F,\mathbb{C})=\Omega^n(F,\mathbb{C})$ be the Hodge star
operator associated to the induced metric on $F$. Then $*$ defines isomorphisms
$*:V_{1}\rightarrow V_{2}$ and $*:\rm{ker}(b) \rightarrow \rm{coker}(b)$.
\end{lemma}
\begin{proof}
For any $\theta \in H^n(F,\partial F,\mathbb{C})$, the expression
\[
b(\theta,* \theta)=i^r \int_F\theta\wedge *\theta =c\int_F dv_F=(-1)^n b(*\theta, \theta)
\]
$(c \neq 0)$ is nonzero, so for $\theta \notin \rm{ker}(b)$, $*\theta \in H^n(F,\mathbb{C})$ is in the
image of  $j:H^n(F,\partial F,\mathbb{C})\rightarrow H^n(F,\mathbb{C})$. For if $*\theta \in \rm{coker}(j)$ then reading the above sequence backwards and considering $<\ ,\ >$ on $U^*$ shows $b(*\theta, \theta)=0$. For $\theta \in \rm{ker} (b)$, there is no element $\alpha \in H^n(F,\partial F,\mathbb{C})$ so that $b(\theta)(\alpha)\neq 0$, so especially $*\theta \in \rm{coker}(b)$. Since $*^2=1$, both mappings are injective, hence surjective.
\end{proof}
Since $N=F^e \bigsqcup (-F^e)$, we have $H^n(N,\mathbb{C})=H^n(F^e,\mathbb{C})
\oplus H^n(-F^e,\mathbb{C})$. In the following, for every element $\theta \in
H^n(F^e,\mathbb{C})$, we will denote $\overline \theta$ the corresponding
element in $H^n(-F^e,\mathbb{C})$. Let $\hat D$ be the signature operator on the
cylinder $Z=[0,1]\times F^e$, resp. product metric, it induces an anti-involution
$\hat \gamma$ on $\Omega^*(N,\mathbb{C})$ where $N=\partial Z$ so that
\[
\hat D=\hat \gamma(\partial/\partial x+\hat A).
\]
Now restricting to cohomology we have
\[
\hat \gamma=\begin{pmatrix} \gamma & 0\\0 & -\gamma \end{pmatrix}
\]
relative to the decomposition of $H^n(N,\mathbb{C})$ above.
With these notations, if for elements $\theta^{\pm}\in H^n(F^e,\mathbb{C})$
$\hat \gamma\theta^{\pm}= \pm i\theta^{\pm}$, then $\hat \gamma \overline
\theta^{\pm}= {\mp} i\overline \theta^{\pm}$.
\begin{lemma}\label{hodge}
Under the isomorphism $\beta:=(\beta_1,\beta_2):U^* \oplus U \rightarrow H^n(F^e,\mathbb{C})$ the Hodge star $*_{F^e}$ splits as
\[
(\beta^{-1}\circ *_{F^e}\circ \beta)(\theta, \omega)=(*\omega,*\theta)
\]
if $\theta \in V_{2}, \omega \in V_1$,
\[
(\beta^{-1} \circ *_{F^e}\circ \beta^{-1})(\theta,\omega)=(*\omega,*\theta),
\]
for $\theta \in \rm{coker}(b)$, $\omega \in \rm{ker}(b)$.
\end{lemma}
\begin{proof}
Using the notation from Lemma \ref{betaisom} and its proof, the $*$-operators of  $F$, $\hat F$, resp. $F^e$ together with the long exact sequence of the pair $(F^e,\hat F)$ induce the following commutative diagram. To see this, recall the description of the (relative) cohomology of $F$, $\hat F$ resp. $F^e$ as harmonic forms discussed below Lemma \ref{betaisom}. The rows of the diagram follow from the long exact sequence of the pair $(F^e, \hat F)$ and by using $H^n(\hat F,\mathbb{C})\simeq H^n(F,\mathbb{C})$. From the fact that $*_F$ interchanges Neumann- and Dirichlet-boundary conditions we see that the vertical arrows are well-defined. Note that in the right hand diagram (analogous on the left), we flipped orientation on the upper horizontal and vertical arrow on the right, but suppressed it in the notation (the two signs cancel in all relevant mappings). Now let $\theta \in H^n(F,\mathbb{C})$. Then, since $*$ acts pointwise, $(*_{F^e} \circ i_1^*)(\theta)$ is a harmonic form on $F^e$, whose restriction to $F$ coincides with $*_F \theta$ and hence with $(i_2^* \circ *_F )(\theta)| F$.  Now by unique continuation, both forms coincide on the whole of $F^e$, the left diagram is proven analogously.
\[
\begin{CD}
 0 \rightarrow H^{n}(F^e,\hat F, \mathbb{C})  @>>{i_2^*}> H^n(F^e,\mathbb{C}) @<<i_1^*<H^{n}(\hat F,\mathbb{C}) \rightarrow 0\\
 @VV*_{F}V    @VV*_{F^e}V  @VV*_{F}V  \\
0 \leftarrow H^{n}(\hat F, \mathbb{C})  @>>i_1^* > H^n(F^e,\mathbb{C}) @<<{i_2^*}< H^{n}(F^e,\hat F,\mathbb{C}) \leftarrow 0.\\
\end{CD}
\]
Summarizing, we deduce from the diagram the equalities
\[
*_{F^e}\circ i_1^*=i_2^*\circ *_{F} \quad {\rm resp.}\quad *_{F^e} \circ i_2^*=i_1^*\circ *_{F},
\]
so with $\omega$, $\theta$ as above $(*_{F^e}\circ \beta) (\theta,\omega)=\beta(*_F\omega,*_F\theta)$ which is the assertion.
\end{proof}
Since $\gamma=i^r(-1)^l*_{F^e}$ for some $r,l$ depending on degrees and dimension (see below), these equations continue to hold for $\gamma$ replacing $*_{F^e}$ and defining $\gamma_{1,2}:V_{1,2}\rightarrow V_{1,2}$, $\gamma_F: {\rm ker}(b)\rightarrow {\rm coker}(b)$ by the above equations, we will suppress the
indices of $\gamma$ below.\\
Following well-known results (see for instance \cite{nem2}), one can choose bases $\theta_1,\dots, \theta_\mu \in U$ resp. dually $\omega_1,\dots,\omega_\mu \in U^*$, (i.e. $<\omega_i, \theta_j>=\delta_{ij}$, where $<,>$ is the Poincare-duality pairing), so that the tuple $\mathcal{V}(f):=(U,b,\rho,V)$, where $V:U^*\rightarrow U$ is the variation mapping, for a quasihomogeneous singularity has the following decomposition in terms of the generalized eigenspaces of the monodromy operator $\rho$:
\begin{equation}\label{weightvar}
\mathcal{V}(f)=\bigoplus_{\alpha \in \Lambda}\mathcal{W}_{exp2\pi il(\alpha)}((-1)^{[l(\alpha)]+n})
\end{equation}
where for $|\lambda|=1$, one defines
\begin{equation}\label{nemweight}
\begin{split}
\mathcal{W}_\lambda(\pm 1)&=(\mathbb{C}, \pm i^{-n^2},\lambda,\pm(\lambda-1)i^{n^2}),\quad if \lambda \neq 1,\\
\mathcal{W}_1(\pm 1)&=(\mathbb{C}, 0,1_\mathbb{C},\pm i^{n^2+1}),\quad if \lambda = 1.
\end{split}
\end{equation}
\begin{lemma}\label{unitary}
There is a set of monomials $z^{\alpha_i},\alpha_i \in \Lambda, i=\{1,\dots,\mu\}$ spanning $\mathcal{O}_{\mathbb{C}^{n+1},0}/{\rm grad}(f)\mathcal{O}_{\mathbb{C}^{n+1},0}$ so that the associated elements in $\mathcal{H}^n(f_*\Omega^\cdot_{X/D'_\delta})$ generate the latter as a module over $\mathcal{O}_{D'_\delta}$ and, by restriction to $F$, give rise to a basis in $\theta_i,\dots, \theta_i \in U, i=\{1,\dots,\mu\}$ satisfying the decomposition (\ref{nemweight}). Then, after possibly orthogonalizing elements $\theta_i$ spanning $V_n:={\rm span}\ \{\theta_i:l(\alpha_i) =n \in \mathbb{N}\}$ for some fixed $n$, we can assume that $\omega_i=*\theta_i$ for all $i$ (the $\omega_i$ representing the dual basis in $U^*$), i.e. the set $\{\theta_i\}_{i=1,\dots,\mu}$ is unitary with respect to the $L^2$-inner product on $F$, that is
\[
\int_F\theta_i\wedge \overline *\theta_j=\delta_{ij}, \ i,j \in \{1,\dots, \mu\},
\]
where $\mu$ is the Milnor number of $F$.
\end{lemma}
\begin{proof}
The first assertion, concerning intersection form $b$ and monodromy $\rho$, follows from a classical calculation, see for instance Brieskorn \cite{bries} and \cite{steenbrink}, the asserted form of the variation mapping (\ref{nemweight}) is given in \cite{nem2}.\\
Now for $\theta_i \notin {\rm ker}(j)$ and and since $n=2k, k \in \mathbb{N}$ the set $j(\theta_i)$ diagonalize the intersection form on the image of $j$ with diagonal $\pm 1$. On the other hand since $*$ is unitary with respect to the intersection form, the $\theta_i$ span the $\pm 1$ eigenspaces of $*$, considering $j^{-1}\circ *$  as an endomorphism on $V_1$, that is one has (for the first equality cf. \cite{getzler})
\[
\int_F \theta_i \wedge \overline \theta_j=\pm \int_F\theta_i\wedge \overline *\theta_j= \pm \delta_{ij}\int_F|\theta_i|^2.
\]
So the $\{\theta_i\}_i$ are orthogonal and we can also assume them to be orthonormal.\\
For $\theta_i \in {\rm ker}(j)$, $b(\theta_i;\cdot)$ is degenerate on $U$ and the Hodge star is a map $*:{\rm ker}\ j \rightarrow {\rm coker}\ j$.
It is sufficient to show that for any basis $\{\theta_j\}_j$ of ${\rm ker}\ j$ which is given by monomials $z^{\alpha_j}, j \in\{1,\dots,{\rm dim\ ker}\ b\},\ \alpha_j\subset \Lambda \subset \mathbb{N}^{n+1}$ so that $l(\alpha_j) \in \mathbb{Z}$, where $l(\alpha_j)=\sum_i((\alpha_j)_i+1)w_i$, $*$ maps each subspace $V_p:={\rm span}\ \{\theta_j :l(\alpha_j)=p, p \in\mathbb{Z}\}$ to its corresponding dual $V_p^*\subset {\rm coker}\ j$ and the $V_p$ are orthogonal with respect to the hodge inner product on $F$. That $*:V_p\rightarrow V_p^*$ is clear from the definition of $*$, for the orthogonality of the $V_p$ just consider that since there is a horizontal vector field $\hat X_f$ whose flow induces fibrewise isometries, we have the commuting diagram
\[
\begin{CD}
\Gamma(\mathcal{H}_c^n(f_*\Omega^\cdot_{X/D_\delta}))  @>>\overline *>  \Gamma(\mathcal{H}^n(f_*\Omega^\cdot_{X/D_\delta}))    \\
 @VV\mathcal{L}_{X_f}V    @VV \mathcal{L}_{X_f} V \\
\Gamma(\mathcal{H}_c^n(f_*\Omega^\cdot_{X/D_\delta}))  @>>\overline *> \Gamma( \mathcal{H}^n(f_*\Omega^\cdot_{X/D_\delta})),
\end{CD}
\]
where here $\overline *$ denotes the fibrewise $\overline *$-operation and $\mathcal{H}_c^n(f_*\Omega^\cdot_{X/D_\delta})$ the sheaf of $n$-th cohomologies with compact support. Now if $s_i, s_j \in \Gamma(\mathcal{H}^n(f_*\Omega^\cdot_{X/D_\delta}))$ restrict to $\theta_i \in V_p, \theta_J\in V_q$ respectively for $q\neq p$ then since the flow of $\hat X_f$ preserves the intersection form we have
\[
0=\frac{d}{dt}\Phi_{X_f}(\int_Fs_j\wedge \overline *s_i)=\int_F\left(\mathcal{L}_{\hat X_f}s_j\wedge\overline *s_j+s_j\wedge \overline *\mathcal{L}_{\hat X_f}s_i\right)=4\pi i(q-p)\int_F s_j\wedge \overline *s_i,
\]
 so since $q\neq p$ we arrive at $V_q\perp V_p$. Now if ${\rm dim }\ V_p>1$ for some $p\in \mathbb{N}$, we apply Gram-Schmidt to get an orthogonal basis in $V_p$. This does not affect the form of the $\mathcal{W}_1(\pm 1)$, since $l(\alpha)$ is constant on $V_p$, so we arrive at the assertion.
\end{proof}
Using the last lemma, let $J^+,J^-\subset {1,\dots,\mu}$ be so that $b$ is positive resp. negative definite on the subspaces spanned by $\{\theta_i\}_{i \in J^{\pm}}$, we write $\theta^{\pm}_j$ if $j \in J^{\pm}$. Denote the corresponding elements of the dual base by $\omega_j^{\pm}, j \in J^{\pm}$.
On the other hand, we write $\theta^0_i, i \in J^0$ if $\theta_i \in {\rm ker}(b)$,
$\omega^0 _j, j \in J^0$ for the dual elements (hence if $\omega_j^0 \in {\rm coker} (b)$).
We now decompose ${\rm ker}\ A \cap H^n(F^e,\mathbb{C})=({\rm ker}(\gamma-iI)\oplus{\rm
ker}(\gamma+iI))\cap H^n(F^e,\mathbb{C})=:K^+\oplus K^-$ by
\[
K^+={\rm span}(e_j^+,e_j^-, e_i^0)_{j \in J^{\pm}, i \in J^0},\quad K^-={\rm span}(\tilde f_j^+,\tilde f_j^-, \tilde f_i^0)_{j \in J^\pm, \ i \in J^0}
\]
where (we use $i\gamma=i^{2k+2}(-1)^{k+1}*=*$ for $n=2k$), the use of  $\tilde{}$ will become clear below:
\[
\begin{split}
e_i^{\pm, 0}&:=\theta^{\pm,0}_i-i\gamma\theta^{\pm,0}_i=\theta_i^{\pm,0}-\omega_i^{\pm,0},\\
\tilde f_i^{\pm,0}&:=\theta^{\pm,0}_i+i\gamma\theta^{\pm,0}_i, = \theta_i^{\pm,0}+\omega_i^{\pm,0},\ i \in J^{\pm,0}.
\end{split}
\]
Recall  $N=F^e \bigsqcup (-F^e)$, the same vectors as above, but considered on $-F^e$ have sign-reversed eigenvalues for $\hat \gamma|(-F^e)=-\gamma$, so we have
${\rm ker}\ \hat A \cap H^n(N,\mathbb{C})=({\rm ker}(\hat \gamma-iI)\oplus{\rm
ker}(\hat \gamma+iI))\cap H^n(N,\mathbb{C})=:\hat K^+\oplus \hat K^-=
(K^+\oplus \overline K^-)\oplus (K^- \oplus \overline K^+)$, and
\[
\overline K^-={\rm span}(\tilde e_j^+,\tilde e_j^-, \tilde e_i^0)_{j \in J^{\pm}, i \in
J^0},\quad \overline K^+={\rm span}(f_j^+,f_j^-, f_i^0)_{j \in J^{\pm}, i \in J^0}
\]
where
\[
\begin{split}
\tilde e_i^{\pm,0}&:=\overline \theta^{\pm,0}_i+i\gamma \overline \theta^{\pm,0}_i=\overline \theta_i^{\pm,0}+\overline \omega_i^{\pm,0},\\
f_i^{\pm,0}&:=\overline \theta^{\pm,0}_i-i\gamma\overline \theta^{\pm,0}_i = \overline \theta_i^{\pm,0}-\overline \omega_i^{\pm,0},
\ i \in J^{\pm,0}.
\end{split}
\]
We now define a Lagrangian $L \subset {\rm ker}\ \hat A$ by specifying its associated isometry $P_L:\hat K^+\rightarrow \hat K^-$ (abusing notation in the following by writing $P_L$ for $\Phi(P_L)$, note also that we assume in the following that on the complement of $H^n(N,\mathbb{C})$, $P_L$ is chosen to be in coherence with the following lemma, since the action of $\hat \rho^e$ is trivial outside the middle degree, we omit the details):
\begin{equation}\label{gluelag}
P_L(e_i^{\pm,0}, \tilde e_i^{\pm,0})=(f_i^{\pm,0},\tilde f_i^{\pm,0}).
\end{equation}
Note that by definition of the pairs $(e_i^{\pm,0},f_i^{\pm,0})$ resp. $(\tilde e_i^{\pm,0}, \tilde f_i^{\pm,0})$ which are a basis of the  diagonal $\Delta\subset H^n(N,\mathbb{C})=H^n(F^e,·\mathbb{C})\oplus H^n(-F^e,·\mathbb{C})$, the image of $P_L$ is in fact Lagrangian since
\[
\Delta=\{(f,f)\ | \ f\in H^n(F^e,\mathbb{C})\}\subset H^n(F^e,·\mathbb{C}))\oplus H^n(-F^e,\mathbb{C})
\]
is Lagrangian. In fact:
\begin{enumerate}
\item $\Delta$ is orthogonal to $\hat \gamma(\Delta)$ since
\[
\left( (f,f),\hat \gamma(g,g)\right)_{L^2(N)}=\left( f,\gamma g \right)_{L^2(F^e)} +\left( f,-\gamma
g \right)_{L^2(F^e)}=0,
\]
where $(\cdot,\cdot)_{L^2(N)}$ denotes the $L^2$-innerproduct on $\Omega^*(N,\mathbb{C})$ induced by the metric, $(\cdot,\cdot)_{L^2(F^e)}$ the corresponding one on $F^e$. Furthermore one has
\item $\Delta+\hat \gamma (\Delta)=H^n(F^e,\mathbb{C})\oplus H^n(-F^e,·\mathbb{C})$ since
\[
(f,g)=\frac{1}{2}\left( (f+g,f+g)+\hat \gamma(-\gamma f+\gamma g ,-\gamma f+\gamma
g)\right),
\]
for any pair $(f,g) \in H^n(F^e,·\mathbb{C})\oplus H^n(-F^e,\mathbb{C})$.
\end{enumerate}
In fact we can identify ${\rm im}\ P_L$ very precisely.
\begin{lemma}
$L$ coincides with the space of limiting values $\Lambda_{[0,1]\times F^e}$ of extended $L^2$-solutions of $Df=0$ on $[0,1]\times F^e$ relative to the product metric.
\end{lemma}
\begin{proof}
We already mentioned $i\gamma=*$ for ${\rm dim}\ F=2n=4k$. The space of limiting values of extended $L^2$-solutions is isomorphic to the image of $H^{\rm even}([0,1]\times F^e,\mathbb{C})=H^0([0,1],\mathbb{C})\otimes H^{0,n}(F^e,\mathbb{C})$ under $r:=i^*+i^*(*):H^*([0,1]\times F^e,\mathbb{C})\rightarrow H^*(N,\mathbb{C})$, where $i:N \hookrightarrow [0,1]\times F^e$ is the inclusion.  Since the metric on $[0,1]\times F^e$ is product, $*(H^{\rm even}([0,1]\times F^e,\mathbb{C}))$ is of the form $H^1([0,1],\partial [0,1],\mathbb{C}) \otimes H^*(F^e,\mathbb{C})$, hence its pullback is zero: $i^*(*)=0$ on $*(H^{even}([0,1]\times F^e,\mathbb{C}))$. Furthermore $H^0([0,1],\mathbb{C})$ is represented by a constant function, consequently (with the above notation) ${\rm im}\  r = {\rm span}\{\alpha\oplus \overline \alpha\},\ \alpha \in H^{0,n}(F^e,\mathbb{C})$. Using the isomorphism $\beta=(\beta_1,\beta_2):U^* \oplus U \rightarrow H^n(\tilde F^e,\mathbb{C})$ from Lemma \ref{betaisom} we have thus
\[
\Lambda_{[0,1]\times F^e}={\rm span} \ \{e_0 + \overline e_0,\alpha_i +\overline\alpha_i,\omega_i+\overline \omega_i\}_{i =1,\dots,\mu},
\]
where $e_0$ spans $H^0(F^e,\mathbb{C})$, $\{\alpha_i\}_i$, $\{\omega_i\}_i$ span $U$ (resp. $U^*$ as the dual basis) while $\overline e_0,\overline \alpha_i,\overline \omega_i$ denote the corresponding elements on $-F^e$. We can rewrite the above as
\[
\begin{split}
\Lambda_{[0,1]\times F^e}\cap H^n(N,\mathbb{C})={\rm span} \ &\{(\alpha_i +\omega_i)+(\overline \alpha_i+\overline \omega_i),\\&(\alpha_i -\omega_i)+(\overline \alpha_i-\overline \omega_i)\}_{i =1,\dots,\mu},
\end{split}
\]
but this equals exactly the Lagrangian determined by (\ref{gluelag}).
\end{proof}
Since the action of $\hat \rho^e$ on the basis elements associated to $-(F^e)$
is trivial, we have, denote by ${[z^\alpha(i)]}_{\alpha(i) \in \Lambda}$ a monomial base of
$M(f)$ associated to the ${\theta_i}_{i=1,\dots,\mu}$
\begin{equation}\label{explicit}
\begin{split}
\hat \rho^e(\theta_i^+)=e^{2\pi i l(\alpha_i)}\theta_i^+,& \qquad  \hat
\rho^e(\overline \theta_i^+)=\overline \theta_i^+,\\
\hat \rho^e(\omega_i^+)=(e^{2\pi i l(\alpha_i)}-1)i^{n^2}\theta_i^+ + \omega^+_i,&
\qquad \hat \rho^e(\overline \omega_i^+)=\overline \omega_i^+,\\
\hat \rho^e(\overline \theta_j^-)=\overline \theta_j^-,& \qquad \hat
\rho^e(\theta_j^-)=e^{2\pi i l(\alpha_j)} \theta_j^-,\\
\hat \rho^e(\overline \omega_j^-)=\overline \omega^-_j,&
\qquad \hat \rho^e(\omega_j^-)=-(e^{2\pi i
l(\alpha_j)}-1)i^{n^2}\theta_j^- +\omega^-_j,\\
\hat \rho^e(\theta_k^0)=\theta_k^0,& \qquad  \hat
\rho^e(\overline \theta_k^0)=\overline \theta_k^0,\\
\hat \rho^e(\omega_k^0)=\pm i^{n^2+1}\theta_k^0 + \omega^0_k,&
\qquad \hat \rho^e(\overline \omega_k^0)=\overline \omega_k^0,\\
\end{split}
\end{equation}
where $i \in J^+$, $j\in J^-, k  \in J^0$ and for $k\in J^0$ the signs are determined by $\pm=(-1)^{[l(\alpha_k]+n}$. Note that $L=span\{e_j^{\pm,0}+f_i^{\pm,0}, \tilde e_j^{\pm,0}+\tilde f_j^{\pm,0}\}_{j\in J^{\pm,0}}$, so, using the formulas above, direct calculation leads to the following lemma:
\begin{lemma}\label{bladecomp}
With the above notation and for $n=2k, k\in \mathbb{N}$,
\[
P_{(\rho^e)^*(L)}=\bigoplus_{j \in J+}P^+(j)\oplus \bigoplus_{j \in J^-}P^-(j)\bigoplus_{j \in J^0}\oplus P^0(j),
\]
where
\[
\begin{split}
P^-(j)&=P_{(\rho^e)^*(L)}|_{{\rm span} (e_j^-,\tilde e_j^-)}=\begin{pmatrix}e^{-2\pi il(\alpha_j)}&0 \\ 1-e^{-2\pi i l(\alpha_j)}&1 \end{pmatrix},\quad
P^+(j)=P_{(\rho^e)^*(L)}|_{{\rm span} (e_j^+,\tilde e_j^+)}=\begin{pmatrix}1&1-e^{2\pi il(\alpha_j)} \\ 0&e^{2\pi i l(\alpha_j)} \end{pmatrix},\quad \\
P^0(j)&=P_{(\rho^e)^*(L)}|_{{\rm span} (e_j^0,\tilde e_j^0)}=\begin{pmatrix} \frac{1}{1\mp \frac{i}{2}}&\frac{\mp \frac{i}{2}}{1\mp \frac{i}{2}}\\ \frac{\mp \frac{i}{2}}{1\mp \frac{i}{2}}&\frac{1}{1\mp \frac{i}{2}} \end{pmatrix}.
\end{split}
\]
\end{lemma}
\begin{proof}
Using the introduced notations $e_i^{\pm,0},f_i^{\pm,0},\ i \in\{1,\dots,\mu\}$ for the vectors defined above spanning $\hat K^+$ resp. $\hat K^-$, and writing $\hat K^+\oplus \hat K^-=V^+\oplus V^-\oplus V^0$ for the corresponding decomposition,
we deduce from the above formulas (we will explicitly show the formulas for $P^+$ and $P^0$, $P^-$ is similar to $P^+$) the following. First, let $i \in J^+$, then (note $i^{n^2}=1$ for $n=2k, k\in \mathbb{N}$),
\[
\begin{split}
\hat \rho^e(\tilde f_i^+)&=\hat \rho^e(\theta^+_i+\omega_i^+)\\
&=e^{2\pi i l(\alpha_i)}\theta_i^+ + (e^{2\pi i l(\alpha_i)}-1)i^{n^2}\theta_i^+ + \omega^+_i\\
&=(2e^{2\pi i l(\alpha_i)}-1)\theta_i^+ + \omega_i^+,\\
&=e^{2\pi i l(\alpha_i)}\tilde f_i^+ + (e^{2\pi i l(\alpha_i)}-1)e_i^+,\\
\hat \rho^e(\tilde e_i^+)&=\overline\theta_i^+ +\overline \omega_i^+=\tilde e_i^+\\
\hat \rho^e(f_i^+)&=\overline \theta_i^+ -\overline \omega_i^+=f_i^+\\
\hat \rho^e(e_i^+)&=\hat \rho^e(\theta^+_i-\omega_i^+)\\
&=e^{2\pi i l(\alpha_i)}\theta_i^+ - (e^{2\pi i l(\alpha_i)}-1)i^{n^2}\theta_i^+ - \omega^+_i\\
&= \theta_i^+- \omega_i^+=e_i^+.
\end{split}
\]
Noting $L=span\{e_j^{\pm,0}+f_i^{\pm,0}, \tilde e_j^{\pm,0}+\tilde f_j^{\pm,0}\}_{j\in J^{\pm,0}}$ as above one has since
\[
\hat \rho^e(\tilde e_i^+ +\tilde f_i^+)=e^{2 \pi i l(\alpha_i)}\tilde f_i^+ +(e^{2\pi i l(\alpha_i)}-1)e_i^+ + \tilde e_i^+,\quad \hat\rho^e(e_i^++f_i^+)=e_i^+ +f_i^+,
\]
and by substracting $(e^{2\pi i l(\alpha_i)}-1)$ times the second vector from the first that
\[
\hat \rho^e(L)\cap V^+={\rm span}_i\ \left\{\tilde e_i^+-(e^{2\pi i l(\alpha_i)}-1) f_i^+ + e^{2\pi i l(\alpha_i)}\tilde f_i^+,e_i^+ +f_i^+ \right \},
\]
consequently
\[
P^+(j)=\begin{pmatrix}1&1-e^{2\pi il(\alpha_j)} \\ 0&e^{2\pi i l(\alpha_j)} \end{pmatrix},
\]
which was the assertion. Let now $i \in J^0$, then substituting again from formula (\ref{explicit})
\[
\begin{split}
\hat \rho^e(\tilde f_i^0)&=\hat \rho^e(\theta^0_i+\omega_i^0)\\
&=\theta_i^0 + i^{n^2+1}\theta_i^0 + \omega^0_i\\
&=(1 \pm i)\theta_i^0 + \omega_i^0\\
&=(1 \pm \frac{i}{2})\tilde f_i^0 \pm \frac{i}{2}e_i^0\\
\hat \rho^e(\tilde e_i^0)&=\overline\theta_i^0 +\overline \omega_i^0=\tilde e_i^0\\
\hat \rho^e(f_i^0)&=\overline \theta_i^0 -\overline \omega_i^0=f_i^0\\
\hat \rho^e(e_i^0)&=\hat \rho^e(\theta^0_i- \omega_i^0)\\
&=(1 \mp i)\theta_i^0 - \omega^+_i\\
&= \mp \frac{i}{2}\tilde f_i^0+(1 \mp \frac{i}{2})e_i^0,
\end{split}
\]
so summarizing
\[
\begin{split}
\hat \rho^e(\tilde e_i^0 +\tilde f_i^0)&=(1 \pm \frac{i}{2})\tilde f_i^0 \pm \frac{i}{2}e_i^0+ \tilde e_i^0,\\
\hat \rho^e(e_i^0+f_i^0)&=f_i^0 \mp \frac{i}{2}\tilde f_i^0+(1 \mp \frac{i}{2})e_i^0.
\end{split}
\]
To write the span of these vectors as a graph, we substract from the first vector $\frac{\pm i/2}{1\mp i/2}$ times the second, then one has finally
\[
\begin{split}
\hat \rho^e(L)\cap V^0&=\{ {\rm span}\ \{e_i^0 \mp \frac{\frac{i}{2}}{1\mp \frac{i}{2}}\tilde f_i^0 + \frac{1}{1\mp \frac{i}{2}}f_i^0,\\
&\tilde e_i^0+\frac{1}{1 \mp \frac{i}{2}}\tilde f_i^0\mp \frac{\frac{i}{2}}{1\mp \frac{i}{2}} f_i^0\},
\end{split}
\]
which gives the asserted form of $P^0$.
\end{proof}
Using this lemma, the eigenvalues of $P_{\hat \rho^e(L)}$ can be read off resp. calculated as follows:
\begin{equation}\label{spec}
\begin{split}
{\rm spec}(P_{\hat \rho^e(L)})&=\left\{\{1,{\rm exp}(\pm 2\pi i  l(\alpha_j))\}_{j \in J^{\pm}}, \{1, \frac{3}{5}\mp \frac{4}{5}i\}_{j \in J^0}\right\}\\
&=\left\{\{1,{\rm exp}((-1)^{[l(\alpha)]+n}2 \pi i l(\alpha))\}_{\alpha  \in \Lambda: l(\alpha) \notin \mathbb{Z}},\{1, \frac{3}{5}+(-1)^{l(\alpha)+n+1} \frac{4}{5}i\}_{\alpha \in \Lambda:l(\alpha) \in \mathbb{Z}}\right\}.
\end{split}
\end{equation}
where $[\cdot]$ is the integer part function. Now by taking ${\rm log}(re^{it})=ln \ r+it,\  r>0, -\pi < t \leq \pi$ one has
\begin{equation}\label{finaleta}
\begin{split}
m_{H^*(\tilde F^e)}(L,\rho^e(L))&=-\frac{1}{\pi i}{\rm tr} \ {\rm log} (-P_{L}P^*_{ \rho^e(L)})+dim(L\cap \rho^e(L))\quad \\
&=-\frac{1}{\pi i}\sum_{\lambda \in spec(-\phi(L)\phi(\rho^e(L))^*), \lambda \neq -1}{\rm log} \lambda,	\\	
&=\sum_{\alpha \in \Lambda, l(\alpha) \notin \mathbb{Z}}(-1)^{[l(\alpha)]+n}(1-2\{l(\alpha)\}) +\frac{{\rm arg}(-1+\frac{4}{3}i)}{\pi}\sum _{\alpha \in \Lambda, l(\alpha)\in \mathbb{Z}}(-1)^{l(\alpha)+n},
\end{split}
\end{equation}
where ${\rm arg}(re^{i\theta})=\theta \in[0,2\pi), r>0$.
\end{proof}
The above formula proves Proposition \ref{maslov}, which in turn proves Theorem \ref{eta}. Note that the 'non-degenerate part' of the expression for the eta-invariant equals the algebraic eta-invariant given by Nemethi (\cite{nem1},\cite{nem2}) for so-called '$(-1)^n$-hermitian variation structures', specialized for the case of weighted homogeneous polynomials.

\subsection{Eta invariants and spectral flow}
We close this section by relating the above expression for an eta-invariant on $Y$ with our results of \cite{klein3} (see also \cite{klein1}, Chapter 3). There we observed that the variation structure of a quasihomogeneous polynomial $f$ is completely determined by a set of $\mu$ spectral flows ${\rm sf}(\alpha)$ on its Milnor bundle $Y$ associated to a set of monomials $z^\alpha, \alpha \in \mathbb{Z}^{n+1}$, where $\mu$ is the Milnor number of $f$ and $\{z^\alpha\}$ span its Milnor algebra $M(f)$ as a module over $\mathbb{C}$. On the other hand, the sum of the ${\rm sf}(\alpha)/\beta$ is, modulo a Maslov-type number, determined by a difference of eta-invariants. This difference is, by Lemma 3.2.2 of \cite{klein1}, determined by the restriction of the weighted circle action $\sigma$ of $f$ to the boundary of $Y$ and the Cauchy data space of the trivial bundle $\tilde Y_0:=Y_u\times S^1_\delta$ (for some $u\in S^1_\delta$ fixed) resp. its adiabatic limit in $\partial \tilde Y_0\simeq \partial Y$, alternatively one can replace $\tilde Y_0$ by the $\beta$-fold cyclic covering of $Y$. \\
Assume thus we have chosen a diffeomorphism $\Theta: \partial Y\rightarrow \partial \tilde Y_0$ which is an isometry w.r.t. the product metric $g_0$ on $\tilde Y_0$ and that $Y$ as well as $\tilde Y_0$ are equipped with metric collars as in (\ref{product999}) (note that we assume $\tilde Y_0$ having 'left-boundary'). Let $D_{\tilde Y_0}$ be the signature operator on $\tilde Y_0$ associated to $g_0$ (with tangential operator $A$), $L_{\tilde Y_0}$ its Cauchy data space and recall that there is a limit
\[
\lim_{r \rightarrow \infty}L^r_{\tilde Y_0}=F_\nu^+ \oplus W_{\tilde Y_0}\oplus\gamma(W_{\tilde Y_0}^\perp)\oplus d(E^-_\nu)\oplus V_{\tilde Y_0}
\]
where $L^r_{\tilde Y_0}$ is defined by the 'elongation' $\tilde Y_0\cup_{\partial \tilde Y_0}\partial \tilde Y_0\times (-r,0]$, $\nu\geq\nu_0 \in \mathbb{N}$, where $\nu_0$ is the non-resonance-level for $D_{\tilde Y_0}$ and $W_{\tilde Y_0}\subset d(E_\nu^+)\oplus d^*(E_\nu^-)\subset L^2(\Omega^*_{\partial \tilde Y_0})$, so $W_{\tilde Y_0}=r(\mathcal{K}_{\tilde Y_0})$, using notation from \cite{klein3}. Assume now the link $L:= \partial Y_u$, is a rational homology sphere which implies ${\rm ker}(b)=0$, since ${\rm coker}\  i^*:(H^n(Y_u, \partial Y_u,\mathbb{C})\rightarrow H^n(Y_u,\mathbb{C}))\simeq H^n(\partial Y_u,\mathbb{C})=0$. Set $\rho_{\tilde Y_0}=\sigma_{1/\beta}|Y_u\times id:\tilde Y_0\rightarrow \tilde Y_0$ and note that the $\pm 1$-eigenspaces of $b$ on $H^*(Y_u,\mathbb{C})$ induce a splitting $\mathcal{K}_{\tilde Y_0}=\mathcal{K}^{+}_{\tilde Y_0}\oplus \mathcal{K}^{-}_{\tilde Y_0}\oplus \mathcal{K}_0$, where $\mathcal{K}_0$ represents the $0$-form part (applying the results of Appendix B of \cite{klein3} resp. \cite{klein1}), this induces a splitting $r(\mathcal{K}^{\pm}_{\tilde Y_0}\oplus \mathcal{K}_0)=W_{\tilde Y_0}^\pm\oplus V_0= W_{\tilde Y_0}\oplus V_{\tilde Y_0}$, where we absorbed $V_{Y_0}$ in $V_0$, the zero-form part.of $r(\mathcal{K}_{\tilde Y_0})$. Then set
\begin{equation}
\rho_{\tilde Y_0,b}^*|(W_{\tilde Y_0})=\rho_{\tilde Y_0}^*|_{\partial \tilde Y_0}(W_{\tilde Y_0}^+) \oplus (\rho_{\tilde Y_0}^*|_{\partial \tilde Y_0})^t(W_{\tilde Y_0}^-)
\end{equation}
Note that $(\cdot)^t$ means taking the adjoint of an element $\mathcal{B}(L^2(\Omega^*_{\partial \tilde Y_0}))$, the map is well-defined since $\rho_{\tilde Y_0}^*$ preserves the splitting on $\mathcal{K}_{\tilde Y_0}$, representing fibrewise the algebraic monodromy of $f$. Restricting the action of $\rho_{\tilde Y_0,b}$ to $P_-(W_{\tilde Y_0})$, define
\begin{equation}\label{globalproj2}
\rho_{\tilde Y_0,b}^{\pm}:=P_+ +P_-\circ \rho_{\tilde Y_0,b}^* | \in {\rm End}(W_{\tilde Y_0}, W_{\tilde Y_0}\oplus \gamma W_{\tilde Y_0}),
\end{equation}
where $P_\pm=1/\sqrt{2}(Id\mp i\gamma)$ and finally set
\[
L_{\rho^*_b,\tilde Y_0^\infty}= F_\nu^+ \oplus \rho_{\tilde Y_0,b}^{\pm}(W_{\tilde Y_0})\oplus\gamma(W_{\tilde Y_0}^\perp)\oplus d(E^-_\nu)\oplus V_{\tilde Y_0}  \in {\rm Gr}_\infty(A).
\]
For the following, consider for any isotropic subspace $W\subset L^2(\Omega^*_{\partial \tilde Y_0})$ its associated symplectic subspace $W\oplus\gamma W\subset L^2(\Omega^*_{\partial \tilde Y_0})$ and consider a Lagrangian in this symplectic subspace $L\subset W\oplus \gamma W$. Then let $\phi_W(L):P_-(W\oplus \gamma W) \rightarrow P_+(W\oplus \gamma W)$ be the associated isometry. Define furthermore
\[
\tau(f,b)=\sum_{\alpha \in \Lambda:\frac{1}{2}<\{l(\alpha)\}<1} (-1)^{[l(\alpha)]+n}\in \mathbb{Z},
\]
We can then state
\begin{folg}\label{spectraletainv}
Let $\eta(D_{P^+(\Lambda_Y)})$ be the eta-invariant of the odd signature operator $D$ on $Y$ as calculated in Theorem \ref{eta}. Let ${\rm sf}(\alpha)=-\frac{1}{2}{\rm SF}(\alpha)$, where $\{{\rm SF}(\alpha), \ \alpha \in \Lambda \subset \mathbb{N}^{n+1}\}$ is the set of spectral flows introduced in \cite{klein3}. We then have
\begin{equation}\label{spectraleta}
\eta(D_{P^+(\Lambda_Y)})=\sum_{\alpha \in \Lambda, \frac{{\rm sf}(\alpha)}{\beta} \notin \mathbb{Z}}(-1)^{[\frac{{\rm sf}(\alpha)}{\beta}]+n+1}\left(1-2\{\frac{{\rm sf}(\alpha)}{\beta}\}\right) +\frac{{\rm arg}(-1+\frac{4}{3}i)}{\pi}\sum_{\alpha \in \Lambda, \frac{{\rm sf}(\alpha)}{\beta}\in \mathbb{Z}}(-1)^{\frac{{\rm sf}(\alpha)}{\beta}+n+1},
\end{equation}
where again, $[\cdot]$ denotes the integer part function. Assume now that ${\rm ker}(b)=0$. Then one has
\begin{equation}\label{etadecomp}
\eta(D_{P^+(\Lambda_Y)})={\rm  sign}(b)-\frac{1}{\pi i}{\rm tr\ log}(\Phi(L_{\rho^*_b,\tilde Y_0^\infty})\Phi(L_ {\tilde Y_0^\infty})^*)-2\tau(f,b)
\end{equation}
where we have that
\[
\frac{1}{\pi i} {\rm tr\ log}(\Phi(L_{\rho^*_b,\tilde Y_0^\infty})\Phi(L_ {\tilde Y_0^\infty})^*)=
\frac{1}{\pi i} {\rm tr\ log}(\phi_{W_{\tilde Y_0}}(\rho_{\tilde Y_0,b}^{\pm}(W_{\tilde Y_0}))\phi_{W_{\tilde Y_0}}(W_{\tilde Y_0})^*),
\]
so is determined by finite-dimensional expresssions. Note that, using the isometry $\Theta: \partial Y\rightarrow \partial \tilde Y_0$, the ${\rm tr\ log}$-terms in (\ref{etadecomp}) can be regarded as being defined on $L^2(\Omega^*_{\partial Y})$.
\end{folg}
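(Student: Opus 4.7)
The plan is to deduce both assertions from Theorem \ref{eta} by reinterpreting its two sums: elementary arithmetic identities combined with the spectral-flow computation of \cite{klein3} for formula (\ref{spectraleta}), and a geometric identification with the finite-dimensional action of $\rho_{\tilde Y_0,b}^*$ on $W_{\tilde Y_0}$ for the decomposition (\ref{etadecomp}).

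First I would dispose of (\ref{spectraleta}). By the main result of \cite{klein3} (see also Chapter~3 of \cite{klein1}), the spectral flow ${\rm SF}(\alpha)$ on $Y$ is computed explicitly in terms of the weights $\beta_i/\beta$, and the renormalization ${\rm sf}(\alpha)/\beta = -{\rm SF}(\alpha)/(2\beta)$ agrees with the spectral number ${\rm sp}(\alpha) = l(\alpha)-1$ modulo the appropriate integer shift coming from the orientation conventions in loc. cit. Using $\{x-1\}=\{x\}$ and $[x-1]=[x]-1$ one verifies
\[
(-1)^{[l(\alpha)-1]+n+1}(1-2\{l(\alpha)-1\}) = (-1)^{[l(\alpha)]+n}(1-2\{l(\alpha)\}),
\]
and the integer-$l(\alpha)$ sum is handled identically via $(-1)^{(l(\alpha)-1)+n+1}=(-1)^{l(\alpha)+n}$, so substituting ${\rm sf}(\alpha)/\beta$ into (\ref{spectraleta}) reproduces the formula of Theorem \ref{eta} term by term.

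For (\ref{etadecomp}), the hypothesis ${\rm ker}(b)=0$ forces every $\alpha \in \Lambda$ to have $l(\alpha)\notin\mathbb{Z}$, so the second sum in Theorem \ref{eta} is empty. The variation-structure decomposition (\ref{weightvar})--(\ref{nemweight}) identifies $(-1)^{[l(\alpha)]+n}$ as the sign of $b$ on the one-dimensional summand $\mathcal{W}_{\exp(2\pi i l(\alpha))}(\pm 1)$, whence
\[
{\rm sign}(b) = \sum_{\alpha \in \Lambda}(-1)^{[l(\alpha)]+n}.
\]
Splitting $(-1)^{[l(\alpha)]+n}(1-2\{l(\alpha)\}) = (-1)^{[l(\alpha)]+n} - 2(-1)^{[l(\alpha)]+n}\{l(\alpha)\}$ therefore isolates ${\rm sign}(b)$ plus a remainder which I would identify with $-\tfrac{1}{\pi i}{\rm tr\,log}(\cdots) - 2\tau(f,b)$.

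The identification relies on Lemma~3.2.2 of \cite{klein1}, which diagonalizes the weighted action $\sigma_{1/\beta}$ on the Cauchy-data space $r(\mathcal{K}_{\tilde Y_0}^\pm) = W_{\tilde Y_0}^\pm$ with eigenvalues $e^{\pm 2\pi i l(\alpha)}$ over $J^\pm$; by construction (\ref{globalproj2}) the projections $L_{\rho_b^*,\tilde Y_0^\infty}$ and $L_{\tilde Y_0^\infty}$ coincide on every summand of Lemma \ref{symred} except $W_{\tilde Y_0}\oplus\gamma W_{\tilde Y_0}$, so Lemma \ref{grassmannian2}.4 reduces the infinite-dimensional trace-log to the claimed finite-dimensional one. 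Under the branch $-\pi < t \leq \pi$, $\log(e^{2\pi i c})$ equals $2\pi i c$ for $c\in(0,\tfrac{1}{2}]$ but $2\pi i(c-1)$ for $c\in(\tfrac{1}{2},1)$, so the trace-log recovers $-2(-1)^{[l(\alpha)]+n}\{l(\alpha)\}$ only modulo integer jumps of size one at each $\alpha$ with $\{l(\alpha)\}>\tfrac{1}{2}$, weighted by $(-1)^{[l(\alpha)]+n}$; the sum of these jumps is precisely $-2\tau(f,b)$. The principal obstacle is the simultaneous bookkeeping of three sign conventions—the Nemethi sign $(-1)^{[l(\alpha)]+n}$, the $P_\pm$-twist built into (\ref{globalproj2}), and the branch of $\log$—so that their residual integer discrepancy assembles exactly into $\tau(f,b)$ and not into a complementary sum.
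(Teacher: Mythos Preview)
Your approach is essentially the same as the paper's. Both parts proceed exactly as you outline: for (\ref{spectraleta}) the paper invokes the identity $l(\alpha)={\rm sf}(\alpha)/\beta+1$ from \cite{klein1,klein3} and substitutes into Theorem~\ref{eta}, and for (\ref{etadecomp}) the paper asserts a closed formula for the trace-log term (your branch-cut analysis supplies the mechanism behind the $\tau(f,b)$ correction it states) and then matches against Theorem~\ref{eta} using the same substitution. Your sketch is in fact slightly more explicit than the paper's own proof about where the integer defect $\tau(f,b)$ originates.
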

\begin{proof}
The first assertion (\ref{spectraleta}) follows from Theorem \ref{eta} and using
\begin {equation}\label{bla56789}
l(\alpha)=\sum_{i=0}^n(\alpha_i+1)w_i={\rm deg}(z^\alpha)+\sum_{i=0}^n w_i=\frac{{\rm sf}(\alpha)}{\beta}+1
\end{equation}
by the definition of the weighted degree, ${\rm deg}(z^\alpha)$ and ${\rm sf}(\alpha), \alpha \in \Lambda$ in \cite{klein3} resp. Chapter 3 of \cite{klein1}. To prove formula (\ref{etadecomp}), we observe that
\begin{equation}
\begin{split}
\frac{1}{2\pi i} {\rm tr\ log}(\Phi(L_{\rho^*_b,\tilde Y_0^\infty})\Phi(L_ {\tilde Y_0^\infty})^*) &= \sum_{\alpha\in \Lambda:(-1)^{[l(\alpha)]+n}=1}\{2({\rm deg}(z^\alpha(j)) +\sum^\mu_{i=1} w_i)\}\\
&-\sum_{\alpha\in \Lambda:(-1)^{[l(\alpha)]+n}=-1}\{2({\rm deg}(z^\alpha(j)) +\sum^\mu_{i=1} w_i)\}+\tau(b,f).
\end{split}
\end{equation}
Substituting this into the formula in Theorem \ref{eta} using (\ref{bla56789}) gives the assertion.
\end{proof}
{\it Remark.} We conclude  that, modulo the integer $\tau(f,b)$, and by Corollary 5.3 of Appendix A in \cite{klein3} applied to $\tilde Y_0$, (\ref{etadecomp}) determines $\eta(D_{P^+(\Lambda_Y)})$ for the case ${\rm ker}(b)=0$ by topological resp. spectral-invariants of the fibre, namely its signature and its space of $L^2$-harmonic sections {\it and} the geometry of the 'boundary fibration' $\partial Y$, represented by the $1/\beta$-evaluation of the restricted circle action $\sigma$ on the image of
$\mathcal{K}_{\tilde Y_0}$ under $r$, in this sense, modulo the integers, the 'interior' fibration structure of $Y$ is not needed to calculate $\eta(D_{P^+(\Lambda_Y)})$. On the other hand, (\ref{spectraleta}) encodes a certain 'rigidity' of $\eta(D_{P^+(\Lambda_Y)})$, namely, let $Y_\tau, \tau \in [0,1]$ be a smooth family of bundles $f_\tau:Y_\tau\rightarrow S^1$ so that $Y_1=Y$, $\partial Y_\tau$ is isometric to $\partial Y$ for any $\tau \in [0,1]$ and there is a set of global sections $\mathcal{S}_t:=(s_1(t),\dots,s_\mu(t))$ of $\mathcal{H}^n(f_*\Omega^\cdot_{Y_\tau/S^1})$ (using notation from Section 4.2 in \cite{klein1}) so that in analogy to Proposition 4.2.3 in \cite{klein1} there is a set of spectral flows ${\rm SF}(D_\tau, \mathcal{S}_\tau),\tau \in [0,1]$ on $Y$ so that
\begin{equation}\label{rigidity}
{\rm SF}(D_\tau, \mathcal{S}_\tau)={\rm SF}(D_1, \mathcal{S}_1)\ {\rm for\ any}\  \tau\in [0,1].
\end{equation}
On the other hand, assuming that each $Y_\tau$ has totally geodesic fibres diffeomorphic to the Milnor fibre of $f$, it should be possible to derive a similar formula as (\ref{spectraleta}) for any $\tau \in [0,1]$, so that (\ref{rigidity}) implied equality of the corresponding eta-invariants, we leave the details to a further investigation.

\subsection{Brieskorn polynomials}\label{brieskorn}
Consider a Brieskorn singularity which is given by a polynomial $f:(\mathbb{C}^{n+1},0)\rightarrow (\mathbb{C},0)$ as
\begin{equation}\label{bries1}
f= \sum_{i=1}^{n+1}z_i^{a_i},
\end{equation}
where $a_i\in \mathbb{N}_+$ and assuming as before $n=2k$. Consider its Milnor fibration $f:Y\rightarrow S^1_\delta$ defined in (\ref{milnor323}) with Milnor fibre $F$ and the submersion metric $g$ as defined in (\ref{metric}).Consider the intersection form $b:U:=H^n(F,\partial F, \mathbb {C})\rightarrow U^*=H^n(F,\mathbb{C})$ of $F$, note that $b$ is symmetric, the variation mapping $V:U\rightarrow U^*$ and the monodromy $h:U\rightarrow U$ of $f$. We will now follow Nemethi \cite{nem2} and express $b$, $h$ and $V$ in terms of the $\{a_i\}$. For this start with the singularity $z\mapsto z^a$ for $a \in \mathbb{N}_+$, i.e. $n=0$. Then
\begin{equation}\label{briesvar1}
\mathcal{V}(z^a)=\oplus_{k=1}^{a-1}\mathcal{W}_{exp(2\pi i k/a)}(+1)_{n=0},
\end{equation}
where we use the notation for variation structures from \cite{nem2}. Since $\mathcal{W}_\zeta(+1)_{n=0}=(\mathbb{C};1,\zeta, \zeta-1)$, this is equivalent to
\begin{equation}\label{briesvar2}
\mathcal{V}(z^a)=\oplus_{k=1}^{a-1}(\mathbb{C};1,e^{2\pi i k/a}, e^{2\pi ik/a}-1).
\end{equation}
In this situation one has the following (see \cite{nem2}).
\begin{theorem}\label{bries3}
The variation map $V(f)$ and the monodromy $h(f)$ of $f=\sum_i z_i^{a_i}$ satisfy
\[
(V(f),h(f))=\oplus'(V_{\bf k},h_{\bf k}),
\]
where  $\oplus'=\oplus_{k_1=1}^{a_k-1}\dots \oplus _{k_{n+1}=1}^{a_{n+1}-1}, {\bf k}=(k_1,\dots,k_{n+1})$, and
\[
V_{\bf k}=(-1)^{n(n+1)/2}(e^{2\pi i k_1/a_1}-1)\cdots (e^{2\pi i k_{n+1}/a_{n+1}}-1),\ {\rm and}\ h_{\bf k}=e^{2\pi i \sum_{j=1}^{n+1}k_j/a_j}.
\]
Furthermore, since $b=(h-1)V^{-1}$, one has $b(f)=\oplus'b_{\bf k}$, where $b_{\bf k}=(h_{\bf k}-1)/V_{\bf k}$, explicitly
\[
b_{\bf k}=\frac{sin(\pi \sum_jk_j/a_j)}{2^n\prod_j sin(\pi k_j/a_j)}.
\]
\end{theorem}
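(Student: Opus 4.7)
The plan is to prove Theorem \ref{bries3} by applying the Thom--Sebastiani (join) formula for variation structures inductively, starting from the one-variable case (\ref{briesvar2}). Recall that for $g(w) = w^a$, the Milnor fibre is a set of $a$ points (the $a$-th roots of unity), so $\dim U = a-1$, the monodromy cyclically permutes the fibre points, and the isotypic decomposition of $U$ under $h$ is a sum of one-dimensional eigenspaces indexed by $k = 1, \ldots, a-1$ with eigenvalues $e^{2\pi i k/a}$. The variation $V$ on each eigenspace is then computed directly from its definition $V = (h-\mathrm{id})\circ j^{-1}$ (after trivializing by the Seifert form) to give the factor $(e^{2\pi ik/a}-1)$, yielding (\ref{briesvar2}).

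Next I would invoke the Thom--Sebastiani formula for variation structures (in the form worked out by Sakamoto and extended to variation structures by Nemethi, see \cite{nem2}): if $f_1, f_2$ are isolated hypersurface singularities in disjoint sets of variables, then
\[
\mathcal{V}(f_1 + f_2) \iso \mathcal{V}(f_1) \otimes \mathcal{V}(f_2),
\]
where on Milnor algebras one has $M(f_1 + f_2) = M(f_1) \otimes M(f_2)$, the monodromies multiply ($h(f_1+f_2) = h(f_1) \otimes h(f_2)$), and the variation maps combine with a dimension-dependent sign $V(f_1+f_2) = \epsilon(n_1,n_2) V(f_1) \otimes V(f_2)$ coming from the Koszul sign in the cup-product pairing (equivalently, from the orientation on the join of the two Milnor fibres). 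Applied iteratively to $f = \sum_{j=1}^{n+1} z_j^{a_j}$ starting from (\ref{briesvar2}), the monodromy eigenvalues multiply to give $h_{\mathbf{k}} = \prod_j e^{2\pi i k_j/a_j} = e^{2\pi i \sum_j k_j/a_j}$, and the accumulated signs in the variation factor contribute exactly $(-1)^{n(n+1)/2}$ (which is $\sum_{1\le j < l \le n+1} n_j n_l$ for $n_j = 1$, i.e.\ $\binom{n+1}{2}$), producing $V_{\mathbf{k}} = (-1)^{n(n+1)/2}\prod_j(e^{2\pi i k_j/a_j}-1)$.

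For the last assertion on $b(f)$, since $b = (h-\mathrm{id})\circ V^{-1}$ is multiplicative on the direct sum decomposition, $b(f) = \oplus' b_{\mathbf{k}}$ with $b_{\mathbf{k}} = (h_{\mathbf{k}}-1)/V_{\mathbf{k}}$ on each one-dimensional summand. Using the elementary identity $e^{i\theta}-1 = 2ie^{i\theta/2}\sin(\theta/2)$ in both numerator and denominator, one obtains
\[
\frac{h_{\mathbf{k}}-1}{V_{\mathbf{k}}} = \frac{2i\, e^{i\pi\sum_j k_j/a_j}\sin(\pi\sum_j k_j/a_j)}{(-1)^{n(n+1)/2}(2i)^{n+1}e^{i\pi\sum_j k_j/a_j}\prod_j \sin(\pi k_j/a_j)};
\]
the exponential phases cancel, leaving $\bigl[(-1)^{n(n+1)/2}(2i)^n\bigr]^{-1}\cdot \sin(\pi\sum_j k_j/a_j)/\prod_j \sin(\pi k_j/a_j)$, and the constant $(-1)^{n(n+1)/2}i^n$ reduces to $+1$ for $n=2k$, yielding the stated formula.

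The only genuinely subtle step is the sign in the Thom--Sebastiani formula: the orientation convention on the join $F_{f_1} * F_{f_2}$ of Milnor fibres, together with the convention for identifying $H^n(F,\partial F)$ with $H_n(F)$ by Poincar\'e--Lefschetz duality, produces the exponent $n(n+1)/2$ rather than, say, $n(n-1)/2$ or $(n+1)n$. I would fix conventions once at the beginning (those of \cite{nem2}), verify the sign in the two-variable case $z_1^{a_1}+z_2^{a_2}$ by a direct cohomological computation, and then propagate by induction. The formula for $b_{\mathbf{k}}$ thereafter is purely trigonometric bookkeeping.
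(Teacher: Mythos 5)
Your proposal is correct and follows essentially the same route as the paper: reduce to the one-variable case, combine the factors via Thom--Sebastiani for cohomology/monodromy together with the Deligne--Sakamoto sign (which you phrase directly as a tensor formula for variation structures, using $S(\cdot,\cdot)=\langle V^{-1}\cdot,\cdot\rangle$ implicitly, while the paper states it for Seifert forms and converts), and then obtain $b_{\bf k}$ by the elementary $e^{i\theta}-1=2ie^{i\theta/2}\sin(\theta/2)$ computation. Your explicit remark that the constant $(-1)^{n(n+1)/2}i^{n}$ equals $+1$ only for $n=2k$ is consistent with the paper's standing assumption $n=2k$ in this subsection and is, if anything, a more careful bookkeeping of the sign than the paper's ``follows by direct calculation.''
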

\begin{proof}
For isolated singularities $g:(\mathbb{C}^{n+1},0)\rightarrow (\mathbb{C},0)$ resp. $h:(\mathbb{C}^{m+1},0)\rightarrow (\mathbb{C},0)$
let $f:(\mathbb{C}^{n+1}\times \mathbb{C}^{m+1},0)\rightarrow (\mathbb{C},0)$ be given by $f(x,y)=g(x)+h(y)$. Then the Sebastiani-Thom-Theorem (\cite{thom}) states if $F_g, F_h, F_f$ are the Milnor fibres of $f,g,h$ respectively, that
\[
H^{n+m+1}(F_f,\mathbb{C})=H^{n}(F_g,\mathbb{C})\otimes H^{m}(F_h,\mathbb{C}),
\]
and that $h_f=h_g\otimes h_h$ if $h_f,h_g,h_h$ denotes the respective monodromies. Furthermore, the Deligne-Sakamoto-Theorem (\cite{delsak}) states that the corresponding Seifert-forms satisfy
\[
S_f=(-1)^{(m+1)(n+1)}S_g\otimes S_h.
\]
Then the first two claims follow directly when one considers that $S(\cdot,\cdot)=<V^{-1}\cdot,\cdot>$, where $<,>$ is the perfect pairing $<,>:U\otimes U^*\rightarrow \mathbb{C}$, $(\alpha,\beta) \mapsto \int_F\alpha\wedge\beta$ on a given Milnor fibre $F$. The formula for $b$ then follows by direct calculation.
\end{proof}
We will now use the arguments of section \ref{etaquas} to prove the analogue of Theorem \ref{eta}.For that, let $P_{Y_0}$ be the Calderon-Projector of the trivial bundle $Y_0=F \times S^1$ as in the last section with respect to product metric. In the following, we set ${\rm arg}(re^{i\theta})=\theta \in[0,2\pi),\ r>0$. For the following theorem set
\[
\Lambda:=\left\{{\bf k} \in \mathbb{Z}^{n+1}|1\leq k_j \leq a_j-1\right \},\quad \Lambda_0:= \left\{{\bf k}\ \in \Lambda| \sum_{j=1}^{n+1}k_j/a_j \in \mathbb{Z}\right\}.
\]
Then write for any subset $\Lambda' \subset \Lambda$ the symbol $\sum_{\Lambda'}$ as the sum over all $n+1$-tuples ${\bf k}\subset \mathbb{Z}^{n+1}$ so that ${\bf k} \in \Lambda'$. Then one has the following.
\begin{theorem}\label{etab}
Let $D_{I-P_{Y_0}}$ be defined as in section \ref{begriffe1}, relative to the signature operator $D$ with respect to the submersion metric $g$ on the Milnor bundle $Y$ of the Brieskorn polynomial $f$ as given in (\ref{bries1}). Then the eta-invariant $\eta(D_{I-P_{Y_0}})$ for $n=2k$ or $n=2k+1, \ k\in \mathbb{N}$ equals:
\[
\begin{split}
\eta(D_{I-P_{Y_0}})  &=(-1)^n\sum_{\Lambda\setminus\Lambda_0} {\rm sign}\ \left({\rm sin}(\pi\sum_{j=1}^{n+1}k_j/a_j)\right)\cdot \left(1-2\{\sum_{j=1}^{n+1}k_j/a_j\}\right) \\
&-\frac{{\rm arg}(-1+\frac{4}{3}i)}{\pi}\sum _{\Lambda_0}(-1)^{{\sum_{j=1}^{n+1}k_j/a_j}+n},
\end{split}
\]
where the first sum represents the contribution of the non-trivial part of the algebraic monodromy $\rho$ of $Y$, the second sum amounts to a summation over a basis for the $\lambda=1$-eigenspace of $\rho$. Note that $\{\cdot\}$ denotes the fractional part. All the other results from Theorems \ref{eta} and \ref{eta2} hold in complete analogy, with the above expression for $\eta(D_{I-P_{Y_0}})$ replacing the general formula in the last section.
\end{theorem}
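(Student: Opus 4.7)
The strategy is to specialise Theorem \ref{eta} to the Brieskorn polynomial $f = \sum_{i=1}^{n+1} z_i^{a_i}$ by exploiting the explicit form of its variation structure provided by the Sebastiani--Thom theorem (Theorem \ref{bries3}). The argument is essentially bookkeeping with the spectral data of $f$, and the only point requiring real care is the sign of the $\mathcal{W}_1(\pm 1)$ summands of the variation structure.

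First, the Jacobian ideal $(\partial f/\partial z_i) = (a_i z_i^{a_i-1})$ admits the monomial basis $\{z_1^{\alpha_1}\cdots z_{n+1}^{\alpha_{n+1}} : 0 \leq \alpha_i \leq a_i-2\}$ for the Milnor algebra $M(f)$, so under the reparametrisation $k_j := \alpha_j+1$ the indexing set $\Lambda$ from Theorem \ref{eta} coincides with the lattice set of the statement. The weights of $f$ are $w_i = 1/a_i$, hence
\[
l(\alpha) \;=\; \sum_{i=1}^{n+1}(\alpha_i+1)\,w_i \;=\; \sum_{j=1}^{n+1} k_j/a_j,
\]
so the arithmetic condition $l(\alpha)\in\mathbb{Z}$ is exactly $\mathbf{k}\in\Lambda_0$.

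For $\mathbf{k}\in\Lambda\setminus\Lambda_0$, the identity $\sin(\pi t) = (-1)^{[t]}\sin(\pi\{t\})$, combined with the positivity $\sin(\pi\{t\})>0$ for $t\notin\mathbb{Z}$, yields $(-1)^{[l(\alpha)]+n} = (-1)^n \cdot \mathrm{sign}\!\left(\sin\bigl(\pi\sum_j k_j/a_j\bigr)\right)$, which transforms the first sum of Theorem \ref{eta} into the first sum of Theorem \ref{etab} term by term. For $\mathbf{k}\in\Lambda_0$ one has to determine the sign $\epsilon\in\{\pm 1\}$ of the Nemethi summand $\mathcal{W}_1(\epsilon)$ indexed by $\mathbf{k}$; starting from Theorem \ref{bries3} one writes
\[
V_{\mathbf{k}} \;=\; (-1)^{n(n+1)/2}\prod_{j=1}^{n+1}\bigl(e^{2\pi i k_j/a_j}-1\bigr) \;=\; (-1)^{n(n+1)/2}\,(2i)^{n+1}\,e^{i\pi\sum_j k_j/a_j}\prod_{j=1}^{n+1}\sin(\pi k_j/a_j),
\]
and uses $e^{i\pi\sum_j k_j/a_j} = (-1)^{\sum_j k_j/a_j}$ on $\Lambda_0$ together with $\sin(\pi k_j/a_j)>0$ to match $V_{\mathbf{k}}$ with the normal form $\mathcal{W}_1(\epsilon) = (\mathbb{C}, 0, 1, \epsilon\, i^{n^2+1})$ and read off $\epsilon$ as a function of $\sum_j k_j/a_j$ and $n$. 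This, after the unitarisation of the monomial basis of Lemma \ref{unitary}, delivers the second sum in the stated formula.

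The main obstacle is purely bookkeeping rather than conceptual: simultaneously tracking the phase $i^{n^2+1}$, the Sebastiani--Thom factor $(-1)^{n(n+1)/2}$, and the arising sign $(-1)^{\sum_j k_j/a_j}$, and verifying that both parities $n=2k$ and $n=2k+1$ lead to the uniform expression asserted (the odd case differing by the global $(-1)^n$ already noted in the proof of Proposition \ref{maslov}). Once $\epsilon$ is pinned down, all remaining statements of the theorem, namely the identification with $\eta(D, P^+(\Lambda_Y))$ and with the eta-invariant relative to the adiabatic limit $P_Y^\infty$, and the analogue of Theorem \ref{eta2} for arbitrary APS-conditions, carry over verbatim from the arguments of Section \ref{etaquas}, with only the explicit lattice sums of Theorem \ref{etab} replacing the general monomial sums of Theorem \ref{eta}.
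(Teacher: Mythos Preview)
Your approach is genuinely different from the paper's. The paper does \emph{not} specialise Theorem~\ref{eta}; instead it reruns the entire Maslov-index computation of Proposition~\ref{maslov} from scratch, using the tensor-product basis coming from Theorem~\ref{bries3} (in which $b,h,V$ are already simultaneously diagonalised) rather than the unitarised monomial basis of Lemma~\ref{unitary}. Concretely, the paper rebuilds the $\pm i$-eigenspace decomposition with the convention switch $\tilde\gamma=-\hat\gamma$, recomputes $\hat\rho^e$ on the basis vectors $e_i,\tilde f_i,\dots$ using the Brieskorn data $(h_{\mathbf k},V_{\mathbf k},b_{\mathbf k})$, and reads off the $2\times 2$ blocks $P^\pm(i),P^0(j)$ directly. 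Your route---identify $l(\alpha)=\sum_j k_j/a_j$ and rewrite the signs via $(-1)^{[t]}=\operatorname{sign}\sin(\pi t)$---is shorter and perfectly legitimate for the first sum, since Brieskorn polynomials are a special case of the quasihomogeneous theorem already proved.

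There is, however, a genuine gap in your treatment of the $\Lambda_0$ contribution. You say one ``reads off $\epsilon$'' from $V_{\mathbf k}$ and that this ``delivers the second sum in the stated formula'', but you do not actually carry out the identification, and this is precisely where the two routes must be reconciled. If you substitute $l(\alpha)=\sum_j k_j/a_j$ directly into the $\Lambda_0$-sum of Theorem~\ref{eta}, you obtain $+\tfrac{\arg(-1+\frac{4}{3}i)}{\pi}\sum_{\Lambda_0}(-1)^{\sum_j k_j/a_j+n}$, whereas Theorem~\ref{etab} asserts the \emph{opposite} sign. The paper's own computation reaches $P^0$ with sign $\pm=-(-1)^{\sum_j k_j/a_j}$, but this is in the $\tilde\gamma=-\hat\gamma$ convention, and the promised ``total sign change \dots\ at the end of the calculation'' is never displayed; tracking it through shows that the $P^0$ blocks in fact agree with those of Lemma~\ref{bladecomp}. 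In short: either the stated second sum in Theorem~\ref{etab} carries a sign slip relative to Theorem~\ref{eta}, or there is a normalisation issue in passing between the tensor-product basis and the unitarised monomial basis that neither your sketch nor the paper makes fully explicit. You should complete the $\epsilon$-matching (compare $V_{\mathbf k}/|V_{\mathbf k}|$ with $\epsilon\,i^{n^2+1}$ under the correct isomorphism of variation structures, not just naive rescaling) and state clearly which sign you actually obtain.
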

\begin{proof}
We will examine the case $n=2k$, the odd case is similar (note the total change of sign). Following the argumentation for the case of general quasihomogeneous polynomials in the last section, we have to determine the quantity
\[
m_{H^*(N,\mathbb{C})}(\Lambda_0,\rho^e(\Lambda_0))=-\frac{1}{\pi i}\sum_{\lambda \in {\rm spec}(-\phi(\Lambda_0)\phi(\rho^e(\Lambda_0))^*), \lambda \neq -1}{\rm log}\ \lambda,
\]
where $\Lambda_0 \subset {\rm ker}\ \hat A\simeq H^*(F^e  \times (-F^e),\mathbb{C})$ is the space of limiting values of extended $L^2$-solutions of $Df=0$ on $[0,1]\times F^e$ relative to the product metric.
Analaogous to the last section, let $J^+,J^-\subset {1,\dots,\mu}$ and let ${\bf k}_i$ be an index family on $\Lambda$ so that $b_{{\bf k}_i}$ is positive resp. negative definite, let the corresponding subspaces be spanned by the elements $\{\theta_i\}_{i \in J^{\pm}} \in U$, we write $\theta^{\pm}_j$ if $j \in J^{\pm}$. We now denote the corresponding elements of $U^*$ so that $b, V, h$ have the (diagonal) form from Theorem \ref{bries3} by $\omega_j^{\pm}\in U^*, j \in J^{\pm}$. On the other hand, we write $\theta^0_i \in U, i \in J^0$ if $\theta_i \in {\rm ker}(b)$, $\omega^0 _j \in U^*, j \in J^0$ for the dual base in ${\rm coker} (b)$. Note that in the following, we will replace $\hat \gamma$ by $\tilde \gamma=-\hat \gamma$, which amounts in switching the $\pm i$-eigenspaces of $\hat \gamma$, the total sign change will be taken account for at the end of the caclulation. We then decompose ${\rm ker}\ \hat A \cap H^n(N,\mathbb{C})=({\rm ker}(\tilde \gamma-iI)\oplus{\rm
ker}(\tilde \gamma+iI))\cap H^n(N,\mathbb{C})=:\hat K^+\oplus \hat K^-= (K^+\oplus \overline K^-)\oplus (K^- \oplus \overline K^+)=V^+\oplus V^-\oplus V^0$ as above by
\[
K^+={\rm span}(e_j^+,e_j^-, e_i^0)_{j \in J^{\pm}, i \in J^0},\quad K^-={\rm span}(\tilde f_j^+,\tilde f_j^-, \tilde f_i^0)_{j \in J^\pm, \ i \in J^0}
\]
where (note again $i\gamma=i^{2k+2}(-1)^{k+1}*=*$ for $n=2k$), set now $|b_{{\bf k}_i}|:=({\rm sign} \ b_{{\bf k}_i})b_{{\bf k}_i}$ if $i\in J^\pm$, $|b_{{\bf k}_i}|=1$, if $i \in J^0$,
\[
\begin{split}
e_i^{\pm, 0}&:=\theta^{\pm,0}_i+i\gamma\theta^{\pm,0}_i=\theta_i^{\pm,0}+|b_{{\bf k}_i}|\omega_i^{\pm,0},\\
\tilde f_i^{\pm,0}&:=\theta^{\pm,0}_i-i\gamma\theta^{\pm,0}_i, = \theta_i^{\pm,0}-|b_{{\bf k}_i}|\omega_i^{\pm,0},\ i \in J^{\pm,0}\\
\end{split}
\]
and we use the corresponding basis elements $\overline \theta_i,\overline \omega_i$ on $H^n(-F^e,\partial F^e,\mathbb{C})$ resp. $H^n(-F^e,\mathbb{C})$
\[
\overline K^-={\rm span}(\tilde e_j^+,\tilde e_j^-, \tilde e_i^0)_{j \in J^{\pm}, i \in
J^0},\quad \overline K^+={\rm span}(f_j^+,f_j^-, f_i^0)_{j \in J^{\pm}, i \in J^0}
\]
where
\[
\begin{split}
\tilde e_i^{\pm,0}&:=\overline \theta^{\pm,0}_i-i\gamma \overline \theta^{\pm,0}_i=\overline \theta_i^{\pm,0}-|b_{{\bf k}_i}|\overline \omega_i^{\pm,0},\\
f_i^{\pm,0}&:=\overline \theta^{\pm,0}_i+i\gamma\overline \theta^{\pm,0}_i = \overline \theta_i^{\pm,0}+|b_{{\bf k}_i}|\overline \omega_i^{\pm,0},
\ i \in J^{\pm,0}.
\end{split}
\]
As before we define the Lagrangian $L \subset {\rm ker}\ \hat A$ by specifying
its associated isometry $P_L:\hat K^+\rightarrow \hat K^-$ as
\begin{equation}\label{gluelag2}
P_L(e_i^{\pm,0}, \tilde e_i^{\pm,0})=(f_i^{\pm,0},\tilde f_i^{\pm,0}).
\end{equation}
Then, for the "$+$"-case (suppressing the suffix in the following) using the notation from Theorem \ref{bries3} one gets
\[
\begin{split}
\hat \rho^e(e_i)&=(h_{{\bf k}_i}+|b_{{\bf k}_ i}|V_{{\bf k}_i})\theta_i+|b_{{\bf k}_ i}|\omega_i\\
\hat \rho^e(\tilde f_i)&=(h_{{\bf k}_i}-|b_{{\bf k}_i}|V_{{\bf k}_i})\theta_i-|b_{{\bf k}_i}|\omega_i\\
\hat \rho^e(\tilde e_i)&=\tilde e_i,\quad \hat \rho^e(f_i)=f_i.
\end{split}
\]
so using $b_{\bf k}=(h_{\bf k}-1)/V_{\bf k}$ one gets
\[
\begin{split}
\hat \rho^e(e_i)&=(2h_{{\bf k}_i}-1)\theta_i+|b_{{\bf k}_i}|\omega_i=h_{{\bf k}_i}e_i+(h_{{\bf k}_i}-1)\tilde f_i\\
\hat\rho^e(\tilde f_i)&=\tilde f_i,\quad \hat \rho^e(\tilde e_i)=\tilde e_i,\quad \hat \rho^e(f_i)=f_i.
\end{split}
\]
and consequently
\[
\hat \rho^e(L)\cap V^+= {\rm span}\ \left\{e_i^++\frac{h_{{\bf k}_i}-1}{h_{{\bf k}_i}}\tilde f_i^+ + \frac{1}{ h_{{\bf k}_i}} f_i^+,\tilde e_i^++\tilde f_i^+ \right \},
\]
so writing again for $P_{(\rho^e)^*(L)}:\hat K^+\rightarrow  \hat K^-$ the decomposition
\[
P_{(\rho^e)^*(L)}=\bigoplus_{j \in J+}P^+(j)\oplus \bigoplus_{j \in J^-}P^-(j)\bigoplus_{j \in J^0}\oplus P^0(j),
\]
one gets for the first summand
\[
P^+(i)=P_{(\rho^e)^*(L)}|_{{\rm span} (e_j^+,\tilde e_j^+)}=\begin{pmatrix}\frac{1}{ h_{{\bf k}_i}}&0 \\ \frac{h_{{\bf k}_i}-1}{h_{{\bf k}_i}}&1 \end{pmatrix}.
\]
An analogous computation for the "-"-case gives
\[
\hat \rho^e(L)\cap V^-= {\rm span}\ \left\{\tilde e_i^- -(h_{{\bf k}_i}-1)f_i^- +  h_{{\bf k}_i} \tilde f_i^-,e_i^- + f_i^- \right \},
\]
which implies
\[
P^-(i)=P_{(\rho^e)^*(L)}|_{{\rm span} (e_j^-,\tilde e_j^-)}=\begin{pmatrix} 1&-(h_{{\bf k}_i}-1)\\0& h_{{\bf k}_i} \end{pmatrix}.
\]
For the "0"-case just go back to the formulas in Theorem \ref{bries3}, from which it follows that $\sum_{j=1}^{n+1}k_j/a_j \in \mathbb{Z}$ since $h_{{\bf k}}=1$. Then using the following fomula for $V_{\bf k}$ (see \cite{nem1}) which follows directly from the one given in Theorem \ref{bries3}, one infers that the sign of $V_{\bf k}/(-i)$ is given by $(-1)^{\sum_{j=1}^{n+1}k_j/a_j}$:
\[
V_{\bf k}=(-1)^{n(n+1)/2}(-2i)^{n+1}\oplus_{k_1=1}^{a_1-1}\dots\oplus_{k_n=1}^{a_n-1}e^{\pi i \sum_{j=1}^nk_j/a_j} \cdot \prod_{j=1}^n{\rm sin}\frac{\pi k_j}{a_j},
\]
note that for $n=2k$, $(-1)^{n(n+1)/2}(-i)^{n+1}=-i$. So comparing this with the formula (\ref{nemweight}) which is valid for an appropriate choice of basis in $U$ and its corresponding dual basis with respect to $<\cdot,\cdot>$ and using the calculations in the proof of Lemma \ref{bladecomp} for the "0"-case, one arrives at
\[
P^0(j)=P_{(\rho^e)^*(L)}|_{{\rm span} (e_j^0,\tilde e_j^0)}=\begin{pmatrix} \frac{1}{1\pm \frac{i}{2}}&\frac{\pm \frac{i}{2}}{1\pm \frac{i}{2}}\\ \frac{\pm \frac{i}{2}}{1\pm \frac{i}{2}}&\frac{1}{1\pm \frac{i}{2}} \end{pmatrix},
\]
where the signs are determined by $\pm=-(-1)^{\sum_{j=1}^{n+1}k_j/a_j}$.
\end{proof}

\section{Appendix A}\label{relcohom}
Let $U \subset \mathbb{C}^{n+1}$ be an open set and let
$f:U\rightarrow \mathbb{C}$ be a holomorphic map so that $x \in
\mathbb{C}^{n+1}$ is an isolated singularity, that is $f$ outside $x$ is a
submersion, assume $f(x)=0$. Let $\epsilon$ and $\delta $ be positive real
numbers and  $S=\{u \in \mathbb{C} \mid |u| < \delta \}$, $X= \{z \in
\mathbb{C}^{n+1} \mid |z| < \epsilon,\ f(z) \in S \}$, $X_0=\{z \in X \mid
f(z)=0\}$ so that with $X' =X - X_0$, $S' =S -{0}$ one gets a locally trivial
$C^\infty$-fibration $f: X' \rightarrow S'$. Let $(\Omega^\cdot_{X'},d)$ be the
sheaf complex of holomorphic differential forms on $X'$, then with
$\Omega^i_{X'/S'}=\Omega^i_{X'} /df \wedge \Omega^{i-1}_{X'}$ we get the sheaf
complex of relative differential forms $(\Omega^\cdot_{X'/S'},d)$ on $X'$. By
the Lemma of Poincare and the regularity of $f|X'$ one has a resolution of
$f^{-1}\mathcal{O}_{S'}$ in the category of $(f^{-1}\mathcal{O}_{S'})$-modules
by
\begin{equation}\label{res}
0 \rightarrow f^{-1}\mathcal{O}_{S'} \rightarrow \Omega^0_{X'/S'} \rightarrow
\Omega^1_{X'/S'} \rightarrow \dots \ .
\end{equation}
Here, $f^{-1}\mathcal{O}_{S'}$ is the topological preimage of the sheaf
$\mathcal{O}'$ of holomorphic functions on $S'$. On the other hand, we observe
that the vector spaces $H^i(X_u,\mathbb{C})$, where $X_u$ are the fibres of
$f:X' \rightarrow S'$, are the fibres of the etale space of the sheaf $R^i
f_{*}\mathbb{C}_{X'}$, where for an abelian sheaf $\mathcal{F}$ on $X$ and a mapping
$f:X \rightarrow S$ $R^if_*\mathcal{F}$ is the sheaf on $S$ associated to $V
\subset S, V \mapsto
H^p(f^{-1}(V),\mathcal{F})$ ($R^if_*$ is identical to the right derived functor
of the direct image functor $f_*$ and is calculated by
injective or $f_*$-acyclic resolutions of $\mathcal{F}$, for details see Hartshorne
\cite{hartshorne}). We have the following isomorphism, refer to Looijenga \cite{loo}. Note that the sections of the vectorbundle accociated to
$R^if_{*}\mathbb{C}_{X}$ (with fibres $H^i(X_u,\mathbb{C})$ over $S'$)
constitute the sheaf $R^i f_{*}\mathbb{C}_{X}\otimes _{\mathbb{C}_{S}}\mathcal{O}_{S}$.
\begin{lemma}\label{sect}
With notation as above, the natural map
\[
(R^i f_{*}\mathbb{C}_{X}) \otimes _{\mathbb{C}_{S}}\mathcal{O}_{S}
\longrightarrow R^i f_{*}(f^{-1}\mathcal{O}_{S})
\]
is an isomorphism.
\end{lemma}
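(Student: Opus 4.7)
The plan is to verify the statement on stalks, which suffices since both sides are sheaves on $S$ and the comparison map is evidently compatible with restrictions. I treat the regular value $u \in S'$ and the critical value $u = 0$ separately.

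For $u \in S'$, the local triviality of $f\colon X' \to S'$ provides a fundamental system of Stein polydisc neighbourhoods $V \ni u$ together with fibre-preserving diffeomorphisms $f^{-1}(V) \cong V \times F$, where $F = X_u$ is the Milnor fibre. Under this identification $f$ becomes the projection $\mathrm{pr}_1\colon V\times F\to V$, and $f^{-1}\mathcal{O}_S|_{f^{-1}(V)}$ becomes $\mathrm{pr}_1^{-1}\mathcal{O}_V$. Since $F$ is homotopy equivalent to a bouquet of $n$-spheres, $R^q\mathrm{pr}_{1*}\mathbb{C}_{V\times F}$ is the constant sheaf associated to the finite-dimensional $\mathbb{C}$-vector space $H^q(F,\mathbb{C})$, and the projection formula gives
\[
R^q\mathrm{pr}_{1*}(\mathrm{pr}_1^{-1}\mathcal{O}_V) \;=\; H^q(F,\mathbb{C})\otimes_{\mathbb{C}}\mathcal{O}_V.
\]
Cartan's Theorem~B forces $H^p(V,\mathcal{O}_V)=0$ for $p>0$, so the Leray spectral sequence for $\mathrm{pr}_1$ degenerates and yields $H^i(f^{-1}(V),f^{-1}\mathcal{O}_S)=H^i(F,\mathbb{C})\otimes_{\mathbb{C}}\mathcal{O}(V)$. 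Taking the inductive limit as $V$ shrinks to $u$ produces the right-hand stalk $H^i(F,\mathbb{C})\otimes\mathcal{O}_{S,u}$, which matches the left-hand stalk $(R^if_*\mathbb{C}_X)_u\otimes\mathcal{O}_{S,u}$; by construction the comparison map is the canonical one, hence an isomorphism.

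For $u=0$ local triviality fails, but the conical structure of the Milnor representative rescues the computation. For $\epsilon,\delta$ chosen as in Milnor's construction, the ball $X$ and every preimage $f^{-1}(V)$ of a subdisc $V\ni 0$ deformation retract to the origin (for quasi-homogeneous $f$ via $(z,t)\mapsto tz$; for general isolated singularities via the standard conical neighbourhood structure of the singularity), so both are contractible. Running the same Leray argument with the contractible $f^{-1}(V)$ in place of $V\times F$ collapses the spectral sequence to $H^i(f^{-1}(V),f^{-1}\mathcal{O}_S)=\mathcal{O}(V)$ for $i=0$ and $0$ otherwise. Taking the direct limit over $V$ yields $\mathcal{O}_{S,0}$ in degree zero and vanishing in higher degrees, matching $(R^if_*\mathbb{C}_X)_0\otimes\mathcal{O}_{S,0}=\mathbb{C}\otimes\mathcal{O}_{S,0}=\mathcal{O}_{S,0}$.

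The main obstacle is the behaviour at the critical value $u=0$, where the fibration degenerates and a direct K\"unneth or projection-formula argument on a product is unavailable. The cure is to exploit contractibility of the Milnor representative so that both stalks at $0$ collapse to the trivial modules $\mathbb{C}$ and $\mathcal{O}_{S,0}$. Once this topological input is supplied, the argument at $u=0$ parallels the one at regular values, and the overall proof reduces to a stalk-wise projection-formula computation combined with Cartan's Theorem~B.
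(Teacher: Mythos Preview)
The paper does not give its own proof of this lemma; it simply refers the reader to Looijenga. So the relevant question is only whether your argument is complete.

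Your treatment of the regular points $u\in S'$ is correct and standard: the $C^\infty$ local trivialisation is fibre-preserving, so $f^{-1}\mathcal{O}_S$ really does become $\mathrm{pr}_1^{-1}\mathcal{O}_V$, and the projection formula together with Cartan's Theorem~B finishes the stalk computation there.

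At $u=0$, however, there is a genuine gap. Contractibility of $f^{-1}(V)$ yields $H^i(f^{-1}(V),\mathcal{F})=0$ for $i>0$ only when $\mathcal{F}$ is locally constant; it says nothing about $f^{-1}\mathcal{O}_V$, whose stalks $\mathcal{O}_{V,f(x)}$ genuinely vary with $x$ and which is \emph{not} a local system. Your phrase ``running the same Leray argument'' is also circular here: the Leray spectral sequence for $f\colon f^{-1}(V)\to V$ has $E_2^{p,q}=H^p\bigl(V,R^qf_*(f^{-1}\mathcal{O}_V)\bigr)$, and the stalk of $R^qf_*(f^{-1}\mathcal{O}_V)$ at $0\in V$ is precisely the quantity you are trying to compute. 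Finally, no Vietoris--Begle shortcut is available, since the fibres of $f$ over $V\setminus\{0\}$ are Milnor fibres with $H^n\neq 0$, not acyclic.

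A clean repair is the derived projection formula: for any continuous $f$ one has a natural isomorphism $Rf_!(\mathbb{C}_X)\otimes^L_{\mathbb{C}_S}\mathcal{O}_S\;\simeq\;Rf_!\bigl(f^{-1}\mathcal{O}_S\bigr)$. Passing to a good proper representative of the Milnor map gives $Rf_!=Rf_*$, and since $\mathcal{O}_S$ is flat over $\mathbb{C}_S$ taking cohomology yields $R^if_*\mathbb{C}_X\otimes_{\mathbb{C}_S}\mathcal{O}_S\cong R^if_*(f^{-1}\mathcal{O}_S)$ on all of $S$ at once, with no separate analysis at the critical value needed.
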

Now consider the complex of direct image sheafs $f_*\Omega^\cdot_{X'/S'}$, this
is a complex of $\mathcal{O}_{S'}$-modules, its cohomology sheafs will be
denoted by $\mathcal{H}^p(f_*\Omega^\cdot_{X'/S'})$ for all $p$. The following
result identifies these with the space of sections in the bundle of fibrewise
cohomology groups, for details we refer to Looijenga \cite{loo}.
\begin{prop}\label{deRham}
In the above situation, the fibrewise de Rham evaluation maps
\[
DR_u: \mathcal{H}^p(f_*\Omega^\cdot_{X'/S'})_u \longrightarrow
H^i(X_u,\mathbb{C})
\]
given by integration over the fibre $f^{-1}(u), u \in S'$ are isomorphisms.
Furthermore, they fit together to define a sheaf isomorphism
\[
DR: \mathcal{H}^p(f_*\Omega^\cdot_{X'/S'})_u \longrightarrow (R^i
f_{*}\mathbb{C}_{X'}) \otimes _{\mathbb{C}_{S'}}\mathcal{O}_{S'}.
\]
\end{prop}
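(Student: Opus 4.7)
The plan is to derive the proposition from the relative Poincaré lemma (\ref{res}) combined with standard derived-functor machinery, using Lemma \ref{sect} to identify one side and an acyclicity argument for $\Omega^p_{X'/S'}$ to identify the other. First I would note that (\ref{res}) exhibits the augmentation $f^{-1}\mathcal{O}_{S'} \to \Omega^\cdot_{X'/S'}$ as a quasi-isomorphism of complexes of sheaves on $X'$. Applying $Rf_*$ and passing to $p$-th (hyper)cohomology then gives
\[
R^p f_*(f^{-1}\mathcal{O}_{S'}) \;\cong\; \mathbb{R}^p f_*(\Omega^\cdot_{X'/S'}),
\]
and by Lemma \ref{sect} the left-hand side equals $R^p f_*\mathbb{C}_{X'} \otimes_{\mathbb{C}_{S'}} \mathcal{O}_{S'}$.

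Next I would compute the right-hand side via the second hypercohomology spectral sequence
\[
{}''E_2^{p,q} \;=\; H^p(R^q f_*\Omega^\cdot_{X'/S'}) \;\Longrightarrow\; \mathbb{R}^{p+q} f_*\Omega^\cdot_{X'/S'}.
\]
If the higher direct images $R^q f_* \Omega^p_{X'/S'}$ vanish for $q > 0$, only the $q = 0$ row survives, the sequence degenerates at $E_2$, and $\mathbb{R}^p f_*\Omega^\cdot_{X'/S'} = \mathcal{H}^p(f_*\Omega^\cdot_{X'/S'})$. Stringing together the isomorphisms then yields the sheaf isomorphism $DR$ claimed in the proposition.

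The main obstacle is establishing the vanishing $R^q f_*\Omega^p_{X'/S'} = 0$ for $q>0$, i.e.\ that $f$ is a Stein morphism. I would argue that by Milnor's construction, for $\epsilon$ and $\delta$ sufficiently small each fibre $X_u = f^{-1}(u) \cap B^{2n+2}_\epsilon$ is a closed complex submanifold of the Stein ball $B^{2n+2}_\epsilon$, hence itself Stein. Exhausting any $u \in S'$ by relatively compact Stein neighbourhoods $V$ over which $f^{-1}(V)$ is Stein (this uses both the Stein-ness of the fibres and the local triviality of $f$ in the $C^\infty$ sense combined with elementary openness arguments), Cartan's Theorem B applied to the coherent sheaf $\Omega^p_{X'/S'}$ on $f^{-1}(V)$ gives the desired vanishing.

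Finally I would match the abstract sheaf isomorphism with the concrete fibrewise de Rham map $DR_u$. The stalk $\bigl(R^p f_*\mathbb{C}_{X'} \otimes_{\mathbb{C}_{S'}} \mathcal{O}_{S'}\bigr)_u$ identifies with $H^p(X_u,\mathbb{C})$ through evaluation of local sections. On the other hand, a germ $[\omega] \in \mathcal{H}^p(f_*\Omega^\cdot_{X'/S'})_u$ is represented by a relative holomorphic form on a neighbourhood of $X_u$ which is $d$-closed modulo $df$; its restriction $\omega|_{X_u}$ is a closed holomorphic $p$-form, and the holomorphic de Rham theorem on the Stein manifold $X_u$ identifies its class with an element of $H^p(X_u,\mathbb{C})$. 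Chasing the quasi-isomorphism (\ref{res}) through base change to the fibre shows that this concrete assignment coincides with the stalk of $DR$, completing the identification. The only genuinely delicate point remains the Stein-morphism input above; the rest is essentially formal from the framework already set up in the excerpt.
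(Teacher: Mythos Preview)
Your proposal is correct and follows essentially the same route as the paper: both compute $\mathbb{R}^p f_*(\Omega^\cdot_{X'/S'})$ two ways, once via the quasi-isomorphism (\ref{res}) together with Lemma \ref{sect}, and once via degeneration of the hypercohomology spectral sequence using the Stein property of $f$ to kill the higher $R^qf_*\Omega^p_{X'/S'}$. The paper phrases the first step as degeneration of the \emph{other} hypercohomology spectral sequence rather than invoking $Rf_*$ of a quasi-isomorphism directly, and simply asserts that $f$ is Stein where you supply an argument, but these are cosmetic differences rather than a genuinely different strategy.
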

\begin{proof}
We will briefly describe the arguments. Note first, that, taking the canocial
soft resolution for the complex $f_*\Omega^\cdot_{X'/S'}$, one has two spectral
sequences with $E_2$-terms
\[
'E_2^{p,q}  = \mathcal{H}^p(R^qf_*\Omega^\cdot_{X'/S'}), \quad ''E_2^{p,q}=R^p
f_*(\mathcal{H}^q(\Omega^\cdot_{X'/S'})),
\]
both converging to the cohomology of the full complex,
$\mathbb{R}^\cdot(\Omega^\cdot_{X'/S'})$.
Here, $R^qf_*\Omega^\cdot_{X'/S'}$ denotes the complex
$R^qf_*(\Omega^i_{X'/S'})_{i \in \mathbb{Z}}$. Since $f$ is Stein, the first
spectral sequence degenerates which gives
\[
\mathcal{H}^p(f_*\Omega^\cdot_{X'/S'})\simeq
\mathbb{R}^p(\Omega^\cdot_{X'/S'}).
\]
On the other hand, considering the resolution (\ref{res}), we also have
$\mathcal{H}^p(\Omega^\cdot_{X'/S'})=0,\ p> 0$, that is, the second spectral
sequence degenerates, giving
\[
R^pf_*(f^*\mathcal{O}_{S'})\simeq  \mathbb{R}^p(\Omega^\cdot_{X'/S'}).
\]
Putting this together and using Lemma \ref{sect}, we arrive at the assertion.
\end{proof}
Note that in the above, we worked outside the 'critical set', that is, over
$S'$, which implied that $\mathcal{H}^p(\Omega^\cdot_{X'/S'})=0,\ p> 0$.  Now
note that the sheaf complex of relative differential forms is equally well
defined on $X$ over $S$, so for further use we state the following refinement of
Lemma \ref{deRham}, we will only sketch its proof, for details see
Looijenga \cite{loo}, Greuel \cite{greu} or Brieskorn \cite{bries}.
\begin{lemma}\label{germs}
Let $f:X\rightarrow S$ be a good Stein representative of a smoothing of an
isolated singularity as described above. Then, after possibly shrinking $S$
we have $\mathcal{H}^p(f_*\Omega^\cdot_{X/S})=0,\ n >p> 0$ and
$\mathcal{H}^n(f_*\Omega^\cdot_{X/S})$ is a free $\mathcal{O}_S$-module
of rank $\mu$, where $\mu$ is the $n$-th Betti number of a Milnor fibre. The
former is fitting in the exact sequence
\[
0 \rightarrow R^i f_{*}\mathbb{C}_{X} \otimes
_{\mathbb{C}}\mathcal{O}_{S}\xrightarrow
{\alpha^n}\mathcal{H}^n(f_*\Omega^\cdot_{X/S})
\xrightarrow {\beta^n}f_*\mathcal{H}^n(\Omega^\cdot_{X/S})\rightarrow 0.
\]
whgich implies lemma \ref{deRham}. Furthermore, in $0 \in S$, there is a
canonical isomorphism
\begin{equation}\label{stalk2}
\beta^n:\mathcal{H}^p(f_*\Omega^\cdot_{X/S})_0 \longrightarrow
f_*\mathcal{H}^p(\Omega^\cdot_{X/S})_0=H^p(\Omega^\cdot_{X/S,x})
\end{equation}
for $p > 0$.
\end{lemma}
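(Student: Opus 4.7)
The plan is to extract all four statements from a careful analysis of the two hypercohomology spectral sequences already written down in the sketch of Proposition \ref{deRham}, but this time applied to the unpunctured situation $f:X\to S$, and then to feed in two external ingredients: the homotopy type of the Milnor fibre (a bouquet of $n$-spheres) and the classical Brieskorn--Greuel freeness of $H^n(\Omega^\cdot_{X/S,x})$.

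First I would set up the two hypercohomology spectral sequences $'E_2^{p,q}=\mathcal{H}^p(R^q f_*\Omega^\cdot_{X/S})$ and $''E_2^{p,q}=R^p f_*(\mathcal{H}^q(\Omega^\cdot_{X/S}))$ both converging to $\mathbb{H}^{p+q}(X,\Omega^\cdot_{X/S})$. Since $f:X\to S$ is a Stein representative and each $\Omega^i_{X/S}$ is $\mathcal{O}_X$-coherent, Cartan's Theorem B gives $R^q f_*\Omega^i_{X/S}=0$ for $q>0$, so the first spectral sequence collapses at $E_2$ and identifies $\mathcal{H}^p(f_*\Omega^\cdot_{X/S})$ with $\mathbb{H}^p(X,\Omega^\cdot_{X/S})$. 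For the second sequence I would use that, by the relative Poincar\'e lemma on the submersive part $X'=X\setminus\{x\}$, the sheaves $\mathcal{H}^q(\Omega^\cdot_{X/S})$ with $q\geq 1$ are supported at the isolated critical point $\{x\}$, hence are skyscrapers; then $R^p f_*$ of a skyscraper at $\{x\}$ vanishes for $p\geq 1$, so $''E_2^{p,q}=0$ as soon as $p,q\geq 1$, and only the edge rows survive. The remaining edge row $''E_2^{p,0}=R^p f_*\mathcal{H}^0(\Omega^\cdot_{X/S})=R^p f_*(f^{-1}\mathcal{O}_S)$ gives, via the filtration on $\mathbb{H}^p$, the short exact sequence claimed in the lemma once Lemma \ref{sect} is invoked to rewrite $R^p f_*(f^{-1}\mathcal{O}_S)\simeq R^p f_*\mathbb{C}_X\otimes_{\mathbb{C}}\mathcal{O}_S$.

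Next, for the vanishing of $\mathcal{H}^p$ in the range $0<p<n$, I would shrink $S$ so that $f:X'\to S'$ is the Milnor fibration, observe that the Milnor fibres are bouquets of $n$-spheres (stated as a remark below Definition \ref{defmilnor323}), so $R^p f_*\mathbb{C}_X|_{S'}=0$ for $0<p<n$; at the origin the stalk $(R^p f_*\mathbb{C}_X)_0$ equals $\varinjlim_U H^p(f^{-1}(U),\mathbb{C})$, which vanishes for $p>0$ because $X$ retracts onto $\{x\}$. The other edge term $f_*\mathcal{H}^p(\Omega^\cdot_{X/S})$ for $0<p<n$ is zero off $x$ by the Poincar\'e lemma on $X'$, and its stalk at $0$ equals $H^p(\Omega^\cdot_{X/S,x})$, which is zero in that range by the Brieskorn--Greuel analysis of the relative de Rham complex of an isolated hypersurface singularity. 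The exact sequence then forces $\mathcal{H}^p(f_*\Omega^\cdot_{X/S})=0$ for $0<p<n$. The final isomorphism $\beta^n$ at $0$ for $p=n$ drops out of the same exact sequence: the stalk of $R^n f_*\mathbb{C}_X\otimes\mathcal{O}_S$ at $0$ vanishes by contractibility of $X$ onto $x$, so the surjection $\beta^n$ becomes an isomorphism on the stalk at $0$; and more generally the same argument yields $\beta^p$ at $0$ for all $p>0$ (trivially so once $\mathcal{H}^p$ vanishes for $0<p<n$).

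The main remaining obstacle is the freeness of rank $\mu$ of $\mathcal{H}^n(f_*\Omega^\cdot_{X/S})$ as an $\mathcal{O}_S$-module; away from $0$ this is immediate from Proposition \ref{deRham} (constant rank $\mu$ of $H^n(X_u,\mathbb{C})$ over $S'$, together with a coherence argument), but at $0$ one must produce a genuine free basis. For this I would invoke Brieskorn's theorem that the stalk $H^n(\Omega^\cdot_{X/S,x})$ is a free $\mathcal{O}_{S,0}$-module of rank equal to the Milnor number $\mu$, proved via Koszul-complex/Malgrange-preparation methods applied to the relative de Rham complex, combined with the already-established vanishing of the $R^n f_*\mathbb{C}_X\otimes\mathcal{O}_S$-contribution at $0$ so that $\beta^n$ is an isomorphism of stalks there. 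Once stalk-freeness of rank $\mu$ at $0$ is in hand and generic rank is $\mu$, a standard coherence-plus-Nakayama argument on $S$ (after shrinking) upgrades this to freeness of the sheaf on all of $S$. This Brieskorn--Greuel step is the analytically heaviest and is the real heart of the lemma; everything else is bookkeeping with the two spectral sequences.
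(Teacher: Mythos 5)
Your proposal is correct and follows essentially the same route as the paper: the two hypercohomology spectral sequences (degenerating by Steinness and by the skyscraper nature of $\mathcal{H}^q(\Omega^\cdot_{X/S})$, $q\geq 1$) are exactly how Looijenga's Prop.~8.5 exact sequence, which the paper cites, is obtained, and your appeal to Brieskorn--Greuel for the local vanishing and rank-$\mu$ freeness of $H^n(\Omega^\cdot_{X/S,x})$, plus coherence, is the paper's citation of Looijenga Prop.~8.20. Your identification of the stalk isomorphism $\beta^n$ at $0$ via contractibility of $f^{-1}(U)$ (so $(R^nf_*\mathbb{C}_X)_0=0$, and likewise $R^{n+1}f_*\mathbb{C}_X=0$ for right-exactness, which you should state explicitly) is just an explicit form of the paper's direct-limit argument over shrinking Stein representatives.
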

\begin{proof}
The first thing to prove (\cite{loo}, Prop. 8.5) is the long exact sequence for
$p>0$
\[
.. \rightarrow R^p f_{*}\mathcal{H}^0(\Omega^\cdot_{X/S})\xrightarrow
{\alpha^p}\mathcal{H}^p(f_*\Omega^\cdot_{X/S})
\xrightarrow {\beta^p}f_*\mathcal{H}^p(\Omega^\cdot_{X/S})\rightarrow  R^p
f_{*}\mathcal{H}^0(\Omega^\cdot_{X/S})\rightarrow ..
\]
Lemma \ref{deRham} then follows from
$f_*\mathcal{H}^p(\Omega^\cdot_{X'/S'})=0$,
$\mathcal{H}^0(\Omega^\cdot_{X'/S'})=f^{-1}\mathcal{O}_{S'}$ and Lemma
\ref{sect}.
Then one proves that if $X''\subset X$ so that $f|X''$ is also a Stein
representative then the restriction homomorphism between the corresponding exact
sequences is an isomorphism, taking direct limits, one infers that $\beta^p$ is
an isomorphism. Now in \cite{loo}, Prop 8.20, one proves that
one has an {\it exact} sequence of stalks
\[
0 \rightarrow \mathcal{O}_{S,0} \rightarrow \mathcal{O}_{X,x} \rightarrow \Omega^0_{X/S,x}
\rightarrow
\Omega^1_{X/S,x} \rightarrow \dots \rightarrow \Omega^n_{X/S,x}
\]
and $\Omega^n_{X/S,x}/d\Omega^{n-1}_{X/S,x}$ is free of rank $\mu$ (as a
$\mathcal{O}_{S,0}$-module). Then from (\ref{stalk2}), the exact sequence
\begin{equation}\label{stalk3}
\mathcal{H}^p(\Omega^\cdot_{X/S,x})\rightarrow
\Omega^n_{X/S,x}/d\Omega^{n-1}_{X/S,x}\xrightarrow {d}\Omega^{n+1}_{X/S,x}
\end{equation}
and the fact that $\mathcal{H}^p(f_*\Omega^\cdot_{X/S})$ is coherent, it already
follows that for sufficiently small $S$, $\mathcal{H}^p(f_*\Omega^\cdot_{X/S})$
is a free $\mathcal{O}_S$-module of rank $\mu$ for $p=n$ and is trivial for
$0<p<n$. Note again that for $S$ small enough one has
$\mathcal{H}^0(\Omega^\cdot_{X/S})=f^{-1}\mathcal{O}_{S}$, so $R^p
f_{*}\mathcal{H}^0(\Omega^\cdot_{X/S})$ may be identified with
$R^i f_{*}\mathbb{C}_{X} \otimes_{\mathbb{C}}\mathcal{O}_{S}$.
\end{proof}
Note that from the above Lemma and the sequence (\ref{stalk3}) it follows that a
basis spanning the $\mathcal{O}_{S,0}$-module
$\Omega^n_{X/S,x}/d\Omega^{n-1}_{X/S,x}$ will already span the coherent
$\mathcal{O}_S$-module $\mathcal{H}^p(f_*\Omega^\cdot_{X/S})$, provided $S$ is
small enough. However, $df: \Omega^n_{X/S}\rightarrow \Omega^{n+1}_X\simeq
\mathcal{O}_X$, is only an isomorphism outside $\{x\}$, so if $j:X'\rightarrow
X$ denotes the inclusion, we have in our case (see \cite{loo})
$\omega_f:=j_*j^{-1}\Omega^n_{X/S}\simeq \Omega^{n+1}_X$ and one has the
sequence:
\[
0\rightarrow \Omega^n_{X/S,x}\rightarrow \omega_{f,x}\rightarrow
\omega_{f,x}\otimes\mathcal{O}_{\{x\},x}
\]
i.e. $\omega_f$ and $\Omega^n_{X/S}$ coincide outside of $\{x\}$. One then has
the exact sequence
\begin{equation}\label{dualizing}
0\rightarrow \Omega^n_{X/S,x}/d\Omega^{n-1}_{X/S,x}\rightarrow
\omega_{f,x}/d\Omega^{n-1}_{X/S,x}\rightarrow
\omega_{f,x}\otimes\mathcal{O}_{\{x\},x}
\end{equation}
so $\omega_{f,x}/d\Omega^n_{X/S,x}$ is also a free (\cite{loo}, Prop. 8.20)
$\mathcal{O}_{S,0}$-module of rank $\mu$ (note that $\mathcal{O}_{\{x\},x}\simeq \mathcal{O}_{\mathbb{C}^{n+1},x}/(\frac{\partial f}{\partial z_0},\dots,\frac{\partial f}{\partial
z_n})\mathcal{O}_{\mathbb{C}^{n+1},x}$). Then identifying $\omega_{f,x}$ with
$\mathcal{O}_{\mathbb{C}^{n+1},0}$ by means of $\alpha \mapsto df\wedge \alpha$
there is a correspondence of $d\Omega^{n-1}_{X/S,x}$ with a certain $\mathbb{C}\{f\}$ submodule of
$\mathcal{O}_{\mathbb{C}^{n+1},x}$ which we denote by $\hat M(f)$. For $f$
quasihomogeneous, that is, there are positive integers
$\beta_0,\dots,\beta_n,\beta$ so that $f$ is a $\mathbb{C}$-linear combination
of monomials $z_0^{i_0}\dots z_n^{i_n}$ so that $i_0\beta_0+\dots
+i_n\beta_n=\beta$ one deduces that $\mathcal{O}_{\mathbb{C}^{n+1},x}/M(f)$
coincides with $\mathcal{O}_{\mathbb{C}^{n+1},x}/(\frac{\partial
f}{\partial z_0},\dots,\frac{\partial f}{\partial
z_n})\mathcal{O}_{\mathbb{C}^{n+1},x}$ and
a basis fort the latter module can be chosen to consists of monomials
$\alpha_1,\dots,\alpha_r$, so that for every $\alpha_j$ there is a number $d_j$
such that $\alpha_j=z_0^{i_0}\cdot \dots\cdot z_n^{i_n}$ with
$i_0w_0+\dots+i_nw_n=d_j$ where $w_i=\beta_i/\beta$ ($d_j$ will be called the
degreee of $\alpha_j$). Summing up, we have (\cite{loo})
\begin{lemma}\label{poly}
For $f:X\rightarrow S$ quasihomogeneous with $0 \in \mathbb{C}^{n+1}$ an
isolated singularity there are global sections $\phi_1,\dots,\phi_\mu$ of $\mathcal{H}^i(f_*\Omega^\cdot_{X/S})$
which represent a basis of $H^n(X_u,\mathbb{C})$ for any $u \in S'$ that can be represented by monomials
$\alpha_1,\dots,\alpha_\mu \in \mathbb{C}[z_0\dots,z_{n}]$ by the correspondence $\phi\mapsto$ [coefficient of $df\wedge \phi_i$]. Here, $\mu$ is the Milnor number of $f$. These monomials project onto a $\mathbb{C}$-basis of $\mathcal{O}_{\mathbb{C}^{n+1},0}/(\frac{\partial f}{\partial z_0},\dots,\frac{\partial f}{\partial z_n})\mathcal{O}_{\mathbb{C}^{n+1},0}$.
\end{lemma}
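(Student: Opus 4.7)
The plan is to reduce the statement to a finite-dimensional computation in the Milnor algebra, using the algebraic identifications already set up in Lemma \ref{germs} and the discussion culminating in the exact sequence (\ref{dualizing}), with the quasihomogeneity of $f$ entering only through Euler's identity in one decisive step.

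First, by Lemma \ref{germs} (after shrinking $S$) the sheaf $\mathcal{H}^n(f_*\Omega^\cdot_{X/S})$ is a free $\mathcal{O}_S$-module of rank $\mu$, and its stalk at $0\in S$ is canonically identified with $f_*\mathcal{H}^n(\Omega^\cdot_{X/S})_0$ via $\beta^n$ in (\ref{stalk2}). Combining this with (\ref{dualizing}) and the $\mathcal{O}_{\mathbb{C}^{n+1},0}$-module isomorphism $\omega_{f,x}\xrightarrow{\sim}\mathcal{O}_{\mathbb{C}^{n+1},0}$ given by $\alpha\mapsto [\text{coefficient of }df\wedge\alpha]$, the stalk $\mathcal{H}^n(f_*\Omega^\cdot_{X/S})_0$ becomes identified with $\mathcal{O}_{\mathbb{C}^{n+1},0}/\hat M(f)$, where $\hat M(f)$ is the $\mathbb{C}\{f\}=\mathcal{O}_{S,0}$--submodule corresponding to $d\Omega^{n-1}_{X/S,x}$. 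Thus it suffices to produce a $\mathbb{C}\{f\}$-basis of $\mathcal{O}_{\mathbb{C}^{n+1},0}/\hat M(f)$ consisting of monomial classes: by Nakayama such a basis is equivalent to a $\mathbb{C}$-basis of the fibre $\mathcal{O}_{\mathbb{C}^{n+1},0}/(\hat M(f)+(f)\mathcal{O}_{\mathbb{C}^{n+1},0})$, and once obtained it extends via Lemma \ref{germs} to global sections $\phi_1,\dots,\phi_\mu$ of $\mathcal{H}^n(f_*\Omega^\cdot_{X/S})$, whose restriction to each fibre $X_u$ is a $\mathbb{C}$-basis of $H^n(X_u,\mathbb{C})$ by Proposition \ref{deRham}.

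The main step, and the one where quasihomogeneity is essential, is the identification
\[
\hat M(f)+(f)\mathcal{O}_{\mathbb{C}^{n+1},0}\ =\ \Bigl(\frac{\partial f}{\partial z_0},\dots,\frac{\partial f}{\partial z_n}\Bigr)\mathcal{O}_{\mathbb{C}^{n+1},0}=:J(f).
\]
One direction: for any $(n-1)$-form $\beta=g\,dz_0\wedge\cdots\widehat{dz_i}\cdots\widehat{dz_j}\cdots\wedge dz_n$ (with the obvious sign conventions), a direct computation of $df\wedge d\beta$ shows that the image under the $df\wedge$-identification lies in $J(f)\cdot\mathcal{O}_{\mathbb{C}^{n+1},0}$, because every term carries a factor $\partial_k f$. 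This gives $\hat M(f)\subseteq J(f)$. For the reverse inclusion one uses Euler's identity $\sum_i w_i z_i\,\partial_i f=f$ (with $w_i=\beta_i/\beta$), which on the one hand places $f\in J(f)$, and on the other hand, applied to the contraction of the Euler vector field with $df\wedge dz_0\wedge\cdots\wedge\widehat{dz_k}\cdots\wedge dz_n$, produces a relative $(n-1)$-form whose $d$ is $\partial_k f$ modulo multiples of $f$. Thus $\partial_k f\in \hat M(f)+(f)$ for every $k$, and together with $f\in J(f)$ we get the asserted equality. This is where classical Brieskorn--Greuel arguments (also implicit in Lemma \ref{germs}) are specialized to the quasihomogeneous case.

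Once this identification is in place, the conclusion is immediate: the Milnor algebra $\mathcal{O}_{\mathbb{C}^{n+1},0}/J(f)$ is a finite-dimensional graded $\mathbb{C}$-vector space of dimension $\mu$ with a basis of monomials $\alpha_1,\dots,\alpha_\mu$ (e.g.\ obtained by homogeneous reduction using the quasihomogeneous grading). By the preceding paragraph these monomials represent a $\mathbb{C}$-basis of $\mathcal{O}_{\mathbb{C}^{n+1},0}/(\hat M(f)+(f))$, hence by Nakayama a $\mathbb{C}\{f\}$-basis of $\mathcal{O}_{\mathbb{C}^{n+1},0}/\hat M(f)$. Pulling back through the identifications of Step one, they define germs in $\mathcal{H}^n(f_*\Omega^\cdot_{X/S})_0$ which extend to global sections $\phi_1,\dots,\phi_\mu$ trivializing $\mathcal{H}^n(f_*\Omega^\cdot_{X/S})$, and Proposition \ref{deRham} yields a fibrewise basis of $H^n(X_u,\mathbb{C})$, with the stated coefficient-of-$df\wedge\phi_i$ correspondence built into the construction. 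The hard point is exclusively Step three, where Euler's relation is invoked to collapse $\hat M(f)$ onto the Jacobian ideal modulo $f$; everything else is bookkeeping with the already-established Lemma \ref{germs} and Nakayama.
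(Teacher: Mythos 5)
Your overall reduction is sound and mirrors what the paper (which gives no proof, just a summary of the preceding discussion and a citation to Looijenga) has in mind: use Lemma \ref{germs} to identify the stalk $\mathcal{H}^n(f_*\Omega^\cdot_{X/S})_0$ with $\mathcal{O}_{\mathbb{C}^{n+1},0}/\hat M(f)$ via $\alpha\mapsto[\text{coefficient of }df\wedge\alpha]$, find a monomial $\mathbb{C}\{f\}$-basis there, extend to global sections of the free sheaf after shrinking $S$, and evaluate fibrewise with Proposition \ref{deRham}. The Nakayama step and the globalization are fine. But your decisive step contains a genuine gap: from $\partial_k f\in \hat M(f)+(f)$ for every $k$ you conclude $J(f)\subseteq \hat M(f)+(f)\mathcal{O}$. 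This does not follow, because $\hat M(f)$ (the image of $df\wedge d\Omega^{n-1}_{X,x}$ under the identification) is only a $\mathbb{C}\{f\}$-submodule of $\mathcal{O}_{\mathbb{C}^{n+1},0}$, \emph{not} an $\mathcal{O}$-ideal: for $g\in\mathcal{O}$ one has $g\,df\wedge d\beta=df\wedge d(g\beta)-df\wedge dg\wedge\beta$, and the second term need not lie in $df\wedge d\Omega^{n-1}$. Hence containing the generators $\partial_k f$ does not give containment of the ideal they generate, and the claimed equality $\hat M(f)+(f)\mathcal{O}=J(f)$ is not yet proved by your computation.

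The statement itself is true and can be repaired in either of two ways. (a) Cheaply: you already have the inclusion $\hat M(f)+f\mathcal{O}\subseteq J(f)$ (coefficients of $df\wedge(\cdot)$ lie in $J(f)$, and $f\in J(f)$ by Euler); since Lemma \ref{germs} says $\mathcal{O}/\hat M(f)$ is $\mathcal{O}_{S,0}$-free of rank $\mu$, the quotient $\mathcal{O}/(\hat M(f)+f\mathcal{O})$ has $\mathbb{C}$-dimension $\mu=\dim_{\mathbb{C}}\mathcal{O}/J(f)$, so the surjection $\mathcal{O}/(\hat M(f)+f\mathcal{O})\to\mathcal{O}/J(f)$ is an isomorphism and equality holds — no reverse inclusion argument is needed. (b) Directly, in the spirit of Brieskorn: run your Euler-field computation with a general coefficient. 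For an $n$-form $\omega$ one has $df\wedge d\,\iota_E\omega=df\wedge L_E\omega-f\,d\omega$ (using $\iota_E df=f$ and $df\wedge d\omega=0$), so taking $\omega=g\,dz_0\wedge\cdots\widehat{dz_k}\cdots\wedge dz_n$ with $g$ a monomial of weighted degree $d(g)$ gives $\bigl(d(g)+\sum_iw_i-w_k\bigr)\,g\,\partial_kf\in\hat M(f)+f\mathcal{O}$, and since the constant is positive this yields $g\,\partial_kf\in\hat M(f)+f\mathcal{O}$ for all monomials $g$, hence (arguing degreewise and using coherence) $J(f)\subseteq\hat M(f)+f\mathcal{O}$. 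Your version of the computation, with $g=1$, only reaches the generators, which is exactly where the argument breaks.
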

We finally note that $R^i f_{*}\mathbb{C}_{X'} \otimes_{\mathbb{C}}\mathcal{O}_{S'}$ carries a canonical flat connection, the Gauss-Manin connection. Using the correspondence describes in lemma \ref{germs} one can extend this to sections of the sheaf $\mathcal{H}^p(f_*\Omega^\cdot_{X/S})$. For this, one sets over $S'$ if $\omega \in \mathcal{H}^p(f_*\Omega^\cdot_{X/S})$, so $d\omega=df\wedge \tilde \omega$ for a certain $\tilde \omega \in f_*\Omega^n_{X}$,
\[
\begin{split}
\nabla_\psi\omega:=\mathcal{L}_\psi(\omega)&=i_\psi d\omega\ {\rm mod}(df_*\Omega^{n-1}_X)\\
&=\tilde \omega \ {\rm mod}(df\wedge f_*\Omega^ {n-1}_X+df_*\Omega^{n-1}_X),
\end{split}
\]
where $\psi$ lifts $\frac{\partial}{\partial z}$, so $\nabla_\psi\omega=dz\otimes [\tilde \omega]$. It is then well-known (\cite{loo}) that $\nabla$ extends 'regular-singular' along $S$ and that $\nabla$ maps each of the the modules $ \mathcal{H}^p(\Omega^\cdot_{X/S,x})\subset  \Omega^n_{X/S,x}/d\Omega^{n-1}_{X/S,x}\subset \omega_{f,x}/d\Omega^{n-1}_{X/S,x}$ into the next in the chain of inclusions.\\
{\it Remark.} Instead of working with the sheaf $\omega_{f}$ whose quotient by $d\Omega^{n-1}_{X/S}$ at $x$ fits into the short exact sequence (\ref{dualizing}) and which is isomorphic to $\Omega^n_{X/S}$ outside of $\{x\}$ (this approach goes back to Looijenga \cite{loo}) we will in the following also frequently refer to a more common definition of the Brieskorn lattice $\mathcal{H}''$ which is equivalent to the above for our case of an isolated singularity. $\mathcal{H}''$, understood as a sheaf over $S$, fits into the exact sequence (see \cite{bries})
\[
 0 \rightarrow f_*\Omega^n_{X/S}/d(f_*\Omega^{n-1}_{X/S}) \xrightarrow{df \wedge} \mathcal{H}'':=f_*\Omega^{n+1}_{X}/df\wedge d(f_*\Omega^{n-1}_{X/S}) \rightarrow  f_*\Omega^{n+1}_{X/S}  \rightarrow 0
\]
while its stalk at $s=0$ is isomorphic to $\mathcal{H}''_0=\Omega^{n+1}_{X,x}/df\wedge d(\Omega^{n-1}_{X/S,x})$ and, by the above sequence, $\mathcal{H}''$ coincides with $\mathcal{H}^n(f_*\Omega^\cdot_{X/S})$ outside of $0$.\\
Let now $\omega$ be a section of $\mathcal{H}''$ over a neighbourhood $S\subset \mathbb{C}$ around $s=0$ and consider this as a section of $\mathcal{H}^n(f_*\Omega^\cdot_{X/S})$ on $S'=S\setminus \{0\}$, those sections are called by Varchenko \cite{varchenko2} 'geometric sections'. Consider now over $S'$ the locally constant sheaf $\underline H^n=R^i f_{*}\mathbb{C}_{X'}$ and its dual $\underline H_n={\rm Hom}(\underline H^n, \mathbb{C})$ as the sheaf of homomorphisms from $\underline H^n$ to $\mathbb{C}$. For any $s \in S'$ we have $\underline H_n(s)\simeq H_n(X_s, \mathbb{C})$ and there is a natural isomorphism $T: \underline H^n(\gamma(0))\rightarrow \underline H^n(\gamma(1))$ for any smooth path $\gamma:[0,1]\rightarrow S'$ induced by the fibre bundle structure of $X'$. I.e. we obtain a morphism
\[
M:\pi_1(S', s)\rightarrow {\rm Aut}(\underline H^n(s))
\]
whose evaluation $M(1)$ at the generator $1 \in \pi_1(S', s)$ we will call the monodromy $M$ of $f$. We can 'sheafify' these topological constructions and the result coincides with he Gauss-Manin connection restricted to $\mathcal{H}^p(f_*\Omega^\cdot_{X'/S'})$ described above. Dualizing the above topological notion of parallel transport to isomorphisms $T^*: \underline H_n(\gamma(0))\rightarrow \underline H_n(\gamma(1))$ for smooth paths $\gamma$ as above, we can consider a covariant constant (multivalued) section $\delta$ of $\underline H^n$ over $S'$. Let $s(\omega)$ be the section of $\underline H^n$ over $S'$ represented by $\omega$, then by a Theorem of Malgrange (\cite{malg2}) one has for the dual pairing of $\delta$ and $s(\omega)$ over $S'$:
\begin{theorem}\label{malgrangeth}
The series
\begin{equation}
(s(\omega), \delta)(t)= \sum_{\alpha}\sum_{k=0}^n\frac{1}{k!}a_{k,\alpha}t^\alpha ({\rm ln}\ t)^k
\end{equation}
where $\alpha >-1$, $e^{-2\pi i\alpha}$ is an eigenvalue of $M$, converges in each sector $a< {\rm arg} t< b$ if $0<|t|$ is sufficiently small in $S'$.
\end{theorem}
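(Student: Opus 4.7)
The plan is to reduce the claim to the classical structure theorem for meromorphic connections with regular singularities. The essential input is that the Gauss--Manin connection restricted to the Brieskorn lattice $\mathcal{H}''$ extends to a meromorphic connection on $S$ with at most a regular singularity at $0$; this is a theorem of Brieskorn which ultimately rests on the coherence and $\mathcal{O}_{S,0}$-freeness of $\mathcal{H}''_0$ of rank $\mu$ established in Lemma \ref{germs}, together with the estimate that $t\nabla_{\partial_t}$ preserves a lattice containing $\mathcal{H}''$ after finitely many iterations.

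First, I would pick a basis $\omega_1,\ldots,\omega_\mu$ of $\mathcal{H}''_0$ as in Lemma \ref{poly} and write $\nabla_{\partial_t}$ in this basis as $\partial_t + A(t)/t$ with $A(t)\in \mathrm{Mat}_\mu(\mathcal{O}_{S,0})$. By Levelt's normal form for regular singular connections, there is a holomorphic gauge transformation $P(t)\in \mathrm{GL}_\mu(\mathcal{O}_{S,0})$ after which $A$ becomes a constant matrix $A_0$ whose eigenvalues $-\alpha$ satisfy $e^{-2\pi i\alpha}\in \mathrm{spec}(M)$. The condition $\alpha>-1$ is built into the choice of the specific lattice $\mathcal{H}''$: if one normalises so that the exponents land in the step $V^{>-1}$ of the canonical $V$-filtration of Malgrange--Kashiwara on the meromorphic extension along $t=0$, then the containment of $\mathcal{H}''$ in the appropriate step forces $\alpha>-1$. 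A multivalued horizontal section of the dual connection then takes the form $\delta(t)=P(t)^{-T}\,t^{-A_0^T}\,v$ for some constant vector $v$ encoding the choice of $\delta$.

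Second, expand $t^{-A_0^T}=\exp(-A_0^T \log t)$ and decompose $A_0^T$ into Jordan blocks. Each entry of the matrix exponential is a finite $\mathbb{C}$-linear combination of terms of the form $t^\alpha(\log t)^k/k!$, with $k$ bounded by one less than the size of the largest Jordan block; the bound $k\leq n$ follows from the Monodromy Theorem of Landman (the local monodromy on $H^n(X_s;\mathbb{C})$ of an isolated hypersurface singularity is quasi-unipotent with Jordan blocks of size $\leq n+1$). Writing the geometric section $s(\omega)$ in coordinates as $\omega=\sum c_i(t)\omega_i$ with $c_i\in\mathcal{O}_{S,0}$, the pairing $(s(\omega),\delta)(t)$ becomes a finite linear combination of products $c_i(t)\cdot (P(t)^{-T})_{ji}$ multiplied by the universal functions $t^\alpha(\log t)^k/k!$. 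Re-expanding the holomorphic factors as absolutely convergent power series on some disk $|t|<r$ and collecting by $(\alpha,k)$ yields precisely the expansion claimed in the theorem, with coefficients $a_{k,\alpha}$ obtained from Cauchy products.

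Finally, convergence in each sector $a<\arg t<b$ for $0<|t|$ small is immediate: the holomorphic factors converge absolutely on $|t|<r$ and $\log t$ is single-valued on any such sector, so the series reduces to a finite sum of products of convergent power series by entire functions of $\log t$. The main obstacle is not the algebraic expansion but securing the two structural inputs — Brieskorn's regular-singularity theorem for $\mathcal{H}''$ (which requires a nontrivial coherence argument exploiting the Koszul-type exact sequence (\ref{dualizing}) for $\omega_f/d\Omega^{n-1}_{X/S}$), and the exponent normalisation $\alpha>-1$, which amounts to verifying that the Brieskorn lattice sits in the $V^{>-1}$-step of the Malgrange--Kashiwara $V$-filtration. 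Once these two inputs are granted, the rest is a purely formal computation in the category of regular singular meromorphic connections.
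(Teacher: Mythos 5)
The paper does not actually prove this statement: it is quoted verbatim as a theorem of Malgrange (\cite{malg2}), so there is no internal argument to compare yours against. Judged on its own, your sketch follows the standard modern route (regularity of the Gauss--Manin connection on the Brieskorn lattice, a normal form at the regular singular point, quasi-unipotence of the monodromy bounding the $(\log t)^k$ powers by $k\le n$, and sectorwise convergence because everything reduces to finitely many convergent power series times $t^{\alpha}(\log t)^k$), and that outline is correct. But be aware that the two ``structural inputs'' you grant yourself are precisely the substance of the cited result: Brieskorn's regularity theorem for $\mathcal{H}''$, and above all the exponent bound $\alpha>-1$. Saying that the bound ``amounts to verifying that the Brieskorn lattice sits in the $V^{>-1}$-step of the Malgrange--Kashiwara $V$-filtration'' is a reformulation, not a proof --- that inclusion is Malgrange's positivity theorem (classically proved by estimating $\int_{\delta(t)}\omega/df$ over vanishing cycles, e.g.\ via resolution of singularities, or by the regularity/positivity arguments in Malgrange's paper), and without it your argument only yields moderate growth, i.e.\ some lower bound $\alpha>-N$, together with the eigenvalue condition $e^{-2\pi i\alpha}\in\operatorname{spec}(M)$.

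One smaller inaccuracy: a holomorphic gauge transformation to a constant connection matrix exists only in the non-resonant case (no two eigenvalues of the residue differing by a nonzero integer). In general one must use Levelt's normal form with an integer diagonal twist, or a meromorphic gauge, and then the exponents read off from the residue matrix alone can be shifted by integers. This does not damage the shape of the expansion, but it underlines that the bound $\alpha>-1$ cannot be extracted from the residue eigenvalues; it must come from the position of the lattice $\mathcal{H}''$ inside the meromorphic extension, i.e.\ exactly from the input you left unproven. So: correct strategy and a correct formal computation, but as a self-contained proof it has a gap exactly where the cited theorem does its real work.
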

Furthermore, the coefficients $a_{k,\alpha}$ depend linearly on the section $\delta$, which implies (cf. Varchenko \cite{varchenko2}) there is a set of covariantly constant sections $A^\omega_{k,\alpha}(t)$ of $\underline H^n(t)$ over (a eventually smaller) $S'$ so that
\[
s(\omega)(t)=\sum_{\alpha}\sum_{k=0}^n\frac{1}{k!}A^\omega_{k,\alpha}(t)t^\alpha ({\rm ln}\ t)^k
\]
and by \cite{varchenko3} for any $t, k, \alpha$ the $A^\omega_{k,\alpha}(t)$ belong to the generalized eigenspace of $M$ associated to $e^{-2\pi i\alpha}$. Then one calls the weight $\alpha(\omega)$ of $\omega$ the number $\alpha(\omega):=\{{\rm  min}(\alpha)|{\rm at \ least \ one \ of\ the\ sections \ } A^\omega_{0,\alpha}(t),\dots, A^\omega_{n,\alpha}(t)\neq 0\}$. Then the {\it principal part} of $s(\omega)$ is defined as
\[
s_{max}(\omega)(t)=\sum_{k=0}^n\frac{1}{k!}A^\omega_{k,\alpha(\omega)}(t)t^{\alpha(\omega)} ({\rm ln}\ t)^k,
\]
and the prinicipal parts of geometric sections of one weight are linearly independent at all points $t\in S'$ if they are at one point $t$. Then one calls the Hodge filtration of each $\underline H^n(t)$ the sequence of subspaces $\{F^p\}$ in $\underline H^n(t)$ generated by the principal parts of all geometric sections $\omega$ of $f$, evaluated at $t$, so that $\alpha(\omega)\leq n-p$. Note that since $s(f\omega)=ts(\omega)$, we have $F^{p+1}\subset F^{p}$ and the $F^p$ form a subbundle of $\underline H^n$ (cf. \cite{varchenko3}). We can now define the {\it spectrum} of a singularity due to Varchenko \cite{varchenko2}:
\begin{Def}\label{spectrumdef}
Let the principal parts of sections $\omega^p_1, \dots, \omega^p_{j(p)} \in \mathcal{H}''$ be a basis of $F^p/F^{p+1}$, i.e. their weights satisfy $\alpha(\omega^p_j)\in (n-p-1,n-p]$. Then the union of all such weights $\alpha(\omega^p_j)$ for all geometric sections $\omega^p_j$ and $(p,j)$ satisfying the above is called the spectrum of $f$.
\end{Def}
Note that by \cite{varchenko3}, at each point $t \in S'$, $F^p$ is left invariant by the semismple part $M_s$ of $M$. So the spectrum of $f$ is just the union over all $p$ of the set of numbers $n-l_p(\lambda)$ being asscociated to each eigenvalue $\lambda$ of the action of $M_s$ on $F^p/F^{p+1}$ that satisfy $exp(2\pi i l_p(\lambda))=\lambda$ and the normalization condition $p\leq l_p(\lambda)< p+1$. It is an unordered collection of $\mu$ numbers, $\mu$ being the Milnor number of $f$. Note that for an isolated quasihomogeneous singularity, the spectrum can be expressed in terms of a monomial basis $z^{\alpha(i)}$ of $M(f)$ as $\gamma_i= l(\alpha(i))-1$, hence written as a 'divisor', ${\rm sp}(f)=\sum_{\alpha(i) \in A} \left(l(\alpha(i))-1\right) \in \mathbb{Z}^{(\mathbb{Q})}$ (cf. \cite{kulikov}). We now have the following celebrated theorem due to Varchenko \cite{varchenko2}.
\begin{theorem}\label{definvspec}
The spectrum of $f$ having an isolated singularity at the origin does not change under a deformation (depending holomorphically on the deformation parameters) of $f$ leaving its Milnor number unchanged (these deformations we will refer to as $\mu$-constant deformations).
\end{theorem}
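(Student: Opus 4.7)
My plan is to follow Varchenko's strategy of reinterpreting the spectrum as a list of Hodge-theoretic numerical invariants of the pair $(\mathcal{H}'',\text{Gauss-Manin connection})$ and showing that under a $\mu$-constant deformation these invariants cannot jump.

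For a holomorphic $\mu$-constant family $\{f_t\}_{t \in T}$, I would first assemble the Milnor fibrations into a smooth family over $T$ and invoke the L\^e-Ramanujam theorem (topological triviality along $T$, cited in the introduction) to conclude that the vanishing cohomology local systems $\underline{H}^n_t$ are locally constant in $t$ and that the monodromies $M_t$ are pairwise conjugate. The relative Brieskorn lattice $\mathcal{H}''_t$ is then a locally free $\mathcal{O}_{S_t}$-module of rank $\mu$ depending holomorphically on $t$, and by the compatibility of the Gauss-Manin connection with the family, the Kashiwara-Malgrange V-filtration $V^\bullet$ on the D-module of asymptotic sections --- being canonically determined by $M_t$ --- is also locally constant in $t$.

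Next I would reinterpret the weight function $\alpha(\omega)$ of Theorem \ref{malgrangeth} in terms of $V^\bullet$: concretely,
\[
\alpha(\omega) \;=\; \min\bigl\{\alpha \;:\; s(\omega) \in V^\alpha\bigr\},
\]
so that the Hodge filtration is recovered as
\[
F^p_t \;\cong\; \mathrm{image}\Bigl(\mathcal{H}''_t \cap V^{>\,n-p-1} \longrightarrow \underline{H}^n_t(s)\Bigr),
\]
and the spectrum numbers of weight $\alpha$ are counted by $\dim_{\mathbb{C}} \mathrm{Gr}^\alpha_V \mathcal{H}''_t$. Since $\dim \mathrm{Gr}^\alpha_V$ of the full Gauss-Manin D-module equals the multiplicity of $e^{-2\pi i\alpha}$ as an eigenvalue of $M_t$ --- which is constant in $t$ by conjugacy --- everything is controlled by how the holomorphic family $\mathcal{H}''_t$ sits relative to the rigid filtration $V^\bullet$.

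The proof is then closed by a semicontinuity/flatness argument: the dimensions $\dim \mathrm{Gr}^\alpha_V \mathcal{H}''_t$ are upper semicontinuous in $t$ by coherence, but they sum over $\alpha$ to the constant value $\mu$, forcing every individual summand to be independently constant. Hence the entire spectrum is locally constant on the connected base $T$ and therefore globally constant. The principal obstacle is the uniform-in-$t$ comparison between Varchenko's analytic weight filtration (read off from asymptotic expansions of principal parts) and the algebraic V-filtration, which requires controlling how the regular-singular structure of the Gauss-Manin connection varies as $t$ moves within the $\mu$-constant stratum --- this identification is the essential analytic core of Varchenko's original argument.
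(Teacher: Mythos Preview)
The paper does not prove this theorem at all: it is stated in Appendix~A as ``the following celebrated theorem due to Varchenko \cite{varchenko2}'' and is simply cited from the literature without argument. So there is no paper-proof to compare your proposal against; the result is imported as a black box and used (via Theorem~\ref{foureq} and Corollary~\ref{deformation}) to deduce invariance of the eta-invariant under $\mu$-constant deformation.

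As to your sketch itself: the overall architecture --- reinterpreting spectral numbers as $\dim_{\mathbb C}\mathrm{Gr}^\alpha_V\mathcal H''_t$ and then running an upper-semicontinuity plus constant-total-sum argument to force each graded piece to be individually constant --- is indeed the modern way to package Varchenko's result (see e.g.\ Steenbrink, Saito, or Kulikov \cite{kulikov}). One caution: you invoke the L\^e--Ramanujam theorem to obtain constancy of the local system $\underline H^n_t$ and conjugacy of the monodromies, but L\^e--Ramanujam \cite{ramanujam} famously requires $n\neq 2$, whereas Varchenko's spectrum-constancy theorem holds for all $n\geq 1$. Varchenko's original argument does not pass through topological triviality; rather, he proves semicontinuity of the spectrum directly from the asymptotic expansions and then exploits the symmetry of the spectrum about $(n-1)/2$ (so that semicontinuity in both directions forces constancy). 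In your V-filtration language the fix is that you do not actually need the full conjugacy of $M_t$: what you need is that the set of jump indices $\alpha$ of $V^\bullet$ and the total rank $\mu$ are constant in $t$, and both of these follow from $\mu$-constancy plus coherence of the relative Brieskorn lattice without appealing to L\^e--Ramanujam. If you remove that dependence, your outline is correct.
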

Note that the spectrum of an isolated holomorphic singularity $f:\mathbb{C}^{n+1}\rightarrow \mathbb{C}$ is a topological invariant for $n\leq 2$, while the (integer) Seifert form (\ref{seifertform}) determines and is determined by the topological type of $f$ for $n\geq 3$ (cf. Saeki \cite{saeki}). However, as Saeki shows, the former result remains true for $n=3$ if $f$ is quasihomogeneous, moreover we have (cf. \cite{saeki}, \cite{varchenko2}):
\begin{theorem}\label{foureq}
Let $f$ and $g$ be quasihomogeneous polynomials with an isolated singularity at the origin in $\mathbb{C}^{n+1}$ for $n\geq 1$. Then the following four are equivalent:
\begin{enumerate}
\item  $f$ and $g$ are connected by a $\mu$-constant deformation.
\item  $f$ and $g$ are connected by a topologically constant deformation.
\item $f$ and $g$ have the same weights.
\item $f$ and $g$ have the same spectrum.
\end{enumerate}
\end{theorem}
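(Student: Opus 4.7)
The plan is to establish the four-way equivalence via the chain $(2) \Rightarrow (1) \Rightarrow (4) \Rightarrow (3) \Rightarrow (1) \Rightarrow (2)$, appealing to the cited work of Varchenko, L\^e--Ramanujam and Saeki for the deep steps and verifying the purely algebraic implications directly.

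The implication $(2) \Rightarrow (1)$ is immediate: a topologically constant deformation preserves the homotopy type of the Milnor fibre, hence preserves the Milnor number. The implication $(1) \Rightarrow (4)$ is exactly Varchenko's Theorem \ref{definvspec}: the spectrum is constant along $\mu$-constant deformations.

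For $(4) \Rightarrow (3)$ I would use that for a quasihomogeneous $f$ with rational weights $(w_0,\dots,w_n)$, the spectrum is the explicit multiset $\{l(\alpha)-1 : \alpha \in \Lambda\}$ with $l(\alpha)=\sum_j (\alpha_j+1)w_j$, indexed by any monomial basis of $M(f)$. Equivalently the shifted generating function $\sum_\alpha t^{l(\alpha)}$ coincides with the Poincar\'e series of $M(f)$, which is the rational function $\prod_j (1 - t^{1-w_j})/(1 - t^{w_j})$. Knowing the spectrum as a multiset thus determines this rational function, from which the multiset of weights $\{w_j\}$ is recovered: for example by inspecting its poles and zeros on the unit circle and using that the smallest spectral value equals $-1 + \sum_j w_j$ (attained at $\alpha = 0$), and then recursing on the factorisation.

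The converse $(3) \Rightarrow (1)$ is a connectedness argument: the space of quasihomogeneous polynomials of fixed weight vector $\mathbf{w}$ is a finite-dimensional $\mathbb{C}$-vector space $V_{\mathbf{w}}$, and the subset with an isolated singularity only at $0$ is the complement of a proper Zariski-closed discriminant locus, hence path-connected in the classical topology; any path in it is automatically $\mu$-constant because the Milnor number is locally constant on the open stratum of isolated singularities with fixed weights. Finally $(1) \Rightarrow (2)$ is the L\^e--Ramanujam theorem for $n \neq 2$ and Saeki's extension (\cite{saeki}) for $n = 2$ in the quasihomogeneous case; this is the main obstacle, since $\mu$-constancy is not known in general to imply topological constancy in middle dimension $n=2$, and the proof proposal at this step amounts to invoking the extra rigidity provided by quasihomogeneity as exploited in Saeki's work rather than reproving it.
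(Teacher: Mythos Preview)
The paper does not actually prove Theorem~\ref{foureq}; it merely states it with the citation ``(cf.\ \cite{saeki}, \cite{varchenko2})'' and relies on the reader to consult those sources. Your proposal is therefore not competing with a proof in the paper but rather supplying the argument the citation points to, and your chain of implications is the standard one.

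Two small points are worth tightening. First, in $(4)\Rightarrow(3)$ you speak of recovering the weights from ``poles and zeros on the unit circle'' of the generating function $\prod_j (1-t^{1-w_j})/(1-t^{w_j})$, but after the cancellation forced by $M(f)$ being finite-dimensional this is a genuine polynomial, so there are no poles to inspect; the correct statement is that the spectrum determines the characteristic polynomial of the monodromy together with the Hodge grading, and for a quasihomogeneous isolated singularity (where one may assume $0<w_j\le 1/2$ by K.~Saito) this in turn determines the multiset $\{w_j\}$. This is the content behind the references, not quite the pole argument you sketch. Second, in $(3)\Rightarrow(1)$ you can strengthen ``locally constant'' to ``constant'': on the open stratum of quasihomogeneous polynomials of a fixed weight vector with isolated singularity one has the Milnor--Orlik formula $\mu=\prod_j(1/w_j-1)$, so $\mu$ is determined by the weights alone, and the connectedness of the Zariski-open complement of the discriminant in the vector space $V_{\mathbf w}$ gives the desired path. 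Your attribution of the delicate step $(1)\Rightarrow(2)$ to L\^e--Ramanujam for $n\neq 2$ and the quasihomogeneous extension for $n=2$ is exactly how the paper's introduction frames it.
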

To close this section we briefly discuss the term $\epsilon$-hermitian variation structure, introduced by Nemethi \cite{nem1}, \cite{nem2}. For this let $U$ be a complex vector space with a complex conjugation $\overline \cdot$. $U^*={\rm Hom}_{\mathbb{C}}(U,\mathbb{C})$ will denote its dual. There is a natural isomorphism $\vartheta:U\rightarrow U^{**}$ given by $\theta(u)(\phi)=\phi(u)$. If $\phi \in {\rm Hom}_{\mathbb{C}}(U,U')$ then $\overline \phi$ is defined as $\phi(x)=\overline \phi(\overline x)$. The dual of $\phi$ is as usual defined as $\phi^*:U'^*\rightarrow U^*$ by $\phi^*(\psi)=\psi\circ \phi$. Understanding this, we define:
\begin{Def}\label{variation}
An $\epsilon$-hermitian variation structure is a quadrupel $(U,b,h,V)$ so that $U$ is as above, $\epsilon \in \{1,-1\}$ and one has
\begin{enumerate}
\item $b:U \rightarrow U^*$ is a $\mathbb{C}$-linear endomorphism so that $\overline {b^*\circ \theta}=\epsilon b$,
\item $h$ is a $b$-orthogonal automorphism of $U$ that is $\overline {h^*}\circ b\circ h= b$,
\item $V:U^*\rightarrow U$ is a $\mathbb{C}$-linear endomorphism and $\overline {\theta^{-1}\circ V^*}=-\epsilon V\circ\overline h^*$ and $V\circ b=h-id$.
\end{enumerate}
\end{Def}
Note that if $b$ is an isomorphism, then $V=(h-id)b^{-1}$, so the variation structure is determined by $(U,b,h)$ alone. If $V$ is an isomorphism, the structure is called {\it simple}. Now, if $f:(\mathbb{C}^{n+1},0)\rightarrow (\mathbb{C},0)$ is an isolated hypersurface singularity, then setting $U=H^n(F,\partial F,\mathbb{C})$, where $F$ is the Milnor fibre, $b$ the complex $(-1)^n$-symmetric intersection form, $h$ the complexified monodromy and $V$ the complexified variation mapping ($V$ is defined as $V:U^*\rightarrow U$ by $V(\omega)=[h(\omega)-\omega]\in U$, where $h: F\rightarrow F$ is a smooth representative of the geometric monodromy of $f$ fixing the boundary pointwise) furnish a $(-1)^n$-hermitian variation structure, denote it by $\mathcal{V}(f)$. This variation structure is always simple, as follows from the Gysin sequence of $f$ by applying the $5$-Lemma. Any basis$\{e_i\}\in U$ defines a dual base $\{e_i^*\}_i$ in $U^*$, that is $e_i^*(e_j)=1$ if $i=j$ and $0$ otherwise. Using the matrix representation with respect to such a basis, we can give the following example of simple $(-1)^n$-hermitian variation structures, we assume $|\lambda|=1$:
\begin{equation}\label{nemweight2}
\begin{split}
\mathcal{W}_\lambda(\pm 1)&=(\mathbb{C}, \pm i^{-n^2},\lambda,\pm(\lambda-1)i^{n^2}),\quad if \lambda \neq 1,\\
\mathcal{W}_1(\pm 1)&=(\mathbb{C}, 0,1_\mathbb{C},\pm i^{n^2+1}),\quad if \lambda = 1.
\end{split}
\end{equation}
It is well-known (see \cite{nem2}) that any simple variation structure with diagonalizable monodromy $h$ is a direct sum of indecomposable structures $\mathcal{W}_\lambda(\pm 1),\  \lambda \in S^1$. Now let $S$ be the Seifert form of an isolated singularity $f$ as given by
\begin{equation}\label{seifertform}
S(a,b)=<V^{-1}a, b>, \quad a,b \in U,
\end{equation}
and let $K_f$ be its link. Then by a result of Durfee (\cite{durfee}) we have the following.
\begin{theorem}
Let $n\geq 3$. Then the Seifert form $S$ of $f$ is determined and determines the isotopy class of $K_f\subset S^{2n+1}$ as a fibred knot. Furthermore if $b$, $h$ are intersection form and monodromy automorphism of $f$ as introduced above, one has $b=S+(-1)^nS^t$ and $h=(-1)^{n-1}S^{-1} S^t$, i.e. the variation structure $(U,b,h,V)$ of $f$ is determined by the Seifert form.
\end{theorem}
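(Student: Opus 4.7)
The statement has two essentially independent parts, and the plan is to handle them separately.

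\emph{Part 1 (topological invariance for $n\geq 3$).} This is the classical result of Durfee; I would follow the strategy of his original paper together with Kato's complement. The plan is to reconstruct $(K_f\subset S^{2n+1})$ from the Seifert form $S$ in three steps. First, note that for $n\geq 3$ the Milnor fibre $F$ is $(n-1)$-connected of dimension $2n\geq 6$, so by a standard application of the $h$-cobordism/handle-cancellation techniques the diffeomorphism type of $F$ rel boundary is determined by the intersection form on $H_n(F;\mathbb Z)$ together with a quadratic refinement — both of which are encoded in $S$ via the formula $b=S+(-1)^n S^t$ recalled in the statement. Second, the geometric monodromy $h_g:F\to F$ is, up to isotopy rel boundary, determined by its action on $H_n(F;\mathbb Z)$, i.e.\ by $(-1)^{n-1}S^{-1}S^t$, again using the high-dimensional version of the Dehn–Nielsen-type rigidity valid when $F$ is simply connected of dimension $\geq 6$. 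Third, the total space $S^{2n+1}$ reconstructed as $F\cup_\partial ([0,1]\times\partial F)$ glued via $h_g$ on one side is a fibred knot complement, and by the open-book/Milnor fibration theorem the pair $(S^{2n+1},K_f)$ is recovered from $(F,h_g)$ up to isotopy. Conversely, the Seifert form is itself a topological invariant of $(K_f\subset S^{2n+1})$ since it is defined by linking numbers of pushed-off cycles in $F$.

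\emph{Part 2 (the algebraic identities).} I would derive both formulas purely from the definition $S(a,b)=\langle V^{-1}a,b\rangle$ together with Definition~\ref{variation}. For $b=S+(-1)^n S^t$: from $V\circ b=h-\mathrm{id}$ one has $b=V^{-1}(h-\mathrm{id})$, so as bilinear forms
\begin{equation*}
b(a,a') \;=\; \langle V^{-1}(h-\mathrm{id})a,\,a'\rangle \;=\; S((h-\mathrm{id})a,a').
\end{equation*}
The hermiticity relation $\overline{\theta^{-1}\circ V^*}=-\epsilon V\circ\overline{h}^*$ (with $\epsilon=(-1)^n$) dualises this to $S(ha,ha')=S(a,a')$ and $S^t(a,a')=(-1)^{n+1}S(h^{-1}a,a')$; combining the two yields
\begin{equation*}
S(a,a')+(-1)^n S^t(a,a') \;=\; S(a-h^{-1}a,a')\cdot(\cdots) \;=\; S((h-\mathrm{id})h^{-1}a,a'),
\end{equation*}
which, after composing with $h$ (using $h$-invariance of $b$), collapses to $b(a,a')$. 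For $h=(-1)^{n-1}S^{-1}S^t$: solving $V\circ b=h-\mathrm{id}$ for $h$ gives $h=\mathrm{id}+Vb$, and then rewriting $Vb$ via the previous identity $b=S+(-1)^nS^t$ and using the $b$-orthogonality of $h$ (axiom~(2) of Definition~\ref{variation}) reduces the expression to the claimed quotient $(-1)^{n-1}S^{-1}S^t$. The simplicity hypothesis ($V$ an isomorphism, automatic for isolated hypersurface singularities by the Gysin/Wang argument alluded to above) is exactly what ensures $S$ is nondegenerate so that $S^{-1}$ makes sense.

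\emph{Main obstacle.} The algebraic Part~2 is essentially bookkeeping with the four axioms of Definition~\ref{variation}; I expect no substantive difficulty, only care with signs (the factor $(-1)^n$ versus $(-1)^{n-1}$ and the conjugation $\overline{(\cdot)^*}$). The genuine difficulty is concentrated in Part~1, specifically in the step that promotes the algebraic equivalence of Seifert forms to an ambient isotopy of $K_f$ in $S^{2n+1}$: this requires the high-dimensional handle-body machinery (Smale, Wall, Kato) and fails in low dimensions. This is precisely why the hypothesis $n\geq 3$ is needed, and it is the only place where dimension enters the argument in an essential way.
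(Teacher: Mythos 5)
First, a point of comparison: the paper does not prove this statement at all --- it is imported as a citation from Durfee \cite{durfee} (the algebraic identities also appear in Nemethi \cite{nem1}, \cite{nem2}), so your proposal is being measured against the literature rather than against an argument in the text. Your Part~1 is a faithful outline of the Durfee--Kato strategy and correctly locates where $n\geq 3$ enters (Smale/Wall handlebody theory, rigidity of the monodromy from its homological action, open-book reconstruction). Two caveats: the classification of the fibre is Wall's classification of $(n-1)$-connected $2n$-dimensional handlebodies, which needs the normal-bundle map $H_n(F)\to\pi_{n-1}(SO(n))$ and not merely ``intersection form plus a quadratic refinement''; and the substantive step in Durfee's proof is recovering the \emph{embedding} $F\hookrightarrow S^{2n+1}$ (and its uniqueness up to isotopy) from the full Seifert form, not just the abstract diffeomorphism type of $F$ --- this is where $S$ itself, rather than its symmetrization, is used, and it deserves more than the phrase you give it. As a sketch, though, this part is the right route.

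The genuine gap is in Part~2, precisely the part you describe as bookkeeping. With the paper's stated conventions (Definition \ref{variation} with $\epsilon=(-1)^n$, $V$ invertible, and $S$ defined by (\ref{seifertform})), a direct computation in a basis gives $h=(-1)^{n-1}(S^{t})^{-1}S$ and hence $b=-\bigl(S+(-1)^{n}S^{t}\bigr)$ as bilinear forms; one can check the sign against the model structures $\mathcal{W}_\lambda(\pm1)$ in (\ref{nemweight2}), where $S+\overline{S}^{\,t}=-b$ for $n$ even. So Durfee's formulas as stated are reached only after adjusting a convention (the sign of $V$, equivalently the push-off direction defining $S$, equivalently replacing $h$ by $h^{-1}$), and your derivation cannot ``collapse'' to them without saying which adjustment is made. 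Concretely, your intermediate identity $S^{t}(a,a')=(-1)^{n+1}S(h^{-1}a,a')$ is false in general: the correct consequence of axiom (3) (which does yield $S(ha,ha')=S(a,a')$, as you say) has $h^{-1}$ acting in the \emph{second} slot, $S^{t}(a,a')=(-1)^{n+1}S(a,h^{-1}a')$, and then your final sum gives $S(a,(1-h^{-1})a')$, i.e.\ $b(h^{-1}a,a')=b(a,ha')$ rather than $b(a,a')$; composing with $h$ and invoking $b$-orthogonality does not remove this discrepancy. A second point you flag but do not resolve: Definition \ref{variation} is hermitian (all dualities carry a conjugation), whereas the formulas in the statement concern the integral/real Seifert form with the plain transpose, so the computation must either be done on the real structure or track conjugations throughout. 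None of this is deep --- it is the usual Seifert-matrix convention swamp --- but as written the two displayed identities are not proved, and fixing the conventions explicitly is exactly what the ``bookkeeping'' consists of.
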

Since for $n\geq 3$ the Seifert form is by the above determined by and determines the topological type of $f$, $V_f=f^{-1}(0)$ being toplogically a cone over its link, it follows that in these dimensions the variation structure of $f$ is determined by its topological type, that is the homeomorphism type of the pair $(\mathbb{C}^{n+1},V_f)$.

\end{document}